\definecolor{bblue}{rgb}{.2,0.2,.8}
\theoremstyle{plain}
\newtheorem{theorem}{Theorem}[section]
\newtheorem{proposition}[theorem]{Proposition}
\newtheorem{lemma}[theorem]{Lemma}
\newtheorem{corollary}[theorem]{Corollary}
\newtheorem{remark}[theorem]{Remark}
\numberwithin{equation}{section}
\numberwithin{theorem}{section}
\newcommand{\mc}[1]{{\mathcal #1}}
\newcommand{\mb}[1]{{\mathbf #1}}
\newcommand{\mf}[1]{{\mathfrak #1}}
\newcommand{\bb}[1]{{\mathbb #1}}
\newcommand{\ms}[1]{{\mathscr #1}}
\renewcommand{\epsilon}{\varepsilon}
\renewcommand{\Cap}{{\rm cap}}
\newcommand{\<}{\langle}
\renewcommand{\>}{\rangle}
\title[Metastable $\Gamma$-expansion of large deviations rate
functions]{Metastable $\Gamma$-expansion of finite state Markov chains level
two large deviations rate functions.}
\author{L. Bertini}
\address{Lorenzo Bertini \hfill\break \indent
   Dipartimento di Matematica, Universit\`a di Roma `La Sapienza'
   \hfill\break \indent   P.le Aldo Moro 2, 00185 Roma, Italy}
 \email{bertini@mat.uniroma1.it}
\author{D. Gabrielli}
\address{\noindent Davide Gabrielli \hfill\break\indent 
 DISIM, Universit\`a dell'Aquila
\hfill\break\indent 
67100 Coppito, L'Aquila, Italy
}
\email{davide.gabrielli@univaq.it}
\author{C. Landim}
\address{Claudio Landim
  \hfill\break\indent IMPA \hfill\break\indent Estrada Dona Castorina
  110, \hfill\break\indent
J. Botanico, 22460 Rio de Janeiro, Brazil\hfill\break\indent
  {\normalfont and} \hfill\break\indent CNRS UMR 6085, Universit\'e de
  Rouen, \hfill\break\indent Avenue de l'Universit\'e, BP.12,
  Technop\^ole du Madril\-let, \hfill\break\indent
F76801 Saint-\'Etienne-du-Rouvray, France.} 
\email{landim@impa.br}
\begin{document}

\begin{abstract}
We examine two analytical characterisation of the metastable behavior
of a Markov chain. The first one expressed in terms of its transition
probabilities, and the second one in terms of its large deviations
rate functional.

Consider a sequence of continuous-time Markov chains
$(X^{(n)}_t:t\ge 0)$ evolving on a fixed finite state space $V$. Under
a hypothesis on the jump rates, we prove the existence of times-scales
$\theta^{(p)}_n$ and probability measures with disjoint supports
$\pi^{(p)}_j$, $j\in S_p$, $1\le p \le \mf q$, such that (a)
$\theta^{(1)}_n \to \infty$,
$\theta^{(k+1)}_n/\theta^{(k)}_n \to \infty$, (b) for all $p$,
$x\in V$, $t>0$, starting from $x$, the distribution of
$X^{(n)}_{t \theta^{(p)}_n}$ converges, as $n\to\infty$, to a convex
combination of the probability measures $\pi^{(p)}_j$. The weights of
the convex combination naturally depend on $x$ and $t$.

Let $\ms I_n$ be the level two large deviations rate functional for
$X^{(n)}_t$, as $t\to\infty$.  Under the same hypothesis on the jump
rates and assuming, furthermore, that the process is reversible, we
prove that $\ms I_n$ can be written as
$\ms I_n = \ms I^{(0)} \,+\, \sum_{1\le p\le \mf q} (1/\theta^{(p)}_n) \,
\ms I^{(p)}$ for some rate functionals $\ms I^{(p)}$ which take finite
values only at convex combinations of the measures $\pi^{(p)}_j$:
$\ms I^{(p)}(\mu) < \infty$ if, and only if, $\mu = \sum_{j\in S_p}
\omega_j\, \pi^{(p)}_j$ for some probability measure $\omega$ in $S_p$.
\end{abstract}

\noindent
\keywords{Metastability, Large deviations, Continuous-time Markov
processes on discrete state spaces}

\subjclass[2010]
{Primary 60F10; 60J27; 60J45}


\maketitle
\thispagestyle{empty}

\section{Introduction}
\label{sec0}

The metastable behavior of continuous-time Markov chains has attracted
some interest in recent years. We refer to the monographs \cite{ov05,
bh15, lrev, lms2} for the latest developments. In this article, we
propose to investigate the Markov chains metastable behaviour from an
analytical perpective, by showing that the Markov chains semigroup and
large deviations rate function encode the metastable properties of the
process. The main results explain how to extract from these
functionals the metastable time-scales, states and wells.

To tackle this problem we consider a sequence of continuous-time
Markov chains $(X^{(n)}_t :t\ge 0)$ evolving on a finite state space
$V$. Under a natural hypothesis on the jump rates of these chains,
stated in equation \eqref{mh} below, we prove the existence of
\begin{itemize}
\item[(a)] time-scales $\theta^{(1)}_n, \dots, \theta^{(\mf q)}_n$
such that, as $n\to\infty$,  $\theta^{(1)}_n \to \infty$,
$\theta^{(p+1)}_n/\theta^{(p)}_n \to \infty$ for $1\le p < \mf q$;
\item[(b)] and metastable states $\pi^{(p)}_1, \dots , \pi^{(p)}_{\mf n_p}$,
$1\le p\le \mf q$.
\end{itemize}
The parameter $p$ is called the level and indicates the depth of the
wells or, equivalently, the time-scale at which a metastable behaviour
is observed.  The metastable states $\pi^{(p)}_j$ are probability
measures on $V$. It will be shown that, for each fixed level $p$, the
support of the measures $\pi^{(p)}_1, \dots , \pi^{(p)}_{\mf n_p}$ are
disjoint. They represent the wells among which the process $X^{(n)}_t$
evolves in the time-scale $\theta^{(p)}_n$. The number of metastable
set decreases as the time-scales increase: $\mf n_{p+1} < \mf n_p$. A
metastable state at level $p+1$ is a convex combination of metastable
states at level $p$: for each $1\le p< \mf q$ and
$1\le m\le \mf n_{p+1}$,
$\pi^{(p+1)}_{m} = \sum_j \omega^{(m)}_j \pi^{(p)}_{j}$ for some
probability measure $\omega^{(m)}$ on $\{1, \dots, \mf n_p\}$.

The first main result of this article, Theorem \ref{mt0}.(b), states
that for all $t>0$, $x\in V$, the distribution of
$X^{(n)}_{t \theta^{(p)}_n}$ starting from $x$ converges to a convex
combination of the measures $\pi^{(p)}_j$, $1\le j\le \mf n_p$.  More
precisely, denote by $p^{(n)}_t(x,y)$ the transition probabilities of
the Markov chain $X^{(n)}_t$. Then, for each $1\le p\le \mf q$, $t>0$,
$x \in V$, there exists a probability measure
$\omega^{(p)}_{t,x}(\,\cdot\,)$ on $\{1, \dots, \mf n_p\}$ such that
\begin{equation}
\label{i0}
\lim_{n\to\infty} p^{(n)}_{t \theta^{(p)}_n} (x,\,\cdot\,) \;=\;
\sum_{j=1}^{\mf n_p} \omega^{(p)}_{t,x}(j)\; \pi^{(p)}_j (\,\cdot\,)\;.
\end{equation}
The weights $\omega^{(p)}_{t,x}(j)$ of this convex combination
naturally depend on $x$ and $t$, and are obtained by a recursion
procedure.

Theorem \ref{mt0} also characerises the asymptotic behavior of the
transition probabilities at all intermediate time-scales
$\beta_n$. Fix $0\le p \le \mf q$, set $\theta^{(0)}_n =1$,
$\theta^{(\mf q+1)}_n =+\infty$, and consider a sequence $\beta_n$
such that $\beta_n/\theta^{(p)}_n \to \infty$,
$\beta_n/\theta^{(p+1)}_n \to 0$.  Theorem \ref{mt0} provides a formula
for the limit of $p^{(n)}_{\beta_n} (x,\,\cdot\,)$ as $n\to\infty$.
It corresponds to the limit obtained in \eqref{i0} by letting
$t\to\infty$ after $n\to\infty$.

Freidlin and Koralov \cite{fk17}, after \cite{bl4} and \cite{lx16},
examined sequences of Markov chains on finite state spaces under the
same hypothesis \eqref{mh} assumed below and taken from \cite{bl4,
lx16}.  Their main results describes the asymptotic behavior of the
transition probabilities at the intermediate time-scales $\beta_n$
introduced above. These results demonstrate the interest of the theory
developed in \cite{bl2, BL15, llm18, or19, lms2}, which permits to
investigate the asymptotic behavior of the Markov chain exactly at
the metastable time-scale, and not just before or after it.

We turn to the large deviations. Denote by $\ms I_n$ the level two
large deviations rate functional of the Markov chain $X^{(n)}_t$, as
$t\to\infty$ \cite{var}. Under the hypothesis of reversibility, the
second main result of this article provides a $\Gamma$-expansion of
the functional $\ms I_n$ as
\begin{equation}
\label{i2}
\ms I_n \;=\;  \ms I^{(0)} \;+\; \sum_{p=1}^{\mf q} \frac{1}{\theta^{(p)}_n}
\, \ms I^{(p)}\;.
\end{equation}
This expansion has to be understood in the sense that $\ms I_n$,
$\theta^{(p)}_n\, \ms I_n$, $1\le p\le \mf q$, $\Gamma$-converge to
$\ms I^{(0)}$, $\ms I^{(p)}$, respectively.  The rate functionals
$\ms I^{(p)}$ take finite values only at convex combinations of the
metastable states $\pi^{(p)}_j$: $\ms I^{(p)}(\mu) < \infty$ if, and
only if, $\mu = \sum_{j\in S_p} \omega_j\, \pi^{(p)}_j$ for some
probability measure $\omega$ in $S_p$.

Therefore, both the semigroup and the level two large deviations rate
functionals encode all characteristics of the metastable behaviour of
a Markov chain. They provide the time-scales, the metastable states
and the wells. In particular, it becomes a natural problem to prove
such an expansion in other contexts.

We believe that the inductive approach presented here provides a
general method to derive these results, as well as the metastable
behavior in the classical sense \cite{bl2}, of Markov chains with
wells of different depths, even if the state space is not fixed, as
assumed here. To be applied, one needs (a) to show that the process
quickly reaches one of the wells (the initial step of the induction
procedure) and (b) to compute the capacities \eqref{26b} and the
asymptotic jump rates \eqref{34}.

More precisely, inspecting the proof of Theorem \ref{mt3} reveals that
it essentially relies on the convergences of the generator of the
trace process on the wells (more exactly on the convergence of the
average rates $r^{(p)}_n(i,j)$ introduced in \eqref{41} below). Since
this convergence has been obtained in many different contexts, by
following the strategy proposed here it should be possible to derive
the metastable $\Gamma$-expansion of the large deviations level two
rate function for dynamics in which the state space is not fixed.

This includes random walks in potential fields \cite{lmt2015, ls2018},
condensing zero-range models \cite{bl3, l2014, s2018}, inclusion
processes \cite{GRV-13, CCG-14, bdg17, kim21, ks21}, or statistical
mechanical models in which the volume grows as the temperature
decreases. For example, the Curie-Weiss model in random environment
\cite{begk01, bbi09}, the Blume-Capel model \cite{llm19}, the Potts
model \cite{ls2016, ks21c, lee22}, or the Kawasaki dynamics for the
Ising model \cite{GL15}.

In particular, it should be possible to apply this approach to
non-reversible diffusions in potential fields,\cite{s95, begk02,
lms17, ls19, rs18, ls22, ls22b}, extending Di Ges\`u and Mariani
\cite{GM17}, who prove the $\Gamma$-expansion in the reversible case
in which there is only one well at each different depth.


\section{The model}
\label{sec1}

Let $\color{blue} G=(V,E)$ be a finite directed graph, where $V$
represents the finite set of vertices, and $E$ the set of directed
edges. Denote by $\color{blue} (X^{(n)}_t : t\ge 0)$, $n\ge 1$, a
sequence of $V$-valued, irreducible continuous-time Markov chains,
whose jump rates are represented by $\color{blue} R_n(x,y)$. We assume
that $R_n(x,y)>0$ for all $(x,y)\in E$ and $n\ge 1$. The generator
reads as
\begin{equation*}
{ \color{blue} (\ms L_n f)(x)}
\;=\; \sum_{y\colon (x,y)\in E} R_n(x,y)\, \{\,
f(y)\,-\, f(x)\,\}\;.
\end{equation*}
Denote by $\color{blue} \lambda_n(x)$, $x\in V$, the holding rates of
the Markov chain $X^{(n)}_t$ and by $\color{blue} p_n(x,y)$, $x$,
$y\in V$, the jump probabilities, so that
$R_n(x,y) = \lambda_n(x) \, p_n(x,y)$.

Let $\color{blue} \pi_n$ stand for the unique stationary state. The
so-called Matrix tree Theorem \cite[Lemma 6.3.1]{FW98} provides a
representation of the measure $\pi_n$ in terms of arborescences of the
graph $(V,E)$.

Denote by $\color{blue} D(\bb R_+, W)$, $W$ a finite set, the space
of right-continuous functions $\mf x: \bb R_+ \to W$ with left-limits
endowed with the Skorohod topology and the associated Borel
$\sigma$-algebra. Let $\color{blue} \mb P_{\! x}=\mb P^n_{\! x}$,
$x\in V$, be the probability measure on the path space $D(\bb R_+, V)$
induced by the Markov chain $X^{(n)}_t$ starting from $x$. Expectation
with respect to $\mb P_{\! x}$ is represented by $\color{blue} \mb E_x$.

Denote by $p^{(n)}_t(x,y)$ the transition probability of
the Markov chain $X^{(n)}_t$:
\begin{equation*}
{ \color{blue}  p^{(n)}_t(x,y)}
\;:=\; \mb P^n_{\! x} \big[\, X_t \,=\, y\,\big]\;,
\quad x\,,\, y \in V\;,\;\; t\,>\,0\;.
\end{equation*}
Since the chain is irreducible and $\pi_n$ its stationary state, by
the ergodic theorem for finite state-spaces Markov chains,
\begin{equation*}
\lim_{t\to\infty} p^{(n)}_t(x,y) \;=\; \pi_n(y)\;\;
\text{for all $x$, $y\in V$}\;.
\end{equation*}

\subsection*{Longer time-scales}

Assume that $\lim_n R_n(x,y)$ exists for all $(x,y)\in E$, and denote
by $\bb R_{0} (x,y) \in [0,\infty)$ its limit:
\begin{equation}
\label{01}
{\color{blue} \bb R_{0} (x,y)}
\;:=\; \lim_n R_n(x,y)\;, \quad (x,y) \,\in\, E \;. 
\end{equation}
Let $\bb E_0$ be the set of edges whose asymptotic rate is positive:
$\color{blue} \bb E_0 \,:=\, \{\, (x,y)\in E : \bb R_{0}
(x,y)>0\,\}$, and assume that $\bb E_0 \not = \varnothing$.  The jump
rates $\bb R_0 (x,y)$ induce a continuous-time Markov chain on $V$,
denoted by $\color{blue} (\bb X_t:t\ge 0)$, which, of course, may be
reducible. Denote by $\color{blue} \bb L^{(0)}$ its generator.

Denote by $\color{blue} \ms V_{1}, \dots, \ms V_{\mf n}$,
$\mf n\ge 1$, the closed irreducible classes of $\bb X_t$, and let
\begin{equation}
\label{05}
{\color{blue} S} \;: =\; \{1, \dots, \mf n\}\;, \quad 
{\color{blue} \ms V}  \;: =\; \bigcup_{j\in S} \ms V_{j} \;, \quad
{\color{blue} \Delta } \;: =\;  V  \, \setminus \, \ms V \;. 
\end{equation}
The set $\Delta$ may be empty and some of the sets $\ms V_j$ may be
singletons.

Let $\color{blue} \bb Q_x$ be the probability measure on
$D(\bb R_+, V)$ induced by the Markov chain $\bb X_t$ starting from
$x$. 

For two sequences of positive real numbers $(\alpha_n : n\ge 1)$,
$(\beta_n : n\ge 1)$, $\color{blue} \alpha_n \prec \beta_n$ or
$\color{blue} \beta_n \succ \alpha_n$ means that
$\lim_{n\to\infty} \alpha_n/\beta_n = 0$. Similarly,
$\color{blue} \alpha_n \preceq \beta_n$ or
$\color{blue} \beta_n \succeq \alpha_n$ indicates that either
$\alpha_n \prec \beta_n$ or $\alpha_n/\beta_n$ converges to a positive
real number $a\in (0,\infty)$.

Let
\begin{equation*}
\gamma_n \;:=\;  \max_{(x,y)\in E \setminus \bb E_0} R_n(x,y) 
\end{equation*}
so that $\gamma_n \prec 1$. Choose a sequence $\beta_n$ such that
$1\prec \beta_n \prec \gamma_n^{-1}$. Couple
$X^{(n)}_t$ and $\bb X_t$ making them jump as much as possible
together. Denote by $\widehat{\mb P}_{\! x } $ the coupling
measure. Since $\beta_n \prec \gamma_n^{-1}$, for all $x\in V$
\begin{equation*}
\lim_{n\to \infty}
\widehat{\mb P}_{\! x } \, \big[ \, \mb X^{(n)}_t \,=\, \bb X_t \;, 0\le
t\le \beta_n\,\big] \;=\; 1\;.
\end{equation*}
In particular, for all $x$, $y\in V$
\begin{equation*}
\lim_{n\to\infty} p^{(n)}_{\beta_n} (x,y) \;=\;
\lim_{n\to\infty} \widehat{\mb P}_{\! x } \,
\big[ \, \bb X_{\beta_n} \,=\, y \,\big] \;=\;
\sum_{j\in S} \mf a^{(0)} (x,j)\, \pi^\sharp_j (y)\;,
\end{equation*}
where $\color{blue} \pi^\sharp_j$, $1\le j\le \mf n$, represents the
stationary states of the Markov chain $\bb X$ restricted to $\ms V_j$
and $\mf a^{(0)} (x,j)$ the probability that the chain $\bb X_t$
starting from $x$ is absorbed by the closed recurrent class $\ms V_j$:
\begin{equation}
\label{92}
\mf a^{(0)} (x,j) \;:=\; \lim_{t\to\infty} \bb Q_{\! x } \,
\big[ \, \bb X_{t} \,\in\, \ms V_j \,\big] \;.
\end{equation}

In the first part of this article, we investigate the asymptotic
behaviour of $p^{(n)}_{\beta_n} (x,y)$ in different time-scales
$\beta_n$. The definition of the time-scales and the description of
the asymptotic behaviour is based on a construction of a tree
\cite{bl4, lx16} presented after the statement of the main hypothesis
of the article.

\subsection*{The main assumption}

Two sequences of positive real numbers $(\alpha_n : n\ge 1)$,
$(\beta_n : n\ge 1)$ are said to be \emph{comparable} if
$\alpha_n \prec \beta_n$, $\beta_n \prec \alpha_n$ or
$\alpha_n / \beta_n \to a \in (0,\infty)$. This condition excludes the
possibility that the sequence $\alpha_n /\beta_n$ oscillates between
two finite values and does not converge.

A set of sequences
$(\alpha^{\mf u}_n: n\ge 1)$, $\mf u \in \mf R$, of positive real
numbers, indexed by some finite set $\mf R$, is said to be comparable
if for all $\mf u$, $\mf v \in\mf R$ the sequence
$(\alpha^{\mf u}_n : n \ge 1)$, $(\alpha^{\mf v}_n : n \ge 1)$ are
comparable.

Let $\bb Z_+ = \{0, 1, 2, \dots \}$, and let $\color{blue} \Sigma_m$,
$m\ge 1$, be the set of functions $k: E \to \bb Z_+$ such that
$\sum_{(x,y)\in E} k(x,y) = m$.  We assume throughout this article
that for every $m\ge 1$ the set of sequences
\begin{equation}
\label{mh}
\big(\, \prod_{(x,y)\in E} R_n(x,y)^{k(x,y)} : n \ge 1 \,\big)
\;,\quad k\in\Sigma_m \;,
\end{equation}
is comparable.

\begin{remark}
\label{rm3}
This hypothesis on the jump rates is taken from \cite{bl4} and
\cite{lx16}. It also appears in \cite{fk17}, what supports the
assertions that this condition is natural in the context of
metastability.

As observed in \cite{bl4}, assumption \eqref{mh} is fulfilled by all
statistical mechanics models which evolve on a fixed state space and
whose metastable behaviour has been derived.  This includes the Ising
model \cite{ns91, ns92, bc96, bm02}, the Potts model with or without a
small external field \cite{nz19, ks21b}, the Blume-Capel model
\cite{co96, ll16}, and conservative Kawasaki dynamics \cite{bhn06,
GHNO09, HNT12, bl15b}.
\end{remark}

\subsection*{A rooted tree}

In this subsection, we present the construction, proposed in
\cite{bl4, lx16}, of a rooted tree which describes all different
metastable behaviours of the Markov chain $X^{(n)}_t$. This
construction plays a fundamental role in the statement of the main
theorems of this article. The reader will find at the end of this
section a simple example which may help to understand the
construction.

The tree satisfies the following conditions:

\begin{itemize}
\item[(a)] Each vertex of the tree represents a subset of $V$;
\item[(b)] Each generation forms a partition of $V$;
\item[(c)] The children of each vertex form a partition of the parent.
\item[(d)] The generation $p+1$ is strictly coarser than the
generation $p$.
\end{itemize}

The tree is constructed by induction starting from the leaves to the
root.  It corresponds to a deterministic coalescence process. Denote
by $\color{blue} \mf q$ the number of steps in the recursive
construction of the tree. At each level $1\le p\le \mf q$, the
procedure generates a partition
$\{ \ms V^{(p)}_1, \dots, \ms V^{(p)}_{\mf n_p}, \Delta_p\}$, a
time-scale $\theta^{(p)}_n$ and a $\{1, \dots, \mf n_p\}$-valued
continuous-time Markov chains $\bb X^{(p)}_t$ which describes the
evolution of the chain $X^{(n)}_{t \theta^{(p)}_n}$ among the subsets
$\ms V^{(p)}_1, \dots, \ms V^{(p)}_{\mf n_p}$, called hereafter
\emph{wells}.

The leaves are the sets $\ms V_1, \dots, \ms V_{\mf n}, \Delta$
introduced in \eqref{05}. We proceed by induction. Let
$\color{blue} S_1 = S$, $\color{blue} \mf n_1 = \mf n$,
$\color{blue} \ms V^{(1)}_j = \ms V_j$, $j\in S_1$,
$\color{blue} \Delta_1 = \Delta$, and assume that the recursion has
produced the sets
$\ms V^{(p)}_1, \dots, \ms V^{(p)}_{\mf n_p}, \Delta_p$ for some
$p\ge 1$, which forms a partition of $V$.

Denote by $H_{\ms A}$, $H^+_{\ms A}$, ${\ms A}\subset V$, the hitting
and return time of ${\ms A}$:
\begin{equation} 
\label{201}
{\color{blue} H_{\ms A} } \;: =\;
\inf \big \{t>0 : X^{(n)}_t \in {\ms A} \big\}\;,
\quad
{\color{blue} H^+_{\ms A}} \;: =\;
\inf \big \{t>\tau_1 : X^{(n)}_t \in {\ms A} \big\}\; ,  
\end{equation}
where $\tau_1$ represents the time of the first jump of the chain
$X^{(n)}_t$:
$\color{blue} \tau_1 = \inf\{t>0 : X^{(n)}_t \not = X^{(n)}_0\}$.

For two non-empty, disjoint subsets $\ms A$, $\ms B$ of $V$, denote by
$\Cap_n(\ms A, \ms B)$ the capacity between $\ms A$ and $\ms B$:
\begin{equation}
\label{202}
{\color{blue} \Cap_n(\ms A, \ms B)}
\;:=\; \sum_{x\in \ms A} \pi_n(x)\, \lambda_n(x) 
\, \mb P^n_{\! x} \big[\, H_{\ms B} < H^+_{\ms A}\, \big]\;.
\end{equation}
Set $\color{blue} S_p = \{1, \dots, \mf n_p\}$, and let
$\theta^{(p)}_n$ be defined by
\begin{equation}
\label{26b}
{\color{blue} \frac 1{\theta^{(p)}_n}} \;:=\;
\sum_{i\in S_p}  \frac{\Cap_n (\ms V^{(p)}_i, \breve{\ms V}^{(p)}_i)}
{\pi_n(\ms V^{(p)}_i)}  \;, \quad
\text{where}\;\;
{\color{blue} \breve{\ms V}^{(p)}_i}  \,:=\, \bigcup_{j\in S_p \setminus\{ i\}} \ms
V^{(p)}_j\;. 
\end{equation}
The ratio
$\pi_n(\ms V^{(p)}_i) /\Cap_n (\ms V^{(p)}_i, \breve{\ms V}^{(p)}_i)$
represents the time it takes for the chain $X^{(n)}_t$, starting from
a point in $\ms V^{(p)}_i$ to reach the set $\breve{\ms
V}^{(p)}_i$. Therefore, $\theta^{(p)}_n$ corresponds to the smallest
time needed to observe such a jump.

Recall from \eqref{100} the definition of the trace of a Markov
chain. Denote by $\color{blue} \{Y^{n,p}_t: t\ge 0\}$ the trace of
$\{X^{(n)}_t: t\ge 0\} $ on $\ms V^{(p)}$, and by
$\color{blue} R^{(p)}_n : \ms V^{(p)} \times \ms V^{(p)} \to \bb R_+$
its jump rates. By equation (2.5) in \cite{lrev},
\begin{equation}
\label{40}
R^{(p)}_n (x,y) \;=\; \lambda_n(x) \; \mb P^n_{\! x} \big[ H_y
= H^+_{\ms V^{(p)}} \big]\;, \quad x\,,\; y \in \ms V^{(p)} \,, 
\; x\not = y  \;.
\end{equation}

Denote by $r^{(p)}_n(i,j)$ the mean rate at which the trace process
jumps from $\ms V^{(p)}_i$ to $\ms V^{(p)}_j$:
\begin{equation}
\label{41}
{\color{blue}  r^{(p)}_n(i,j)}
\; := \; \frac{1}{\pi_n(\ms V^{(p)}_i)}
\sum_{x \in\ms V^{(p)}_i} \pi_n(x) 
\sum_{y\in\ms V^{(p)}_j} R^{(p)}_n(x,y) \;.
\end{equation}
Under the assumption \eqref{mh}, \cite{lx16} proved that the sequences
$\theta^{(p)}_n \, r^{(p)}_n(i,j)$ converge for all
$i\not = j\in S_p$.  Denote the limits by $r^{(p)} (i,j)$:
\begin{equation}
\label{34}
{\color{blue}  r^{(p)} (i,j)} \;:=\;  \lim_{n\to\infty} \theta^{(p)}_n
\, r^{(p)}_n(i, j) \;\in\; \bb R_+\;.
\end{equation}

Denote by $\color{blue} (\bb X^{(p)}_t : t\ge 0)$ the $S_p$-valued
continuous-time Markov chain induced by the jump rates $r^{(p)}(j,k)$,
and by $\color{blue} \bb L^{(p)}$ its generator.  Let
$\Phi_p : \ms V^{(p)}\to S_p$ be the projection which sends the points
in $\ms V^{(p)}_j$ to $j$:
\begin{equation*}
{\color{blue} \Phi_p} \;:=\; \sum_{k\in S_p} k\; \chi_{_{\ms V^{(p)}_k}}\;.
\end{equation*}
In this formula and below, $\color{blue} \chi_{_{\ms A}}$ stands for
the indicator function of the set $\ms A$.

Next theorem is the main result in \cite{lx16}.

\begin{theorem}
\label{t1}
Assume that condition \eqref{mh} is in force. Then, for each
$1\le p\le \mf q$, $j\in S_p$, $x\in \ms V^{(p)}_j$, under the measure
$\mb P^n_{\! x}$, the sequence of $S_p$-valued, hidden Markov
processes $\Phi_p (X^{(n)}_{t\theta^{(p)}_n})$ converges weakly in the
Skorohod topology to $\bb X^{(p)}_t$. Moreover, the time spent in
$\Delta_p$ is negligible in the sense that for all $t>0$,
\begin{equation*}
\lim_{n\to\infty} \max_{y\in \ms V^{(p)}}\,
\mb E^n_{\! x} \Big[\, \int_0^t \chi_{_{\Delta_p}}
(X^{(n)}_{s\theta^{(p)}_n})\; ds \,\Big] \;=\; 0\;.
\end{equation*}
\end{theorem}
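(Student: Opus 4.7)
The strategy is induction on $p$. By the construction of the tree each well $\ms V^{(p)}_i$ is a union of level-$(p-1)$ wells, and the induction hypothesis tells us that $X^{(n)}$ equilibrates inside $\ms V^{(p)}_i$ on the scale $\theta^{(p-1)}_n$, which is strictly smaller than $\theta^{(p)}_n$; the latter is the scale on which transitions between distinct $\ms V^{(p)}_i$'s become visible, as is already suggested by the definition \eqref{26b} in terms of the capacities $\Cap_n(\ms V^{(p)}_i, \breve{\ms V}^{(p)}_i)$. I would first replace $X^{(n)}_t$ by its trace $Y^{n,p}_t$ on $\ms V^{(p)}$, whose rates are given by \eqref{40}. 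A standard time-change argument reduces weak convergence of $\Phi_p(X^{(n)}_{t\theta^{(p)}_n})$ to that of $\Phi_p(Y^{n,p}_{t\theta^{(p)}_n})$ once one has established the second claim of the theorem, namely that the total time spent in $\Delta_p$ up to $t\theta^{(p)}_n$ is $o(\theta^{(p)}_n)$; this I would prove by combining Markov's inequality with the bound $\sup_x \mb E^n_{\! x}[H_{\ms V^{(p)}}] = o(\theta^{(p)}_n)$ obtainable from the induction, together with a renewal-type estimate on the number of excursions from $\ms V^{(p)}$ to $\Delta_p$ controlled by the capacities in \eqref{26b}.

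The heart of the proof is then a Beltr\'an--Landim style martingale argument for $\Phi_p(Y^{n,p}_{t\theta^{(p)}_n})$, which is not itself Markovian. For $f:S_p\to\bb R$, one writes the Dynkin martingale
\begin{equation*}
M^n_t \;=\; f\bigl(\Phi_p(Y^{n,p}_{t\theta^{(p)}_n})\bigr) \,-\, f\bigl(\Phi_p(Y^{n,p}_0)\bigr) \,-\, \int_0^t \theta^{(p)}_n\, L^{(p)}_n(f\circ\Phi_p)\bigl(Y^{n,p}_{s\theta^{(p)}_n}\bigr)\,ds\;,
\end{equation*}
where $L^{(p)}_n$ is the generator of $Y^{n,p}$. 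The average of the integrand with respect to $\pi_n(\,\cdot\,\mid \ms V^{(p)}_i)$ equals, by \eqref{41}, $\sum_{j\ne i} \theta^{(p)}_n\, r^{(p)}_n(i,j)\,[f(j)-f(i)]$, which converges to $(\bb L^{(p)} f)(i)$ by \eqref{34}. A \emph{replacement lemma} asserting that the integrand in the display above can be replaced by $(\bb L^{(p)} f)\bigl(\Phi_p(Y^{n,p}_{s\theta^{(p)}_n})\bigr)$, with an $L^2(\mb P^n_{\! x})$ error that vanishes as $n\to\infty$, would then let me identify every weak limit of $\Phi_p(Y^{n,p}_{t\theta^{(p)}_n})$ as a solution of the martingale problem for $\bb L^{(p)}$. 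Uniqueness of that martingale problem on the finite set $S_p$, together with tightness via Aldous's criterion (the inter-well jump rate being $O(1)$ by \eqref{26b}--\eqref{34}), would then finish the proof.

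I expect the replacement lemma to be the main obstacle. It is the quantitative form of the equilibration of $Y^{n,p}$ inside each $\ms V^{(p)}_i$ on a scale strictly faster than $\theta^{(p)}_n$, and the natural route is to solve a Poisson equation $L^{(p)}_n u_i = g_i$ on each well, where $g_i$ is the fluctuation of the integrand around its conditional mean, and to bound $u_i$ in $L^2(\pi_n)$ via the spectral gap of $L^{(p)}_n$ restricted to $\ms V^{(p)}_i$. Hypothesis \eqref{mh}, through the comparability of all products of jump rates, is precisely what guarantees that this gap is of order $1/\theta^{(p-1)}_n$, and hence strictly larger than $1/\theta^{(p)}_n$; but the bookkeeping is subtle because each $\ms V^{(p)}_i$ may itself carry metastable structure at several sublevels, so a careful nesting of capacity ratios and spectral-gap comparisons along the induction is required to propagate the estimate from level $p-1$ to level $p$.
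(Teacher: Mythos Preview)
The paper does not prove Theorem~\ref{t1}; it is quoted verbatim as ``the main result in \cite{lx16}'' and used as a black box throughout Sections~\ref{sec3}--\ref{sec7}. So there is no proof in this paper to compare against.

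That said, your outline is recognisably the Beltr\'an--Landim programme of \cite{bl2,bl4,lx16}: pass to the trace on $\ms V^{(p)}$, establish negligibility of $\Delta_p$-excursions, then run a martingale-problem argument for $\Phi_p(Y^{n,p}_{t\theta^{(p)}_n})$ with a replacement step showing equilibration inside each well. This is indeed the skeleton of the proof in \cite{lx16}. Two remarks on the details. First, in \cite{bl4,lx16} the replacement step is carried out not via a bare spectral-gap bound on $\ms V^{(p)}_i$ but through a set of structural conditions (the ``Conditions H'' or ``T1--T4'' in those papers) on capacities and conditional measures, verified recursively; the spectral-gap heuristic you give is morally right but the actual bookkeeping runs through potential-theoretic quantities. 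Second, your proposed route to $\Delta_p$-negligibility via $\sup_x \mb E^n_{\! x}[H_{\ms V^{(p)}}]=o(\theta^{(p)}_n)$ plus a renewal bound on excursion counts is plausible but more delicate than you suggest: the number of excursions into $\Delta_p$ on the scale $\theta^{(p)}_n$ is typically of order $\theta^{(p)}_n/\theta^{(p-1)}_n\to\infty$, so one needs the \emph{mean} excursion length to be $o(\theta^{(p-1)}_n)$, not merely $o(\theta^{(p)}_n)$, and this is where the recursive structure bites.
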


The process $\bb X^{(p)}_t$ describes therefore how the chain
$X^{(n)}_{t}$ evolves among the wells $\ms V^{(p)}_j$ in the
time-scale $\theta^{(p)}_n$.  Let $p^{(p)}_t(i,j)$ be the transition
probabilities:
\begin{equation}
\label{61}
{\color{blue}  p^{(p)}_t(i,j)} \;=\; \bb Q^{(p)}_i [\, X_t
= j\,]\;,
\quad t\,\ge \, 0\;, i\,,\, j \in S_p\;,
\end{equation}
where $\color{blue} \bb Q^{(p)}_i$ stands for the probability measure
on the path space $D(\bb R_+, S_p)$ induced by the Markov chain $\bb X^{(p)}_t$
starting from $i$.

By \cite[Theorem 2.7]{lx16}, there exists $j$, $k\in S$ such that
$r^{(p)} (j,k) >0$. Actually, by the proof of this result
\begin{equation}
\label{36}
\text{$\displaystyle \sum_{k\not = j} r^{(p)} (j,k) >0$
for all $j\in S_p$ such that}\quad 
\lim_{n\to \infty} \theta^{(p)}_n \,
\frac{\Cap_n (\ms V^{(p)}_j, \breve{\ms V}^{(p)}_j)}
{\pi_n(\ms V^{(p)}_j)} >0 \;.
\end{equation}

Denote by
$\color{blue} \mf R^{(p)}_1, \dots, \mf R^{(p)}_{\mf n_{p+1}}$ the
recurrent classes of the $S_p$-valued chain $\bb X^{(p)}_t$, and by
$\color{blue} \mf T_p$ the transient states. Let
${\color{blue} \mf R^{(p)}} = \cup_j \mf R^{(p)}_j$, and observe that
$\{\mf R^{(p)}_1, \dots, \mf R^{(p)}_{\mf n_{p+1}}, \mf T_p\}$ forms a
partition of the set $S_p$. This partition of $S_p$ induces a new
partition of the set $V$. Let
\begin{equation*}
{\color{blue}  \ms V^{(p+1)}_m}
\;:= \; \bigcup_{j\in \mf R^{(p)}_m} \ms V^{(p)}_j\;, \quad
{\color{blue}  \ms T^{(p+1)}}
\;:= \; \bigcup_{j\in \mf T_p} \ms V^{(p)}_j\;, 
\quad m\in  {\color{blue} S_{p+1} \;:=\; \{1, \dots, \mf n_{p+1}\}} \;,
\end{equation*}
so that $V  \,=\,  \Delta_{p+1}   \, \cup \, \ms V^{(p+1)}$, where
\begin{equation}
\label{05b}
{\color{blue} \ms V^{(p+1)}}
\;=\; \bigcup_{m\in S_{p+1}} \ms V^{(p+1)}_{m}\;,
\quad
{\color{blue}  \Delta_{p+1}}
\;:= \; \Delta_p \,\cup\, \ms T^{(p+1)} \;.
\end{equation}

The subsets
$\ms V^{(p+1)}_1, \dots, \ms V^{(p+1)}_{\mf n_{p+1}}, \Delta_{p+1}$ of
$V$ are the result of the recursive procedure. We claim that
conditions (a)--(d) hold at step $p+1$ if they are fulfilled up to
step $p$ in the induction argument.

The sets
$\ms V^{(p+1)}_{1}, \dots, \ms V^{(p+1)}_{\mf n_{p+1}}$,
$\Delta_{p+1}$ constitute a partition of $V$ because the sets
$\mf R^{(p)}_1, \dots, \mf R^{(p)}_{\mf n_{p+1}}$, $\mf T_p$ form a
partition of $S_p$, and the sets
$\ms V^{(p)}_{1}, \dots, \ms V^{(p)}_{\mf n_{p}}$, $\Delta_{p}$ one of
$V$. Conditions (a)--(c) are therefore satisfied.

To show that the partition obtained at step $p+1$ is strictly coarser
than $\{\ms V^{(p)}_{1}, \dots,$
$\ms V^{(p)}_{\mf n_{p}}, \Delta_{p}\}$, observe that, by \eqref{36},
$r^{(p)}(j,k)>0$ for some $k\neq j \in S_p$. Hence, either $j$ is a
transient state for the process $\bb X^{(p)}_t$ or the closed
recurrent class which contains $j$ also contains $k$. In the first
case $\Delta_p \subsetneq \Delta_{p+1}$, and in the second one there
exists $m\in S_{p+1}$ such that
$ \ms V^{(p)}_{j} \cup \ms V^{(p)}_{k} \subset \ms V^{(p+1)}_{m}$.
Therefore, the new partition
$\{\ms V^{(p+1)}_{1}, \dots, \ms V^{(p+1)}_{\mf n_{p+1}},
\Delta_{p+1}\}$ of $V$ satisfies the conditions (d).

The construction terminates when the $S_p$-valued Markov chain
$\bb X^{(p)}_t$ has only one recurrent class so that $\mf
n_{p+1}=1$. In this situation, the partition at step $p+1$ is
$\ms V^{(p+1)}_{1}$, $\Delta_{p+1}$.

This completes the construction of the rooted tree.  Recall that we
denote by $\color{blue} \mf q$ the number of steps of the scheme.  As
claimed at the beginning of the procedure, for each
$1\le p\le \mf q$, we generated a time-scale $\theta^{(p)}_n$, a
partition
$\ms P_p = \{\ms V^{(p)}_{1}, \dots, \ms V^{(p)}_{\mf n_{p}},
\Delta_{p}\}$, where
$\ms P_1 = \{\ms V_{1}, \dots, \ms V_{\mf n}, \Delta \}$,
$\ms P_{\mf q+1} = \{\ms V^{(\mf q+1)}_{1}, \Delta_{\mf q+1} \}$, and
a $S_p$-valued continuous-time Markov chain $\bb X^{(p)}_t$.

Furthermore, by construction,
\begin{equation}
\label{59}
\Delta_p \;\subset\; \Delta_{p+1} \;, \quad 1\,\le\, p\,\le\, \mf q\;,
\end{equation}
by \cite[Assertion 8.B]{lx16},
\begin{equation}
\label{51}
\theta^{(p)}_n \;\prec\;  \theta^{(p+1)}_n \;, \quad
1\, \le\,  p \, <\, \mf q\;,
\end{equation}
and by \cite[Assertion 8.A]{lx16} or equation (8.2) of this article,
\begin{equation}
\label{58}
\lim_{n\to\infty} \frac{\pi_n(x)}{\pi_n(\ms V^{(p)}_{j})}
\;\;\text{exists and belongs to $(0,1]$}
\end{equation}
for all $1\le p\le \mf q+1$, $j\in S_p$, $x\in \ms V^{(p)}_{j}$.

The partitions $\ms P_1, \dots, \ms P_{\mf q+1}$ form a rooted tree
whose root ($0$-th generation) is $V$, first generation is
$\{\ms V^{(\mf q+1)}_{1}, \Delta_{\mf q+1} \}$ and last
($(\mf q+1)$-th) generation is
$\{\ms V_{1}, \dots, \ms V_{\mf n}, \Delta \}$.  Note that the set
$\ms V^{(p+1)}$ corresponds to the set of recurrent points for the
chain $\bb X^{(p)}_t$. In contrast, the points in $\Delta_{p+1}$ are
either transient for this chain or negligible in the sense that the
chain $X^{(n)}_t$ remains a negligible amount of time on the set
$\Delta_p$ in the time-scale $\theta^{(p)}_n$ (cf. \cite{bl4, lx16}).

\smallskip\noindent{\bf Example.}  We conclude this section with an
example to help the reader understanding the tree's construction. Let
$V= \{0, \dots, 29\}$, and consider the energy
$\bb H: V \to \{0, \dots, 4\}$ given in Figure \ref{fig1}. Note that
$\bb H (k+ 1) - \bb H (k) = \pm 1$ for $0\le k<29$. The energy $\bb H$
has $9$ local minima, represented in Figure 1 by $x_1, \dots x_9$.

Consider the $V$-valued continuous-time Markov chain $X^{(n)}_t$ whose
jump rates are given by $R_n(k,j)=0$ if $j\not = k \pm 1$ and
$R_n(k,k\pm 1) = \exp\{ - n\, [\bb H (k\pm 1) - \bb H (k)]^+ \}$,
where $a^+ = \max\{a,0\}$. Hence if $\bb H (k\pm 1) - \bb H (k) = -1$
the chain jumps from $k$ to $k\pm 1$ at rate $1$, while if
$\bb H (k\pm 1) - \bb H (k) = +1$ it jumps from $k$ to $k\pm 1$ at
rate $e^{-n}$. More simply, observing the energy landscape presented
in Figure \ref{fig1}, the chain jumps ``downwards'' at rate $1$ and
jumps ``upwards'' at rate $e^{-n}$.

It is easy to check that the stationary state, denoted by $\pi_n$, is
given by $\pi_n(k) = (1/Z_n) \exp\{-n\, \bb H(k)\}$, where $Z_n$ is a
normalising constant, and that $\pi_n$ satisfies the detailed balance
conditions. In particular, and since the downward jump rates are equal
to $1$, $c_n(j,k) := \pi_n(j) \, R_n(j,k) = \pi_n(j) \wedge
\pi_n(k)$. It follows from this identity and Lemma \ref{l23} below
that the capacities introduced in \eqref{202} are easy to estimate in
this example.

\begin{figure}[h]
\centering
\begin{tikzpicture}[scale = .4]
\draw[very thick] (0,0) -- (1,-1) -- (2,0) -- (3,-1) --
(4,0) -- (5,-1) -- (6,-2) -- (7,-1) -- (8,0) -- (9,-1) -- (10,-2) --
(11,-3) -- (12,-2) -- (13,-1) -- (14,-2) -- (15,-1) -- (16, -2) --
(17, -1) -- (18,0) -- (19,1) -- (20, 0) -- (21, -1) -- (22, 0) -- (23,
-1) -- (24, 0) -- (25, 1) -- (26, 0) -- (27,-1) -- (28, -2) -- (29,-3);
\draw[thick] (4.75,-1) -- (5.25,-1);
\draw[thick] (6.75,-1) -- (7.25,-1);
\draw[thick] (8.75,-1) -- (9.25,-1);
\draw[thick] (9.75,-2) -- (10.25,-2);
\draw[thick] (11.75,-2) -- (12.25,-2);
\draw[thick] (16.75,-1) -- (17.25,-1);
\draw[thick] (17.75,0) -- (18.25,-0);
\draw[thick] (19.75,0) -- (20.25,-0);
\draw[thick] (23.75,0) -- (24.25,-0);
\draw[thick] (25.75,0) -- (26.25,-0);
\draw[thick] (26.75,-1) -- (27.25,-1);
\draw[thick] (27.75,-2) -- (28.25,-2);
\foreach \x in {0, ..., 29}
\fill (\x,.-4) circle [radius = .2cm];
\draw (1,-5) node {$x_1$};
\draw (3,-5) node {$x_2$};
\draw (6,-5) node {$x_3$};
\draw (11,-5) node {$x_4$};
\draw (14,-5) node {$x_5$};
\draw (16,-5) node {$x_6$};
\draw (21,-5) node {$x_7$};
\draw (23,-5) node {$x_8$};
\draw (29,-5) node {$x_9$};
\end{tikzpicture}
\caption{The energy landscape of the Markov chain $X^{(n)}_t$.}
\label{fig1}
\end{figure}
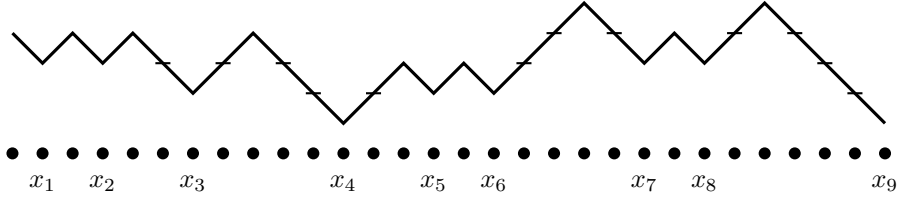

Consider the tree construction presented at the beginning of this
section.

\smallskip\noindent{\bf Step 1: the leaves.} In the first step we
determine the leaves of the tree, which correspond to the closed
irreducible classes of the chain $\bb X_t$. In this example, the
closed irreducible classes are the local minima of the energy $\bb H$
so that $\mf n=9$, $\ms V_j = \{x_j\}$, $1\le j\le 9$,
$\Delta = V \setminus \{x_1, \dots, x_9\}$, and the leaves are the
sets $\Delta$ and $\ms V_j$, $1\le j\le 9$.

Denote by $\mf q+1$, $\mf q\ge 0$, to total number of generations of
the tree. The exact value of $\mf q+1$ will only be known at the end
of the construction.

\smallskip\noindent{\bf Step 2: the generation $\mf q$.} The second
step consists in determining the smallest transition time between a
well $\ms V_j$ to a well $\ms V_k$. This is the smallest time-scale it
takes for the process $X^{(n)}_t$ starting from $\ms V_j$ to hit
$\ms V_k$. In the above example this time-scale is
$\theta^{(1)}_n = e^n$. In this time scale, the trace of $X^{(n)}_t$
on $\ms V = \cup_j \ms V_j$ evolves as a Markov chain and converges,
as $n\to \infty$, to a $\ms V$-valued Markov chain, represented by
$\bb X^{(1)}_t$. The states $x_1$ and $x_2$ are transient states for
$\bb X^{(1)}_t$ and absorbed at the recurrent state $x_3$. Similarly,
the states $x_5$ and $x_6$ are transient states for $\bb X^{(1)}_t$
and are absorbed by $x_4$. The states $x_7$, $x_8$ form a closed
irreducible class of $\bb X^{(1)}_t$, as well as the point $x_9$.

Therefore, $\mf T_1 =\{1, 2, 5, 6\}$, $\mf R^{(1)}_1= \{3\}$,
$\mf R^{(1)}_2= \{4\}$, $\mf R^{(1)}_3= \{7,8\}$,
$\mf R^{(1)}_4= \{9\}$, so that $\ms V^{(2)}_1 = \{x_3\}$,
$\ms V^{(2)}_2 = \{x_4\}$, $\ms V^{(2)}_3= \{x_7,x_8\}$,
$\ms V^{(2)}_4= \{x_9\}$, $\ms T_2 = \{x_1, x_2, x_5, x_6\}$.
Moreover, the generation $\mf q$ of the tree has $5$ elements:
$\Delta_2 = \Delta \cup \ms T_2$, and $\ms V^{(2)}_j$, $1\le j\le 4$.

\smallskip\noindent{\bf Step 3: the generation $\mf q-1$.}  At this
point, we need to determine the smallest transition time between the
wells $\ms V^{(2)}_1$, $\ms V^{(2)}_2$, $\ms V^{(2)}_3$ and
$\ms V^{(2)}_4$. In this example the smallest transition time is
$\theta^{(2)}_n = e^{2n}$.

Let $\ms V^{(2)} = \cup_{1\le j\le 4} \ms V^{(2)}_j$, and denote by
$Y^{n,2}_t$ the trace of the process $X^{(n)}_t$ on $\ms V^{(2)}$.
Consider the projection $\Phi_2: \ms V^{(2)} \to S_2 = \{1, 2, 3, 4\}$
which sends the points in $\ms V^{(2)}_j$ to $j$. Note that $\Phi_2$
is not a bijection. In consequence the process $\Phi_2 (Y^{n,2}_t)$ is
not a Markov chain. It is however possible to prove (cf. \cite{bl2})
that the process $\Phi_2 (Y^{n,2}_{t\theta^{(2)}_n})$ converges to a
$S_2$-valued Markov chain, denoted by $\bb X^{(2)}_t$.

The states $1$ and $3$, which corresponds to the sets $\ms V^{(2)}_1$
and $\ms V^{(2)}_3$, respectively, are transient for the chain
$\bb X^{(2)}_t$, while the states $2$ and $4$, which corresponds to
the sets $\ms V^{(2)}_2$ and $\ms V^{(2)}_4$, respectively, form
closed irreducible classes. The state $1$ is absorbed at $2$, while
the state $3$ may be absorbed at $2$ or $4$.

Thus, in this example, $\mf T_2 =\{1, 3\}$, $\mf R^{(2)}_1= \{2\}$,
$\mf R^{(2)}_2= \{4\}$, so that $\ms V^{(3)}_1 = \{x_4\}$,
$\ms V^{(3)}_2 = \{x_9\}$, $\ms T_3 = \{x_3, x_7, x_8\}$. The
generation $\mf q-1$ of the tree has $3$ elements:
$\Delta_3 = \Delta_2 \cup \ms T_3$, and $\ms V^{(3)}_j$, $j=1,2$.

\smallskip\noindent{\bf Step 4: the generation $\mf q-2$.} We need now
to determine the smallest transition time between the wells
$\ms V^{(3)}_1$ and $\ms V^{(3)}_2$. In this example it is
$\theta^{(3)}_n = e^{3n}$.

Let $\ms V^{(3)} = \ms V^{(3)}_1 \cup \ms V^{(3)}_2$, and denote by
$Y^{n,3}_t$ the trace of the process $X^{(n)}_t$ on $\ms V^{(3)}$. It
is however possible to prove (cf. \cite{bl2}) that the process
$Y^{n,3}_{t\theta^{(3)}_n}$ converges to a $\{1,2\}$-valued Markov
chain, denoted by $\bb X^{(3)}_t$.

The states $\{1,2\}$ form a irreducible class for $\bb X^{(3)}_t$.
Hence $\mf T_3$ is empty and $\mf R^{(3)}= \mf R^{(3)}_1= \{1,2\}$, so
that $\ms V^{(4)}_1 = \{x_4, x_9\}$, $\ms T_4 = \varnothing$. The
generation $\mf q-2$ of the tree has $2$ elements:
$\Delta_4 = \Delta_3$, and $\ms V^{(4)}_1$.

As there is only one closed irreducible class, the construction is
completed and the value of $\mf q$ is revealed. The partition
$\{\Delta_4 \,,\, \ms V^{(4)}_1\}$ of $V$ corresponds to the first
generation. Since, by construction, it is also the $(\mf q-2)$-th
generation, we deduce that $\mf q=3$ and that the tree has $\mf q+1=4$
generations. To get a rooted tree, we declare that the root, which
corresponds to the zeroth generation, is the set
$V = \Delta_4 \cup \ms V^{(4)}_1$. The tree associated to the example
presented in Figure \ref{fig1} is depicted in Figure \ref{fig2}.

\begin{figure}[h]
\centering
\begin{tikzpicture}[scale = .4]
\draw (0,0) node {$\{x_4\}$};
\draw (3,0) node {$\{x_9\}$};
\draw (6,0) node {$\{x_3\}$};
\draw (9,0) node {$\{x_7\}$};
\draw (12,0) node {$\{x_8\}$};
\draw (15,0) node {$\{x_1\}$};
\draw (18,0) node {$\{x_2\}$};
\draw (21,0) node {$\{x_5\}$};
\draw (24,0) node {$\{x_6\}$};
\draw (27,0) node {$\Delta_1$};
\draw (0,-4) node {$\{x_4\}$};
\draw (3,-4) node {$\{x_9\}$};
\draw (6,-4) node {$\{x_3\}$};
\draw (10.5,-4) node {$\{x_7, x_8\}$};
\draw (21,-4) node {$\Delta_2$};
\draw[very thick] (0,-.6) -- (0,-3.4);
\draw[very thick] (3,-.6) -- (3,-3.4);
\draw[very thick] (6,-.6) -- (6,-3.4);
\draw[very thick] (9,-.6) -- (10.5,-3.4);
\draw[very thick] (12,-.6) -- (10.5,-3.4);
\draw[very thick] (15,-.6) -- (21,-3.4);
\draw[very thick] (18,-.6) -- (21,-3.4);
\draw[very thick] (21,-.6) -- (21,-3.4);
\draw[very thick] (24,-.6) -- (21,-3.4);
\draw[very thick] (27,-.6) -- (21,-3.4);
\draw (0,-8) node {$\{x_4\}$};
\draw (3,-8) node {$\{x_9\}$};
\draw (13.5,-8) node {$\Delta_3$};
\draw[very thick] (0,-7.4) -- (0,-4.6);
\draw[very thick] (3,- 7.4) -- (3,-4.6);
\draw[very thick] (13.5,-7.4) -- (6,-4.6);
\draw[very thick] (13.5,-7.4) -- (10.5,-4.6);
\draw[very thick] (13.5,-7.4) -- (21,-4.6);
\draw (1.5,-12) node {$\{x_4, x_9\}$};
\draw (13.5,-12) node {$\Delta_4$};
\draw[very thick] (0,-8.6) -- (1.5,-11.4);
\draw[very thick] (3,- 8.6) -- (1.5,-11.4);
\draw[very thick] (13.5,-8.6) -- (13.5,-11.4);
\draw (7.5,-16) node {$V$};
\draw[very thick] (1.5,-12.6) -- (7.5,-15.4);
\draw[very thick] (13.5,-12.6) -- (7.5,-15.4);
\end{tikzpicture}
\caption{The tree or coalescence process generated by the Markov chain
$X^{(n)}_t$.}
\label{fig2}
\end{figure}
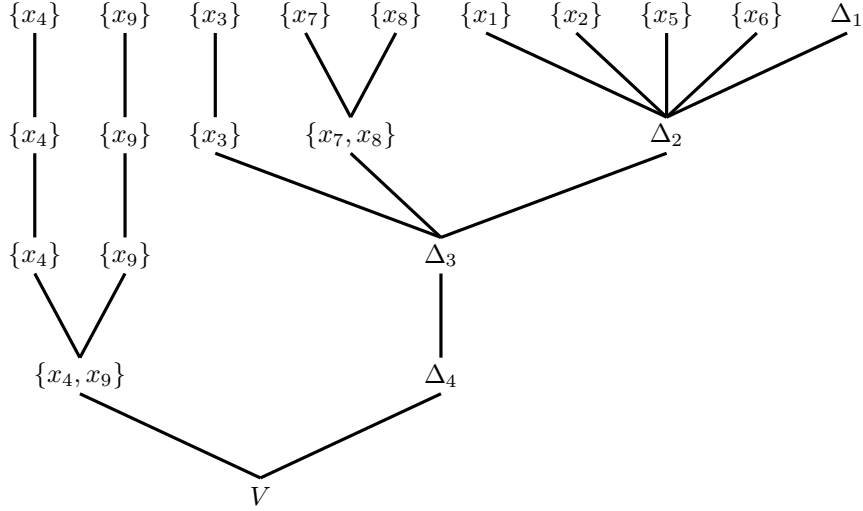

\section{The main results}
\label{sec2}

In this section, we enunciate the main results of the article.  The
statements require a further layer in the tree construction presented
in the previous section. At each step $1\le p\le \mf q +1$, we
introduce a set of probability measures $\pi^{(p)}_j$, $j\in S_p$, on
$V$. The construction of these measures is carried out below by
induction. In Proposition \ref{p3}, however, we characterise the
measure $\pi^{(p)}_j$ as the limit of the stationary state $\pi_n$
conditioned to $\ms V^{(p)}_j$. In particular,
\begin{equation}
\label{52}
\text{ the support of $\pi^{(p)}_j$ is the set $\ms V^{(p)}_j$}\; .
\end{equation}
Moreover, in Theorem \ref{mt0}.(b) we show that for all $t>0$, $x\in
V$, the distribution of $X^{(n)}_{t \theta^{(p)}_n}$ starting from $x$
converges to a convex combination of the measures $\pi^{(p)}_j$, $j\in
S_p$. The weights of this convex combination depend on $x$ and
$t$. This result asserts, therefore, that the measures $\pi^{(p)}_j$
are the metastable states of the process $X^{(n)}_t$ observed on the
time-scale $\theta^{(p)}_n$.

We proceed by induction. Let $\pi^{(1)}_j$, $j\in S_{1}$, be the
probability measure on $\ms V^{(1)}_j$ given by
$\color{blue} \pi^{(1)}_j = \pi^\sharp_j$, where, recall,
$\pi^\sharp_j$ represents the stationary states of the Markov chain
$\bb X_t$ restricted to the closed irreducible set
$\ms V^{(1)}_j = \ms V_j$. Clearly, condition \eqref{52} is fulfilled.

Fix $1\le p\le \mf q$, and assume that the probability measures
$\pi^{(p)}_j$, $j\in S_p$, has been defined and satisfy condition
\eqref{52}. Denote by $\color{blue} M^{(p)}_m(\cdot)$,
$m\in S_{p+1}$, the stationary state of the Markov chain
$\bb X^{(p)}_t$ restricted to $\mf R^{(p)}_m$. The measure
$ M^{(p)}_m$ is understood as a measure on $S_p=\{1, \dots, \mf n_p\}$
which vanishes on the complement of $\mf R^{(p)}_m$.  Let
$\pi^{(p+1)}_m$ be the probability measure on $\ms V^{(p)}_m$ given by
\begin{equation}
\label{80}
{\color{blue} \pi^{(p+1)}_m (x)}
\;:=\; \sum_{j\in \mf R^{(p)}_m} M^{(p)}_m(j)\, \pi^{(p)}_j (x)\;, \quad
x\in V\;.
\end{equation}

Clearly, condition \eqref{52} is in force. Moreover, $\pi^{(p+1)}_m$
is a convex combination of the measures $\pi^{(p)}_j$,
$j\in \mf R^{(p)}_m$. A fortiori, for each $1\le p\le \mf q +1$,
$m\in S_p$, $\pi^{(p)}_m$ is a convex combination of the measures
$\pi^\sharp_j$, $j\in S$.

We further add absorption probabilities at each step. Let
$\mf a^{(0)} (x, j)$, $x\in V$, $j\in S_1$, be the probability that the
Markov chain $\bb X_t$ starting from $x$ is absorbed at the closed
irreducible set $\mc V^{(1)}_j$:
\begin{equation}
\label{33}
{\color{blue} \mf a^{(0)} (x,j)}
\;:=\;  \lim_{t\to\infty}
\bb Q_x\big[\, \bb X_t \, \in\, \ms V_j \,\big]\;.
\end{equation}
Note that $a^{(0)} (x,\,\cdot\,)$ is a probability measure on $S_1$
for each $x\in V$.

Fix $1\le p\le \mf q$ and assume that $\mf a^{(p-1)} (x, j)$ has been
defined. Let $\mf A^{(p)} (j, m)$, $j\in S_p$, $m\in S_{p+1}$, be the
probability that the chain $\bb X^{(p)}_t$ starting from $j$ has been
absorbed at the closed irreducible set $\mf R^{(p)}_m$:
\begin{equation}
\label{63}
{\color{blue}  \mf A^{(p)} (j,m)} \;:=\; \lim_{t\to\infty}
\sum_{k\in \mf R^{(p)}_m} p^{(p)}_t(j,k)\;, \quad j\in S_p\;,\;\; m\in S_{p+1} \;.
\end{equation}
For $x\in V$, $m\in S_{p+1}$, let
\begin{equation}
\label{33b}
{\color{blue} \mf a^{(p)} (x,m)}
\;:=\; \sum_{j\in S_p} \mf a^{(p-1)} (x, j) \, \mf A^{(p)} (j, m) \;.
\end{equation}
Since $\mf A^{(p)} (j, \,\cdot\,)$ is a probability measure on
$S_{p+1}$, it is easy to show by induction that
$a^{(p)} (x,\,\cdot\,)$ is a probability measure on $S_{p+1}$ for each
$x\in V$, $1\le p\le \mf q$. 

Let $\color{blue} \theta^{(0)}_n =1$,
$\color{blue} \theta^{(\mf q +1)}_n =+\infty$ for all $n\ge 1$. The
first main result of the article reads as follows. It provides a
complete description of the ergodic behavior of the Markov chain
$X^{(n)}_t$. 

\begin{theorem}
\label{mt0}
Assume that condition \eqref{mh} is in force. Then,

\begin{itemize}
\item[(a)] For each $1\le p\le \mf q +1$, sequence $(\beta_n:n\ge 1)$
such that  $\theta^{(p-1)}_n \,\prec\, \beta_n\,\prec\,
\theta^{(p)}_n$, and $x \in V$,  
\begin{equation}
\label{27b}
\lim_{n\to\infty} p^{(n)}_{\beta_n} (x,\,\cdot\, ) \;=\;
{\color{blue} \Pi_{p-1}(x,\,\cdot\,) }\;:=\;
\sum_{j\in S_p}
\mf a^{(p-1)} (x,j)\, \pi^{(p)}_j (\,\cdot\, )\;.
\end{equation}

\item[(b)] For each $1\le p\le \mf q$, $t>0$, $x \in V$,
\begin{equation}
\label{60}
\lim_{n\to\infty} p^{(n)}_{t \theta^{(p)}_n} (x,\,\cdot\,) \;=\;
\sum_{j\in S_p} \omega^{(p)}_t(x,j)\, \pi^{(p)}_j (\,\cdot\,)\;,
\end{equation}
where
\begin{equation*}
\omega^{(p)}_t(x,j) \;=\;
\sum_{k\in S_{p}} \mf a^{(p-1)} (x,k) \; p^{(p)}_t(k,j) \;.
\end{equation*}

\item[(c)] For all $1\le p\le \mf q$, $j\in S_p$, $x\in V$, 
\begin{equation*}
\lim_{t\to0} \lim_{n\to\infty} p^{(n)}_{t \theta^{(p)}_n} (x,\,\cdot\,) 
\;=\; \sum_{j\in S_p}
\mf a^{(p-1)} (x,j)\, \pi^{(p)}_j (\,\cdot\, )
\end{equation*}

\item[(d)] For all $1\le p\le \mf q$, $1\le j\le \mf n_p$, $x\in V$, 
\begin{equation*}
\lim_{t\to\infty} \lim_{n\to\infty} p^{(n)}_{t \theta^{(p)}_n} (x,\,\cdot\,)
\;=\; \sum_{m\in S_{p+1}}
\mf a^{(p)}(x,m)\, \pi^{(p+1)}_m (\,\cdot\,)\;.
\end{equation*}
\end{itemize}
Moreover,
\begin{equation}
\label{70}
\lim_{n\to \infty} \pi_n(\Delta_{\mf q+1}) \;=\; 0\;, \quad
\lim_{n\to\infty} \pi_n(x)
\;\;\text{exists and belongs to $(0,1]$}
\end{equation}
for all $x\,\in\, \ms V^{(\mf q+1)}$.
\end{theorem}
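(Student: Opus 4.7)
The plan is to prove (a)--(d) jointly by induction on $p$, with Theorem \ref{t1} providing the macroscopic input at each level and the induction hypothesis providing the microscopic equilibration inside each well. The base case $p=1$ of (a) is contained in the coupling argument with the limiting chain $\bb X_t$ given in the text preceding the statement: this yields the limit $\sum_j \mf a^{(0)}(x,j)\,\pi^\sharp_j$ on any scale $1 \prec \beta_n \prec \gamma_n^{-1}$, and the same limit persists up to $\beta_n \prec \theta^{(1)}_n$ since once the chain enters $\ms V_j$ its distribution remains close to $\pi^{(1)}_j = \pi^\sharp_j$ until an exit time of order $\theta^{(1)}_n$.

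The heart of the induction is (b). Fix $t>0$, $x\in V$, and choose $s_n$ with $\theta^{(p-1)}_n \prec s_n \prec \theta^{(p)}_n$. The Markov property gives
\begin{equation*}
p^{(n)}_{t\theta^{(p)}_n}(x, y) \;=\; \sum_{z\in V}\, p^{(n)}_{s_n}(x, z)\, p^{(n)}_{t\theta^{(p)}_n - s_n}(z, y).
\end{equation*}
By the inductive (a), $p^{(n)}_{s_n}(x,\cdot) \to \sum_k \mf a^{(p-1)}(x,k)\,\pi^{(p)}_k$, so the initial mass is placed on the wells $\ms V^{(p)}_k$, distributed as $\pi^{(p)}_k$ within each. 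It then remains to show that for every $k\in S_p$ and every $z$ drawn from $\pi^{(p)}_k$,
\begin{equation*}
p^{(n)}_{t\theta^{(p)}_n - s_n}(z,\cdot) \;\longrightarrow\; \sum_{j\in S_p} p^{(p)}_t(k,j)\, \pi^{(p)}_j.
\end{equation*}
The distribution among wells, $\mb P^n_z[X^{(n)}_{t\theta^{(p)}_n - s_n}\in \ms V^{(p)}_j] \to p^{(p)}_t(k,j)$, is precisely Theorem \ref{t1} (the shift by $s_n$ is negligible since $s_n \prec \theta^{(p)}_n$). To replace the conditional law inside $\ms V^{(p)}_j$ by $\pi^{(p)}_j$, peel off a further microscopic window of length $s_n$ at the right end and reapply the inductive intra-well equilibration.

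Parts (c) and (d) follow by letting $t\downarrow 0$ and $t\to\infty$ in (b): the identity $p^{(p)}_t(k,j)\to \delta_{k,j}$ as $t\to 0$ yields (c); the identity $\lim_{t\to\infty} p^{(p)}_t(k,j) = \sum_{m\in S_{p+1}} \mf A^{(p)}(k,m)\, M^{(p)}_m(j)\, \mathbf 1\{j\in\mf R^{(p)}_m\}$, combined with the definitions \eqref{80} and \eqref{33b}, yields (d). Part (a) at level $p+1$ is then obtained from (d) at level $p$ by a diagonal extraction: any $\beta_n$ with $\theta^{(p)}_n \prec \beta_n \prec \theta^{(p+1)}_n$ can be written as $t_n\,\theta^{(p)}_n$ with $t_n\to\infty$, and local uniformity in $t$ of the convergence in (b) allows the two limits to be exchanged. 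Finally, \eqref{70} follows from \eqref{58} at $p=\mf q+1$ together with $\mf n_{\mf q+1}=1$: this gives $\pi_n(x)/\pi_n(\ms V^{(\mf q+1)})\to \ell(x)\in(0,1]$ with $\sum_{x\in \ms V^{(\mf q+1)}} \ell(x) = 1$, and an iterative application of \eqref{58} across the levels of the tree shows that $\pi_n(\Delta_{\mf q+1})\to 0$, whence $\pi_n(x)\to \ell(x)\in(0,1]$ on $\ms V^{(\mf q+1)}$.

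The main obstacle is the intra-well equilibration step within (b): passing from the coarse, Skorohod-level convergence supplied by Theorem \ref{t1} to a statement about the full transition probabilities requires the two-sided trimming (by $s_n$ on the left and by $s_n$ on the right) described above, together with the verification that the inductively defined $\pi^{(p)}_k$ is the correct quasi-stationary law inside $\ms V^{(p)}_k$ on scales $\prec \theta^{(p)}_n$. This is precisely the content of Proposition \ref{p3} (the characterization of $\pi^{(p)}_k$ as the $n\to\infty$ limit of $\pi_n(\,\cdot\,\mid \ms V^{(p)}_k)$) and is where the inductive structure is used most heavily.
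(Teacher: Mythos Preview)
Your inductive scheme and the two-sided Markov decomposition for part (b) mirror the paper's argument closely, and your derivations of (c) and (d) from (b) are exactly what the paper does. The gap is in how you promote (d) at level $p$ to (a) at level $p+1$.

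Writing $\beta_n = t_n\,\theta^{(p)}_n$ with $t_n\to\infty$ and invoking ``local uniformity in $t$'' to exchange $\lim_n$ and $\lim_t$ is not justified: part (b) gives only pointwise-in-$t$ convergence, and no monotonicity or uniformity is available a priori. The paper does \emph{not} argue this way. Instead, Proposition~\ref{l12} splits $\beta_n$ into a prefix $t\,\theta^{(p)}_n$ (handled by (d), equation~\eqref{65}), a long middle stretch $\beta_n - (t+s)\,\theta^{(p)}_n$, and a suffix $s\,\theta^{(p)}_n$. The crucial step is showing that during the middle stretch the chain cannot leave the enlarged well $\ms U^{(p+1)}_m = \ms V^{(p+1)}_m \cup (\Delta_{p+1}\setminus\Delta_{p+1,m})$; this is Lemma~\ref{l14} and Corollary~\ref{l15}, and it rests on the capacity bound $\mb P^n_x[H_{\breve{\ms V}^{(p+1)}_m} < \beta_n] \le C_0\,\beta_n/\theta^{(p+1)}_n \to 0$, which in turn uses the very definition~\eqref{26b} of $\theta^{(p+1)}_n$. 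You implicitly invoke the same mechanism in your base case (``until an exit time of order $\theta^{(1)}_n$'') but then abandon it at higher levels in favor of the unjustified limit exchange. The capacity estimate is not optional: it is the only place where the hypothesis $\beta_n \prec \theta^{(p+1)}_n$ enters.

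A second, smaller gap: your argument for \eqref{70} via \eqref{58} does not work. Assertion \eqref{58} controls ratios $\pi_n(x)/\pi_n(\ms V^{(p)}_j)$ for $x\in\ms V^{(p)}_j$; it says nothing about points of $\Delta_p$, and the theorem does not assume reversibility (which would allow the argument of~\eqref{94}). The paper instead uses stationarity: $\pi_n(\Delta_{\mf q+1}) = \sum_x \pi_n(x)\,\mb P^n_x[X_{\beta_n}\in\Delta_{\mf q+1}] \le \max_x \mb P^n_x[X_{\beta_n}\in\Delta_{\mf q+1}]$, which tends to zero by part (a) at level $\mf q+1$ since $\pi^{(\mf q+1)}_1$ is supported on $\ms V^{(\mf q+1)}$.
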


Note that the right-hand side of (c) and (d) coincide with the one
obtained in (a). These assertion state that at the time-scale
$\theta^{(p)}_n$ a smooth transition between two different regimes is
observed.

Part (b) of this theorem states that, starting from $x$, the
distribution of the process at time $t\theta^{(p)}_n$ is close to a
convex combination of the measures $\pi^{(p)}_k$, $k\in S_p$. The
weight of the measure $\pi^{(p)}_k$ is given by the probability that
the process is initially attracted to a well $\ms V^{(p)}_j$ times the
probability that the dynamics among the wells drives the process from
the well $\ms V_j$ to the well $\ms V_k$ in the ``macroscopic'' time
intervall $[0,t]$.

The next result provides a formula for the measures $\pi^{(p)}_j$ and
for the absorbing probabilities $\mf a^{(p-1)} (x,j)$. Recall that for
each $x\in V$, $\mf a^{(p-1)} (x,\,\cdot\,)$ is a probability measure
on $S_p$.

\begin{proposition}
\label{p3}
Fix $1\le p\le \mf q+1$, $j\in S_p$. For all $z\in \ms V^{(p)}_j$, 
\begin{equation*}
\lim_{n\to\infty} \frac{\pi_n(z)}{\pi_n(\ms V^{(p)}_j)}
\;=\; \pi^{(p)}_j(z) \;.
\end{equation*}
If $x\in \ms V^{(p)}_j$, then
$\mf a^{(p-1)} (x,k)=\delta_{j,k}$, $k\in S_p$. On the other hand, if
$x\not\in \ms V^{(p)}$, then
\begin{equation*}
\mf a^{(p-1)} (x,j) \;=\; \lim_{n\to\infty} \mb P^n_x
\big[\, H_{\ms V^{(p)}_j} \,<\,  H_{\breve{\ms V}^{(p)}_j}\,\big]\;. 
\end{equation*}
\end{proposition}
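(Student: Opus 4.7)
I would prove the three assertions by a single induction on $p$, with Step 1 treating the identification of $\pi^{(p)}_j$, Step 2 the trivial absorption identity, and Step 3 the hitting-probability formula. By \eqref{58}, for each $z \in \ms V^{(p)}_j$ the limit $c^{(p)}_j(z) := \lim_n \pi_n(z)/\pi_n(\ms V^{(p)}_j)$ exists and defines a probability measure on $\ms V^{(p)}_j$, so Step 1 amounts to showing $c^{(p)}_j = \pi^{(p)}_j$. For $p=1$, since $\ms V_j$ is a closed irreducible class of $\bb X_t$, the rates of the trace of $X^{(n)}_t$ on $\ms V_j$ converge to those of $\bb X_t$ restricted to $\ms V_j$, while the invariant measure of this trace is $\pi_n(\,\cdot\,)/\pi_n(\ms V_j)$; passing to the limit, $c^{(1)}_j$ is invariant for the irreducible restricted chain, hence $c^{(1)}_j = \pi^\sharp_j = \pi^{(1)}_j$. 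For the inductive step, fix $z \in \ms V^{(p+1)}_m$ and let $j \in \mf R^{(p)}_m$ be the unique index with $z \in \ms V^{(p)}_j$; then
$$
\frac{\pi_n(z)}{\pi_n(\ms V^{(p+1)}_m)}
\;=\;
\frac{\pi_n(z)}{\pi_n(\ms V^{(p)}_j)}\cdot
\frac{\pi_n(\ms V^{(p)}_j)}{\pi_n(\ms V^{(p+1)}_m)}\;,
$$
whose first factor tends to $\pi^{(p)}_j(z)$ by the inductive hypothesis. For the second factor, $\pi_n$ conditioned on $\ms V^{(p)}$ is invariant for the trace $Y^{n,p}_t$, so its push-forward under $\Phi_p$ is invariant for the $S_p$-valued jump chain with rates $r^{(p)}_n(i,k)$; rescaling rates by $\theta^{(p)}_n$ preserves invariance, and by \eqref{34} the rescaled rates converge to those of $\bb X^{(p)}_t$. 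Hence the limiting measure is invariant for $\bb X^{(p)}_t$, and its restriction to the closed class $\mf R^{(p)}_m$ is a multiple of $M^{(p)}_m$; since the denominator $\pi_n(\ms V^{(p+1)}_m)$ already sums over $\mf R^{(p)}_m$, the normalization cancels and the second factor tends to $M^{(p)}_m(j)$. The product equals $\pi^{(p+1)}_m(z)$ by the defining relation \eqref{80}.

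\emph{Step 2.} For $p=1$ the identity follows from \eqref{33} because $\ms V_j$ is closed for $\bb X_t$. For the inductive step, if $x \in \ms V^{(p+1)}_m$ then $x \in \ms V^{(p)}_k$ for a unique $k \in \mf R^{(p)}_m$; by induction $\mf a^{(p-1)}(x,\,\cdot\,)=\delta_k$, so \eqref{33b} yields $\mf a^{(p)}(x,m')=\mf A^{(p)}(k,m')=\delta_{m,m'}$, because $k$ lies in the closed recurrent class $\mf R^{(p)}_m$ of $\bb X^{(p)}_t$.

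\emph{Step 3.} Fix $x \in \Delta_p$ and set $T_n := H_{\ms V^{(p)}}$. Since $\ms V^{(p)}_j$ and $\breve{\ms V}^{(p)}_j$ are disjoint with union $\ms V^{(p)}$, the event $\{H_{\ms V^{(p)}_j}<H_{\breve{\ms V}^{(p)}_j}\}$ coincides with $\{X^{(n)}_{T_n}\in\ms V^{(p)}_j\}$. Choose $\beta_n$ with $\theta^{(p-1)}_n \prec \beta_n \prec \theta^{(p)}_n$. Theorem \ref{mt0}(a) together with \eqref{52} yields $\lim_n p^{(n)}_{\beta_n}(x,\ms V^{(p)}_j) = \mf a^{(p-1)}(x,j)$, so it suffices to show
$$
\lim_{n\to\infty} \Big( \mb P^n_x[X^{(n)}_{\beta_n}\in\ms V^{(p)}_j] - \mb P^n_x[X^{(n)}_{T_n}\in\ms V^{(p)}_j] \Big) \;=\; 0\;.
$$
This reduces to two soft time-scale estimates: (a) $\mb P^n_x[T_n > \beta_n] \to 0$, because Theorem \ref{t1} at level $p-1$ implies that the chain stabilizes in some recurrent class of $\bb X^{(p-1)}_t$, hence enters $\ms V^{(p)}$, already on the scale $\theta^{(p-1)}_n \prec \beta_n$; and (b) conditional on $\{X^{(n)}_{T_n}\in\ms V^{(p)}_j\}$, the probability of a transition between distinct wells at level $p$ on $[T_n,\beta_n]$ vanishes, since such transitions are concentrated on the longer scale $\theta^{(p)}_n \succ \beta_n$, again by Theorem \ref{t1}. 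The main obstacle is the identification $\lim_n \pi_n(\ms V^{(p)}_j)/\pi_n(\ms V^{(p+1)}_m) = M^{(p)}_m(j)$ in Step 1: it requires transferring invariance through the limit of rescaled trace rates and, since $\bb X^{(p)}_t$ typically has several recurrent classes, carefully isolating the component of the limit invariant measure supported on $\mf R^{(p)}_m$.
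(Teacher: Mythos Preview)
Your three-step outline mirrors exactly the paper's proof: Step 1 is Lemmas \ref{l32}, \ref{l33} and Corollary \ref{l34}; Step 2 is Lemma \ref{l21}; Step 3 is Lemma \ref{l22} together with the auxiliary estimates \eqref{87} and \eqref{88}. Two points deserve sharpening.

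\textbf{Step 1.} Your claim that the push-forward of $\pi_n$ under $\Phi_p$ is stationary for the $S_p$-chain with rates $r^{(p)}_n$ is correct, but passing to the limit in the full balance equation is not innocuous: for $j\in\mf R^{(p)}_m$ the incoming flux $\sum_i \pi_n(\ms V^{(p)}_i)\,\theta^{(p)}_n r^{(p)}_n(i,j)$ contains terms with $i\in\mf T_p$ or $i\in\mf R^{(p)}_{m'}$, $m'\neq m$, for which $\pi_n(\ms V^{(p)}_i)/\pi_n(\ms V^{(p+1)}_m)$ may diverge while $\theta^{(p)}_n r^{(p)}_n(i,j)$ goes to zero. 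The paper's Lemma \ref{l33} sidesteps this by keeping only $i\in\mf R^{(p)}_m$ on the right-hand side, obtaining an \emph{inequality} in the limit, and then summing over $j\in\mf R^{(p)}_m$ to force equality (using that $\mf R^{(p)}_m$ is closed, so $r^{(p)}(j,k)=0$ for $k\notin\mf R^{(p)}_m$). You flag this as the main obstacle, and this is precisely the missing maneuver.

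\textbf{Step 3(a).} Theorem \ref{t1} at level $p-1$ is stated only for starting points in $\ms V^{(p-1)}$, so it does not directly cover $x\in\Delta_{p-1}\subset\Delta_p$. The paper's route (equation \eqref{87}) is cleaner: since Theorem \ref{mt0}(a) has already been established and gives $p^{(n)}_{s\beta_n}(x,\Delta_p)\to 0$ for every $s>0$ and every $x\in V$, one gets $\mb P^n_x[H_{\ms V^{(p)}}>a\beta_n]\to 0$ by a Chebyshev argument on the occupation time of $\Delta_p$. Your claim (b) is exactly \eqref{88}, obtained from the Skorohod convergence of the trace process.
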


\subsection*{Large deviations rate function expansion.}

We assume from now on that the dynamics is reversible: $\pi_n(x)
\, R_n(x,y) =  \pi_n(y)\, R_n(y,x)$ for all $(x,y)\in E$.
For a  probability measure $\nu$ on a finite space $W$ and two
functions $f$, $g:W \to \bb R$, let  
\begin{equation*}
{\color{blue} \< \, f \,,\, g \,\>_{\nu} } \;=\;
\sum_{x\in W} f(x)\, g(x)\, \nu(x)\;.
\end{equation*}

By \cite{var}, for each fixed $n\ge 1$, the occupation time
distribution of the chain $X^n_t$, defined
by 
\begin{equation*}
\frac{1}{t} \int_0^t \delta_{X^n_s}\; ds \;,
\end{equation*}
satisfies a large deviations principle as $t\to\infty$, the so-called
level 2 LDP.  In this formula, $\delta_x$, $x\in V$, represents the
Dirac measure concentrated at $x$, so that
$t^{-1} \int_0^t \delta_{X^n_s}\; ds$ is a random element of
$\color{blue}\ms P(V)$, the space of probability measures on $V$.
Denote by $\ms I_n: \ms P(V) \to [0,\infty]$ the level two large 
deviations rate function:
\begin{equation}
\label{f4}
{\color{blue} \ms I_n (\mu)} \;=\; -\, \inf_u \sum_{x\in V} \frac{(\ms L_n
u)(x)}{u(x)} \; \mu(x)\;,
\end{equation}
where the infimum is performed over all functions
$u: V \to (0,\infty)$.  Since we assumed reversibility and
$\pi_n(x)>0$ for all $x\in V$, for all measures $\mu \in \ms P(V)$, by
\cite[Theorem 5]{dv75},
\begin{equation}
\label{f6}
\ms I_n(\mu)
\;=\; \< \,
\sqrt{f_n} \,,\, (-\, \ms L_n) \sqrt{f_n} \,\>_{\pi_n}\;, 
\end{equation}
where $f_n(x) = \mu (x)/\pi_n(x)$.

The second main result of this article provides an expansion of the
rate function $\ms I_n$. Recall that we denote by $\bb L^{(0)}$ the
generator of the Markov chain $\bb X_t$ introduced right after
\eqref{01}. Let $\ms I^{(0)} : \ms P(V) \to \bb R_+$ be given by
\begin{equation}
\label{f11}
{\color{blue} \ms I^{(0)} (\mu)} \;=\; -\,
\inf_{u>0}   \, \sum_{x\in V} \mu(x) \frac{(\bb L^{(0)} u)(x)}{u(x)}\;,
\end{equation}
where the supremum is carried over all functions
$u: V \to (0,\infty)$. Theorem \ref{mt3} below states that the
sequence of rate functions $\ms I_n$ $\Gamma$-converges to
$\ms I^{(0)}$.  In \eqref{79}, we show that
$\ms I^{(0)} (\mu) \,=\, 0$ if and only if there exists a probability
measure $\omega$ on $S_1$ such that
\begin{equation}
\label{79b}
\mu \;=\; \sum_{j\in S_1} \omega_j\, \pi^{(1)}_j\;.
\end{equation}
For such measures $\mu$, it is natural to consider the limit $\beta_n
\ms I_{n} (\mu)$ for some sequence $\beta_n\to\infty$. 

Fix $1\le p\le \mf q$. Denote by $\color{blue} \ms P(S_p)$ the set of
probability measures on $S_p$. Let
$\ms I^{(p)} \colon \ms P(V) \to [0,+\infty]$ be the functional given
by
\begin{equation}
\label{83b}
{\color{blue} \ms I^{(p)} (\mu) } \, :=\,
\left\{
\begin{aligned}
& -\,  \inf_{\mb h} \,  \sum_{j\in S_p} \omega_j \,
\frac{\bb L^{(p)} \mb h}{\mb h} \quad \text{if}\;\;
\mu = \sum_{j\in S_p} \omega_j \, \pi^{(p)}_j \;\; \text{and}\;\;
\omega \in \ms P(S_p)\;,  \\
& +\infty \quad\text{otherwise}\;.
\end{aligned}
\right.
\end{equation}
In this formula, the infimum is carried over all functions
$\mb h:S_p \to (0,\infty)$.  We prove in \eqref{82} that
\begin{equation*}
\ms I^{(p+1)} (\mu) \;<\; \infty \quad \text{if and only if}\quad
\ms I^{(p)} (\mu) \;=\; 0\;.
\end{equation*}
By \eqref{79b}, this assertion holds also for $p=0$.

Recall the definition of $\Gamma$-convergence. We refer to \cite{Br}
for an overview on this subject. Fix a Polish space $\mc X$ and a
sequence $(U_n : n\in\bb N)$ of functionals on $\mc X$,
$U_n\colon \mc X \to [0,+\infty]$. The sequence $U_n$
\emph{$\Gamma$-converges} to the functional
$U\colon \mc X\to [0,+\infty]$ if and only if the two following
conditions are met:

\begin{itemize}
\item [(i)]\emph{$\Gamma$-liminf.} The functional $U$ is a
$\Gamma$-liminf for the sequence $U_n$: For each $x\in\mc X$ and each
sequence $x_n\to x$, we have that $\liminf_n U_n(x_n) \ge U(x)$.

\item [(ii)]\emph{$\Gamma$-limsup.} The functional $U$ is a
$\Gamma$-limsup for the sequence $U_n$: For each $x\in\mc X$ there
exists a sequence $x_n\to x$ such that $\limsup_n U_n(x_n) \le U(x)$.
\end{itemize}

\begin{theorem}
\label{mt3}
The functional $\ms I_n$ $\Gamma$-converges to $\ms I^{(0)}$. Moreover,
for each $1\le p\le \mf q$, the functional $\theta^{(p)}_n\, \ms I_n$
$\Gamma$-converges to $\ms I^{(p)}$.
\end{theorem}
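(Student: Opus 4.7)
My plan is to prove Theorem \ref{mt3} by induction on $p \in \{0, 1, \ldots, \mf q\}$. The base case $p=0$ follows directly from the variational formula \eqref{f4} and the pointwise convergence $R_n(x,y) \to \bb R_{0}(x,y)$. Indeed, for any $u > 0$ and $\mu_n \to \mu$, $\ms I_n(\mu_n) \ge -\sum_x \mu_n(x)(\ms L_n u)(x)/u(x) \to -\sum_x \mu(x)(\bb L^{(0)} u)(x)/u(x)$, and supremising over $u$ yields the $\Gamma$-liminf; the $\Gamma$-limsup is realised by the constant sequence $\mu_n \equiv \mu$ since \eqref{f6} together with pointwise convergence of rates gives $\ms I_n(\mu) \to \ms I^{(0)}(\mu)$.

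For the inductive step at level $p \ge 1$, the $\Gamma$-liminf splits into two cases. If $\mu$ is not a convex combination of $\{\pi^{(p)}_j\}$, then by the characterisation \eqref{82} $\ms I^{(p-1)}(\mu) > 0$; the inductive hypothesis together with $\theta^{(p)}_n/\theta^{(p-1)}_n \to \infty$ from \eqref{51} yields $\theta^{(p)}_n \ms I_n(\mu_n) \to \infty = \ms I^{(p)}(\mu)$. Otherwise $\mu = \sum_i \omega_i \pi^{(p)}_i$ and we use \eqref{f6} and the Dirichlet principle to obtain
\begin{equation*}
\ms I_n(\mu_n) \;\ge\; \tfrac12 \sum_{x,y \in \ms V^{(p)}} \pi_n(x) R^{(p)}_n(x,y) \bigl[\sqrt{f_n(y)} - \sqrt{f_n(x)}\bigr]^2\;,\quad f_n = \mu_n/\pi_n\;.
\end{equation*}
Assuming boundedness of $\liminf \theta^{(p)}_n \ms I_n(\mu_n)$, the within-well portion of the right-hand side multiplied by $\theta^{(p)}_n$ forces $\sqrt{f_n}$ to be asymptotically constant on each $\ms V^{(p)}_i$ (since within-well trace rates dominate the scale $1/\theta^{(p)}_n$), with $c_{i,n} := f_n|_{\ms V^{(p)}_i} \approx \omega_i/\pi_n(\ms V^{(p)}_i)$ enforced by $\mu_n \to \mu$. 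Multiplying the remaining cross-well contribution by $\theta^{(p)}_n$ and using \eqref{34} yields the Dirichlet form of the reversible chain $\bb X^{(p)}$ evaluated at $\sqrt{\omega/\bar\pi}$ with $\bar\pi_i = \lim_n \pi_n(\ms V^{(p)}_i)$, which equals $\ms I^{(p)}(\mu)$ by the variational formula \eqref{83b}. For the $\Gamma$-limsup, I would take the recovery sequence $\mu_n = \pi_n (u^*_n)^2/Z_n$, where $u^*_n(x) = \sqrt{\omega_i/\pi_n(\ms V^{(p)}_i)}$ for $x \in \ms V^{(p)}_i$ and $u^*_n$ is $\ms L_n$-harmonic on $\Delta_p$, and $Z_n \to 1$ is the normalising constant (using $\pi_n(\Delta_p) \to 0$ via \eqref{70}). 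Proposition \ref{p3} then gives $\mu_n \to \mu$; the Dirichlet principle identifies $\langle u^*_n, -\ms L_n u^*_n\rangle_{\pi_n}$ with the trace Dirichlet form $\tfrac12 \sum_{i \neq j} \pi_n(\ms V^{(p)}_i) r^{(p)}_n(i,j) [\sqrt{c_{j,n}} - \sqrt{c_{i,n}}]^2$ with $c_{i,n} = \omega_i/\pi_n(\ms V^{(p)}_i)$, and multiplication by $\theta^{(p)}_n$ combined with \eqref{34} yields $\theta^{(p)}_n \ms I_n(\mu_n) \to \ms I^{(p)}(\mu)$.

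The main obstacle is justifying, in the $\Gamma$-liminf case $\mu = \sum_i \omega_i \pi^{(p)}_i$, that bounded $\theta^{(p)}_n \ms I_n(\mu_n)$ forces $\sqrt{f_n}$ to equilibrate within each well $\ms V^{(p)}_i$. This requires the quantitative assertion $\theta^{(p)}_n R^{(p)}_n(x,y) \to \infty$ for within-well pairs, together with uniform control on the resulting oscillations of $\sqrt{f_n}$; both should follow from hypothesis \eqref{mh} and the definition of $\theta^{(p)}_n$ in \eqref{26b}, since this capacity-based scale separates inter-well transition times from intra-well mixing. A secondary technical point is checking that the harmonic-extension recovery has negligible mass on $\Delta_p$ uniformly in $n$, so that $Z_n \to 1$ and the limit of the trace Dirichlet form genuinely recovers $\ms I^{(p)}(\mu)$.
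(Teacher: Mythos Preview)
Your overall inductive structure and the $\Gamma$-limsup recovery sequence match the paper's Proposition \ref{p02b}: the paper also takes $\mu_n=\alpha_n h_n^2\pi_n$ with $h_n$ the $\ms L_n$-harmonic extension to $V$ of $\sqrt{\omega_j/\pi_n(\ms V^{(p)}_j)}$ on $\ms V^{(p)}_j$, and uses Corollary \ref{l01} to identify $D_n(h_n)$ with the trace Dirichlet form. However, your justification that $Z_n\to 1$ via $\pi_n(\Delta_p)\to 0$ alone is insufficient: for $x\in\Delta_p$ one has $h_n(x)^2\pi_n(x)\le C_0\sum_j\omega_j\,[\pi_n(x)/\pi_n(\ms V^{(p)}_j)]\,\mb P^n_{\! x}[H_{\ms V^{(p)}_j}=H_{\ms V^{(p)}}]^2$, and the ratio $\pi_n(x)/\pi_n(\ms V^{(p)}_j)$ may diverge. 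The paper needs the nontrivial Lemma \ref{l20} (built on Lemmas \ref{l25}--\ref{l27}, \ref{l30}, \ref{l31}, \ref{l38}) to kill each summand; this is what your ``secondary technical point'' actually requires.

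For the $\Gamma$-liminf in the case $\mu=\sum_j\omega_j\pi^{(p)}_j$, the paper takes a genuinely different route that sidesteps the equilibration obstacle you flag. Rather than bounding $\ms I_n(\mu_n)=D_n(\sqrt{\mu_n/\pi_n})$ below by the trace Dirichlet form and then arguing that $\sqrt{\mu_n/\pi_n}$ is forced to be asymptotically constant on each well, the paper uses the \emph{variational} formula \eqref{f4} with a well-chosen test function: fix any $\mb h:S_p\to(0,\infty)$, set $h=\sum_j\mb h(j)\chi_{\ms V^{(p)}_j}$, and let $u_n$ be its $\ms L_n$-harmonic extension to $V$. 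Since $\ms L_n u_n=0$ on $\Delta_p$, the inequality $\ms I_n(\mu_n)\ge -\int(\ms L_n u_n/u_n)\,d\mu_n$ reduces to an integral over $\ms V^{(p)}$; Lemma \ref{l19} identifies $\ms L_n u_n$ there with $\ms L^{(p)}_n h$, and because $h$ is \emph{already} constant on each well, only the averaged cross-well rates $r^{(p)}_n(i,j)$ survive. Multiplying by $\theta^{(p)}_n$, using $\mu_n(x)\to\omega_j\pi^{(p)}_j(x)$ together with Proposition \ref{p3}, and finally optimising over $\mb h$ gives exactly $\ms I^{(p)}(\mu)$ via \eqref{83b}. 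The gain is that by building the constancy-on-wells into the \emph{test function} instead of extracting it from $\sqrt{f_n}$, one never needs the within-well equilibration argument, and one never needs pointwise control of individual trace rates $\theta^{(p)}_n R^{(p)}_n(x,y)$---only the averages $\theta^{(p)}_n r^{(p)}_n(i,j)$ from \eqref{34} enter. Your Dirichlet-form route is plausible, but completing it would require both a within-well spectral-gap estimate and control of the cross-well sum against oscillations of $\sqrt{f_n}$ of order $o(\pi_n(\ms V^{(p)}_i)^{-1/2})$, neither of which follows directly from \eqref{26b}.
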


This theorem provides an expansion of the large deviations rate
function $\ms I_n$ which can be written as
\begin{equation}
\label{f05}
\ms I_n \;=\; \ms I^{(0)} \;+\; \sum_{p=1}^{\mf q}
\frac{1}{\theta^{(p)}_n} \; \ms I^{(p)}\;.
\end{equation}
Therefore, the rate function $\ms I_n$ encodes all the characteristics
of the metastable behavior of the chain $X^{(n)}_t$. The time-scales
$\theta^{(p)}_n$ appear as the weights of the expansion, and the
meta-stable states $\pi^{(p)}_j$, $j\in S_p$, generate the space where
the rate functional $\ms I^{(p)} (\mu)$ is finite. Indeed, by
\eqref{09c}, $\ms I^{(p)} (\mu)$ is finite if and only if $\mu$ is a
convex combination of the measures $\pi^{(p)}_j$, $j\in S_p$.

Theorem \ref{mt3} extends to the context of continuous-time Markov
chains evolving on finite state-spaces a result by Di Ges\`u and
Mariani \cite{GM17} proved for reversible diffusions with a single
valley at each different depth.

\begin{remark}
\label{rm1}
Theorem \ref{mt3} should hold for nonreversible
dynamics. Reversibility is assumed here only to compute the
$\Gamma$-limsup through formula \eqref{f6}. It should also be possible
to obtain a metastable $\Gamma$-expansion for the level 2.5 large
deviations rate function derived in \cite{bfg}.
\end{remark}

\begin{remark}
\label{rm5}
The proof of Theorems \ref{mt0} and \ref{mt3} do not require the full
strength of assumption \eqref{mh}, but only the ability to
compute some capacities, the limit of the ratio of some measures and
of mean jump rates. Stating, however, the minimal conditions would
require much work.
\end{remark}

\section{The first time-scale}
\label{sec3}

In this section, we prove conditions (a) and (b) of Theorem \ref{mt0}
for $p=1$. Throughout the article, we adopt the following notation,
$\color{blue} O(\epsilon)$ represents a term whose absolute value is
bounded by $C_0\, \epsilon$ for some constant $C_0$ independent of $n$
and $\epsilon$. Similarly, $\color{blue} o_n(1)$ represents a term
which vanishes as $n\to\infty$.

Recall that we denote by $(\bb X_t : t\ge 0)$ the
$V$-valued continuous-time Markov chain with jump rates
$\bb R_0(x,y)$, and by $\bb Q_x$ the probability measure on
$D(\bb R_+, V)$ induced by the chain $\bb X_t$ with jump rates
$\bb R_0$ starting from $x$. For $x$, $y\in S$, let
\begin{equation}
\label{30}
{\color{blue} \omega (x,y)} \;: =\; \lim_{t\to\infty}
\bb Q_x\big[\, \bb X_t \, = \, y\,\big]\;.
\end{equation}
Clearly,
\begin{equation}
\label{32}
\omega (x,y) \;=\; 0\,, \;\; y\in\Delta \quad\text{and}\quad
\omega (x,y)  \;=\; \mf a^{(0)} (x,j)\, \pi^\sharp_j (y)\,,  \;\; y\in\ms V_j\;,  
\end{equation}
where $\mf a^{(0)} (x,j)$ has been introduced in \eqref{33}.

Denote by $\ms W_j$, $j\in S$, the set of points in $V$ which may end
in the set $\ms V_j$:
\begin{equation}
\label{31}
{\color{blue} \ms W_j} \;: =\; \big \{ \, x \in V :
\mf a^{(0)} (x,j) \,>\, 0\,\big\}
\;.
\end{equation}
Note that $V = \cup_j \ms W_j$.  Let $\ms B_j$ be the set of points
attracted to $\ms V_j$:
\begin{equation*}
{\color{blue} \ms B_j} \,: =\, \big\{ \, x\in V :
\mf a^{(0)} (x,j) \,=\, 1 \,\big\}\;.
\end{equation*}
Clearly, $\ms V_j \subset \ms B_j \subset \ms W_j$, and
$\ms B_j = \ms W_j \setminus (\cup_{k\not = j} \ms W_k) = V \setminus
(\cup_{k\not = j} \ms W_k)$. In other words, $\ms B_j^c = \cup_{k\not
= j} \ms W_k$. Moreover, as $\mf a^{(0)} (x,j) =0$ for
$x\in \cup_{k\not = j} \ms B_k$ and $\mf a^{(0)} (x,j) = 1$ for
$x\in \ms B_j$,
\begin{equation}
\label{35}
\omega (x,y)  \;=\; 0 \,, \quad
\omega (x,z)  \;=\;  \pi^\sharp_j (z)
\,,  \quad  x\,,\, z \in\ms V_j\,, \;\; y\in\ms V_k
\,, \;\; k\,\neq\, j \;.
\end{equation}

The first result describes the asymptotic behavior of
$p^{(n)}_{t} (x,y)$ in the slowest time-scale, $t=O(1)$. 

\begin{lemma}
\label{l09}
For every $\epsilon>0$, there exists $T_\epsilon$ such that
\begin{equation*}
\limsup_{n\to\infty} \big|\,
\mb P^n_{\! x}[\, X_{T_\epsilon} = y \,]
\,-\, \omega(x,y) \,\big| \;\le\; \epsilon\quad
\text{for all}\;\; x\,,\, y\, \in V \;,
\end{equation*}
where $\omega(x,y)$ has been introduced in \eqref{30}.
\end{lemma}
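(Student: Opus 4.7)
The plan is to reduce the claim to the convergence $\bb Q_x[\bb X_t = y] \to \omega(x,y)$ for the limiting chain $\bb X_t$ and then to transfer this convergence to $X^{(n)}$ by coupling.

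First I would argue that for the $V$-valued continuous-time Markov chain $\bb X_t$ with jump rates $\bb R_0$, the function $t \mapsto \bb Q_x[\bb X_t = y]$ converges as $t\to\infty$ to its ergodic limit $\omega(x,y)$ defined in \eqref{30}; this is the classical convergence to equilibrium for finite-state chains with a finite number of closed irreducible classes $\ms V_1, \dots, \ms V_{\mf n}$ and a transient set $\Delta$, and the rate of convergence is exponential. In particular, given $\epsilon > 0$, I can choose a finite $T_\epsilon$ (independent of $x,y$) such that
\begin{equation*}
\max_{x,y \in V} \big|\, \bb Q_x[\, \bb X_{T_\epsilon} = y\,] \,-\, \omega(x,y)\,\big| \;\le\; \epsilon/2\;.
\end{equation*}

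Next I would invoke the coupling $\widehat{\mb P}_{\! x}$ introduced before the statement of Lemma \ref{l09}, which realizes $X^{(n)}_t$ and $\bb X_t$ on the same probability space by forcing them to perform all ``asymptotically surviving'' jumps (edges in $\bb E_0$) together, while the chain $X^{(n)}$ may additionally perform ``spurious'' jumps along edges in $E \setminus \bb E_0$ at rates bounded above by $\gamma_n \to 0$. Since $T_\epsilon$ is fixed and does not depend on $n$, a standard union bound on the number of spurious jumps up to time $T_\epsilon$ yields
\begin{equation*}
\widehat{\mb P}_{\! x}\big[\, X^{(n)}_t \neq \bb X_t \;\text{for some}\; t\in[0,T_\epsilon]\,\big] \;\le\; C \, T_\epsilon\, \gamma_n \;\to\;0\;,
\end{equation*}
uniformly in $x\in V$. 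Consequently
\begin{equation*}
\big|\, \mb P^n_{\! x}[\, X_{T_\epsilon} = y\,] \,-\, \bb Q_x[\, \bb X_{T_\epsilon}=y\,]\,\big| \;\longrightarrow\; 0\;, \quad n\to\infty\;.
\end{equation*}

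Finally I would combine the two estimates by the triangle inequality to conclude that
\begin{equation*}
\limsup_{n\to\infty} \big|\, \mb P^n_{\! x}[\, X_{T_\epsilon} = y\,] \,-\, \omega(x,y)\,\big| \;\le\; \epsilon/2 \;<\; \epsilon
\end{equation*}
for every $x,y \in V$. There is essentially no serious obstacle here: the only delicate point is checking that the convergence to equilibrium for $\bb X_t$ holds uniformly in the starting state $x$ (which is automatic since $V$ is finite and the asymptotic absorption probabilities $\mf a^{(0)}(x,\cdot)$ are well-defined from any $x\in V$), and that the coupling error bound is uniform in the initial point, which follows from the crude deterministic bound on total jump rates out of any state in $E\setminus \bb E_0$.
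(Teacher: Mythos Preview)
Your proposal is correct and follows essentially the same approach as the paper: choose $T_\epsilon$ via the ergodic convergence of the limiting chain $\bb X_t$ to $\omega(x,y)$, then transfer to $X^{(n)}$ by the basic coupling on the fixed interval $[0,T_\epsilon]$, which succeeds because the rates $R_n$ converge to $\bb R_0$. The paper's proof is slightly terser (it records the coupling statement as \eqref{27} and combines directly), but the logic is identical.
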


\begin{proof}
Fix $\epsilon>0$. By the ergodic theorem, there exists
$T_\epsilon<\infty$ such that
\begin{equation}
\label{29}
\big|\, \bb Q_{\! x}[\, \bb X_{T_\epsilon} = y \,]
\,-\, \omega(x,y) \,\big| \;\le\; \epsilon
\end{equation}
for all $x$, $y\in V$.

Couple $X^{(n)}_t$ and $\bb X_t$ making them jump together as much as
possible. Denote by $\bb P^{(n)}_x$ the measure on
$D(\bb R_+, V\times V)$ induced by the basic coupling starting from
$(x,x)$. By \eqref{01}, for all $T>0$,
\begin{equation}
\label{27}
\lim_{n\to\infty} \bb P^{(n)}_z
\big[\, \bb X_t \, = \, X^{(n)}_t \;,\;\; 0\le t\le T \,\big] \;
= \; 1\;.
\end{equation}
The assertion of the lemma follows from \eqref{29} and \eqref{27} with
$T = T_\epsilon$.
\end{proof}

Recall the definition of the sets $\ms V_j$, $1\le j\le \mf n$,
introduced in \eqref{05}. The chain $\bb X_t$ has only one closed
irreducible class if, and only if, $\mf n=1$.

\begin{corollary}
\label{l11}
Assume that $\mf n=1$, Then, $\lim_{n\to\infty} p^{(n)}_{\beta_n}(x,y)
= \pi^\sharp(y)$ for all $x$, $y\in V$, $\beta_n \succ 1$.
\end{corollary}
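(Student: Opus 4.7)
The plan is to combine Lemma \ref{l09} with the Chapman--Kolmogorov identity. The key observation is that, when $\mf n = 1$, the Markov chain $\bb X_t$ has a unique closed irreducible class, so $\mf a^{(0)}(x,1) = 1$ for every $x\in V$. In view of \eqref{32} this gives $\omega(x,y) = \pi^\sharp(y)$ for all $x,y\in V$; the limiting distribution appearing in Lemma \ref{l09} is therefore \emph{independent of the starting point}. This is what will allow us to iterate.

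First, I would fix $\epsilon>0$ and apply Lemma \ref{l09} to obtain $T_\epsilon < \infty$ such that $\limsup_n |p^{(n)}_{T_\epsilon}(z,y) - \pi^\sharp(y)| \le \epsilon$ for every $z,y \in V$. Since $V$ is finite, this gives a single integer $N_\epsilon$ such that, for $n \ge N_\epsilon$,
\begin{equation*}
\max_{z,y\in V} \big|\, p^{(n)}_{T_\epsilon}(z,y) \,-\, \pi^\sharp(y)\,\big| \;\le\; 2\epsilon\;.
\end{equation*}
Because $\beta_n \succ 1$, i.e. $\beta_n \to \infty$, we eventually have $\beta_n > T_\epsilon$, so the Markov property lets me decompose
\begin{equation*}
p^{(n)}_{\beta_n}(x,y) \;=\; \sum_{z\in V} p^{(n)}_{\beta_n - T_\epsilon}(x,z)\; p^{(n)}_{T_\epsilon}(z,y)\;.
\end{equation*}

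Subtracting $\pi^\sharp(y)$ and using $\sum_z p^{(n)}_{\beta_n - T_\epsilon}(x,z) = 1$, I conclude
\begin{equation*}
\big|\, p^{(n)}_{\beta_n}(x,y) \,-\, \pi^\sharp(y)\,\big|
\;\le\; \sum_{z \in V} p^{(n)}_{\beta_n - T_\epsilon}(x,z) \,
\big|\, p^{(n)}_{T_\epsilon}(z,y) \,-\, \pi^\sharp(y)\,\big|
\;\le\; 2\epsilon
\end{equation*}
for every $n$ large enough. Since $\epsilon > 0$ is arbitrary, letting $n\to\infty$ and then $\epsilon\to 0$ yields the claim.

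There is no significant obstacle; the only minor point is to upgrade the pointwise ``$\limsup_n$'' bound of Lemma \ref{l09} to a bound uniform in the intermediate state $z$, but finiteness of $V$ makes this immediate. The argument relies essentially on the fact that $\omega(z,y)$ does not depend on $z$, which is precisely what the hypothesis $\mf n = 1$ provides; for general $\mf n$ the same strategy would produce a convex combination of the $\pi^\sharp_j$ weighted by absorption probabilities, as in part (a) of Theorem \ref{mt0}.
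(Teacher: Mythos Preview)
Your proof is correct and follows essentially the same approach as the paper: decompose $p^{(n)}_{\beta_n}(x,y)$ via the Markov property at time $\beta_n - T_\epsilon$ and apply Lemma \ref{l09} to the second factor, using that $\mf n=1$ forces $\omega(z,y)=\pi^\sharp(y)$ independently of $z$. The paper's version is slightly more terse (writing the error as $O(\epsilon)+o_n(1)$ rather than spelling out the uniform $N_\epsilon$), but the argument is the same.
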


\begin{proof}
Fix $\epsilon>0$, and let $T_\epsilon$ be the constant given by Lemma
\ref{l09}. By the Markov property,
\begin{equation*}
p^{(n)}_{\beta_n}(x,y) \;=\; \sum_{z\in V}
p^{(n)}_{\beta_n - T_\epsilon}(x,z) \; p^{(n)}_{T_\epsilon}(z,y)\;.
\end{equation*}
By Lemma \ref{l09} and \eqref{32}, since $\mf a^{(0)} (y,1) = 1$ for
all $y\in V$, the right-hand side is equal to
\begin{equation*}
\sum_{z\in V}
p^{(n)}_{\beta_n - T_\epsilon}(x,z) \, \pi^\sharp(y) \;+\; O(\epsilon)
\;+\; o_n(1) \;=\;
\pi^\sharp(y) \;+\; O(\epsilon) \;+\; o_n(1) \;,
\end{equation*}
which completes the proof of the corollary.
\end{proof}

Corollary \ref{l11} shows that the asymptotic behavior of the
transition probability $p^{(n)}_t$ is trivial if $\mf n=1$, that is if
the Markov chain $\bb X_t$ has a unique closed irreducible
class. Assume that $\mf n\ge 2$.

\subsection*{The time-scale $\theta^{(1)}_n$}

Recall the definition of $\mf n_1$, $S_1$, and the sets
$\ms V^{(1)}_j$, $j\in S_1$, $\Delta_1$, introduced just above
\eqref{201}.  Let $\theta_n = \theta^{(1)}_n$ be given by \eqref{26b}
with $p=1$.

Recall from \cite[Section 2.3]{lx16} the definition of the sequence
$\alpha_n$. In the present context, by \eqref{01}, the sequence
$\alpha_n$ converges to a positive real number.  By Assertions 7.B and
equation (7.4) in \cite{lx16}, $\theta_n\succ 1$. The next result is
the first assertion of Theorem \ref{mt0}.

\begin{proposition}
\label{mt1}
Let $( \beta_n : n\ge 1)$ be a sequence such that
$1\prec \beta_n \prec \theta_n$. Then, \eqref{27b} holds for all $x$,
$y\in V$.
\end{proposition}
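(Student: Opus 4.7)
The plan is to split the time interval $[0,\beta_n]$ at a bounded $T_\epsilon$, apply Lemma \ref{l09} to the initial segment, and handle the long remainder of length $\beta_n - T_\epsilon \prec \theta_n$ by a capacity estimate. Fix $\epsilon>0$ and let $T_\epsilon$ be given by Lemma \ref{l09}. By the Markov property,
\begin{equation*}
p^{(n)}_{\beta_n}(x,y) \;=\; \sum_{z\in V} p^{(n)}_{T_\epsilon}(x,z)\, p^{(n)}_{\beta_n-T_\epsilon}(z,y)\;.
\end{equation*}
Lemma \ref{l09} together with the explicit form \eqref{32} of $\omega$ yields $p^{(n)}_{T_\epsilon}(x,z) = \mf a^{(0)}(x,j)\,\pi^\sharp_j(z) + O(\epsilon) + o_n(1)$ for $z\in\ms V_j$, and $O(\epsilon) + o_n(1)$ for $z\in\Delta$. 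Substituting, it suffices to prove that for each $j\in S_1$ and $y\in V$,
\begin{equation*}
\nu^{n,j}(y) \;:=\; \sum_{z\in\ms V_j}\pi^\sharp_j(z)\, p^{(n)}_{\beta_n-T_\epsilon}(z,y) \;\longrightarrow\; \pi^\sharp_j(y)\,\chi_{\ms V_j}(y)\;.
\end{equation*}

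The question thus reduces to the evolution of $X^{(n)}$ from the initial distribution $\pi^\sharp_j$ concentrated on a single well. I would establish two complementary facts. First, with high probability $X^{(n)}$ has not left $\ms V_j$ by time $\beta_n - T_\epsilon$: the expected number of $\ms V_j\to\breve{\ms V}_j$ crossings starting from $\pi^\sharp_j$ in time $\beta_n$ is, by the definition \eqref{26b} of $\theta_n$ and the agreement of $\pi^\sharp_j$ with the limiting normalized restriction $\pi_n|_{\ms V_j}/\pi_n(\ms V_j)$, bounded by $C\beta_n/\theta_n \to 0$; hence by Markov's inequality $\mb P^n_{\pi^\sharp_j}[H_{\breve{\ms V}_j}\le \beta_n] = o_n(1)$, and the residual probability of being in $\Delta$ at time $\beta_n-T_\epsilon$ without ever reaching $\breve{\ms V}_j$ is controlled by the negligibility of the occupation time of $\Delta$ on scale $\theta_n$ (cf.\ Theorem \ref{t1}).

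Second, conditional on not leaving $\ms V_j$, $X^{(n)}$ evolves as a Markov chain on $\ms V_j$ whose rates $R_n|_{\ms V_j\times\ms V_j}$ converge to $\bb R_0|_{\ms V_j\times\ms V_j}$; the latter is irreducible with unique invariant measure $\pi^\sharp_j$. Since $\pi^\sharp_j$ is invariant for the limiting generator, starting from $\pi^\sharp_j$ the distribution at any time $t\le \beta_n - T_\epsilon$ stays within $o_n(1)$ of $\pi^\sharp_j$ by a standard semigroup-continuity argument. Combining this with the exit bound yields $\nu^{n,j}\to \pi^\sharp_j\,\chi_{\ms V_j}$, and letting $\epsilon\downarrow 0$ concludes.

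The main obstacle is the exit estimate: because $\beta_n$ may exceed $\gamma_n^{-1}$, a naive bound on the bare exit rate $O(\gamma_n)$ over the interval $[0,\beta_n]$ does not suffice. One must organize the trajectory into cycles of excursions and returns to $\ms V_j$, counting only successful crossings, so that the capacity $\Cap_n(\ms V_j,\breve{\ms V}_j)$ --- rather than the possibly much larger bare rate --- controls the crossing probability. This is exactly the type of estimate developed in \cite{lx16} and underlying the whole induction-on-time-scales scheme.
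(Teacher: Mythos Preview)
Your opening move --- split at $T_\epsilon$, apply Lemma~\ref{l09}, and reduce to showing
\[
\nu^{n,j}(y) \;=\; \sum_{z\in\ms V_j}\pi^\sharp_j(z)\, p^{(n)}_{\beta_n-T_\epsilon}(z,y) \;\longrightarrow\; \pi^\sharp_j(y)\,\chi_{\ms V_j}(y)
\]
--- is exactly how the paper begins. The exit estimate you describe is also the right one and is precisely Lemma~\ref{l07}; the paper proves it directly from Lemma~\ref{l06} and the definition~\eqref{26b} of $\theta_n$, so your worry in the last paragraph is unfounded.

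The gap is in your ``second fact''. You write that, conditional on not leaving $\ms V_j$, the process evolves as the restricted chain and, since $\pi^\sharp_j$ is invariant for the limiting restricted generator, the law stays within $o_n(1)$ of $\pi^\sharp_j$ over $[0,\beta_n-T_\epsilon]$ by ``semigroup continuity''. Two problems. First, the event $\{H_{\ms V_j^c}>\beta_n\}$ need not have probability close to $1$: the process typically makes many excursions into $\Delta$ and returns, since $\beta_n$ may be much larger than $\gamma_n^{-1}$. What is true (Lemma~\ref{l07}) is that $\breve{\ms V}_j$ is not hit, which is weaker. Second, even granting the right conditioning, the conditioned process is a time-inhomogeneous Doob transform, not the chain with rates $R_n|_{\ms V_j\times\ms V_j}$; and $\pi^\sharp_j$ is not invariant for $\ms L_n$, so a naive Gronwall bound of the form $\|\nu^n_t-\pi^\sharp_j\|\le C\,t\,\|\ms L_n^*\pi^\sharp_j\|$ blows up since $t=\beta_n\to\infty$. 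There is no ``standard semigroup-continuity argument'' that delivers this uniformly over an unbounded interval.

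The paper avoids tracking the law through the long middle segment altogether. It first strengthens the confinement from $\breve{\ms V}_j$ to the full complement of the basin $\ms B_j=\{w:\mf a^{(0)}(w,j)=1\}$ (Corollary~\ref{l08}, deduced from Lemma~\ref{l07} by a short strong-Markov argument). Then it performs a \emph{second} split at time $\beta_n-2T_\epsilon$: on the high-probability event $\{H_{\ms B_j^c}\ge\beta_n\}$, the state at that time lies in $\ms B_j$, and for any $w\in\ms B_j$ Lemma~\ref{l09} gives $p^{(n)}_{T_\epsilon}(w,\cdot)=\pi^\sharp_j(\cdot)+O(\epsilon)+o_n(1)$ because $\mf a^{(0)}(w,j)=1$. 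This double-sandwich (apply Lemma~\ref{l09} at both ends, use only confinement in between) replaces your unjustified ``stays near $\pi^\sharp_j$'' claim. Your sketch can be repaired exactly along these lines; in particular, the relevant confinement set is $\ms B_j$, not $\ms V_j$.
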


Recall that we call the sets $\ms V_j$ wells.  A time scale
$\beta_n\prec \theta_n$ is not long enough to allow the process to
jump from a well to another. This is the content of the next two
results. Lemma \ref{l07} states that starting from a well $\ms V_j$
the process does not visit another well (the set $\breve{\ms V}_j$
introduced in \eqref{26b}) in a time-scale $\beta_n$ such that
$\beta_n \prec \theta_n$. Corollary \ref{l08} extends this result
asserting that the points that might end up in another well (the set
$\cup_{k\not = j} \ms W_k$) are also not visited in this time-scale.

\begin{lemma}
\label{l07}
Let $(\beta_n : n\ge 1)$ be a sequence such that
$\beta_n \prec \theta_n$. Then, for all $j\in S_1$, $x\in \ms V_j$, 
\begin{equation*}
\lim_{n\to\infty}
\mb P^{n}_{\! x} \big[\,
H_{\breve{\ms V}_j} \,<\, \beta_n \,\big]\,=\; 0\;.
\end{equation*}
\end{lemma}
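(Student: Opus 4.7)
\smallskip\noindent\textbf{Plan.}
The strategy is to reduce the pointwise statement to an averaged statement via Lemma \ref{l09}, and then control the averaged version by a capacity-based hitting-time bound.

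First I would establish the averaged bound. By \eqref{26b}, $\Cap_n(\ms V_j,\breve{\ms V}_j)/\pi_n(\ms V_j) \le 1/\theta^{(1)}_n$. For any continuous-time Markov chain, a standard argument using the equilibrium potential yields an estimate of the form
\begin{equation*}
\mb P^n_{\nu_n}\big[\, H_{\breve{\ms V}_j} < t \,\big] \;\le\; \frac{C\, t \, \Cap_n(\ms V_j, \breve{\ms V}_j)}{\pi_n(\ms V_j)}
\end{equation*}
where $\nu_n(y) := \pi_n(y)/\pi_n(\ms V_j)$ and $C>0$ is a constant. Setting $t=\beta_n$ and using $\beta_n \prec \theta^{(1)}_n$, the right-hand side vanishes as $n\to\infty$.

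Next I would transfer this averaged bound to a pointwise bound for an arbitrary starting point $x\in\ms V_j$. By \eqref{32} and \eqref{35}, for such $x$ the limit $\omega(x,\,\cdot\,)$ equals $\pi^\sharp_j$ on $\ms V_j$ and vanishes elsewhere. Fixing $\epsilon>0$, choose $T_\epsilon$ as in Lemma \ref{l09} and apply the strong Markov property at $T_\epsilon$:
\begin{equation*}
\mb P^n_{\! x}\big[\, H_{\breve{\ms V}_j} < \beta_n \,\big]
\;\le\; \mb P^n_{\! x}\big[\, H_{\breve{\ms V}_j} \le T_\epsilon \,\big]
\;+\; \sum_{y\in V}\mb P^n_{\! x}\big[\, X_{T_\epsilon}=y \,\big]\, \mb P^n_{\! y}\big[\, H_{\breve{\ms V}_j} < \beta_n \,\big]\;.
\end{equation*}
The first term tends to $0$ because $\bb X_t$ starting in $\ms V_j$ never leaves $\ms V_j$, and the coupling estimate \eqref{27} extends this to $X^{(n)}_t$ up to time $T_\epsilon$. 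For the second term, Lemma \ref{l09} shows that the distribution of $X^{(n)}_{T_\epsilon}$ is, up to error $\epsilon$, concentrated on $\ms V_j$ with density $\pi^\sharp_j$.

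To bound the remaining sum by the averaged probability from the first step, I need the comparability $\pi^\sharp_j(y) \le C_j\, \nu_n(y)$ for $y\in\ms V_j$ and $n$ large. This follows from the $p=1$ case of Proposition \ref{p3} (namely $\pi_n(y)/\pi_n(\ms V_j) \to \pi^\sharp_j(y)$), which can be obtained directly from the Matrix-tree representation of $\pi_n$ mentioned just after \eqref{05}, together with the fact that $\pi^\sharp_j$ has full support on $\ms V_j$. Combining the three ingredients gives $\limsup_n \mb P^n_{\! x}[H_{\breve{\ms V}_j} < \beta_n] \le C'\epsilon$, and letting $\epsilon\to 0$ concludes the proof. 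The main obstacle is the capacity-based hitting-time estimate in the first step for non-reversible chains: in the reversible setting it follows easily from spectral theory, whereas the general case requires either a time-reversal argument or a direct martingale estimate using the equilibrium potential.
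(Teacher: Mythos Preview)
Your argument is correct but substantially more elaborate than the paper's. The paper dispatches the lemma in three lines by applying the singleton case of Lemma~\ref{l06} directly: $\mb P^n_{\! x}[H_{\breve{\ms V}_j} \le \beta_n] \le e\,\beta_n\,\Cap_n(\{x\},\breve{\ms V}_j)/\pi_n(x)$, then uses \eqref{58} to pass from $\pi_n(x)$ to $\pi_n(\ms V_j)$ and monotonicity of capacity (equation~(B2) in \cite{lrev}) to bound $\Cap_n(\{x\},\breve{\ms V}_j)\le\Cap_n(\ms V_j,\breve{\ms V}_j)$, whence the bound $C_0\,\beta_n/\theta_n\to 0$. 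Your route---establishing the estimate first for the conditioned measure $\nu_n$ and then transferring it to a deterministic starting point via Lemma~\ref{l09} and the coupling \eqref{27}---works, and has the mild advantage of needing only the averaged capacity bound (the first part of Lemma~\ref{l06} with $\nu=\pi_A$); but your worry about the non-reversible case is misplaced here, since Lemma~\ref{l06} (both parts) is stated and holds for general chains. One small simplification: for the comparability $\pi^\sharp_j(y)\le C_j\,\nu_n(y)$ you only need that $\nu_n(y)$ has a strictly positive limit, which is exactly \eqref{58}; identifying that limit as $\pi^\sharp_j(y)$ via Proposition~\ref{p3} or the Matrix-tree theorem is not actually required.
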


\begin{proof}
Fix $j\in S$, $x\in \ms V_j$.  By Lemma \ref{l06} and \eqref{58}, the
probability appearing in the statement of the lemma is bounded by
$ C_0 \, \beta_n\, \Cap_n (\{x\} \, ,\, \breve{\ms V}_j) / \pi_n (\ms
V_j)$ for some finite constant $C_0$, independent of $n$ and whose
value may change from line to line. By equation (B2) in \cite{lrev},
this expression is bounded by
$ C_0 \, \beta_n\, \Cap_n (\ms V_j \, ,\, \breve{\ms V}_j) / \pi_n
(\ms V_j)$. By the definition \eqref{26b} of $\theta_n$, this
expression is less than or equal to $ C_0 \, \beta_n\, /\, \theta_n$.
This concludes the proof of the lemma.
\end{proof}

\begin{corollary}
\label{l08}
Let $(\beta_n : n\ge 1)$ be an increasing sequence such that
$\beta_n \prec \theta_n$. Then, for all $j\in S$, $x\in \ms V_j$,
\begin{equation*}
\lim_{n\to\infty}
\mb P^{n}_{\! x} \big[\,
H_{\ms B_j^c} \,<\, \beta_n \,\big]\,=\; 0\;.
\end{equation*}
\end{corollary}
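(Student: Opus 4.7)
The plan is to reduce Corollary \ref{l08} to Lemma \ref{l07} by showing that if the process hits some point of $\ms B_j^c=\bigcup_{k\neq j}\ms W_k$, then within an additional \emph{constant} time $T$ it will with positive probability be carried into the competing well $\breve{\ms V}_j$. Concatenating these two stages via the strong Markov property produces an upper bound of the form
\begin{equation*}
\mb P^n_{\! x}\big[\,H_{\ms B_j^c}<\beta_n\,\big]\;\le\;\alpha^{-1}\,\mb P^n_{\! x}\big[\,H_{\breve{\ms V}_j}<\beta_n+T\,\big]\;+\;o_n(1),
\end{equation*}
for some constant $\alpha>0$; since $\beta_n+T\prec\theta_n$, the right-hand side tends to $0$ by Lemma \ref{l07}.

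The key ingredient is a uniform lower bound $\mb P^n_y[X^{(n)}_T\in\breve{\ms V}_j]\ge\alpha$ valid for every $y\in\ms B_j^c$ and every $n$ large enough. To construct it, first I would fix, for each $y\in\ms B_j^c$, an index $k=k(y)\neq j$ with $y\in\ms W_{k(y)}$, so that $\mf a^{(0)}(y,k(y))>0$ by the definition \eqref{31}. Using \eqref{33} and the ergodic theorem for $\bb X_t$, pick $T_y<\infty$ so large that $\bb Q_y[\bb X_{T_y}\in\ms V_{k(y)}]\ge\tfrac12\,\mf a^{(0)}(y,k(y))$. Since $\ms B_j^c$ is finite, set $T:=\max_y T_y$ and $\alpha:=\tfrac14\min_{y\in\ms B_j^c}\mf a^{(0)}(y,k(y))>0$. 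The basic coupling between $X^{(n)}_t$ and $\bb X_t$ used in Lemma \ref{l09} (recall \eqref{27} on a finite time interval) then implies, for $n$ large and uniformly in $y\in\ms B_j^c$,
\begin{equation*}
\mb P^n_{\! y}\big[\,X^{(n)}_T\in\ms V_{k(y)}\,\big]\;\ge\;\bb Q_y\big[\,\bb X_T\in\ms V_{k(y)}\,\big]\;-\;o_n(1)\;\ge\;2\alpha.
\end{equation*}
In particular $\mb P^n_y[X^{(n)}_T\in\breve{\ms V}_j]\ge 2\alpha$ for every $y\in\ms B_j^c$ and all $n$ sufficiently large.

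With this uniform lower bound at hand, the strong Markov property applied at $H_{\ms B_j^c}$ gives, decomposing on the value $y=X^{(n)}_{H_{\ms B_j^c}}\in\ms B_j^c$,
\begin{equation*}
\mb P^n_{\! x}\big[\,H_{\breve{\ms V}_j}\le H_{\ms B_j^c}+T,\;H_{\ms B_j^c}<\beta_n\,\big]\;\ge\;2\alpha\,\mb P^n_{\! x}\big[\,H_{\ms B_j^c}<\beta_n\,\big].
\end{equation*}
The left-hand side is bounded above by $\mb P^n_{\! x}[H_{\breve{\ms V}_j}<\beta_n+T]$. Because $\beta_n\prec\theta_n$ and $T$ is a constant, the sequence $\beta_n+T$ still satisfies $\beta_n+T\prec\theta_n$, so Lemma \ref{l07} yields $\mb P^n_{\! x}[H_{\breve{\ms V}_j}<\beta_n+T]\to 0$. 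Dividing by $2\alpha$ concludes the proof.

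The only delicate point is justifying the uniform coupling estimate at the intermediate endpoint $y$; but since $V$ is finite and $T$ is a fixed constant, \eqref{27} applied starting from each $y\in\ms B_j^c$ (with the minimum over the finite set $\ms B_j^c$ taken afterwards) makes this straightforward. The increasing assumption on $\beta_n$ is not really used; it is kept only so that $\beta_n+T$ is manifestly of the same order and Lemma \ref{l07} applies verbatim.
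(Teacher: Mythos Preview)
Your proof is correct and follows essentially the same idea as the paper: reduce to Lemma~\ref{l07} by showing that any visit to $\ms B_j^c$ is followed, within bounded time and with uniformly positive probability, by a visit to $\breve{\ms V}_j$, and then apply the strong Markov property together with the coupling~\eqref{27}. The paper argues by contradiction (fixing a single bad point $z\in\ms W_k$ along a subsequence) and separately treats the cases $\beta_n\succ 1$ and $\beta_n$ bounded, whereas your direct argument with the uniform constant $\alpha$ handles all cases at once; this is slightly cleaner. One minor point: when passing from $T_y$ to $T=\max_y T_y$, you implicitly use that $t\mapsto\bb Q_y[\bb X_t\in\ms V_{k(y)}]$ is non-decreasing (since $\ms V_{k(y)}$ is absorbing for $\bb X_t$); it would be worth saying this explicitly, or instead choosing $T_y$ so that the bound holds for all $t\ge T_y$.
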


\begin{proof}
Assume first that $\beta_n \succ 1$.  Fix $j\in S$ and
$x\in \ms V_j$ and keep in mind that $\ms B_j^c = \cup_{k\not =j} \ms
W_k$.

We proceed by contradiction. Suppose the assertion does not hold.  In
this case, there exists $\delta>0$, $k\not = j$, $z\in \ms W_k$ and a
subsequence $n'$, still denoted by $n$, such that
$\mb P^{n}_{\! x} [\, H_{z} \,<\, \beta_n \,]\,>\, \delta$ for all
$n$. By the strong Markov property and this bound,
\begin{equation*}
\mb P^{n}_{\! x} \big[\,
H_{\breve{\ms V}_j} \,<\, 2\, \beta_n \,\big]\,\ge\,
\mb P^{n}_{\! x} [\, H_{z} \,<\, \beta_n \,]\,
\mb P^{n}_{\! z} \big[\,
H_{\breve{\ms V}_j} \,<\, \beta_n \,\big]
\,\ge\, \delta \,
\mb P^{n}_{\! z} \big[\,
H_{\breve{\ms V}_j} \,<\, \beta_n \,\big]\;.
\end{equation*}
Since $z\in \ms W_k$, there exists $\delta'>0$ and $T_0<\infty$, such
that $\bb Q_z \big[\, H_{\ms V_k} < T_0 \,\big] \, >\, \delta'$. By
\eqref{27} this estimate extends to $X^{(n)}_t$:
$\mb P^{n}_{\! z} \big[\, H_{\ms V_k} < T_0 \,\big] \, >\, \delta'/2$
for all $n$ sufficiently large.

Combining the previous estimates yields that
$\mb P^{n}_{\! x} [\, H_{\breve{\ms V}_j} \,<\, 2\, \beta_n \,]\,\ge\,
\delta\, \delta'/2$ for all $n$ sufficiently large because
$\beta_n \to\infty$. This result contradicts the assertion of Lemma
\ref{l07} and completes the proof of the corollary in the case
$\beta_n \succ 1$.

If the sequence $\beta_n$ is bounded, the result follows from the
coupling \eqref{27} because $\bb Q_x [\, H_{\ms W_k} < \infty \,]
\,\le\, \bb Q_x [\, H_{\ms V^c_j} < \infty \,]
\,=\, 0$ for all $x\in \ms V_j$, $k\not = j$.
\end{proof}

\begin{proof}[Proof of Proposition \ref{mt1}]
Fix $x$, $y\in V$, $\epsilon>0$, and recall the definition of
$\omega(x,y)$ introduced in \eqref{30}. Since $\ms V$ represents the
set of recurrent points of the chain $\bb X_t$, there exists
$T_\epsilon>0$ such that
\begin{equation}
\label{28}
\bb Q_w\big[\, \bb X_T \, \in\, \ms V\,\big] \;\ge\; 1\,-\, \epsilon
\;, \quad
\big|\, \bb Q_w\big[\, \bb X_T \, = \, z\,\big] \,-\, 
\omega(w,z) \,\big| \;\le\; \epsilon
\end{equation}
for all $w$, $z\in V$, $T\ge T_\epsilon$.  

Assume first that $y\in \Delta$. By the Markov property,
\begin{equation*}
\mb P^{n}_{\! x} \big[\, X_{\beta_n} \,=\, y \,\big]
\;=\; \sum_{z\in V}
\mb P^{n}_{\! x} \big[\, X_{\beta_n- T_\epsilon} \,=\, z \,\big]\,
\mb P^{n}_{\! z} \big[\, X_{T_\epsilon} \,=\, y \,\big]\;.
\end{equation*}
By \eqref{27}, \eqref{28} and \eqref{32}, the right-hand side is
bounded by $o_n(1) + \epsilon$, which proves \eqref{27b} for $y\in
\Delta$. 

Assume that $y\in \ms V_k$ for some $k\in S_1$.  By the Markov property,
\begin{equation*}
\mb P^{n}_{\! x} \big[\, X_{\beta_n} \,=\, y \,\big]
\;=\; \sum_{z\in V}
\mb P^{n}_{\! x} \big[\, X_{T_\epsilon} \,=\, z \,\big]\,
\mb P^{n}_{\! z} \big[\, X_{\beta_n-T_\epsilon} \,=\, y \,\big]\;.
\end{equation*}
By \eqref{27}, \eqref{28} and \eqref{32}, the right-hand side is equal
to
\begin{equation*}
\sum_{j\in S} \sum_{z\in \ms V_j } \mf a^{(0)}(x,j) \, \pi^\sharp_j(z)
\, \mb P^{n}_{\! z} \big[\, X_{\beta_n-T_\epsilon} \,=\, y \,\big]
\,+\, o_n(1) \,+\, O(\epsilon) \;.
\end{equation*}
Since $\beta_n\prec \theta_n$, by Corollary \ref{l08}, we may add
inside the probability the event $\{H_{\ms B^c_j} \,\ge\,
\beta_n\}$. The previous sum is thus equal to
\begin{equation*}
\sum_{j\in S} \sum_{z\in \ms V_j }  \mf a^{(0)}(x,j)\, \pi^\sharp_j(z)
\, \mb P^{n}_{\! z} \big[\, X_{\beta_n-T_\epsilon} \,=\, y
\,,\, H_{\ms B^c_j} \,\ge\, \beta_n \,\big]
\,+\, o_n(1) \,+\, O(\epsilon)\;.
\end{equation*}
As $y$ belongs to $\ms V_k$ and $\ms V_k \cap \ms B_j =\varnothing$ if
$j\not = k$, this sum is equal to
\begin{equation*}
\sum_{z\in \ms V_k}  \mf a^{(0)}(x,k)\, \pi^\sharp_k(z)
\, \mb P^{n}_{\! z} \big[\, X_{\beta_n-T_\epsilon} \,=\, y
\,,\, H_{\ms B^c_k} \,\ge\, \beta_n \,\big]
\,+\, o_n(1) \,+\, O(\epsilon)\;.
\end{equation*}
In view of the presence of the event $\{H_{\ms B^c_k} \,\ge\,
\beta_n\}$, the previous probability is equal to 
\begin{equation*}
\sum_{w\in \ms B_k}
\mb P^{n}_{\! z} \big[\, X_{\beta_n-T_\epsilon} \,=\, y
\,,\, X_{\beta_n-2T_\epsilon} \,=\, w
\,,\, H_{\ms B^c_k} \,\ge\, \beta_n \,\big]
\end{equation*}
By Corollary \ref{l08}, we may remove the event
$\{H_{\ms B^c_k} \,\ge\, \beta_n\}$ at a cost $o_n(1)$ and apply the
Markov property to conclude that the previous sum is equal to
\begin{equation*}
\sum_{w\in \ms B_k}
\mb P^{n}_{\! z} \big[\, X_{\beta_n-2T_\epsilon} \,=\, w
\,\big] \; 
\mb P^{n}_{\! w} \big[\, X_{T_\epsilon} \,=\, y \,\big]
\,+\, o_n(1)\;.
\end{equation*}
By \eqref{27}, \eqref{28} and \eqref{32}, this expression is equal
to
\begin{equation*}
\sum_{w\in \ms B_k} \mb P^{n}_{\! z} \big[\, X_{\beta_n-2T_\epsilon} \,=\, w
\,\big] \;  \mf a^{(0)}(w,k) \, \pi^\sharp_k(y)
\,+\, o_n(1) \,+\, O(\epsilon) \;.
\end{equation*}
Since $w$ belongs to $\ms B_k$, $\mf a^{(0)}(w,k) =1$ and the previous
expression is equal to
\begin{equation*}
\pi^\sharp_k(y) \, 
\mb P^{n}_{\! z} \big[\, X_{\beta_n-2T_\epsilon} \,\in \, \ms B_k
\,\big] \;  
\,+\, o_n(1) \,+\, O(\epsilon) \;.
\end{equation*}
Since $z$ belongs to $\ms V_k$ and
$\{X_{\beta_n-2T_\epsilon} \,\not \in \, \ms B_k\} \subset \{H_{\ms
B^c_k} \,\le\, \beta_n \}$, by Corollary \ref{l08}, the expression in
the previous displayed equation is equal to
$\pi^\sharp_k(y) \,+\, o_n(1) \,+\, O(\epsilon)$.

Combining the previous estimates yields that
\begin{equation*}
\begin{aligned}
\mb P^{n}_{\! x} \big[\, X_{\beta_n} \,=\, y \,\big]
\; & =\; \sum_{z\in \ms V_k}  \mf a^{(0)}(x,k)\, \pi^\sharp_k(z)
\pi^\sharp_k(y) \,+\, o_n(1) \,+\, O(\epsilon) \\
& =\; \mf a^{(0)}(x,k)\,
\pi^\sharp_k(y) \,+\, o_n(1) \,+\, O(\epsilon)  \;,
\end{aligned}
\end{equation*}
as claimed.
\end{proof}

\subsection*{The time-scale $t\, \theta_n$}

We turn to the proof of Theorem \ref{mt0}.(b) for $p=1$.

\begin{proposition}
\label{mt2}
Assertion \eqref{60} holds for $p=1$ and all $t>0$, $x\in V$.
\end{proposition}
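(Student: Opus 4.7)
The plan is to combine two Markov decompositions with Proposition~\ref{mt1} applied in two short windows and Theorem~\ref{t1} applied on the scale $\theta^{(1)}_n$. Fix a sequence $\beta_n$ with $1\prec \beta_n \prec \theta^{(1)}_n$. The Markov property at time $\beta_n$ gives
\[
p^{(n)}_{t\theta^{(1)}_n}(x,y) \;=\; \sum_{w\in V} p^{(n)}_{\beta_n}(x,w)\, p^{(n)}_{t\theta^{(1)}_n - \beta_n}(w,y)\;.
\]
By Proposition~\ref{mt1}, $p^{(n)}_{\beta_n}(x,w) \to \sum_{k\in S_1} \mf a^{(0)}(x,k)\, \pi^{(1)}_k(w)$ pointwise in $w\in V$, and the limit measure is supported on $\ms V$. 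Because $V$ is finite and transition probabilities are bounded by one, this substitution introduces only an $o_n(1)$ error:
\[
p^{(n)}_{t\theta^{(1)}_n}(x,y) \;=\; \sum_{k\in S_1} \mf a^{(0)}(x,k) \sum_{w\in \ms V_k} \pi^{(1)}_k(w)\, p^{(n)}_{t\theta^{(1)}_n - \beta_n}(w,y) \;+\; o_n(1)\;.
\]
The problem is therefore reduced to showing, for each $k\in S_1$, $w\in \ms V_k$ and every deterministic sequence $s_n\to t$ with $s_n>0$,
\[
\lim_{n\to\infty} p^{(n)}_{s_n\theta^{(1)}_n}(w,y) \;=\; \sum_{l\in S_1} p^{(1)}_t(k,l)\, \pi^{(1)}_l(y)\;.
\]

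To prove this reduced statement I would apply the Markov property once more, at time $s_n\theta^{(1)}_n - \beta_n$, and split the intermediate sum according to whether the midpoint $z$ lies in some $\ms V_l$ or in $\Delta$. For $z\in \ms V_l$, Proposition~\ref{mt1} combined with $\mf a^{(0)}(z,j)=\delta_{l,j}$ yields $p^{(n)}_{\beta_n}(z,y)\to \pi^{(1)}_l(y)$ for each $z$; since $\ms V_l$ is finite, the part of the sum with $z\in \ms V_l$ equals
\[
\pi^{(1)}_l(y)\;\mb P^n_w\big[X^{(n)}_{s_n\theta^{(1)}_n - \beta_n} \in \ms V_l\big] \;+\; o_n(1)\;.
\]
Theorem~\ref{t1} asserts that $\Phi_1(X^{(n)}_{s\theta^{(1)}_n})$ converges weakly in the Skorohod topology to $\bb X^{(1)}_s$ for every starting point $w\in \ms V$. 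Since any fixed $t>0$ is almost surely a continuity point of $\bb X^{(1)}$, the standard consequence of Skorohod convergence at deterministic times converging to a continuity point of the limit applied with $s'_n := s_n - \beta_n/\theta^{(1)}_n \to t$ gives
\[
\mb P^n_w\big[X^{(n)}_{s_n\theta^{(1)}_n - \beta_n}\in \ms V_l\big] \;\longrightarrow\; p^{(1)}_t(k,l)\;.
\]

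The main technical point is to control the contribution from $z\in\Delta$ in the second Markov decomposition. This contribution is bounded by $\mb P^n_w[X^{(n)}_{s_n\theta^{(1)}_n-\beta_n}\in \Delta]$; extending $\Phi_1$ by $0$ on $\Delta$ and invoking the same Skorohod argument together with the fact that $\bb X^{(1)}$ is $S_1$-valued, this probability converges to $\mb P[\bb X^{(1)}_t = 0]=0$. Putting the pieces together establishes the reduced claim; substituting back into the first decomposition and using $\sum_{w\in \ms V_k}\pi^{(1)}_k(w)=1$ produces
\[
\lim_{n\to\infty} p^{(n)}_{t\theta^{(1)}_n}(x,y) \;=\; \sum_{l\in S_1} \pi^{(1)}_l(y) \sum_{k\in S_1}\mf a^{(0)}(x,k)\, p^{(1)}_t(k,l) \;=\; \sum_{l\in S_1} \omega^{(1)}_t(x,l)\, \pi^{(1)}_l(y)\;,
\]
which is precisely the assertion of Proposition~\ref{mt2}.
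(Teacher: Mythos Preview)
Your argument is correct and follows the same three-segment Markov decomposition as the paper: two short windows at the ends to land in $\ms V$ and average over a well, and a middle segment on which convergence of the coarse-grained process to $\bb X^{(1)}$ is invoked. The only differences are cosmetic. The paper uses Lemma~\ref{l09} with a \emph{fixed} time $T_\epsilon$ for the bookend windows, whereas you use Proposition~\ref{mt1} with a growing window $\beta_n$ satisfying $1\prec\beta_n\prec\theta^{(1)}_n$; both produce the same limiting kernel $\sum_k \mf a^{(0)}(x,k)\,\pi^{(1)}_k(\cdot)$. For the middle segment the paper appeals to \eqref{37}, which it obtains from \cite[Proposition~2.1]{llm18}, \cite[Theorem~2.7]{lx16} and Lemma~\ref{l10}, while you instead read off the marginal convergence directly from the Skorohod convergence asserted in Theorem~\ref{t1} at an a.s.\ continuity point of $\bb X^{(1)}$; your treatment of the $\Delta$-contribution via $\{\Phi_1=0\}$ is exactly the complement of this. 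Your route is slightly more economical in that it avoids re-deriving Lemma~\ref{l10}, at the cost of relying on the full strength of Theorem~\ref{t1} rather than its ingredients.
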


The proof of this result relies on the following lemma.

\begin{lemma}
\label{l10}
Recall the definition of the set $\Delta$ introduced in
\eqref{05}. Then, 
\begin{equation*}
\lim_{\delta\to 0} \limsup_{n\to\infty} \max_{j\in S}
\max_{x\in \ms V_j} \sup_{2\delta \le s \le 3\delta}
\mb P^n_{\! x} \big[\, X_{s\theta_n} \in \Delta\,\big] \;=\; 0\;.
\end{equation*}
\end{lemma}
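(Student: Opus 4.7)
The plan is to exploit the transience of $\Delta$ for the limiting chain $\bb X_t$ via a short-time coupling, applying the Markov property at time $s\theta_n - T$ for some fixed $T$ independent of $n$. Since $\theta_n \to \infty$ and $s \ge 2\delta > 0$, we have $s\theta_n > T$ for $n$ large, so this decomposition is well-defined.

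First, because $\Delta$ consists precisely of the transient states of the chain $\bb X_t$ (it is the complement of the union of the closed irreducible classes $\ms V_1, \dots, \ms V_{\mf n}$), we have $\lim_{t\to\infty} \bb Q_y[\bb X_t \in \Delta] = 0$ for every $y \in V$. Since $V$ is finite, this convergence is uniform in $y$, so for any $\epsilon > 0$ there exists $T_\epsilon < \infty$ with $\max_{y \in V} \bb Q_y[\bb X_{T_\epsilon} \in \Delta] \le \epsilon$. Invoking the basic coupling between $X^{(n)}_t$ and $\bb X_t$ used in the proof of Lemma \ref{l09}, the convergence \eqref{27} applied with $T = T_\epsilon$ yields
$$\max_{y \in V} \mb P^n_y\big[X_{T_\epsilon} \in \Delta\big] \;\le\; \max_{y \in V} \bb Q_y\big[\bb X_{T_\epsilon} \in \Delta\big] \;+\; o_n(1) \;\le\; \epsilon \;+\; o_n(1).$$

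Now fix $\delta > 0$ and $s \in [2\delta, 3\delta]$. For $n$ large enough that $2\delta \theta_n > T_\epsilon$, the Markov property at time $s\theta_n - T_\epsilon$ gives
$$\mb P^n_x\big[X_{s\theta_n} \in \Delta\big] \;=\; \sum_{y \in V} \mb P^n_x\big[X_{s\theta_n - T_\epsilon} = y\big]\, \mb P^n_y\big[X_{T_\epsilon} \in \Delta\big] \;\le\; \epsilon \;+\; o_n(1),$$
uniformly in $x \in V$, in $j \in S$, and in $s \in [2\delta, 3\delta]$. Taking $\limsup_n$ followed by $\lim_{\delta \to 0}$ gives a value at most $\epsilon$; since $\epsilon > 0$ is arbitrary, the required limit is zero.

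The argument is essentially straightforward modulo the bookkeeping in the first two steps; the only point requiring attention is that the coupling estimate \eqref{27} must be applied with a time horizon $T_\epsilon$ that is fixed independently of $n$, which is exactly the regime in which \eqref{01} makes the coupling succeed with probability tending to one. Note that this approach in fact proves a slightly stronger statement than the lemma requires: the bound holds uniformly over all $x \in V$ (not only $x \in \ms V_j$), and the restriction to $s \in [2\delta, 3\delta]$ can be replaced by any range of $s$ with $s\theta_n \to \infty$, so the outer limit $\lim_{\delta\to 0}$ is not genuinely needed.
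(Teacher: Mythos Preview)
Your proof is correct and follows essentially the same approach as the paper: apply the Markov property at time $s\theta_n - T_\epsilon$ and bound the remaining probability $\mb P^n_y[X_{T_\epsilon}\in\Delta]$ uniformly in $y$ via the coupling with $\bb X_t$. The paper packages the coupling step by invoking Lemma~\ref{l09} together with \eqref{32} (which gives $\omega(y,z)=0$ for $z\in\Delta$), whereas you unfold that lemma and appeal directly to the transience of $\Delta$ and \eqref{27}; the substance is identical, and your closing remark that the bound is uniform in $x\in V$ and independent of $\delta$ is a correct observation.
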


\begin{proof}
Fix $\epsilon>0$ and let $T_\epsilon$ be the constant given by Lemma
\ref{l09}.  By the Markov property, the probability appearing in the
statement of the lemma is bounded by
\begin{equation*}
\max_{y\in V} \mb P^n_{\! y} \big[\, X_{T_\epsilon} \in \Delta\,\big]
\end{equation*}
By \eqref{32} and Lemma \ref{l09}, this expression is bounded by
$\epsilon + o_n(1)$, which proves the lemma.
\end{proof}

By \cite[Proposition 2.1]{llm18}, \cite[Theorem 2.7]{lx16} and Lemma
\ref{l10}, for every $t>0$, $j$, $k\in S_1$, $x\in \ms V_j$, 
\begin{equation}
\label{37}
\lim_{n\to\infty} \mb P^n_{\! x} \big[\, X_{t\theta_n}
\,\in \, \ms V_k \,\big] \;=\; p^{(1)}_t(j,k)\;,
\end{equation}
where the transition probability $p^{(1)}_t$ has been introduced in
\eqref{61}. 

\begin{proof}[Proof of Proposition \ref{mt2}]

Suppose that $y\in \Delta$ and fix $t>0$, $\epsilon>0$. In this case,
by the Markov property
\begin{equation*}
\mb P^n_{\! x} \big[\, X_{t\theta_n} \,=\, y \,\big] \;=\;
\sum_{z\in V} \mb P^n_{\! x} \big[\, X_{t\theta_n - T_\epsilon}
\,=\, z \,\big] \, 
\mb P^n_{\! z} \big[\, X_{T_\epsilon} \,=\, y \,\big]\;, 
\end{equation*}
where $T_\epsilon$ is given by Lemma \ref{l09}. By this lemma, the
second probability on the right hand side is bounded by
$\omega(z,y) + \epsilon + o_n(1)$. By \eqref{32}, as $y\in \Delta$,
$\omega(z,y)=0$ so that
\begin{equation*}
\lim_{n\to\infty} \mb P^n_{\! x} \big[\, X_{t\theta_n} \,=\, y \,\big] \;=\;
0\;,
\end{equation*}
as claimed.

Suppose that $y\in \ms V_m$ for some $m\in S_1$ and fix $t>0$,
$\epsilon>0$. By the Markov property
\begin{equation*}
\mb P^n_{\! x} \big[\, X_{t\theta_n} \,=\, y \,\big] \;=\;
\sum_{z, z' \in V} \mb P^n_{\! x} \big[\, X_{T_\epsilon} \,=\, z \,\big]\
\mb P^n_{\! z} \big[\, X_{t\theta_n - 2 T_\epsilon}
\,=\, z' \,\big] \, 
\mb P^n_{\! z'} \big[\, X_{T_\epsilon} \,=\, y \,\big]\;, 
\end{equation*}
where $T_\epsilon$ is given by Lemma \ref{l09}. By this lemma and
\eqref{32}, which asserts that $\omega(x',y') =0$ if $y'\in\Delta$,
this expression is equal to
\begin{equation*}
\sum_{z' \in V} \sum_{j\in S_1} \sum_{z\in \ms V_j} \omega(x,z) \,
\mb P^n_{\! z} \big[\, X_{t\theta_n - 2 T_\epsilon}
\,=\, z' \,\big] \, \omega(z',y) \;+\; \epsilon \;+\; o_n(1) \;.
\end{equation*}
The first part of the proof permits to restrict the first sum to
$z'\in \ms V$. Since $y\in \ms V_m$, by \eqref{35}, we may further
restrict the sum to $z'\in \ms V_m$, and then replace $\omega(z',y)$
by $\pi^{(1)}_m(y)$. Hence the previous sum is equal to
\begin{equation*}
\pi^{(1)}_m(y)\, \sum_{j\in S_1} \sum_{z\in \ms V_j} \omega(x,z) \,
\mb P^n_{\! z} \big[\, X_{t\theta_n - 2 T_\epsilon}
\,\in \, \ms V_m  \,\big] \,  \;+\; \epsilon \;+\; o_n(1) \;,
\end{equation*}
where we summed over $z' \in \ms V_m$. By \eqref{37}, as $n\to\infty$,
this expression converges to
\begin{equation*}
\pi^{(1)}_m(y)\, \sum_{j\in S_1} \sum_{z\in \ms V_j} \omega(x,z) \,
p^{(1)}_t(j,m)  \;+\; \epsilon \;=\;
\sum_{j\in S_1} \mf a^{(0)} (x,j) \, p^{(1)}_t(j,m) \, \pi^{(1)}_m(y) \;+\;
\epsilon  \;,
\end{equation*}
as claimed.
\end{proof}

\section{Longer time-scales}
\label{sec4}

In this section, we complete the proof of Theorem \ref{mt0}.  We first
derive some properties of the weights $\mf a^{(p)}$ needed in the
argument. Recall that $\mf q$ represents the number of time-scales or
steps in the construction of the rooted tree in Section
\ref{sec1}. Moreover, the chain $\bb X^{(\mf q)}_t$ has only one
closed irreducible class.

Next result states that a point in the closed irreducible class
$\ms V^{(p+1)}_\ell$ is not absorbed at $\ms V^{(p+1)}_m$ for
$m\neq \ell$. 

\begin{lemma}
\label{l18}
For all $0\le p< \mf q$, $\ell \in S_{p+1}$, $x\in \ms
V^{(p+1)}_\ell$, 
\begin{equation}
\label{67}
\mf a^{(p)}(x,m) \;=\; 0
\quad \text{for all} \quad m \,\in\, S_{p+1} \setminus \{\ell\} \; .
\end{equation}
\end{lemma}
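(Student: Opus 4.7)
The plan is to prove this by induction on $p$, propagating the fact that $\mf a^{(p)}(x,\cdot)$ is a probability measure on $S_{p+1}$ and exploiting the closed-irreducible-class structure of the chain $\bb X^{(p)}_t$ at each step.

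For the base case $p=0$: if $x\in \ms V^{(1)}_\ell = \ms V_\ell$, then since $\ms V_\ell$ is by definition a closed irreducible class of $\bb X_t$, the chain starting at $x$ stays in $\ms V_\ell$ for all time. Hence $\bb Q_x[\bb X_t \in \ms V_m] = 0$ for every $m\neq \ell$, and the definition \eqref{33} immediately gives $\mf a^{(0)}(x,m)=0$.

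For the inductive step, I would assume the statement at level $p-1$ and prove it at level $p$. Fix $\ell\in S_{p+1}$ and $x\in \ms V^{(p+1)}_\ell$. By the construction of $\ms V^{(p+1)}_\ell$, there exists a (unique) $j\in \mf R^{(p)}_\ell$ with $x\in \ms V^{(p)}_j$. The inductive hypothesis, applied at level $p-1$ with $j\in S_p$ in place of $\ell$, yields $\mf a^{(p-1)}(x,k)=0$ for all $k\in S_p\setminus\{j\}$; because $\mf a^{(p-1)}(x,\cdot)$ is a probability measure on $S_p$, this forces $\mf a^{(p-1)}(x,j)=1$. Plugging into the recursion \eqref{33b} collapses the sum to a single term:
\begin{equation*}
\mf a^{(p)}(x,m) \;=\; \sum_{k\in S_p} \mf a^{(p-1)}(x,k)\, \mf A^{(p)}(k,m) \;=\; \mf A^{(p)}(j,m)\;.
\end{equation*}
It remains to show $\mf A^{(p)}(j,m)=0$ for $m\neq\ell$. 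This is immediate from \eqref{63}: since $j$ lies in the closed irreducible class $\mf R^{(p)}_\ell$ of the Markov chain $\bb X^{(p)}_t$, the chain starting from $j$ stays in $\mf R^{(p)}_\ell$ for all times, so $p^{(p)}_t(j,k')=0$ for every $k'\in\mf R^{(p)}_m$ (disjoint from $\mf R^{(p)}_\ell$ when $m\neq \ell$). Letting $t\to\infty$ yields $\mf A^{(p)}(j,m)=0$, completing the induction.

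I do not anticipate any genuine obstacle here: everything rests on the two elementary observations that (a) the absorption weights at each level sum to one, and (b) once a chain sits inside a closed irreducible class it cannot leave. The only care needed is bookkeeping — matching $x\in \ms V^{(p+1)}_\ell$ to a unique lower-level well $\ms V^{(p)}_j$ with $j\in \mf R^{(p)}_\ell$, so that the inductive hypothesis applies cleanly before unfolding the definition of $\mf a^{(p)}$.
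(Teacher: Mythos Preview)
Your proof is correct and follows essentially the same inductive strategy as the paper: establish the base case via the closed-class property of the $\ms V_j$, then in the step use that $x\in\ms V^{(p)}_j$ for some $j\in\mf R^{(p)}_\ell$ together with the induction hypothesis and the fact that $\mf R^{(p)}_\ell$ is closed for $\bb X^{(p)}_t$. The only cosmetic difference is the order in which you collapse the sum in \eqref{33b}: you first use the induction hypothesis to reduce to the single term $\mf A^{(p)}(j,m)$ and then invoke the closed-class property, whereas the paper first restricts the sum via $\mf A^{(p)}(\cdot,m)$ and then applies the induction hypothesis --- both arrive at the same place.
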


\begin{proof}
The proof is by induction in $p$. For $p=0$, by definition \eqref{33}
of $\mf a^{(0)}$, for all $\ell \in S_{1}$, $x\in \ms V_\ell$, $m \in
S_{1} \setminus \{\ell\}$, 
\begin{equation*}
\mf a ^{(0)}(x,m)  \;=\; \lim_{t\to\infty}
\bb Q_x\big[\, \bb X_t \, \in\, \ms V_m \,\big] \;=\; 0
\end{equation*}
because
the sets $\ms V_k$ are the closed irreducible classes of the chain
$\bb X_t$.

Assume that \eqref{67} holds for $0\le p\le r-1$. Fix
$\ell \in S_{r+1}$, $x\in \ms V^{(r+1)}_\ell$,
$m \in S_{r+1} \setminus \{\ell\}$. By definition of
$\mf a^{(r)}(x,m)$,
\begin{equation*}
\mf a^{(r)} (x,m)
\;:=\; \sum_{j\in S_r} \mf a^{(r-1)} (x, j) \, \mf A^{(r)} (j, m) \;.
\end{equation*}
We may restrict the sum to $j \in \mf R^{(r)}_m$. Indeed, since
$S_r \setminus \mf R^{(r)}_m = \cup_{k \in S_{r+1} \setminus \{m\}}
\mf R^{(r)}_k$ and since the sets $\mf R^{(r)}_k$, $k\in S_{r+1}$,
are the closed irreducible classes of the chain $\bb X^{(r)}_t$,
$\mf A^{(r)} (j, m)=0$ for $j\in S_r \setminus \mf R^{(r)}_m$. Hence,
\begin{equation*}
\mf a^{(r)} (x,m)
\;:=\; \sum_{j\in \mf R^{(r)}_m}
\mf a^{(r-1)} (x, j) \, \mf A^{(r)} (j, m) \;.
\end{equation*}

On the other hand, as
$x\in \ms V^{(r+1)}_\ell = \cup_{i \in \mf R^{(r)}_\ell} \ms
V^{(r)}_i$ and $\mf R^{(r)}_\ell \cap \mf R^{(r)}_m = \varnothing$
because $\ell \not = m$, $x$ belongs to some $\ms V^{(r)}_i$ with
$i\not \in \mf R^{(r)}_m$. Thus, by the induction assumption
$\mf a^{(r-1)} (x, j) =0$ for all $j\in \mf R^{(r)}_m$, which yields
that $\mf a^{(r)} (x,m)=0$, as claimed.
\end{proof}

The previous result is stated for $p<\mf q$ because
$\bb X^{(\mf q)}_t$ has only one irreducible class which makes
$S_{\mf q+1}$ a singleton.

It has been noted, just before the statement of Theorem \ref{mt0},
that $\mf a^{(p)} (x, \,\cdot\,)$ is a probability measure on
$S_{p+1}$ for all $x\in V$. Therefore, by the previous lemma, for all
$1\le p < \mf q$, $\ell \in S_{p+1}$, $x\in \ms V^{(p+1)}_\ell$,
\begin{equation}
\label{69}
\mf a^{(p)} (x, \ell) \;=\; 1 \quad\text{so that}\quad 
\Pi_p(x,\,\cdot\,) \;=\; \pi^{(p+1)}_\ell
(\,\cdot\,) \;,
\end{equation}
where $\Pi_p(x,\,\cdot\,)$ has been introduced in \eqref{27b}.  In
particular, under these conditions on $\ell$ and $x$,
\begin{equation}
\label{68}
\Pi_p(x,y) \;=\; 0
\end{equation}
for all $y\in \ms V^{(p+1)}_m$, $m\in S_{p+1} \setminus \{\ell\}$.

This identity can be extended. Since the support of the measure
$\pi^{(p+1)}_m(\,\cdot\,)$ is the set $\ms V^{(p+1)}_m$,
$m\in S_{p+1}$, and $\cup_m \ms V^{(p+1)}_m = \ms V^{(p+1)}$,
\begin{equation}
\label{66}
\Pi_p(x, y) \;=\; 0 \quad\text{for all}\quad x\,\in\, V\,, \;
y\,\in\, \big(\, \ms V^{(p+1)} \,\big)^c \,=\, \Delta_{p+1}\;.
\end{equation}

\subsection*{ Induction hypotheses:}

Assume that we proved for some $1\le p<\mf q$ that
for all $t>0$, $x$, $y\in V$,
\begin{equation}
\label{62}
\lim_{n\to\infty} p^{(n)}_{t\, \theta^{(p)}_n} (x,y) \;=\;
\sum_{k\in S_p} \omega^{(p)}_t (x, k) \; \pi^{(p)}_k(y) \;,
\end{equation}
where $ \omega^{(p)}_t$, $\pi^{(p)}_k$ are as in the statement of
Theorem \ref{mt0}. This assertion for $p=1$ is the content of
Proposition \ref{mt2}.

\subsection*{The time scale $t\theta^{(p)}_n$, as $t\to\infty$}

Recall the definition of $\mf A^{(p)}(j,m)$, $m\in S_{p+1}$,
$j\in S_p$, introduced in \eqref{63}.  With this notation, for every
$j$, $k\in S_p$,
\begin{equation}
\label{64}
\lim_{t\to\infty} p^{(p)}_t(j,k) \;=\; \sum_{m\in S_{p+1}}
\mf A^{(p)} (j,m)\; M^{(p)}_m(k)\;,
\end{equation}
where, recall, $M^{(p)}_m(\cdot)$, $m\in S_{p+1}$, the stationary
state of the Markov chain $\bb X^{(p)}_t$ restricted to
$\mf R^{(p)}_m$.  In particular, $\lim_{t\to\infty} p^{(p)}_t(j,k) =0$
for every $k\in \mf T_p$.

By the induction assumption \eqref{62}, the definition of
$\omega^{(p)}_t$, \eqref{64} and the fact that the support of the
measure $M^{(p)}_m$ is the set $\mf R^{(p)}_m$, for all $x\in V$,
\begin{equation*}
\lim_{t\to\infty} \lim_{n\to\infty} p^{(n)}_{t\, \theta^{(p)}_n} (x,\,\cdot\,)
\; =\; \sum_{j\in S_p} \sum_{m\in S_{p+1}} \sum_{k\in \mf R^{(p)}_m}
\mf a^{(p-1)}  (x,j) \, \mf A^{(p)} (j,m) \, M^{(p)}_m(k)\, 
\pi^{(p)}_k(\,\cdot\,)  \;.
\end{equation*}
By definition of the measures $\pi^{(p+1)}_m$ and by the one of
$a^{(p)} (x,m)$, given in \eqref{33b}, this expression is equal to
\begin{equation*}
\sum_{j\in S_p} \sum_{m\in S_{p+1}} 
\mf a^{(p-1)} (x,j) \, \mf A^{(p)} (j,m) \, \pi^{(p+1)}_m(\,\cdot\,)
\; =\; \sum_{m\in S_{p+1}} 
\mf a^{(p)} (x,m) \, \pi^{(p+1)}_m(\,\cdot\,) \;. 
\end{equation*}
Hence, we proved that for all $x\in V$,
\begin{equation}
\label{65}
\lim_{t\to\infty} \lim_{n\to\infty} p^{(n)}_{t\, \theta^{(p)}_n} (x,\,\cdot\,)
\; =\; \sum_{m\in S_{p+1}} 
\mf a^{(p)} (x,m) \, \pi^{(p+1)}_m(\,\cdot\,)
\;=\; \Pi_p(x, \,\cdot\, )  \;.
\end{equation}

The argument above shows that Theorem \ref{mt0}.(d) follows from
Theorem \ref{mt0}.(b). Assertion (c) of this theorem follows from the
fact that $p^{(p)}_t(j,k)$ converges to $\delta_{j,k}$ as $t\to 0$.

\subsection*{The time scale $\theta^{(p)}_n\prec \beta_n \prec \theta^{(p+1)}_n$}

By \eqref{65} and \eqref{66},
\begin{equation}
\label{46}
\lim_{t\to\infty} \lim_{n\to\infty} \max_{x\in V }\mb P^{n}_{\! x} \big[\,
X_{t \theta^{(p)}_n} \not\in \ms V^{(p+1)} \,\big] \;=\; 0\;.
\end{equation}
In particular,
\begin{equation}
\label{42}
\lim_{t\to\infty} \limsup_{n\to\infty} \max_{x\in V }\mb P^{n}_{\! x} \big[\,
H_{\breve{\ms V}^{(p+1)}} > t \theta^{(p)}_n \,\big] \;=\; 0\;.
\end{equation}

Suppose that $S_{p+1}$ is a singleton. In other words, that the chain
$\bb X^{(p)}_t$ has a unique closed irreducible class. In this case
$p=\mf q$ and $\theta^{(p+1)}_n = +\infty$ for all $n\ge 1$.  If
$S_{p+1}$ is not a singleton, recall from \eqref{26b} the definition
of $\theta^{(p+1)}_n$. As stated in \eqref{51}, by \cite[Assertion
8.B]{lx16}, $\theta^{(p)}_n \prec \theta^{(p+1)}_n$.

\begin{lemma}
\label{l14}
Let $(\beta_n:n\ge 1)$ be a sequence such that
$\theta^{(p)}_n \prec \beta_n \prec \theta^{(p+1)}_n$.
Then, for all $m\in S_{p+1}$,
\begin{equation*}
\lim_{n\to\infty} \max_{x\in \ms V^{(p+1)}_m }\mb P^{n}_{\! x} \big[\,
H_{\breve{\ms V}^{(p+1)}_m} < \beta_n \,\big] \;=\; 0\;,
\end{equation*}
where $\breve{\ms V}^{(p+1)}_m$ has been introduced in \eqref{26b}.
\end{lemma}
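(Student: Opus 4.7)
The plan is to mimic the proof of Lemma \ref{l07}, but at level $p+1$ rather than at the first level. The only tools we need are the hitting-probability bound (Lemma \ref{l06}), the capacity monotonicity estimate (equation (B2) in \cite{lrev}), the definition \eqref{26b} of $\theta^{(p+1)}_n$, and the already-proven asymptotic equivalence \eqref{58} of $\pi_n$ on points of a well at level $p+1$.

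First, I would fix $m\in S_{p+1}$ and $x\in \ms V^{(p+1)}_m$, and apply Lemma \ref{l06} to obtain
\begin{equation*}
\mb P^n_{\!x}\big[\, H_{\breve{\ms V}^{(p+1)}_m} < \beta_n \,\big]
\;\le\; C_0\, \beta_n\, \frac{\Cap_n(\{x\}, \breve{\ms V}^{(p+1)}_m)}{\pi_n(x)}\;,
\end{equation*}
for some universal constant $C_0$. By \eqref{58} applied at level $p+1$, the ratio $\pi_n(x)/\pi_n(\ms V^{(p+1)}_m)$ converges to a strictly positive limit, uniformly in $x\in \ms V^{(p+1)}_m$ (the well is finite). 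Hence $\pi_n(x)\ge c\,\pi_n(\ms V^{(p+1)}_m)$ for some $c>0$ and all $n$ large, and the right-hand side above is bounded by $C_0'\, \beta_n\, \Cap_n(\{x\}, \breve{\ms V}^{(p+1)}_m)/\pi_n(\ms V^{(p+1)}_m)$.

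Next, I would appeal to the monotonicity estimate (B2) in \cite{lrev}, which gives $\Cap_n(\{x\}, \breve{\ms V}^{(p+1)}_m) \le \Cap_n(\ms V^{(p+1)}_m, \breve{\ms V}^{(p+1)}_m)$. Combining this with the previous bound yields
\begin{equation*}
\mb P^n_{\!x}\big[\, H_{\breve{\ms V}^{(p+1)}_m} < \beta_n \,\big]
\;\le\; C_0'\, \beta_n\, \frac{\Cap_n(\ms V^{(p+1)}_m, \breve{\ms V}^{(p+1)}_m)}{\pi_n(\ms V^{(p+1)}_m)}\;.
\end{equation*}
By the definition \eqref{26b} of $\theta^{(p+1)}_n$, the single term on the right is at most $1/\theta^{(p+1)}_n$, so the probability is bounded by $C_0'\, \beta_n/\theta^{(p+1)}_n$, which tends to $0$ by the hypothesis $\beta_n\prec \theta^{(p+1)}_n$. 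Since the bound is uniform in $x\in \ms V^{(p+1)}_m$, taking the maximum over this (finite) set yields the claim.

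The only mildly delicate point is that, whereas Lemma \ref{l07} appealed to \eqref{58} at the first level (essentially trivial, since the irreducible classes $\ms V_j$ of $\bb X_t$ carry stationary mass comparable to $\pi_n(\ms V_j)$), here I need the same statement at level $p+1$. This is precisely the content of \eqref{58} as recorded in the tree construction section, so no extra work is required; the proof at level $p+1$ is therefore structurally identical to the $p=1$ case.
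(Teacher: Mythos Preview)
Your proof is correct and follows essentially the same argument as the paper's own proof: apply Lemma~\ref{l06} and \eqref{58} to bound the hitting probability by $C_0\,\beta_n\,\Cap_n(\{x\},\breve{\ms V}^{(p+1)}_m)/\pi_n(\ms V^{(p+1)}_m)$, use (B2) in \cite{lrev} to enlarge $\{x\}$ to $\ms V^{(p+1)}_m$, and then invoke the definition \eqref{26b} of $\theta^{(p+1)}_n$ together with $\beta_n\prec\theta^{(p+1)}_n$.
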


\begin{proof}
Fix $m\in S_{p+1}$, $x\in \ms V^{(p+1)}_m$.  By Lemma \ref{l06} and
\eqref{58}, the probability appearing above is bounded by
$ C_0 \, \beta_n\, \Cap_n (\{x\} \, ,\, \breve{\ms V}^{(p+1)}_m) /
\pi_n (\ms V^{(p+1)}_m)$ for some finite constant $C_0$ independent
of $n$. By equation (B2) in \cite{lrev}, this expression is bounded by
$ C_0 \, \beta_n\, \Cap_n (\ms V^{(p+1)}_m \, ,\, \breve{\ms
V}^{(p+1)}_m) / \pi_n (\ms V^{(p+1)}_m)$. By the definition
\eqref{26b} of $\theta^{(p+1)}_n$, this expression is less than or
equal to $ C_0 \, \beta_n\, /\, \theta^{(p+1)}_n$.  This concludes the
proof of the lemma.
\end{proof}

Let $\Delta_{p+1, m}$, $m \in S_{p+1}$, be the set of points in
$\Delta_{p+1}$ which may be absorbed by a set $\ms V^{(p+1)}_\ell$,
$\ell \not = m$, in the time-scale $\theta^{(p)}_n$:
\begin{equation*}
{\color{blue} \Delta_{p+1, m} }\; :=\;
\Big\{\, x\in \Delta_{p+1} : 
\sum_{\ell \in S_{p+1} \setminus \{m\}} \mf a^{(p)}(x,\ell)  >0\,\Big \}\;.
\end{equation*}

\begin{corollary}
\label{l15}
Let $(\beta_n:n\ge 1)$ be a sequence such that
$\theta^{(p)}_n \prec \beta_n \prec \theta^{(p+1)}_n$.
Then, for all $m\in S_{p+1}$,
\begin{equation*}
\lim_{n\to\infty} \max_{x\in \ms V^{(p+1)}_m }\mb P^{n}_{\! x} \big[\,
H_{\Delta_{p+1, m}} < \beta_n \,\big] \;=\; 0\;.
\end{equation*}
\end{corollary}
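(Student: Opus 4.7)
The plan is to parallel the proof of Corollary \ref{l08}, with Lemma \ref{l14} replacing Lemma \ref{l07} and the limit identity \eqref{65} replacing the $\bb X_t$-coupling bound \eqref{27}. I would argue by contradiction: if the assertion failed, there would exist $\delta > 0$, a subsequence, and points $x_n \in \ms V^{(p+1)}_m$ with $\mb P^n_{x_n}[H_{\Delta_{p+1,m}} < \beta_n] \ge \delta$. Since both $\ms V^{(p+1)}_m$ and $\Delta_{p+1,m}$ are finite, a diagonal extraction fixes the starting point $x_n \equiv x$ and produces a single $z \in \Delta_{p+1,m}$ satisfying $\mb P^n_x[H_z < \beta_n] \ge \delta/|\Delta_{p+1,m}|$ along a sub-subsequence.

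By the definition of $\Delta_{p+1,m}$, there is some $\ell \ne m$ with $\mf a^{(p)}(z,\ell) > 0$. Summing \eqref{65} over $y \in \ms V^{(p+1)}_\ell$ and using that the support of $\pi^{(p+1)}_k$ is $\ms V^{(p+1)}_k$ for each $k$ yields
\begin{equation*}
\lim_{t\to\infty}\lim_{n\to\infty} \mb P^n_z\big[\, X_{t\theta^{(p)}_n} \in \ms V^{(p+1)}_\ell \,\big] \;=\; \mf a^{(p)}(z,\ell) \;>\; 0\,.
\end{equation*}
I then choose $t_0$ large enough that this probability exceeds $\mf a^{(p)}(z,\ell)/2$ for all $n$ large; since $\ms V^{(p+1)}_\ell \subset \breve{\ms V}^{(p+1)}_m$, the same lower bound holds with $\breve{\ms V}^{(p+1)}_m$ in place of $\ms V^{(p+1)}_\ell$.

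The strong Markov property at $H_z$ then produces
\begin{equation*}
\mb P^n_x\big[\, H_{\breve{\ms V}^{(p+1)}_m} \,\le\, \beta_n + t_0 \theta^{(p)}_n \,\big]
\;\ge\; \mb P^n_x[\, H_z \le \beta_n\,] \;
\mb P^n_z[\, X_{t_0 \theta^{(p)}_n} \in \breve{\ms V}^{(p+1)}_m\,]
\;\ge\; \frac{\delta\,\mf a^{(p)}(z,\ell)}{2\,|\Delta_{p+1,m}|}\,,
\end{equation*}
a positive constant independent of $n$. Setting $\tilde\beta_n := \beta_n + t_0\theta^{(p)}_n$, the hypotheses $\theta^{(p)}_n \prec \beta_n \prec \theta^{(p+1)}_n$ give $\tilde\beta_n \prec \theta^{(p+1)}_n$, so Lemma \ref{l14} applied to $\tilde\beta_n$ forces the left-hand side to $0$, contradicting the above lower bound. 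The only delicate point is the simultaneous subsequential extraction of a fixed starting point $x$ and a fixed intermediate point $z$, which relies on the finiteness of $V$; everything else is a routine adaptation of the template of Corollary \ref{l08}.
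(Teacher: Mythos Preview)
Your proof is correct and follows essentially the same route as the paper's: both argue by contradiction, use \eqref{65} to show that from any $z\in\Delta_{p+1,m}$ the process reaches $\breve{\ms V}^{(p+1)}_m$ within time $t_0\theta^{(p)}_n$ with uniformly positive probability, and then combine this via the strong Markov property with the assumed positive probability of hitting $\Delta_{p+1,m}$ before $\beta_n$ to contradict Lemma~\ref{l14}. The only cosmetic difference is that the paper applies the strong Markov property at $H_{\Delta_{p+1,m}}$ and takes a minimum over $z\in\Delta_{p+1,m}$, whereas you first extract a single $z$ by pigeonhole; both are equivalent here since $V$ is finite.
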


\begin{proof}
Suppose the assertion is not true. Then, there exists $\delta>0$,
$x\in \ms V^{(p+1)}_m$ and a subsequence $n'$, still denoted by $n$,
such that
\begin{equation*}
\mb P^{n}_{\! x} \big[\,
H_{\Delta_{p+1, m}} < \beta_n \,\big] \;\ge \; \delta
\end{equation*}
for all $n$ sufficiently large.

Fix $t>0$ to be chosen later. Denote by
$\vartheta_s: D(\bb R_+, V) \to D(\bb R_+, V)$, $s\ge 0$, the
semigroup of translations of a trajectory:
$\color{blue} (\vartheta_s \mf x)(r) = \mf x(r+s)$, $r\ge 0$. By the
strong Markov property,
\begin{equation*}
\begin{aligned}
& \mb P^{n}_{\! x} \big[\,
H_{\breve{\ms V}^{(p+1)}_m} < \beta_n \,+\, t\, \theta^{(p)}_n \,\big]
\;  \ge\;
\mb P^{n}_{\! x} \big[\,
H_{\Delta_{p+1, m}} < \beta_n \,,\,
H_{\breve{\ms V}^{(p+1)}_m} \,\circ\, \vartheta_{H_{\Delta_{p+1, m}}}
< t\, \theta^{(p)}_n \,\big] \\
&\qquad\qquad   \ge\;
\mb P^{n}_{\! x} \big[\,
H_{\Delta_{p+1, m}} < \beta_n \,\big] \, \min_{z\in \Delta_{p+1, m}}
\mb P^{n}_{\! z} \big[\,  H_{\breve{\ms V}^{(p+1)}_m} 
< t\, \theta^{(p)}_n \,\big] \\
&\qquad\qquad   \ge\;
\mb P^{n}_{\! x} \big[\,
H_{\Delta_{p+1, m}} < \beta_n \,\big] \, \min_{z\in \Delta_{p+1, m}}
\mb P^{n}_{\! z} \big[\, 
X_{t\, \theta^{(p)}_n} \in \breve{\ms V}^{(p+1)}_m \,\big]\;.
\end{aligned}
\end{equation*}
By the first part of the proof, the first term is bounded below by
$\delta$ for $n$ sufficiently large. By Theorem \ref{mt0}.(d), proved
in the previous subsection for $p$, for each $z\in \Delta_{p+1, m}$,
the second probability converges, as $n\to\infty$ and then
$t\to\infty$, to
\begin{equation*}
\sum_{\ell \in S_{p+1} \setminus \{m\}} \mf a^{(p)}(z,\ell) \;.
\end{equation*}
By definition of $\Delta_{p+1, m}$, this term is strictly positive for
each $z\in \Delta_{p+1, m}$. Therefore, there exist $\delta'>0$ and
$t_0<\infty$ such that
\begin{equation*}
\liminf_{n\to \infty} \min_{z\in \Delta_{p+1, m}}
\mb P^{n}_{\! z} \big[\, 
X_{t_0\, \theta^{(p)}_n} \in \breve{\ms V}^{(p+1)}_m \,\big] \;\ge\;
\delta' \;.
\end{equation*}

Putting together the previous estimates yields that
\begin{equation*}
\liminf_{n\to \infty} \mb P^{n}_{\! x} \big[\,
H_{\breve{\ms V}^{(p+1)}_m} < \beta_n \,+\, t_0\, \theta^{(p)}_n \,\big]
\;  >\; 0 \;,
\end{equation*}
in contradiction with the statement of Lemma \ref{l14}. This completes
the proof of the corollary.
\end{proof}

For $m\in S_{p+1}$, let
\begin{equation*}
{\color{blue} \ms U^{(p+1)}_m}\; :=\;
\Big\{\, x\in V : \mf a^{(p)}(x,m)  =1\,\Big \} \;.
\end{equation*}
By \eqref{69} and the definition of the set $\Delta_{p+1,m}$,
introduced just before the statement of Corollary \ref{l15}, the set
$\ms U^{(p+1)}_m$ is equal to
$\ms V^{(p+1)}_m \, \cup\, [\, \Delta_{p+1} \setminus
\Delta_{p+1,m}\,]$. Thus,
$(\ms U^{(p+1)}_m)^c = \breve{\ms V}^{(p+1)}_m \cup \Delta_{p+1,m}$.

\begin{proposition}
\label{l12}
Let $\theta^{(p)}_n \prec \beta_n \prec \theta^{(p+1)}_n$. Then, for
all $x \in V$,
\begin{equation*}
\lim_{n\to\infty} p^{(n)}_{\beta_n} (x,\,\cdot\,)
\; =\; \Pi_p(x, \,\cdot\, ) \;,  
\end{equation*}
where $\Pi_p(x, \,\cdot\, )$ has been introduced in \eqref{27b}.
\end{proposition}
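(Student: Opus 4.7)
The plan is to prove the proposition by combining the Markov property with the induction hypothesis \eqref{62}, the just-established Theorem \ref{mt0}.(d) for level $p$ (displayed in \eqref{65}), and the confinement estimates provided by Lemma \ref{l14} and Corollary \ref{l15}. The crux of the argument will be the following sub-claim: for every $z \in \ms V^{(p+1)}_m$ and every sequence $\gamma_n$ with $\theta^{(p)}_n \prec \gamma_n \prec \theta^{(p+1)}_n$, one has $\lim_{n\to\infty} p^{(n)}_{\gamma_n}(z, y) = \pi^{(p+1)}_m(y)$ for every $y \in V$.

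To see how the sub-claim yields the proposition, I will fix a large integer $s$, set $\beta'_n := \beta_n - s\theta^{(p)}_n$ (which still lies in the window $\theta^{(p)}_n \prec \beta'_n \prec \theta^{(p+1)}_n$ for every fixed $s$), and use the Markov property to write
\begin{equation*}
p^{(n)}_{\beta_n}(x, y) \;=\; \sum_{z \in V} p^{(n)}_{s\theta^{(p)}_n}(x, z)\, p^{(n)}_{\beta'_n}(z, y)\;.
\end{equation*}
By \eqref{65} and \eqref{66}, the mass of $p^{(n)}_{s\theta^{(p)}_n}(x, \,\cdot\,)$ outside $\ms V^{(p+1)}$ tends to $0$ as $n \to \infty$ and then $s \to \infty$, so the contribution of $z \notin \ms V^{(p+1)}$ is negligible in that iterated limit. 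For $z \in \ms V^{(p+1)}_m$ the sub-claim supplies $p^{(n)}_{\beta'_n}(z, y) \to \pi^{(p+1)}_m(y)$. Combining this with the induction hypothesis \eqref{62} for the first factor, and computing $\sum_{z \in \ms V^{(p+1)}_m} F_s(x, z) \to \mf a^{(p)}(x, m)$ exactly as in the derivation of \eqref{65} (using the definition \eqref{80} of $\pi^{(p+1)}_m$ and \eqref{64}), yields $\lim_n p^{(n)}_{\beta_n}(x, y) = \sum_m \mf a^{(p)}(x, m)\, \pi^{(p+1)}_m(y) = \Pi_p(x, y)$; the iterated limit in $s$ collapses because the left-hand side does not depend on $s$.

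For the sub-claim I fix $z \in \ms V^{(p+1)}_m$. Since $(\ms U^{(p+1)}_m)^c = \breve{\ms V}^{(p+1)}_m \cup \Delta_{p+1, m}$, Lemma \ref{l14} together with Corollary \ref{l15} gives
\begin{equation*}
\mb P^n_z\big[\, H_{(\ms U^{(p+1)}_m)^c} > \gamma_n \,\big] \;\longrightarrow\; 1\;.
\end{equation*}
I then fix a large auxiliary parameter $s'$, decompose $\gamma_n = (\gamma_n - s'\theta^{(p)}_n) + s'\theta^{(p)}_n$, and apply the Markov property at the intermediate time to get $p^{(n)}_{\gamma_n}(z, y) = \mb E^n_z\big[\, p^{(n)}_{s'\theta^{(p)}_n}(X^{(n)}_{\gamma_n - s'\theta^{(p)}_n}, y) \,\big]$. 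The confinement above lets me restrict this expectation to the event $\{X^{(n)}_{\gamma_n - s'\theta^{(p)}_n} \in \ms U^{(p+1)}_m\}$ at cost $o_n(1)$. On $\ms U^{(p+1)}_m$ one has $\mf a^{(p)}(\,\cdot\,, m) = 1$ by definition, hence $\Pi_p(z', y) = \pi^{(p+1)}_m(y)$ for all $z' \in \ms U^{(p+1)}_m$; by \eqref{65} this gives $p^{(n)}_{s'\theta^{(p)}_n}(z', y) \to \pi^{(p+1)}_m(y)$ as $n \to \infty$ then $s' \to \infty$, uniformly in $z' \in \ms U^{(p+1)}_m$ (the state space is finite). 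Therefore $\lim_n p^{(n)}_{\gamma_n}(z, y) = \pi^{(p+1)}_m(y) + o_{s'}(1)$; since the left-hand side is independent of $s'$, sending $s' \to \infty$ closes the sub-claim.

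The main obstacle will be the sub-claim, which forces us to exploit simultaneously the confinement of the process to $\ms U^{(p+1)}_m$ on the full interval $[0, \gamma_n]$ (possible because $\gamma_n \prec \theta^{(p+1)}_n$) and the local equilibration on the shorter scale $\theta^{(p)}_n$ (giving convergence of $p^{(n)}_{s'\theta^{(p)}_n}(z',\cdot)$ to $\Pi_p(z',\cdot)$). Managing the order of the iterated limits — $n \to \infty$ first and then the auxiliary parameters $s, s' \to \infty$ — is what makes the argument delicate; the finiteness of $V$ is used at several places to upgrade pointwise convergences to uniform ones, which is essential for the error control.
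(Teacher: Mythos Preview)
Your proposal is correct and follows essentially the same approach as the paper: both split $\beta_n$ via the Markov property into an initial piece of length $\sim s\theta^{(p)}_n$ (handled by \eqref{65}) and a remainder, then handle the remainder by a second Markov decomposition combined with the confinement estimates of Lemma~\ref{l14} and Corollary~\ref{l15} to land in $\ms U^{(p+1)}_m$, where $\Pi_p(\cdot,y)=\pi^{(p+1)}_m(y)$. The only difference is organisational: you isolate the statement ``$p^{(n)}_{\gamma_n}(z,\cdot)\to\pi^{(p+1)}_m$ for $z\in\ms V^{(p+1)}_m$'' as a separate sub-claim, whereas the paper carries out the same computation inline.
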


\begin{proof}
Fix $\epsilon>0$. By \eqref{65}, there exists $t_\epsilon$ such that
\begin{equation}
\label{39}
\Big|\, \lim_{n\to\infty} p^{(n)}_{t\, \theta^{(p)}_n} (x,y)
\, - \, \Pi_p(x, y) \,\Big| \;<\; \epsilon
\end{equation}
for all $x$, $y\in V$, $t>t_\epsilon$.

Fix $t>t_\epsilon$. By the Markov property,
\begin{equation*}
p^{(n)}_{\beta_n} (x,y) \;=\;
\sum_{z\in V} p^{(n)}_{t\, \theta^{(p)}_n} (x,z)
\; p^{(n)}_{\beta_n - t\, \theta^{(p)}_n} (z,y)\;.
\end{equation*}
By \eqref{39} and \eqref{66}, this expression is equal to
\begin{equation}
\label{44}
\sum_{z\in \ms V^{(p+1)}} \Pi_p  (x,z)
\; \mb P^{n}_{\! z}\big[\, X_{\beta_n - t\, \theta^{(p)}_n} = y\,\big]
\;+\; o_n(1) \;+\; O(\epsilon) \;.
\end{equation}

Fix $s>0$, and rewrite the sum appearing in \eqref{44} as
\begin{equation*}
\sum_{m\in S_{p+1}} \sum_{z\in \ms V^{(p+1)}_m}
\sum_{w\in V} \Pi_p  (x,z)
\; \mb P^{n}_{\! z}\big[\, X_{\beta_n - t\, \theta^{(p)}_n} = y
\,,\, X_{\beta_n - (t+s)\, \theta^{(p)}_n} = w \,\big] \;.
\end{equation*}
We have shown just above the statement of the proposition that
$(\ms U^{(p+1)}_m)^c = \breve{\ms V}^{(p+1)}_m \cup
\Delta_{p+1,m}$. Hence, by Lemma \ref{l14} and Corollary \ref{l15}, we
may restrict the third sum to $w\in \ms U^{(p+1)}_m$ by paying a price
of order $o_n(1)$. Apply the Markov property to rewrite the resulting
expression as
\begin{equation*}
\sum_{m\in S_{p+1}} \sum_{z\in \ms V^{(p+1)}_m}
\sum_{w\in \ms U^{(p+1)}_m} \Pi_p  (x,z)
\; \mb P^{n}_{\! z}\big[\, X_{\beta_n - (t+s)\, \theta^{(p)}_n} = w \,\big]
\, \mb P^{n}_{\! w}\big[\, X_{s\, \theta^{(p)}_n} = y \,\big] \;.
\end{equation*}

By \eqref{65} the last probability converges, as $n\to\infty$, and
then $s\to \infty$, to $\Pi_p(w,y)$. By definition of $\Pi_p$ and the
one of $\ms U^{(p+1)}_m$, since $w\in \ms U^{(p+1)}_m$ and
$\mf a^{(p)}(x,\,\cdot\,)$ is a probability measure on $S_{p+1}$,
$\Pi_p(w,y) = \pi^{(p+1)}_m(y)$. This expression does not depend on
$w$. By Lemma \ref{l14} and Corollary \ref{l15}, the previous sum is
thus equal to
\begin{equation*}
\sum_{m\in S_{p+1}} \sum_{z\in \ms V^{(p+1)}_m}
\Pi_p  (x,z) \; \pi^{(p+1)}_m(y) \;+\; o_n(1)\;.
\end{equation*}
By the definition \eqref{27b} of $\Pi_p$, this expression is equal to
\begin{equation*}
\sum_{\ell\in S_{p+1}} \mf a^{(p)}(x,\ell) \; \pi^{(p+1)}_\ell(y)  \;,
\end{equation*}
as claimed.
\end{proof}

\subsection*{The time scale $\theta^{(p+1)}_n$}

If $S_{p+1}$ is a singleton,  $p=\mf q$,
$\theta^{(p+1)}_n = +\infty$ for all $n$ and the proof of Theorem
\ref{mt0} ends at the previous step where we considered the time-scale
$\theta^{(p)}_n \prec \beta_n \prec \theta^{(p+1)}_n \equiv +\infty$.

Assume that $S_{p+1}$ is not a singleton.  The next result completes
the recursive argument and the proof of Theorem \ref{mt0}.  It states
that the induction hypothesis \eqref{62} holds at level $p+1$ if it
holds at level $p$.

\begin{proposition}
\label{l16}
For all $t>0$, $x$, $y\in V$,
\begin{equation*}
\lim_{n\to\infty} p^{(n)}_{t\, \theta^{(p+1)}_n} (x,y) \;=\;
\sum_{m\in S_{p+1}} \omega^{(p+1)}_t (x, m) \; \pi^{(p+1)}_m(y)\;.
\end{equation*}
\end{proposition}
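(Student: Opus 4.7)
The plan is to mimic the argument used for Proposition \ref{mt2}, replacing the intermediate-scale input (Proposition \ref{mt1}/Lemma \ref{l09}) by Proposition \ref{l12}, and replacing the coarse-grained convergence \eqref{37} by its level-$(p+1)$ analogue. Pick a time-scale $s_n$ with $\theta^{(p)}_n \prec s_n \prec \theta^{(p+1)}_n$ and, for $t>0$ fixed, decompose by the Markov property
\begin{equation*}
p^{(n)}_{t\theta^{(p+1)}_n}(x,y) \;=\; \sum_{z,z'\in V}
\mb P^n_{\! x}\bigl[X_{s_n}=z\bigr]\;
\mb P^n_{\! z}\bigl[X_{t\theta^{(p+1)}_n-2s_n}=z'\bigr]\;
\mb P^n_{\! z'}\bigl[X_{s_n}=y\bigr]\;.
\end{equation*}

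By Proposition \ref{l12}, the first factor converges to $\Pi_p(x,z)=\sum_{j\in S_{p+1}} \mf a^{(p)}(x,j)\,\pi^{(p+1)}_j(z)$, and by \eqref{66} the limit is supported on $\ms V^{(p+1)}=\bigcup_j \ms V^{(p+1)}_j$. Similarly, for each fixed $z'$, $\mb P^n_{\! z'}[X_{s_n}=y]\to\Pi_p(z',y)$; when $z'\in\ms V^{(p+1)}_m$, Lemma \ref{l18} gives $\mf a^{(p)}(z',\ell)=\delta_{\ell,m}$, hence $\Pi_p(z',y)=\pi^{(p+1)}_m(y)$. After restricting the $z'$-sum to $\ms V^{(p+1)}$ (which is legitimate because of \eqref{46}/\eqref{66} together with the Markov property applied at time $t\theta^{(p+1)}_n-s_n$), one rewrites the triple sum as
\begin{equation*}
\sum_{j,m\in S_{p+1}} \mf a^{(p)}(x,j)\,
\Bigl[\sum_{z\in\ms V^{(p+1)}_j}\pi^{(p+1)}_j(z)
\sum_{z'\in\ms V^{(p+1)}_m}
\mb P^n_{\! z}\bigl[X_{t\theta^{(p+1)}_n-2s_n}=z'\bigr]\Bigr]\,
\pi^{(p+1)}_m(y) \;+\; o_n(1)\;.
\end{equation*}

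The inner quantity in brackets is the probability that $X^{(n)}_{t\theta^{(p+1)}_n-2s_n}$ lies in the well $\ms V^{(p+1)}_m$, averaged with respect to the distribution $\pi^{(p+1)}_j$ on the starting well $\ms V^{(p+1)}_j$. The heart of the argument is to show that this average converges to $p^{(p+1)}_t(j,m)$. This is the level-$(p+1)$ analogue of \eqref{37} and is the main technical step: one combines Theorem \ref{t1} at level $p+1$ (weak convergence of $\Phi_{p+1}(X^{(n)}_{t\theta^{(p+1)}_n})$ to $\bb X^{(p+1)}_t$ starting from $\ms V^{(p+1)}_j$) with the level-$(p+1)$ analogue of Lemma \ref{l10} (showing $\mb P^n_{\!\cdot}\bigl[X_{s\theta^{(p+1)}_n}\in\Delta_{p+1}\bigr]\to 0$, uniformly on compact time intervals bounded away from $0$), and with an instance of \cite[Proposition 2.1]{llm18} to pass from Skorohod convergence to convergence of one-dimensional marginals at the fixed time $t$. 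The absorbing effect of $s_n\prec\theta^{(p+1)}_n$ guarantees that the time shift $-2s_n$ does not alter the limit.

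The auxiliary Lemma \ref{l10} at level $p+1$ is straightforward: by the Markov property and Proposition \ref{l12} applied at a time of order $s'_n$ with $\theta^{(p)}_n\prec s'_n\prec s\theta^{(p+1)}_n$, the probability of being in $\Delta_{p+1}$ at time $s\theta^{(p+1)}_n$ is dominated by $\Pi_p(\,\cdot\,,\Delta_{p+1})$, which vanishes by \eqref{66}. Once the middle-factor convergence is established, plugging it back yields
\begin{equation*}
\lim_{n\to\infty} p^{(n)}_{t\theta^{(p+1)}_n}(x,y)
\;=\; \sum_{j,m\in S_{p+1}} \mf a^{(p)}(x,j)\,p^{(p+1)}_t(j,m)\,\pi^{(p+1)}_m(y)
\;=\; \sum_{m\in S_{p+1}}\omega^{(p+1)}_t(x,m)\,\pi^{(p+1)}_m(y)\;,
\end{equation*}
which is the claim. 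The main obstacle is the middle-factor step; once it is in place the rest is a bookkeeping of the Markov-property decomposition analogous to the proof of Proposition \ref{mt2}.
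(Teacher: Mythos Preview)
Your proposal is correct and follows essentially the same route as the paper's proof: a three-fold Markov decomposition, the outer factors handled by the intermediate-scale convergence to $\Pi_p$, and the middle factor by the level-$(p+1)$ analogue of \eqref{37} (which is exactly \eqref{37b} in the paper, obtained via Lemma \ref{l17} --- your level-$(p+1)$ Lemma \ref{l10} --- together with \cite[Proposition 2.1]{llm18} and \cite[Theorem 2.7]{lx16}). The only cosmetic difference is that the paper splits at times $t_\epsilon\,\theta^{(p)}_n$ and invokes the $\epsilon$-approximation \eqref{48} coming from \eqref{65}, whereas you split at a genuine intermediate scale $s_n$ with $\theta^{(p)}_n\prec s_n\prec\theta^{(p+1)}_n$ and invoke Proposition \ref{l12} directly; both choices lead to the same limit $\Pi_p$ for the outer factors and the argument is otherwise identical.
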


The proof of this result is based on Lemma \ref{l17} below. 

\begin{lemma}
\label{l17}
Recall the definition of the set $\Delta_{p+1}$ introduced in
\eqref{05b}. Then, 
\begin{equation*}
\lim_{\delta\to 0} \limsup_{n\to\infty} \max_{m\in S_{p+1}}
\max_{x\in \ms V^{(p+1)}_m} \sup_{2\delta \le s \le 3\delta}
\mb P^n_{\! x} \big[\, X_{s\theta^{(p+1)}_n} \in \Delta_{p+1}\,\big] \;=\; 0\;.
\end{equation*}
\end{lemma}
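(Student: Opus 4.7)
My plan is to mimic the argument used for Lemma \ref{l10} (the $p=1$ analogue), replacing the fixed macroscopic time $T_\epsilon$ coming from Lemma \ref{l09} by a time of order $T\theta^{(p)}_n$, and replacing the role of the measure $\omega$ by the limiting measure $\Pi_p$. The crucial input that is now available, from the previous subsections, is identity \eqref{65} together with \eqref{66}, which says that
\begin{equation*}
\lim_{T\to\infty}\,\lim_{n\to\infty}\, \max_{y\in V}\, \mb P^{n}_{\! y}\big[\, X_{T\theta^{(p)}_n}\in\Delta_{p+1}\,\big]
\;=\; \max_{y\in V}\, \Pi_p(y,\Delta_{p+1})\;=\;0\;,
\end{equation*}
where finiteness of $V$ turns the pointwise limit into a uniform one.

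Fix $\epsilon>0$ and choose $T=T_\epsilon$ large enough so that, for every $n$ sufficiently large, $\mb P^{n}_{\! y}[\, X_{T\theta^{(p)}_n}\in\Delta_{p+1}\,]<\epsilon$ uniformly in $y\in V$. Now fix $\delta>0$ and $s\in[2\delta,3\delta]$. Since $\theta^{(p)}_n\prec\theta^{(p+1)}_n$, for $n$ large enough we have $T\theta^{(p)}_n<\delta\,\theta^{(p+1)}_n\le s\theta^{(p+1)}_n$, so we may apply the Markov property to split the time $s\theta^{(p+1)}_n$ as $(s\theta^{(p+1)}_n-T\theta^{(p)}_n)+T\theta^{(p)}_n$:
\begin{equation*}
\mb P^{n}_{\! x}\big[\, X_{s\theta^{(p+1)}_n}\in\Delta_{p+1}\,\big]
\;=\; \sum_{y\in V} p^{(n)}_{s\theta^{(p+1)}_n - T\theta^{(p)}_n}(x,y)\,
\mb P^{n}_{\! y}\big[\, X_{T\theta^{(p)}_n}\in\Delta_{p+1}\,\big]
\;\le\; \epsilon\;.
\end{equation*}
The bound is uniform in $m\in S_{p+1}$, $x\in\ms V^{(p+1)}_m$, and $s\in[2\delta,3\delta]$, which gives $\limsup_{n\to\infty}\max_m\max_{x\in\ms V^{(p+1)}_m}\sup_{s\in[2\delta,3\delta]}\mb P^{n}_{\! x}[\,X_{s\theta^{(p+1)}_n}\in\Delta_{p+1}\,]\le\epsilon$; letting $\epsilon\downarrow 0$ (and then $\delta\downarrow 0$, trivially) yields the assertion.

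There is no real obstacle: the only point to check is that the time window $[2\delta,3\delta]\cdot\theta^{(p+1)}_n$ is long enough to accommodate a suffix of length $T\theta^{(p)}_n$ for arbitrarily large fixed $T$, which is exactly what $\theta^{(p)}_n\prec\theta^{(p+1)}_n$ guarantees once $n$ is large depending on $\delta$ and $T_\epsilon$. The only thing one needs to verify is that the convergence in \eqref{65} is indeed uniform in $x$, but this is automatic from finiteness of $V$. Note also that the $\lim_{\delta\to 0}$ in the statement is a convenient packaging: the proof actually establishes the stronger fact that the $\limsup_n$ already vanishes for each fixed $\delta>0$.
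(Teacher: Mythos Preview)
Your proof is correct and follows essentially the same approach as the paper: both fix $\epsilon>0$, invoke the consequence of \eqref{65}--\eqref{66} (packaged in the paper as \eqref{46}) to find a time $T_\epsilon\,\theta^{(p)}_n$ after which the process is in $\Delta_{p+1}$ with probability at most $\epsilon$ uniformly in the starting point, and then apply the Markov property to peel off a suffix of length $T_\epsilon\,\theta^{(p)}_n$ from the time $s\theta^{(p+1)}_n$, using $\theta^{(p)}_n\prec\theta^{(p+1)}_n$ to ensure this is legitimate. Your additional remark that the $\lim_{\delta\to 0}$ is superfluous is also correct and matches the structure of the paper's argument.
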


\begin{proof}
Fix $\delta>0$, $\epsilon>0$. By \eqref{46}, there exists
$t_\epsilon<\infty$
\begin{equation}
\label{47}
\lim_{n\to\infty} \max_{x\in V }\mb P^{n}_{\! x} \big[\,
X_{t \theta^{(p)}_n} \not\in \ms V^{(p+1)} \,\big] \;\le \; \epsilon
\end{equation}
for all $t\ge t_\epsilon$.  By the Markov property, since
$\theta^{(p)}_n \prec \theta^{(p+1)}_n$, the probability appearing in
the statement of the lemma is bounded by
\begin{equation*}
\max_{y\in V} \mb P^n_{\! y} \big[\, X_{t_\epsilon \theta^{(p)}_n}
\in \Delta_{p+1}\,\big] 
\end{equation*}
for all $x\in V$, $s\in [2\delta, 3\delta]$.  By \eqref{47}, this
expression is bounded by $\epsilon + o_n(1)$, which proves the lemma.
\end{proof}

By \cite[Proposition 2.1]{llm18}, \cite[Theorem 2.7]{lx16} and Lemma
\ref{l17} for every $t>0$, $\ell$, $m \in S_{p+1}$, $x\in \ms V^{(p+1)}_\ell$, 
\begin{equation}
\label{37b}
\lim_{n\to\infty} \mb P^n_{\! x} \big[\, X_{t\theta^{(p+1)}_n}
\,\in \, \ms V^{(p+1)}_m \,\big] \;=\; p^{(p+1)}_t(\ell, m)\;,
\end{equation}
where, recall, $ p^{(p+1)}_t(\ell, m)$ is the transition probability
of the $S_{p+1}$-valued Markov chain $\bb X^{(p+1)}_t$.

\begin{proof}[Proof of Proposition \ref{l16}]

Suppose that $y\in \Delta_{p+1}$ and fix $t>0$, $\epsilon>0$. Recall
the definition of $t_\epsilon$ introduced in \eqref{47}. By the
Markov property,
\begin{equation*}
\begin{aligned}
\mb P^n_{\! x} \big[\, X_{t\theta^{(p+1)}_n} \,=\, y \,\big] \; & =\;
\sum_{z\in V} \mb P^n_{\! x} \big[\, X_{t\theta^{(p+1)}_n -
t_\epsilon \theta^{(p)}_n}
\,=\, z \,\big] \, 
\mb P^n_{\! z} \big[\, X_{t_\epsilon \theta^{(p)}_n} \,=\, y \,\big] \\
\; & \le\; \max_{z\in V} \mb P^n_{\! z}
\big[\, X_{t_\epsilon \theta^{(p)}_n} \,=\, y \,\big]
\;.
\end{aligned}
\end{equation*}
By \eqref{47}, this maximum is bounded by $\epsilon + o_n(1)$, so that
\begin{equation*}
\lim_{n\to\infty} \mb P^n_{\! x} \big[\, X_{t\theta^{(p+1)}_n} \,=\, y
\,\big] \;=\;
0\;,
\end{equation*}
as claimed.

Suppose that $y\in \ms V^{(p+1)}_m$ for some $m\in S_{p+1}$ and fix
$t>0$, $\epsilon>0$. Recall the definition of $\Pi_p$, introduced in
\eqref{27b}. Choose $t_\epsilon$ large enough for
\begin{equation}
\label{48}
\max_{z,z'\in V} \Big|\, \lim_{n\to\infty} p^{(n)}_{t\, \theta^{(p)}_n}
(z, z') \,-\,  \Pi_p(z,z') \,\Big| 
\;\le \;  \epsilon
\end{equation}
for all $t\ge t_\epsilon$.

By the Markov property, as $\theta^{(p)}_n \prec \theta^{(p+1)}_n$, 
\begin{equation*}
\begin{aligned}
& \mb P^n_{\! x} \big[\, X_{t\theta^{(p+1)}_n} \,=\, y \,\big] \\
&\quad \;=\;
\sum_{z, z' \in V} \mb P^n_{\! x} \big[\, X_{t_\epsilon\, \theta^{(p)}_n} \,=\, z \,\big]\
\mb P^n_{\! z} \big[\, X_{t\theta^{(p+1)}_n - 2 t_\epsilon\, \theta^{(p)}_n}
\,=\, z' \,\big] \, 
\mb P^n_{\! z'} \big[\, X_{t_\epsilon\, \theta^{(p)}_n} \,=\, y \,\big]\;.
\end{aligned}
\end{equation*}
By \eqref{48} and \eqref{66}, this expression is equal to
\begin{equation*}
\sum_{z' \in V} \sum_{\ell \in S_{p+1}} \sum_{z \in \ms V^{(p+1)}_\ell} \Pi_p(x,z) \,
\mb P^n_{\! z} \big[\, X_{t\theta^{(p+1)}_n - 2 t_\epsilon\,
\theta^{(p)}_n} \, \,=\, z' \,\big] \, \Pi_p(z',y)   \;+\; O(\epsilon) \;+\; o_n(1) \;.
\end{equation*}
The first part of the proof permits to restrict the first sum to
$z'\in \ms V^{(p+1)}$. Since $y\in \ms V^{(p+1)}_m$, by \eqref{68} we
may further restrict the sum to $z'\in \ms V^{(p+1)}_m$. At this
point, by \eqref{69}, we may replace $\Pi_p (z',y)$ by
$\pi^{(p+1)}_m(y)$. Hence, the previous sum is equal to
\begin{equation*}
\pi^{(p+1)}_m(y) \, \sum_{\ell \in S_{p+1}} \sum_{z \in \ms V^{(p+1)}_\ell} \Pi_p(x,z) \,
\mb P^n_{\! z} \big[\, X_{t\theta^{(p+1)}_n - 2 t_\epsilon\,
\theta^{(p)}_n} \, \,\in \, \ms V^{(p+1)}_m \,\big] 
\;+\; O(\epsilon) \;+\; o_n(1) \;,
\end{equation*}
where we summed over $z' \in \ms V^{(p+1)}_m$. By \eqref{37b}, as
$n\to\infty$, this expression converges to
\begin{equation*}
\sum_{\ell \in S_{p+1}} \sum_{z \in \ms V^{(p+1)}_\ell} \Pi_p(x,z) \,
p^{(p+1)}_t(\ell, m) \, \pi^{(p+1)}_m(y) \;+\; O(\epsilon)\;.
\end{equation*}
By the definition \eqref{27b} of $\Pi_p$ and since the measure
$\pi^{(p+1)}_k (\,\cdot\,)$, $k\in S_{p+1}$, is supported on
$\ms V^{(p+1)}_k$, the previous expression is equal to
\begin{equation*}
\sum_{\ell \in S_{p+1}}  \mf a^{(p)} (x, \ell) \,
p^{(p+1)}_t(\ell, m) \, \pi^{(p+1)}_m(y) \;+\; O(\epsilon) \;,
\end{equation*}
as claimed.
\end{proof}

\subsection*{Proof of \eqref{70}}

Recall that $\theta^{(\mf q +1)}_n \equiv +\infty$, and fix a
sequence $\beta_n$ such that
$\theta^{(\mf q)}_n \prec \beta_n \prec \theta^{(\mf q+1)}_n$. Since
$\pi_n$ is the stationary state,
\begin{equation*}
\pi_n(\Delta_{\mf q+1}) \;=\; \sum_{x\in V} \pi_n(x)\,
\mb P^n_{\! x} \big[\,
X_{\beta_n} \in \Delta_{\mf q+1}\, \big] \;\le\;
\max_{x\in V} \mb P^n_{\! x} \big[\,
X_{\beta_n} \in \Delta_{\mf q+1}\, \big]\;.
\end{equation*}
By the tree construction, $S_{\mf q+1}$ is a singleton and there is
only one measure at step $\mf q+1$, the measure $\pi^{(\mf q+1)}_1$
which is concentrated on $\ms V^{(\mf q+1)}_1 = \ms V^{(\mf
q+1)}$. Since $\pi^{(\mf q+1)}_1 (\Delta_{\mf q+1}) =0 $, by
\eqref{27b}, and the previous displayed equation,
\begin{equation*}
\limsup_{n\to \infty} \pi_n(\Delta_{\mf q+1}) \;\le\;
\pi^{(\mf q+1)}_1 (\Delta_{\mf q+1}) =0\;.
\end{equation*}

It follows from the previous estimate that
$\lim_{n\to \infty} \pi_n(\ms V^{(\mf q+1)}) =1$. Hence, by
\eqref{58}, for all $x\in \ms V^{(\mf q+1)}$, 
\begin{equation*}
\lim_{n\to\infty} \pi_n(x)
\;\;\text{exists and belongs to $(0,1]$}\; .
\end{equation*}

\section{Proof of Proposition \ref{p3}}
\label{sec5}

The proof is divided in several lemmata.  We start with the asymptotic
behavior of the stationary states $\pi_n$.

\begin{lemma}
\label{l32}
For all $j\in S_1$, $x\in \ms V_j$, 
\begin{equation*}
\lim_{n\to\infty} \frac{\pi_n(x)}{\pi_n(\ms V_j)}
\;=\; \pi^\sharp (x) \;>\; 0\;.
\end{equation*}
\end{lemma}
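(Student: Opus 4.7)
\textbf{Plan for the proof of Lemma \ref{l32}.}
The plan is to identify the normalised conditional measure $\pi_n(\cdot)/\pi_n(\ms V_j)$ on $\ms V_j$ as the invariant probability of the trace of $X^{(n)}_t$ on $\ms V_j$, and then to show that the jump rates of this trace converge to those of $\bb X_t$ restricted to $\ms V_j$; the lemma will follow from continuity of the stationary measure of a finite irreducible Markov chain with respect to its rates. Denoting by $R^{j}_n(x,y)$ the jump rates of the trace of $X^{(n)}_t$ on $\ms V_j$ (given by the formula in \eqref{40} with $\ms V^{(p)}$ replaced by $\ms V_j$), the irreducibility of $X^{(n)}_t$ passes to the trace, and the standard identification of the invariant measure of a trace chain yields that its unique invariant probability on $\ms V_j$ is precisely $\pi_n(x)/\pi_n(\ms V_j)$, $x\in \ms V_j$.

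The central step is to show that for all $x\neq y$ in $\ms V_j$, $R^{j}_n(x,y) \to \bb R_0(x,y)$. Conditioning on the first jump of $X^{(n)}_t$ out of $x$ and distinguishing whether it lands directly in $\ms V_j$ or first in $\Delta$,
\begin{equation*}
R^{j}_n(x,y) \;=\; R_n(x,y) \;+\; \sum_{z\in \Delta} R_n(x,z)\,
\mb P^n_z\big[\, X_{H_{\ms V_j}} = y\,\big]\;.
\end{equation*}
Since $\ms V_j$ is a closed irreducible class of $\bb X_t$, $\bb R_0(x,z) = 0$ for every $z\in V\setminus \ms V_j$, and in particular for $z\in \Delta$; by \eqref{01} this forces $R_n(x,z)\to 0$ for all such pairs. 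The conditional probabilities in the sum are bounded by one, so the entire sum vanishes in the limit, and together with $R_n(x,y)\to \bb R_0(x,y)$ this yields the claimed convergence of the trace rates.

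The limiting rates on $\ms V_j$ are those of $\bb X_t$ restricted to the closed irreducible class $\ms V_j$; they are irreducible on $\ms V_j$ with unique invariant probability $\pi^\sharp_j$, which is strictly positive on $\ms V_j$. The stationary measure of a finite-state irreducible continuous-time Markov chain is a continuous function of its jump rates, as is transparent from the Matrix Tree representation recalled after \eqref{01}. Consequently the invariant probability $\pi_n(\cdot)/\pi_n(\ms V_j)$ of the trace converges pointwise on $\ms V_j$ to $\pi^\sharp_j$, and the positivity is inherited from $\pi^\sharp_j$. The step demanding the most care is the convergence of the trace rates; the key observation is that the closedness of $\ms V_j$ under the limit dynamics $\bb R_0$ makes excursions of $X^{(n)}_t$ into $\Delta$ asymptotically negligible for the computation of $R^{j}_n$.
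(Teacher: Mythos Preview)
Your argument is correct in substance, but note a small slip in the decomposition: after the first jump from $x\in\ms V_j$, the chain may land in $\ms V_k$ for some $k\neq j$, not only in $\Delta$, so the sum should run over $z\in V\setminus\ms V_j$ rather than $z\in\Delta$. You have in fact already stated the relevant bound in that generality (``$\bb R_0(x,z)=0$ for every $z\in V\setminus\ms V_j$''), so the fix is purely notational and the conclusion $R^{j}_n(x,y)\to\bb R_0(x,y)$ stands.

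Your route is genuinely different from the paper's. The paper does not pass through the trace on $\ms V_j$ at all: it first invokes \eqref{58} to know a priori that $\pi_n(x)/\pi_n(\ms V_j)$ converges to some $m(x)>0$, and then takes the full-chain balance identity $\sum_y \pi_n(x)R_n(x,y)=\sum_y \pi_n(y)R_n(y,x)$, drops the terms with $y\notin\ms V_j$ on the right to get an inequality, divides by $\pi_n(\ms V_j)$, passes to the limit, and observes that summing the resulting inequality over $x\in\ms V_j$ forces it to be an equality; hence $m$ is stationary for $\bb X_t$ on $\ms V_j$ and equals $\pi^\sharp_j$ by uniqueness. Your approach trades the reliance on the external input \eqref{58} for the trace machinery and the Matrix Tree continuity argument: you obtain both existence of the limit and its identification in one stroke, at the cost of computing the trace rates. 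Either way the key structural fact is the same --- closedness of $\ms V_j$ under $\bb R_0$ kills all outgoing rates in the limit --- but your version is more self-contained and meshes well with the trace formalism used elsewhere in the paper, while the paper's version is shorter given that \eqref{58} is already available.
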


\begin{proof}
Fix $j\in S_1$. By \eqref{58}, the limit $\pi_n(x)/\pi_n(\ms V_j)$
exists for all $x\in \ms V_j$ and is strictly positive. It remains to
show that it is equal to $\pi^\sharp (x)$. Denote the limit by
$m(x)$. Since $\pi_n$ is a stationary state, for all $x\in \ms V_j$,
\begin{equation*}
\sum_{y\in V} \pi_n(x) \, R_n(x,y) \;=\;
\sum_{y\in V} \pi_n(y) \, R_n(y,x) \; \ge \;
\sum_{y\in \ms V_j} \pi_n(y) \, R_n(y,x) \;.
\end{equation*}
As $\ms V_j$ is a closed irreducible class for
the chain $\bb X_t$, dividing by $\pi_n(\ms V_j)$ and passing to
the limit yields that
\begin{equation*}
\sum_{y\in \ms V_j} m(x) \, \bb R_0 (x,y) 
\; \ge \; \sum_{y\in \ms V_j} m(y) \, \bb R_0(y,x) \;.
\end{equation*}
Summing over $x\in \ms V_j$ shows that this inequality must be an
identity for all $x\in \ms V_j$. Therefore, $m$ is a stationary state
for the chain $\bb X_t$ on $\ms V_j$ what implies that $m=\pi^\sharp$, as
claimed. 
\end{proof}

\begin{lemma}
\label{l33}
Fix $1\le p\le \mf q$. For all $m\in S_{p+1}$, $j\in \mf R^{(p)}_m$, 
\begin{equation*}
\lim_{n\to\infty} \frac{\pi_n(\ms V^{(p)}_j)}{\pi_n(\ms V^{(p+1)}_m)}
\;=\; M^{(p)}_m(j) \;.
\end{equation*}
\end{lemma}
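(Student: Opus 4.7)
The plan is to identify the limit $b_j := \lim_n \pi_n(\ms V^{(p)}_j)/\pi_n(\ms V^{(p+1)}_m)$, for $j \in \mf R^{(p)}_m$, with the unique stationary probability measure $M^{(p)}_m$ of $\bb X^{(p)}$ restricted to the closed irreducible class $\mf R^{(p)}_m$, by verifying that $b$ satisfies its balance equation. First, pick any $x_j \in \ms V^{(p)}_j \subset \ms V^{(p+1)}_m$ and write
\[
\frac{\pi_n(\ms V^{(p)}_j)}{\pi_n(\ms V^{(p+1)}_m)} \;=\; \frac{\pi_n(\ms V^{(p)}_j)}{\pi_n(x_j)}\, \cdot\, \frac{\pi_n(x_j)}{\pi_n(\ms V^{(p+1)}_m)}\;.
\]
By \eqref{58} applied at levels $p$ and $p+1$, both factors converge to strictly positive limits, so $b_j$ exists and lies in $(0,1]$. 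Since $\ms V^{(p+1)}_m$ is the disjoint union of the $\ms V^{(p)}_j$ for $j\in \mf R^{(p)}_m$, the normalisation $\sum_{j\in \mf R^{(p)}_m} b_j = 1$ is automatic.

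The measure $\pi_n$ restricted and normalised to $\ms V^{(p)}$ is stationary for the trace process $Y^{n,p}_t$ with rates $R^{(p)}_n$; aggregating this balance via the partition $\{\ms V^{(p)}_j\}_{j\in S_p}$ and the definition \eqref{41} of the mean rates gives, for each $i\in S_p$,
\[
\pi_n(\ms V^{(p)}_i) \sum_{k \in S_p \setminus \{i\}} r^{(p)}_n(i,k)
\;=\; \sum_{k \in S_p \setminus \{i\}} \pi_n(\ms V^{(p)}_k)\, r^{(p)}_n(k,i)\;.
\]
Fix $i \in \mf R^{(p)}_m$, divide by $\pi_n(\ms V^{(p+1)}_m)$, multiply by $\theta^{(p)}_n$, and let $n\to\infty$ using \eqref{34}. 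Because $i$ is recurrent in the closed class $\mf R^{(p)}_m$ of $\bb X^{(p)}$, $r^{(p)}(i,k) = 0$ for $k \notin \mf R^{(p)}_m$, so the left-hand side tends to $b_i \sum_{k\in \mf R^{(p)}_m\setminus\{i\}} r^{(p)}(i,k)$. On the right, states $k \in \mf R^{(p)}_{m'}$ with $m'\neq m$ also lie in a closed class, so $r^{(p)}(k,i)=0$, and these contribute nothing in the limit.

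The main obstacle is the contribution of transient indices $k \in \mf T_p$, for which $r^{(p)}(k,i)$ may be positive; I need to show that $\hat\pi_n(k) := \pi_n(\ms V^{(p)}_k)/\pi_n(\ms V^{(p+1)}_m) \to 0$. By the monotonicity \eqref{59} and the inclusion $\ms V^{(p)}_k \subset \ms T^{(p+1)} \subset \Delta_{p+1} \subset \Delta_{\mf q+1}$, relation \eqref{70} yields $\pi_n(\ms V^{(p)}_k) \le \pi_n(\Delta_{\mf q+1}) \to 0$. For the denominator, iterate \eqref{58} upward along the tree: for any $x \in \ms V^{(p+1)}_m$ and any $p+1 \le q \le \mf q+1$, the ratio $\pi_n(x)/\pi_n(\ms V^{(q)}_{\ell(q)})$ (where $\ms V^{(q)}_{\ell(q)}$ is the unique ancestor of $x$ at level $q$) converges to a strictly positive limit, and at the top level $\pi_n(\ms V^{(\mf q+1)}_1) = \pi_n(\ms V^{(\mf q+1)}) \to 1$ by \eqref{70}. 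Hence $\pi_n(\ms V^{(p+1)}_m)$ converges to a strictly positive number, and $\hat\pi_n(k) \to 0$. Since $\theta^{(p)}_n r^{(p)}_n(k,i) \to r^{(p)}(k,i) < \infty$, the product $\hat\pi_n(k)\,\theta^{(p)}_n r^{(p)}_n(k,i)$ vanishes.

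Combining these steps, the limiting balance identity reads
\[
b_i \sum_{k \in \mf R^{(p)}_m \setminus \{i\}} r^{(p)}(i,k)
\;=\; \sum_{k \in \mf R^{(p)}_m \setminus \{i\}} b_k\, r^{(p)}(k,i)\;, \qquad i \in \mf R^{(p)}_m\;.
\]
Thus $(b_j)_{j \in \mf R^{(p)}_m}$ is a probability measure on the irreducible class $\mf R^{(p)}_m$ invariant for $\bb X^{(p)}$; by uniqueness, $b_j = M^{(p)}_m(j)$, which is the desired assertion.
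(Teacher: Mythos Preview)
Your overall strategy---show that the limit $(b_j)_{j\in\mf R^{(p)}_m}$ satisfies the balance equation of $\bb X^{(p)}$ restricted to $\mf R^{(p)}_m$ and invoke uniqueness---is exactly the paper's strategy. The difficulty, and the gap in your argument, is in handling the terms on the right-hand side with $k\notin\mf R^{(p)}_m$.

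Your claim that $\pi_n(\ms V^{(p+1)}_m)$ converges to a strictly positive number is false. The ancestor-chain argument breaks because $\ms V^{(p+1)}_m$ need not have an ancestor of the form $\ms V^{(q)}_\ell$ at every higher level $q$: it may instead fall into $\Delta_q$. In the paper's worked example, take $p=1$, $m=1$, so $\ms V^{(2)}_1=\{x_3\}$; at level $3$ the index $1\in\mf T_2$ and $\{x_3\}\subset\Delta_3\subset\Delta_{\mf q+1}$, whence $\pi_n(\ms V^{(2)}_1)\to 0$ by \eqref{70}. So your argument that $\hat\pi_n(k)\to 0$ for $k\in\mf T_p$ collapses. (In fact $\hat\pi_n(k)\to 0$ is not even true for all $k\in\mf T_p$: with $p=1$, $m=1$, $k=5$ one has $\pi_n(x_5)/\pi_n(x_3)\to 1$.) The same issue affects the contributions from $k\in\mf R^{(p)}_{m'}$, $m'\neq m$: there $r^{(p)}(k,i)=0$, but $\hat\pi_n(k)$ may diverge, so the product $\hat\pi_n(k)\,\theta^{(p)}_n r^{(p)}_n(k,i)$ is an $\infty\cdot 0$ form that you have not resolved.

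The paper sidesteps all of this with a one-line trick: since every term on the right is nonnegative, simply \emph{drop} the terms with $k\notin\mf R^{(p)}_m$ before passing to the limit, obtaining for each $j\in\mf R^{(p)}_m$ the inequality
\[
b_j\sum_{k\in\mf R^{(p)}_m\setminus\{j\}} r^{(p)}(j,k)\;\ge\;\sum_{k\in\mf R^{(p)}_m\setminus\{j\}} b_k\, r^{(p)}(k,j)\,.
\]
Summing over $j\in\mf R^{(p)}_m$ gives the same double sum on both sides, forcing equality for every $j$. No estimate on $\hat\pi_n(k)$ for $k\notin\mf R^{(p)}_m$ is needed.
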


\begin{proof}
Fix $1\le p\le \mf q$ and $m\in S_{p+1}$. Consider the sequence of
measures on $\mf R^{(p)}_m$ defined by
$ m_n(j) = \pi_n(\ms V^{(p)}_j)/\pi_n(\ms V^{(p+1)}_m)$. By
\eqref{58}, it converges to a limiting measure, denoted by $m(j)$.

By \cite[Proposition 6.3]{bl2},
$\pi_n(\,\cdot\,)/\pi_n(\ms V^{(p)})$ is the stationary state of the
chain $Y^{n,p}_t$, the trace of $X^{(n)}_t$ on $\ms V^{(p)}$. Hence,
for all $j\in \mf R^{(p)}_m$, $x\in \ms V^{(p)}_j$,
\begin{equation*}
\sum_{y\in \ms V^{(p)}} \pi_n(x) \, R^{(p)}_n(x,y) \;=\;
\sum_{y\in \ms V^{(p)}} \pi_n(y) \, R^{(p)}_n(y,x) \; \ge \;
\sum_{k\in \mf R^{(p)}_m} \sum_{y\in \ms V^{(p)}_k} \pi_n(y) \,
R^{(p)}_n(y,x) \;. 
\end{equation*}
Sum over all $x\in \ms V^{(p)}_j$ to get that
\begin{equation*}
\sum_{k\in S_p} \sum_{x\in \ms V^{(p)}_j}
\pi_n(x) \, R^{(p)}_n(x,\ms V^{(p)}_k) \; \ge \;
\sum_{k\in \mf R^{(p)}_m} \sum_{y\in \ms V^{(p)}_k} \pi_n(y) \,
R^{(p)}_n(y,\ms V^{(p)}_j) \;,
\end{equation*}
where
$R^{(p)}_n(z,\ms V^{(p)}_\ell) = \sum_{w\in \ms V^{(p)}_\ell}
R^{(p)}_n(z,w) $. Remove on both sides of this inequality the case
$k=j$. By \eqref{41}, this new expression divided by
$\pi_n(\ms V^{(p+1)}_m)$ is equal to
\begin{equation*}
\frac{\pi_n(\ms V^{(p)}_j)}{\pi_n(\ms V^{(p+1)}_m)}
\sum_{k\in S_p\setminus\{j\}} r^{(p)}_n (j,k)  \; \ge \;
\sum_{k\in \mf R^{(p)}_m \setminus\{j\}}
\frac{\pi_n(\ms V^{(p)}_k)}{\pi_n(\ms V^{(p+1)}_m)}
\, r^{(p)}_n (k,j) \;. 
\end{equation*}
By the assumption on the measure $m_n$ and by \eqref{34}, as
$n\to\infty$, this expression multiplied by $\theta^{(p)}_n$ on both
sides converges to
\begin{equation*}
m(j) \sum_{k\in S_p\setminus\{j\}} r^{(p)} (j,k)  \; \ge \;
\sum_{k\in \mf R^{(p)}_m \setminus\{j\}} m(k) \, r^{(p)} (k,j) \;. 
\end{equation*}
Since $\mf R^{(p)}_m$ is a closed irreducible class for the chain
$\bb X^{(p)}_t$, $r^{(p)} (j,k) =0$ for all $k\not\in \mf R^{(p)}_m$,
and the first sum can be restricted to this later set.  Summing over
$j$ yields that this inequality must be an identity for all
$j$. Therefore, $m$ is a stationary state for the Markov chain
$\bb X^{(p)}_t$ restricted to $\mf R^{(p)}_m$. By ergodicity,
$m= M^{(p)}_m$, as claimed.
\end{proof}

\begin{corollary}
\label{l34}
Fix $1\le p\le \mf q+1$. For all $j\in S_{p}$, $x\in \ms V^{(p)}_j$, 
\begin{equation*}
\lim_{n\to\infty} \frac{\pi_n(x)}{\pi_n(\ms V^{(p)}_j)}
\;=\; \pi^{(p)}_j(x) \;.
\end{equation*}
\end{corollary}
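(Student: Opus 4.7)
The plan is to proceed by induction on $p$, with base case $p=1$ supplied directly by Lemma \ref{l32} (since $\pi^{(1)}_j = \pi^\sharp_j$ by definition). For the inductive step, assume the assertion for some $1\le p\le \mf q$; the goal is to deduce it for $p+1$.

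Fix $m\in S_{p+1}$ and $x\in \ms V^{(p+1)}_m$. Since
$\ms V^{(p+1)}_m = \bigcup_{j\in \mf R^{(p)}_m} \ms V^{(p)}_j$
and the sets $\ms V^{(p)}_j$, $j\in S_p$, are pairwise disjoint, there is a unique $j = j(x) \in \mf R^{(p)}_m$ such that $x\in \ms V^{(p)}_j$. Decompose the ratio as
\begin{equation*}
\frac{\pi_n(x)}{\pi_n(\ms V^{(p+1)}_m)}
\;=\; \frac{\pi_n(x)}{\pi_n(\ms V^{(p)}_j)} \;\cdot\;
\frac{\pi_n(\ms V^{(p)}_j)}{\pi_n(\ms V^{(p+1)}_m)}\;.
\end{equation*}
The induction hypothesis yields $\pi_n(x)/\pi_n(\ms V^{(p)}_j) \to \pi^{(p)}_j(x)$, while Lemma \ref{l33} gives $\pi_n(\ms V^{(p)}_j)/\pi_n(\ms V^{(p+1)}_m) \to M^{(p)}_m(j)$. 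Hence
\begin{equation*}
\lim_{n\to\infty} \frac{\pi_n(x)}{\pi_n(\ms V^{(p+1)}_m)}
\;=\; M^{(p)}_m(j)\, \pi^{(p)}_j(x)\;.
\end{equation*}

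To conclude, compare the right-hand side with the defining formula \eqref{80} for $\pi^{(p+1)}_m(x)$. By \eqref{52} applied at level $p$, the support of $\pi^{(p)}_k$ is $\ms V^{(p)}_k$; since $x\in \ms V^{(p)}_j$ and the sets $\ms V^{(p)}_k$ are disjoint, $\pi^{(p)}_k(x) = 0$ for all $k\in \mf R^{(p)}_m\setminus\{j\}$. Therefore
\begin{equation*}
\pi^{(p+1)}_m(x)
\;=\; \sum_{k\in \mf R^{(p)}_m} M^{(p)}_m(k)\, \pi^{(p)}_k(x)
\;=\; M^{(p)}_m(j)\, \pi^{(p)}_j(x)\;,
\end{equation*}
which matches the limit above and closes the induction. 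Since Lemma \ref{l33} is available for $1\le p\le \mf q$, the induction reaches $p+1 = \mf q+1$.

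Since all the hard work has been done in Lemmas \ref{l32} and \ref{l33}, the proof is essentially algebraic bookkeeping: the only point demanding care is matching the limit $M^{(p)}_m(j)\pi^{(p)}_j(x)$ with the sum \eqref{80} defining $\pi^{(p+1)}_m$, and this relies crucially on the disjointness of supports guaranteed by \eqref{52}. No real obstacle is expected.
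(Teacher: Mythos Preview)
Your proof is correct and follows essentially the same approach as the paper: induction on $p$ with base case Lemma~\ref{l32}, the decomposition $\pi_n(x)/\pi_n(\ms V^{(p+1)}_m) = [\pi_n(x)/\pi_n(\ms V^{(p)}_j)]\cdot[\pi_n(\ms V^{(p)}_j)/\pi_n(\ms V^{(p+1)}_m)]$, and the identification of the limit with $\pi^{(p+1)}_m(x)$ via \eqref{80}. The only cosmetic difference is that you make explicit the collapse of the sum in \eqref{80} to a single term by invoking the disjointness of supports \eqref{52}, which the paper leaves implicit.
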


\begin{proof}
The proof is performed by induction. Lemma \ref{l32} covers the case
$p=1$. Assume that this corollary has been proven for all
$1\le p <p_0$, where $p_0\le \mf q+1$. Fix $j\in S_{p_0}$ and
$x\in \ms V^{(p_0)}_j$.  By construction of $\ms V^{(p_0)}_j$, there
exists $k\in S_{p_0-1}$ such that
$x\in \ms V^{(p_0-1)}_k \subset \ms V^{(p_0)}_j$. We can write
\begin{equation*}
\frac{\pi_n(x)}{\pi_n(\ms V^{(p_0)}_j)} \;=\;
\frac{\pi_n(x)}{\pi_n(\ms V^{(p_0-1)}_k)} \;
\frac{\pi_n(\ms V^{(p_0-1)}_k) }{\pi_n(\ms V^{(p_0)}_j)}\; \cdot
\end{equation*}
By Lemma \ref{l33} and the induction assumption, as $n\to\infty$, this
expression converges to 
\begin{equation*}
\pi^{(p_0-1)}_k (x)\,  M^{(p_0-1)}_j(k)\;.
\end{equation*}
By \eqref{80}, this expression is equal to $\pi^{(p_0)}_j (x)$ as claimed.
\end{proof}

We turn to the absorbing probabilities. We first consider the case
where the state belongs to the valley.

\begin{lemma}
\label{l21}
For all $1\le p\le \mf q+1$, $j\in S_p$ and $x\in \ms V^{(p)}_j$, $\mf
a^{(p-1)} (x,j)=1$.
\end{lemma}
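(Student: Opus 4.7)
The plan is to prove the lemma by induction on $p$, leveraging the recursive definitions \eqref{33}--\eqref{33b} of the absorption weights and the fact that at each level the wells are built from closed irreducible classes of the corresponding limiting chain.

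For the base case $p=1$, the set $\ms V^{(1)}_j = \ms V_j$ is by definition a closed irreducible class of the chain $\bb X_t$. Hence for any $x\in \ms V_j$ we have $\bb Q_x[\bb X_t \in \ms V_j] = 1$ for all $t \ge 0$, and taking $t \to \infty$ in \eqref{33} yields $\mf a^{(0)}(x,j) = 1$.

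For the inductive step, fix $x \in \ms V^{(p+1)}_m$. By the construction
\[
\ms V^{(p+1)}_m \;=\; \bigcup_{k \in \mf R^{(p)}_m} \ms V^{(p)}_k
\]
and the disjointness of the sets $\ms V^{(p)}_i$, $i \in S_p$, there is a unique $k \in \mf R^{(p)}_m \subset S_p$ with $x \in \ms V^{(p)}_k$. The induction hypothesis gives $\mf a^{(p-1)}(x,k) = 1$, and since $\mf a^{(p-1)}(x,\cdot)$ is a probability measure on $S_p$, all other entries vanish. Substituting in \eqref{33b} collapses the sum to $\mf a^{(p)}(x,m) = \mf A^{(p)}(k,m)$. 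As $\mf R^{(p)}_m$ is a closed irreducible class of the chain $\bb X^{(p)}_t$, starting from $k \in \mf R^{(p)}_m$ the chain never leaves $\mf R^{(p)}_m$, so $\sum_{\ell \in \mf R^{(p)}_m} p^{(p)}_t(k,\ell) = 1$ for every $t \ge 0$; passing to the limit in \eqref{63} gives $\mf A^{(p)}(k,m) = 1$, and the induction closes.

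This is essentially a bookkeeping argument built on the tree construction, with no real obstacle. The only point requiring care is recognising that each $x \in \ms V^{(p+1)}_m$ sits in a unique lower-level well $\ms V^{(p)}_k$ whose index $k$ automatically belongs to the closed irreducible class $\mf R^{(p)}_m$ of $\bb X^{(p)}_t$; once this is in hand, the recursion collapses and the closed-class property of $\mf R^{(p)}_m$ finishes the step.
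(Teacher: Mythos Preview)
Your proof is correct and follows essentially the same approach as the paper's own proof: induction on $p$, with the base case handled by the closed-irreducible property of $\ms V_j$ for $\bb X_t$, and the inductive step by locating $x$ in a unique $\ms V^{(p)}_k$ with $k\in\mf R^{(p)}_m$, collapsing the sum in \eqref{33b} via the induction hypothesis, and concluding $\mf A^{(p)}(k,m)=1$ from the closed-irreducible property of $\mf R^{(p)}_m$.
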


\begin{proof}
The proof is by induction on $p$. Fix $j\in S_1$ and $x\in \ms V_j$.
By \eqref{33}, $\mf a^{(0)} (x,j)=1$ because $\ms V_j$ is a closed
irreducible class for $\bb X_t$ and $x$ belongs to $\ms V_j$.

Suppose that the results has been proved for $p-1$. This means that if
$j\in S_{p}$ and $x\in \ms V^{(p)}_j$, then
$\mf a^{(p-1)} (x,j) =1$.  As $\mf a^{(p-1)} (x\,,\,\cdot\,)$ is a
probability measure on $S_{p}$, $\mf a^{(p-1)} (x,k) = 0$ for all
$k\in S_{p} \setminus \{j\}$.

Fix $m\in S_{p+1}$ and $x\in \ms V^{(p+1)}_m$. As
$ \ms V^{(p+1)}_m = \cup_{j\in \mf R^{(p)}_m} \ms V^{(p)}_j$,
$x\in \ms V^{(p)}_j$ for some $j\in \mf R^{(p)}_m$. By \eqref{33b},
and since, by the induction hypothesis, $\mf a^{(p-1)} (x, k) =
\delta_{j,k}$, 
\begin{equation*}
\mf a^{(p)} (x,m)
\;=\; \sum_{k\in S_p} \mf a^{(p-1)} (x, k) \, \mf A^{(p)} (k, m)
\;=\; \mf A^{(p)} (j, m)\;.
\end{equation*}
As $j\in \mf R^{(p)}_m$ and $\mf R^{(p)}_m$ is a closed irreducible
class for $\bb X^{(p)}_t$, by the definition \eqref{63} of $\mf
A^{(p)}$, $\mf A^{(p)} (j, m) =1$, which completes the proof of the
lemma. 
\end{proof}

It follows from this lemma and from \eqref{27b} that for all
$1\le p\le \mf q+1$, $j\in S_p$, $x\in \ms V^{(p)}_j$ and sequences
$\beta_n$ such that
$\theta^{(p-1)}_n \,\prec\, \beta_n\,\prec\, \theta^{(p)}_n$
\begin{equation}
\label{86}
\lim_{n\to\infty} p^{(n)}_{\beta_n} (x,\,\cdot\, ) \;=\;
\pi^{(p)}_j (\,\cdot\, )\;.
\end{equation}

Lemma \ref{l21} provides a formula for $a^{(p-1)} (x\,,\, \cdot\,)$
when $x\in \ms V^{(p)}$. Lemma \ref{l22} completes the
characterisation of $a^{(p-1)} (x\,,\, \cdot\,)$. The proof of this
result relies on the following bound.

We claim that for all $a>0$, $1\le p\le \mf q+1$,
$x\not\in \ms V^{(p)}$ and sequence $\beta_n$ such that
$\theta^{(p-1)}_n \,\prec\, \beta_n\,\prec\, \theta^{(p)}_n$,
\begin{equation}
\label{87}
\lim_{n\to\infty} \max_{x\in V} \mb P^n_{\! x} \big[\,
H_{\ms V^{(p)}} \,>\, a\, \beta_n
\,\big] \; = \; 0 \;.
\end{equation}

If $x\in \ms V^{(p)}$, there is nothing to prove. Fix
$x\not\in \ms V^{(p)}$ and observe that
$\{ H_{\ms V^{(p)}} \,>\, a\,\beta_n \} \subset \, \int_{[0,a\beta_n]}
\chi_{\Delta_p}(X^n_s)\, ds \ge a\, \beta_n$.  Hence, by Chebyshev
inequality,
\begin{equation*}
\mb P^n_{\! x} \big[\, H_{\ms V^{(p)}} \,>\, a\, \beta_n
\,\big] \;\le\; \mb P^n_{\! x} \big[\,
\int_0^{a\beta_n}
\chi_{\Delta_p}(X^n_s)\, ds \ge a\, \beta_n 
\,\big] \;\le\; \frac{1}{a}\, \int_0^{a}
\mb E^n_x \big[\, \chi_{\Delta_p}(X^n_{s\beta_n})\, ds 
\,\big] \;.
\end{equation*}
The last term can be written as
\begin{equation*}
\sum_{z\in \Delta_p} \frac{1}{a}\, \int_0^{a}
p^{(n)}_{s\beta_n}(x,z)\, ds \;.
\end{equation*}
For each fixed $0<s<a$ the sequence $s\beta_n$ satisfies the
hypotheses of Theorem \ref{mt0}.(a). Hence, since
$\pi^{(p)}_j(\Delta_p) =0$ for all $j\in S_p$,
$p^{(n)}_{s\beta_n}(x,z) \to 0$. Therefore, by the dominated
convergence theorem, the previous expression vanishes, which proves
claim \eqref{87}.

\begin{lemma}
\label{l22}
For all $1\le p\le \mf q+1$, $j\in S_p$, $x\in V$,
\begin{equation*}
\mf a^{(p-1)} (x,j) \;=\; \lim_{n\to\infty} \mb P^n_{\! x}
\big[\, H_{\ms V^{(p)}_j} \,<\,  H_{\breve{\ms V}^{(p)}_j}\,\big]\;. 
\end{equation*}
\end{lemma}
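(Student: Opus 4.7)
The plan is to relate the probability on the right-hand side to the asymptotics of $p^{(n)}_{\beta_n}(x,\cdot)$ on an intermediate time-scale $\beta_n$ with $\theta^{(p-1)}_n \prec \beta_n \prec \theta^{(p)}_n$, whose limit is furnished by Theorem \ref{mt0}(a). First, I dispatch the easy cases. When $x \in \ms V^{(p)}_k$ for some $k \in S_p$, Lemma \ref{l21} gives $\mf a^{(p-1)}(x,j) = \delta_{k,j}$; conversely, right-continuity of the trajectories together with the disjointness of $\ms V^{(p)}_j$ and $\breve{\ms V}^{(p)}_j$ forces $H_{\ms V^{(p)}_j} = 0 < H_{\breve{\ms V}^{(p)}_j}$ when $k = j$ and $H_{\breve{\ms V}^{(p)}_j} = 0 < H_{\ms V^{(p)}_j}$ when $k \neq j$, so the right-hand side also equals $\delta_{k,j}$. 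The case $p = \mf q + 1$ is trivial because $S_{\mf q+1}$ is a singleton and $\breve{\ms V}^{(\mf q+1)}_1 = \varnothing$, so both sides equal $1$ by irreducibility.

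Assume from now on $1 \le p \le \mf q$ and $x \in \Delta_p$. Since $\pi^{(p)}_k$ is a probability measure supported on $\ms V^{(p)}_k$ by \eqref{52} and the wells are disjoint, summing the identity \eqref{27b} over $y \in \ms V^{(p)}_j$ yields
\[
\lim_{n\to\infty} \mb P^n_x\big[\, X^{(n)}_{\beta_n} \in \ms V^{(p)}_j \,\big] \;=\; \mf a^{(p-1)}(x,j)\;.
\]
Since $\ms V^{(p)}_j$ and $\breve{\ms V}^{(p)}_j$ are disjoint with union $\ms V^{(p)}$, the event $\{H_{\ms V^{(p)}_j} < H_{\breve{\ms V}^{(p)}_j}\}$ coincides, modulo a null set, with $\{X^{(n)}_{H_{\ms V^{(p)}}} \in \ms V^{(p)}_j\}$---the first well visited is $\ms V^{(p)}_j$. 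It therefore suffices to prove that
\[
\lim_{n\to\infty} \Big|\, \mb P^n_x\big[\, X^{(n)}_{\beta_n} \in \ms V^{(p)}_j \,\big] \,-\, \mb P^n_x\big[\, X^{(n)}_{H_{\ms V^{(p)}}} \in \ms V^{(p)}_j \,\big]\, \Big| \;=\; 0\;.
\]

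To estimate this difference, I would decompose the symmetric difference of the two events according to $X^{(n)}_\tau$, with $\tau = H_{\ms V^{(p)}}$, and apply the strong Markov property at $\tau$. The piece where $X^{(n)}_\tau \in \ms V^{(p)}_k$ with $k \neq j$ but $X^{(n)}_{\beta_n} \in \ms V^{(p)}_j \subset \breve{\ms V}^{(p)}_k$ is controlled by $\max_{z \in \ms V^{(p)}_k} \mb P^n_z[\, H_{\breve{\ms V}^{(p)}_k} \le \beta_n \,]$; the piece where $X^{(n)}_\tau \in \ms V^{(p)}_j$ but $X^{(n)}_{\beta_n} \in \breve{\ms V}^{(p)}_j$ is controlled analogously by $\max_{z \in \ms V^{(p)}_j} \mb P^n_z[\, H_{\breve{\ms V}^{(p)}_j} \le \beta_n \,]$; and the residual piece $X^{(n)}_{\beta_n} \in \Delta_p$ contributes at most $\mb P^n_x[\, X^{(n)}_{\beta_n} \in \Delta_p \,]$, which vanishes by \eqref{27b} since each $\pi^{(p)}_k$ assigns no mass to $\Delta_p$. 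The main technical point will be the level-$p$ analog of Lemma \ref{l14}: for $\beta_n \prec \theta^{(p)}_n$ and any $k \in S_p$, $\max_{z \in \ms V^{(p)}_k} \mb P^n_z[\, H_{\breve{\ms V}^{(p)}_k} \le \beta_n \,] \to 0$. This analog is proved verbatim as Lemma \ref{l14}, combining Lemma \ref{l06}, the capacity inequality (B2) of \cite{lrev}, the limits \eqref{58} at level $p$, and the definition \eqref{26b} of $\theta^{(p)}_n$.
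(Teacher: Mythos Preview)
Your argument is correct and follows the same overall strategy as the paper---identify $\mf a^{(p-1)}(x,j)$ with $\lim_n \mb P^n_x[X_{\beta_n}\in\ms V^{(p)}_j]$ via Theorem~\ref{mt0}(a), then match this with the hitting probability by strong Markov and a ``no-escape'' estimate for the wells on the scale $\beta_n\prec\theta^{(p)}_n$. The organization, however, is genuinely more economical than the paper's. The paper first restricts to $\{H_{\ms V^{(p)}}<\epsilon\beta_n\}$ via \eqref{87}, proves the auxiliary bound \eqref{f13}, derives only the one-sided inequality $\mf a^{(p-1)}(x,j)\le \liminf_n \mb P^n_x[H_{\ms V^{(p)}_j}=H_{\ms V^{(p)}}]$, and then upgrades to equality by summing over $j$ and using that both sides are probability measures on $S_p$; moreover it establishes the no-escape estimate \eqref{88} through convergence of the trace process. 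You avoid all of this: the observation that $\{X_{\beta_n}\in\ms V^{(p)}\}$ already forces $\tau=H_{\ms V^{(p)}}\le\beta_n$ makes \eqref{87} unnecessary, the symmetric-difference decomposition gives two-sided control directly, and your level-$p$ no-escape bound is obtained by the elementary capacity argument of Lemma~\ref{l07}/\ref{l14} rather than trace convergence. The paper's route has the minor advantage that \eqref{87} and \eqref{88} are of independent interest and reused elsewhere, but as a proof of this lemma alone your path is shorter.
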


\begin{proof}
Fix $1\le p\le \mf q+1$ and $j\in S_p$. If $x\in \ms V^{(p)}$, this
result follows from Lemma \ref{l21}. Assume that
$x\not\in \ms V^{(p)}$ and fix a sequence $\beta_n$ such that
$\theta^{(p-1)}_n \,\prec\, \beta_n\,\prec\, \theta^{(p)}_n$.  On the
one hand, by \eqref{27b},
\begin{equation*}
\lim_{n\to\infty} \sum_{y\in \ms V^{(p)}_j}
p^{(n)}_{\beta_n} (x,y) \;=\;
\mf a^{(p-1)} (x,j)\;.
\end{equation*}
On the other hand,
\begin{equation*}
\sum_{y\in \ms V^{(p)}_j} p^{(n)}_{\beta_n} (x,y) \;=\;
\mb P^n_{\! x} \big[\, X_{\beta_n} \, \in\,  \ms V^{(p)}_j \,\big]
\;=\; \sum_{k\in S_p} \mb P^n_{\! x} \big[\,
H_{\ms V^{(p)}_k} \,=\,  H_{\ms V^{(p)}}
\,,\, X_{\beta_n} \, \in\,  \ms V^{(p)}_j \,\big] \;.
\end{equation*}
Fix $k\in S_p$ and $0<\epsilon<1$. By \eqref{87}, the previous
probability for the fixed $k$ is equal to
\begin{equation*}
\mb P^n_{\! x} \big[\,  H_{\ms V^{(p)}} \,<\, \epsilon\, \beta_n\,,\,
H_{\ms V^{(p)}_k} \,=\,  H_{\ms V^{(p)}} \,,\,
X_{\beta_n} \, \in\,  \ms V^{(p)}_j  \,\big] \; + \; o_n(1)\;.
\end{equation*}
By the strong Markov property at $H_{\ms V^{(p)}}$, the previous
probability is equal to 
\begin{equation*}
\mb E^n_x \Big[\,  H_{\ms V^{(p)}} \,<\, \epsilon\, \beta_n\,,\,
H_{\ms V^{(p)}_k} \,=\,  H_{\ms V^{(p)}} \,,\,
\mb P^n_{X(H_{\ms V^{(p)}})}  \big[\, X_{\beta_n - H_{\ms V^{(p)}}}
\, \in\,  \ms V^{(p)}_j \,\big] \, \Big] \;.
\end{equation*}
In this formula, one computes the probability
$\mb P^n_{X(H_{\ms V^{(p)}})} [\, X_{\beta_n - t} \, \in\, \ms
V^{(p)}_j \,]$ and then replace $t$ by $H_{\ms V^{(p)}}$.  
After the proof of this lemma, we show that for all $z \in V$
\begin{equation}
\label{f13}
\sup_{t\le \epsilon \beta_n} \, \mb P^n_{\! z}  \big[\, X_{\beta_n - t}
\, \in\,  \ms V^{(p)}_j \,\big]   \;\le\;
\max_{y\in \ms V^{(p)}_j} \mb P^n_{\! y}  \big[\,
H_{\breve{\ms V}^{(p)}_j} < \epsilon\, \beta_n \,\big]
\;+\; 
\mb P^n_{\! z}  \big[\, X_{\beta_n}
\, \in\,  \ms V^{(p)}_j \cup \Delta_p \,\big]\;.
\end{equation}
By \eqref{88},
\begin{equation*}
\lim_{n\to\infty} \max_{y\in \ms V^{(p)}_j} \mb P^n_{\! y}  \big[\,
H_{\breve{\ms V}^{(p)}_j} < \epsilon\, \beta_n \,\big] \;=\; 0\;.
\end{equation*}

Therefore, up to this point, we proved that
\begin{equation*}
\begin{aligned}
& \mf a^{(p-1)} (x,j) \;\le\; \\
& \quad \sum_{k\in S_p} \liminf_{n\to\infty}
\mb E^n_x \Big[\,  H_{\ms V^{(p)}} \,<\, \epsilon\, \beta_n\,,\,
H_{\ms V^{(p)}_k} \,=\,  H_{\ms V^{(p)}} \,,\,
\mb P^n_{\! X(H_{\ms V^{(p)}})}  \big[\, X_{\beta_n}
\, \in\,  \ms V^{(p)}_j \cup \Delta_p \,\big] \, \Big] \;.
\end{aligned}
\end{equation*}
By \eqref{27b} and Lemma \ref{l21}, if $k\not =j$ the previous
expectation vanishes as $n\to\infty$. If $k=j$ by the same reasons,
the probability inside the expectation converges to $1$ as
$n\to\infty$. Hence,
\begin{equation*}
\mf a^{(p-1)} (x,j) \;\le\;
\liminf_{n\to \infty} \mb P^n_{\! x} \big[\,
H_{\ms V^{(p)}} \,<\, \epsilon\, \beta_n\,,\,
H_{\ms V^{(p)}_j} \,=\,  H_{\ms V^{(p)}} \,\big] \;.
\end{equation*}
Therefore, by \eqref{87}, for all $j\in S_p$,
\begin{equation*}
\mf a^{(p-1)} (x,j) \;\le\;
\liminf_{n\to \infty} \mb P^n_{\! x} \big[\,
H_{\ms V^{(p)}_j} \,=\,  H_{\ms V^{(p)}} \,\big] \;.
\end{equation*}

The previous inequality implies that equality holds for all
$j\in S_p$. Indeed, assume that strict inequality holds for some
$j\in S_p$. Then, as $\mf a^{(p-1)} (x\,,\, \cdot\,)$ is a probability
measure on $S_p$,
\begin{equation*}
\begin{aligned}
1\;=\; \sum_{j\in S_p} \mf a^{(p-1)} (x,j) \; & <\;
\sum_{j\in S_p} \liminf_{n\to \infty} \mb P^n_{\! x} \big[\,
H_{\ms V^{(p)}_j} \,=\,  H_{\ms V^{(p)}} \,\big] \\
& \le \; \liminf_{n\to \infty}
\sum_{j\in S_p}  \mb P^n_{\! x} \big[\,
H_{\ms V^{(p)}_j} \,=\,  H_{\ms V^{(p)}} \,\big] \;=\; 1\;,
\end{aligned}
\end{equation*}
which is a contradiction.
\end{proof}

We turn to the proof of \eqref{f13}. Inserting the event
$\{X_{\beta_n} \, \in\, \ms V^{(p)}_j \cup \Delta_p\}$ and its
complement inside the probability appearing on the left-hand side of
\eqref{f13} yields that this probability is bounded by
\begin{equation*}
\begin{aligned}
& \mb P^n_{\! z}  \big[\, X_{\beta_n - t}
\, \in\,  \ms V^{(p)}_j \,,\, X_{\beta_n}
\, \not  \in\,  \ms V^{(p)}_j \cup \Delta_p \,\big]
\;+\; 
\mb P^n_{\! z}  \big[\, X_{\beta_n}
\, \in\,  \ms V^{(p)}_j \cup \Delta_p \,\big] \\
&\quad \le\;
\max_{y\in \ms V^{(p)}_j} \mb P^n_{\! y}  \big[\, X_{t}
\, \not  \in\,  \ms V^{(p)}_j \cup \Delta_p \,\big]
\;+\; 
\mb P^n_{\! z}  \big[\, X_{\beta_n}
\, \in\,  \ms V^{(p)}_j \cup \Delta_p \,\big] \;,
\end{aligned}
\end{equation*}
where we used the Markov property to estimate the first by the second
line. As $t\le \epsilon\, \beta_n$, this expression is clearly bounded by
\begin{equation*}
\max_{y\in \ms V^{(p)}_j} \mb P^n_{\! y}  \big[\,
H_{\breve{\ms V}^{(p)}_j} < \epsilon\, \beta_n \,\big]
\;+\; 
\mb P^n_{\! z}  \big[\, X_{\beta_n}
\, \in\,  \ms V^{(p)}_j \cup \Delta_p \,\big]\;,
\end{equation*}
as claimed in \eqref{f13}.

To complete the proof of Lemma \ref{l22}, it remains to show that for
all $1\le p\le \mf q$, $j\in S_p$,
\begin{equation}
\label{88}
\lim_{a\to 0} \limsup_{n\to \infty}
\max_{x\in \ms V^{(p)}_j} \mb P^n_{\! x} \big[\,
H_{\breve{\ms V}^{(p)}_j} \,<\, a\, \theta^{(p)}_n \,\big] \;=\; 0\;.
\end{equation}

Fix $1\le p\le \mf q$, $j\in S_p$, $x\in \ms V^{(p)}_j$.  Recall that
$Y^{n,p}$ represents the trace of the process $X^n_t$ on
$\ms V^{(p)}$, and that $\Phi_p : \ms V^{(p)} \to S_p$ stands for the
projection which sends $x\in \ms V^{(p)}_j$ to $j$. By \cite[Theorems
2.1 and 2.12]{lx16}, under $\mb P^n_{\! x}$, the process
$\Phi_p (Y^{n,p}_{t \theta^{(p)}_n})$ converges weakly in the Skorohod
topology to $\bb X^{(p)}_t$. In particular,
\begin{equation*}
\lim_{a\to 0} \limsup_{n\to \infty}
\mb P^n_{\! x} \big[\, H_{\breve{\ms V}^{(p)}_j}(Y^{n,p})
\,<\, a\, \theta^{(p)}_n \,\big] \;=\; 0\;.
\end{equation*}
In this formula, $\color{blue} H_{\breve{\ms V}^{(p)}_j}(Y^{n,p})$
stands for the hitting time of $\breve{\ms V}^{(p)}_j$ for the process
$Y^{n,p}_t$. Since
$H_{\breve{\ms V}^{(p)}_j}(Y^{n,p}) \le H_{\breve{\ms V}^{(p)}_j}$,
assertion \eqref{88} follows from this last result. \smallskip 

We complete this section with a consequence of Lemma \ref{l22}.
Recall from \eqref{61} that $\bb Q^{(p)}_k$ stands for the measure on
$D(\bb R_+, S_p)$ induced by the process $\bb X^{(p)}_t$ starting from
$k\in S_p$.

\begin{lemma}
\label{l37}
For all $2\le p\le \mf q$, $i\in S_{p-1}$ and $x\in \ms V^{(p-1)}_i$,
\begin{equation*}
\mf a^{(p-1)} (x,j) \;=\; \bb Q^{(p-1)}_i
\big[\, H_{\mf R^{(p)}_j} \,<\,  H_{\breve{\mf R}^{(p)}_j}\,\big]\;,
\quad j\in S_p\;,
\end{equation*}
where
${\color{blue} \breve{\mf R}^{(p)}_j} = \cup_{k\in S_p \setminus
\{j\}} \mf R^{(p)}_k$.
\end{lemma}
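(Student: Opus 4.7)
My plan is to unfold the recursive definitions of $\mf a^{(p-1)}$ and $\mf A^{(p-1)}$ and then use the closed-class structure of the $S_{p-1}$-valued chain $\bb X^{(p-1)}_t$. I read the statement with $\mf R^{(p)}_j$ meaning the $j$-th closed irreducible class $\mf R^{(p-1)}_j\subset S_{p-1}$ of $\bb X^{(p-1)}_t$, which by construction is indexed by $j\in S_p$.

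The first step is to reduce $\mf a^{(p-1)}(x,j)$ to $\mf A^{(p-1)}(i,j)$. Since $x\in \ms V^{(p-1)}_i$ with $i\in S_{p-1}$, Lemma \ref{l21} applied at level $p-1$ (this is where the hypothesis $p\ge 2$ is used) gives $\mf a^{(p-2)}(x,i)=1$, and since $\mf a^{(p-2)}(x,\,\cdot\,)$ is a probability measure on $S_{p-1}$, this forces $\mf a^{(p-2)}(x,k)=\delta_{i,k}$. The recursive definition \eqref{33b} then collapses the sum to a single term:
$$\mf a^{(p-1)}(x,j) \;=\; \sum_{k\in S_{p-1}} \mf a^{(p-2)}(x,k)\,\mf A^{(p-1)}(k,j) \;=\; \mf A^{(p-1)}(i,j).$$

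The second step is to identify $\mf A^{(p-1)}(i,j)$ with the hitting probability. By definition \eqref{63},
$$\mf A^{(p-1)}(i,j) \;=\; \lim_{t\to\infty} \bb Q^{(p-1)}_i\big[\, \bb X^{(p-1)}_t \,\in\, \mf R^{(p-1)}_j \,\big].$$
Since $\mf R^{(p-1)}_1,\dots,\mf R^{(p-1)}_{\mf n_p}$ are the closed irreducible classes of $\bb X^{(p-1)}_t$ and $S_{p-1}\setminus \bigcup_m \mf R^{(p-1)}_m = \mf T_{p-1}$ consists of transient states, $\bb Q^{(p-1)}_i$-a.s.\ the chain enters some $\mf R^{(p-1)}_m$ in finite time and stays there forever. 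Hence the event of eventual absorption in $\mf R^{(p-1)}_j$ agrees a.s.\ with $\{H_{\mf R^{(p-1)}_j}<H_{\breve{\mf R}^{(p-1)}_j}\}$, which yields the stated identity.

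No step poses a real obstacle: the argument is pure bookkeeping of the recursive definitions combined with the standard absorption-vs.-hitting identification for finite-state chains with closed classes. The only conceptual ingredient is Lemma \ref{l21}, used to collapse the weighted sum \eqref{33b} to a single summand.
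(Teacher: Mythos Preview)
Your proof is correct and takes a genuinely different route from the paper's own argument. The paper proves the lemma by invoking Lemma~\ref{l22} to write $\mf a^{(p-1)}(x,j)=\lim_n \mb P^n_{\!x}[H_{\ms V^{(p)}_j}<H_{\breve{\ms V}^{(p)}_j}]$, then identifying this event with the corresponding event for the trace process $Y^{n,p-1}$, projecting via $\Phi_{p-1}$, and finally passing to the limit through the weak convergence of $\Phi_{p-1}(Y^{n,p-1}_{t\theta^{(p-1)}_n})$ to $\bb X^{(p-1)}_t$ established in \cite{bl4}. Your argument bypasses all of this machinery: you stay entirely at the level of the recursive definitions, using Lemma~\ref{l21} to collapse \eqref{33b} to the single term $\mf A^{(p-1)}(i,j)$, and then invoking the elementary identification of long-time absorption probabilities with first-hitting probabilities for a finite chain with closed classes. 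Your approach is shorter and more self-contained; the paper's approach, while heavier, has the virtue of tying the formula directly back to the original chain $X^{(n)}_t$ and its trace, which is closer in spirit to how Lemma~\ref{l37} is actually used later (e.g.\ in Lemma~\ref{l25}). Your reading of $\mf R^{(p)}_j$ as $\mf R^{(p-1)}_j$ is the intended one, consistent with the paper's own proof.
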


\begin{proof}
Recall that $Y^{n,p-1}_t$ represents the trace of $X^{(n)}_t$
on $\ms V^{(p-1)}$. By \cite[Theorem 2.1]{bl4}, under the measure
$\mb P^n_{\! x}$ the process
${\color{blue} \bb X^{n,p-1}_t} :=
\Phi_{p-1}(Y^{n,p-1}_{t\theta^{(p-1)}_n})$ converges weakly in the
Skorohod topology to the $S_{p-1}$-valued process $\bb X^{(p-1)}_t$
introduced below \eqref{34}.

Clearly, under the measure $\mb P^n_{\! x}$,
\begin{equation*}
\big \{H_{\ms V^{(p)}_j} (X^n) \,<\,  H_{\breve{\ms V}^{(p)}_j}
(X^n)\, \big\} \;=\;
\big\{H_{\ms V^{(p)}_j} (Y^{n,p-1}) \,<\,  H_{\breve{\ms V}^{(p)}_j}
(Y^{n,p-1})\, \big\}\;.
\end{equation*}
This identity asserts that the process $X^{(n)}$ hits the set
$\ms V^{(p)}_j$ before the set $\breve{\ms V}^{(p)}_j$ if and only if
this happens to the trace process $Y^{n,p-1}$. By projecting the
process $Y^{n,p-1}$ with $\Phi_{p-1}$, the last event becomes
\begin{equation*}
\big\{H_{\mf R^{(p)}_j} (\bb X^{n,p-1}) \,<\,  H_{\breve{\mf R}^{(p)}_j}
(\bb X^{n,p-1})\, \big\}\;,
\end{equation*}
Therefore, by Lemma \ref{l22}, for $2\le p \le \mf q+1$, $j\in S_p$
\begin{equation*}
\begin{aligned}
\mf a^{(p-1)} (x,j) \; & =\; \lim_{n\to\infty} \mb P^n_{\! x}
\big[\, H_{\ms V^{(p)}_j} \,<\,  H_{\breve{\ms V}^{(p)}_j}\,\big]
\\
& =\; \lim_{n\to\infty} \mb P^n_{\! x}
\big[\, H_{\mf R^{(p)}_j} (\bb X^{n,p-1}) \,<\,  H_{\breve{\mf R}^{(p)}_j}
(\bb X^{n,p-1})\,\big]\;.
\end{aligned}
\end{equation*}
As $\bb X^{n,p-1}$ converges weakly in the Skorohod
topology to $\bb X^{(p-1)}$,
\begin{equation*}
\lim_{n\to\infty} \mb P^n_{\! x} \big[\,
H_{\mf R^{(p)}_j} (\bb X^{n,p-1}) \,<\,  H_{\breve{\mf R}^{(p)}_j}
(\bb X^{n,p-1})\, \big] \;=\;
\bb Q^{(p-1)}_{i} \big[\,
H_{\mf R^{(p)}_j}  \,<\,  H_{\breve{\mf R}^{(p)}_j}\, \big] \;,
\end{equation*}
as claimed.
\end{proof}

\section{Preliminary estimates}
\label{sec6}

In this section, we present some estimates needed in the proof of
Theorem \ref{mt3}. We assume throughout it that the process is
reversible. We start with some estimates on the stationary state, now
assumed to be reversible.

Fix $x\in \Delta$. As $x$ is a transient state for the chain
$\bb X_t$, it is eventually absorbed by a closed irreducible class
$\ms V_k$, $k\in S_1$. Fix $j \in S_1$ such that $\mf a^{(0)}(x,j)>0$,
where $\mf a^{(0)}(x,j)$ has been introduced in \eqref{92}.  We claim
that
\begin{equation}
\label{94}
\pi_n(x) \;\prec \; \pi_n(\ms V_j) \;. 
\end{equation}

Indeed, as $\mf a^{(0)}(x,j)>0$, there exists a sequence
$x=x_0, \dots, x_\ell$ of elements of $V$ such that
$\bb R_0(x_i,x_{i+1})>0$, $x_i\in \Delta$, $0\le i <\ell$,
$x_\ell\in \ms V_j$. By reversibility,
\begin{equation*}
\frac{\pi_n(x_i)}{\pi_n(x_{i+1})} \;=\;
\frac{R_n(x_{i+1},x_i)}{R_n(x_i,x_{i+1})}\; \cdot
\end{equation*}
Since $R_n(x_i,x_{i+1}) \to \bb R_0(x_i,x_{i+1})>0$, by \eqref{01},
$\pi_n(x_i) \preceq \pi_n(x_{i+1})$. As $x_{\ell-1}\in \Delta$,
$x_\ell\in \ms V_j$,
$R_n(x_{\ell}, x_{\ell-1}) \to \bb R_0(x_{\ell}, x_{\ell-1})=0$, so
that $\pi_n(x_{\ell-1}) \prec \pi_n(x_{\ell})$, which proves claim
\eqref{94}.

Next result extends this estimate

\begin{lemma}
\label{l25}
Fix $2\le p \le \mf q$, $j\in S_p$, $x\in \ms V^{(p-1)} \setminus \ms
V^{(p)}$. If $\mf a^{(p-1)}(x,j)>0$, then, $\pi_n(x) \;\prec \; \pi_n(\ms
V^{(p)}_j)$. 
\end{lemma}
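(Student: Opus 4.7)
The plan is to reduce the claim to a statement about wells at level $p-1$, then climb along a path in the reduced Markov chain $\bb X^{(p-1)}$ from the well containing $x$ to the class $\mf R^{(p-1)}_j$, exploiting reversibility of the trace process at each step. Since $x\in \ms V^{(p-1)}_i$ for some $i\in S_{p-1}$ and $x\notin \ms V^{(p)}$, the index $i$ must be transient for $\bb X^{(p-1)}$, i.e.\ $i\in \mf T_{p-1}$. Corollary~\ref{l34} gives $\pi_n(x)/\pi_n(\ms V^{(p-1)}_i) \to \pi^{(p-1)}_i(x) \in (0,1]$, so it suffices to prove $\pi_n(\ms V^{(p-1)}_i) \prec \pi_n(\ms V^{(p)}_j)$.

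By Lemma~\ref{l21}, $\mf a^{(p-2)}(x,k) = \delta_{i,k}$, so the recursion \eqref{33b} collapses to $\mf a^{(p-1)}(x,j) = \mf A^{(p-1)}(i,j)$. The hypothesis therefore yields $\mf A^{(p-1)}(i,j)>0$, from which one extracts a path $i=i_0, i_1, \dots, i_\ell$ in $S_{p-1}$ with $r^{(p-1)}(i_s, i_{s+1})>0$ for all $0\le s<\ell$ and $i_\ell \in \mf R^{(p-1)}_j$. Choosing $\ell$ minimal forces $i_s \notin \mf R^{(p-1)}_j$ for every $s<\ell$, since closed classes of $\bb X^{(p-1)}$ are absorbing.

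Next, the reversibility of $X^{(n)}_t$ transfers to the trace process $Y^{n,p-1}$ on $\ms V^{(p-1)}$ by \cite[Proposition~6.3]{bl2}: $\pi_n(x)R^{(p-1)}_n(x,y) = \pi_n(y)R^{(p-1)}_n(y,x)$. Summing this identity over $x\in\ms V^{(p-1)}_i$, $y\in\ms V^{(p-1)}_k$ and invoking definition \eqref{41} gives the macroscopic detailed-balance relation
\begin{equation*}
\pi_n(\ms V^{(p-1)}_i)\, r^{(p-1)}_n(i,k) \;=\; \pi_n(\ms V^{(p-1)}_k)\, r^{(p-1)}_n(k,i)\;.
\end{equation*}
Multiplying numerator and denominator by $\theta^{(p-1)}_n$ and using \eqref{34}, the ratio $\pi_n(\ms V^{(p-1)}_{i_s})/\pi_n(\ms V^{(p-1)}_{i_{s+1}})$ converges to $r^{(p-1)}(i_{s+1},i_s)/r^{(p-1)}(i_s, i_{s+1})$, a finite nonnegative number. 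At the last edge $s=\ell-1$ the numerator vanishes, because $i_\ell$ lies in the closed class $\mf R^{(p-1)}_j$ while $i_{\ell-1}$ does not, so $r^{(p-1)}(i_\ell, i_{\ell-1})=0$. Telescoping,
\begin{equation*}
\frac{\pi_n(\ms V^{(p-1)}_i)}{\pi_n(\ms V^{(p-1)}_{i_\ell})} \;=\; \prod_{s=0}^{\ell-1}
\frac{\pi_n(\ms V^{(p-1)}_{i_s})}{\pi_n(\ms V^{(p-1)}_{i_{s+1}})} \;\longrightarrow\; 0\;,
\end{equation*}
since the last factor tends to $0$ and all earlier factors remain bounded. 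Finally, $i_\ell\in \mf R^{(p-1)}_j$ implies $\ms V^{(p-1)}_{i_\ell}\subset \ms V^{(p)}_j$, so $\pi_n(\ms V^{(p-1)}_{i_\ell}) \le \pi_n(\ms V^{(p)}_j)$ and the claim follows.

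The main (minor) obstacle is arranging that the strict decay of the ratio occurs at one controlled step; this is taken care of by the minimality of $\ell$, which localizes the ``downhill'' drop to the jump into the closed class $\mf R^{(p-1)}_j$ where the reverse rate is forced to vanish. Everything else reduces to reversibility of the trace chain together with the convergence \eqref{34} of the averaged jump rates.
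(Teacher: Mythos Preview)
Your proof is correct and follows essentially the same strategy as the paper: identify $i\in\mf T_{p-1}$ with $x\in\ms V^{(p-1)}_i$, produce a path in $S_{p-1}$ from $i$ to $\mf R^{(p-1)}_j$ along positive $r^{(p-1)}$-rates, use detailed balance of the trace process to control the ratios $\pi_n(\ms V^{(p-1)}_{i_s})/\pi_n(\ms V^{(p-1)}_{i_{s+1}})$, and exploit that the reverse rate vanishes at the entry into the closed class. The only cosmetic differences are that the paper invokes Lemma~\ref{l37} to obtain the path (forcing all intermediate indices into $\mf T_{p-1}$), whereas you derive $\mf A^{(p-1)}(i,j)>0$ from Lemma~\ref{l21} and \eqref{33b} and use minimality of $\ell$ to localise the strict decay; both arguments are equivalent.
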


\begin{proof}
The proof is similar to the one presented to derive \eqref{94}.
Suppose that $x\in \ms V^{(p-1)}_i\setminus \ms V^{(p)}$ for
$i\in S_{p-1}$. As $x$ does not belong to $\ms V^{(p)}$,
$i\in \mf T_{p-1}$.

As $\mf a^{(p-1)}(x,j)>0$, by Lemma \ref{l37}, there exists a sequence
$i=i_0, \dots, i_\ell$ of elements of $S_{p-1}$ such that
$r^{(p-1)}(i_a,i_{a+1})>0$, $i_a\in \mf T_{p-1}$, $0\le a <\ell$,
$i_\ell\in \mf R^{(p-1)}_j$. By reversibility, \eqref{40} and
\eqref{41},
\begin{equation}
\label{97}
\frac{\pi_n(\ms V^{(p-1)}_{i_a})}{\pi_n(\ms V^{(p-1)}_{i_{a+1}})} \;=\;
\frac{r^{(p-1)}_n(i_{a+1},i_a)}{r^{(p-1)}_n(i_a,i_{a+1})}\; \cdot
\end{equation}
Since
$\theta^{(p-1)}_n\, r^{(p-1)}_n(i_a,i_{a+1}) \to
r^{(p-1)}(i_a,i_{a+1})>0$, by \eqref{34},
$\pi_n(\ms V^{(p-1)}_{i_a}) \preceq \pi_n(\ms
V^{(p-1)}_{i_{a+1}})$. As $i_{\ell-1}\in \mf T_{p-1}$,
$i_\ell\in \mf R^{(p-1)}_j$,
$\theta^{(p-1)}_n\, r^{(p)}_n(i_{\ell}, i_{\ell-1}) \to
r^{(p)}(i_{\ell}, i_{\ell-1})=0$, so that
$\pi_n(\ms V^{(p-1)}_{i_{\ell-1}}) \prec \pi_n(\ms
V^{(p-1)}_{i_\ell})$. Since $i_\ell\in \mf R^{(p-1)}_j$,
$\ms V^{(p-1)}_{i_\ell} \subset \ms V^{(p)}_{j}$, and the lemma is
proved.
\end{proof}

\begin{corollary}
\label{l26}
Fix $2\le p \le \mf q$, $j\in S_p$, $x\in V \setminus \ms
V^{(p)}$. If $\mf a^{(p-1)}(x,j)>0$, then, $\pi_n(x) \;\prec \; \pi_n(\ms
V^{(p)}_j)$. 
\end{corollary}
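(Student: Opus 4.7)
The plan is to prove Corollary \ref{l26} by induction on $p$, decomposing $\mf a^{(p-1)}(x,j)$ via \eqref{33b} and reducing the case $x \in \Delta_{p-1}$ to Lemma \ref{l25}. The case $x \in \ms V^{(p-1)} \setminus \ms V^{(p)}$ is already Lemma \ref{l25}, so the only remaining possibility is $x \notin \ms V^{(p-1)}$, equivalently $x \in \Delta_{p-1}$.

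In that case, by \eqref{33b} one has $\mf a^{(p-1)}(x,j) = \sum_{k \in S_{p-1}} \mf a^{(p-2)}(x,k)\, \mf A^{(p-1)}(k,j)$, so some index $k \in S_{p-1}$ satisfies $\mf a^{(p-2)}(x,k) > 0$ and $\mf A^{(p-1)}(k,j) > 0$. The first step of the argument is to conclude that $\pi_n(x) \prec \pi_n(\ms V^{(p-1)}_k)$: at $p = 2$ this is exactly \eqref{94} applied to $x \in \Delta$ and $k \in S_1$, while for $p \ge 3$ it is the inductive hypothesis of Corollary \ref{l26} applied at level $p-1$ to $x \in V \setminus \ms V^{(p-1)}$ and $k \in S_{p-1}$.

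The second step is to promote this to the desired bound by showing $\pi_n(\ms V^{(p-1)}_k) \preceq \pi_n(\ms V^{(p)}_j)$, in a case split on the nature of $k$ in the chain $\bb X^{(p-1)}_t$. If $k \in \mf R^{(p-1)}_j$, then by construction of the tree, $\ms V^{(p-1)}_k \subset \ms V^{(p)}_j$, and the inequality $\pi_n(\ms V^{(p-1)}_k) \le \pi_n(\ms V^{(p)}_j)$ is immediate. If instead $k \in \mf T_{p-1}$, then $\ms V^{(p-1)}_k \subset \ms V^{(p-1)} \setminus \ms V^{(p)}$, and for each $y \in \ms V^{(p-1)}_k$ Lemma \ref{l21} yields $\mf a^{(p-2)}(y,\,\cdot\,) = \delta_k$, so that $\mf a^{(p-1)}(y,j) = \mf A^{(p-1)}(k,j) > 0$; Lemma \ref{l25} then gives $\pi_n(y) \prec \pi_n(\ms V^{(p)}_j)$, and summing over the finitely many $y \in \ms V^{(p-1)}_k$ produces $\pi_n(\ms V^{(p-1)}_k) \prec \pi_n(\ms V^{(p)}_j)$. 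Chaining $\pi_n(x) \prec \pi_n(\ms V^{(p-1)}_k) \preceq \pi_n(\ms V^{(p)}_j)$ completes the inductive step.

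I do not expect any serious obstacle here: the two steps above are structurally parallel, and the only point requiring care is the base case $p = 2$, where the inductive hypothesis is unavailable and one must substitute \eqref{94} as the seed estimate. Once that substitution is made, the argument is purely combinatorial bookkeeping on the tree, relying on Lemmas \ref{l21} and \ref{l25} and the definition \eqref{33b}.
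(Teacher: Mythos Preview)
Your proof is correct, but it proceeds differently from the paper. The paper inducts on $r(x)$, the level at which $x$ first falls out of the nested sequence $\ms V^{(0)} \supset \ms V^{(1)} \supset \cdots$, and at the inductive step invokes the probabilistic characterisation of $\mf a^{(p-1)}$ from Lemma~\ref{l22} together with the strong Markov property at $H_{\ms V^{(s)}}$ to pass from $x\in \ms V^{(s-1)}\setminus \ms V^{(s)}$ to some $z\in \ms V^{(s)}$. You instead induct on $p$ and use only the algebraic recursion \eqref{33b}, never touching Lemma~\ref{l22} or strong Markov: you extract an intermediate index $k\in S_{p-1}$, bound $\pi_n(x)\prec \pi_n(\ms V^{(p-1)}_k)$ by the inductive hypothesis (seeded by \eqref{94} at $p=2$), and then compare $\pi_n(\ms V^{(p-1)}_k)$ with $\pi_n(\ms V^{(p)}_j)$ via the tree structure and Lemma~\ref{l25}. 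Your route is slightly more elementary and self-contained; the paper's is more probabilistic but requires the prior work of Section~\ref{sec5}. The case split on $k\in\mf R^{(p-1)}_j$ versus $k\in\mf T_{p-1}$ is the right dichotomy, since $\mf A^{(p-1)}(k,j)>0$ forces one of these two alternatives.
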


\begin{proof}
Fix $x\in V \setminus \ms V^{(p)}$ and let $r(x)$ be the element $r$
of $\{1, \dots, p\}$ such that
$x\in \ms V^{(r-1)} \setminus \ms V^{(r)}$, where $\ms V^{(0)} = V$.
The proof is by induction on $r(x)$.

If $r(x) =p$, the assertion corresponds to the one of Lemma \ref{l25}.
Suppose that the corollary has been proved for
$y\in \ms V^{(r-1)} \setminus \ms V^{(r)}$ and all
$r \in \{s+1, \dots, p\}$, and fix
$x\in \ms V^{(s-1)} \setminus \ms V^{(s)}$. By the strong Markov
property at time $H_{\ms V^{(s)}}$,
\begin{equation*}
\begin{aligned}
& \mf a^{(p-1)} (x,j) \; =\; \lim_{n\to\infty} \mb P^n_{\! x}
\big[\, H_{\ms V^{(p)}_j} \,<\,  H_{\breve{\ms V}^{(p)}_j}\,\big] \\
& \quad  =\; \lim_{n\to\infty} \sum_{k\in S_s} \sum_{z\in \ms V^{(s)}_k}
\mb P^n_{\! x} \big[\, H_{\ms V^{(s)}_k} \,=\,  H_{\ms V^{(s)}} \,,\,
X (H_{\ms V^{(s)}}) \,=\, z \,\big]
\; \mb P^n_{\! z}
\big[\, H_{\ms V^{(p)}_j} \,<\,  H_{\breve{\ms V}^{(p)}_j} \,\big]
\;. 
\end{aligned}
\end{equation*}
The sum can be restricted to elements $k\in S_s$ and
$z\in \ms V^{(s)}_k$ such that $\mf a^{(s-1)}(x,k)>0$,
$\mf a^{(p-1)}(z,j)>0$. By Lemma \ref{l25}, 
$\pi_n(x) \, \prec \, \pi_n(\ms V^{(s)}_k)$ and by the induction
assumption, $\pi_n(z) \, \preceq \, \pi_n(\ms V^{(p)}_j)$. The previous
estimate may not be strict as it might happen that $z$ belongs to $\ms
V^{(p)}_j$. By \eqref{58}, $\pi_n(\ms V^{(s)}_k) \sim \pi_n(z)$ so
that $\pi_n(x) \, \prec \, \pi_n(\ms V^{(p)}_j)$, as claimed.
\end{proof}

\subsection*{Potential theory}

We turn to estimates involving the capacity.  Recall the definition of
comparable sequences introduced just before the main hypothesis
\eqref{mh}.  Let $c_n\colon E\to \bb R_+$ be given by
$\color{blue} c_n(x,y) := \pi_n(x) R_n(x,y)$ and note that $c_n$ is
symmetric. It follows from \eqref{mh} (cf. equation (2.5) in
\cite{bl4}) that the sequences $c_n(x,y)$ are comparable.  A
self-avoiding path $\gamma$ from $\ms A$ to $\ms B$, $\ms A$,
$\ms B \subset V$, $\ms A\cap \ms B = \varnothing$, is a sequence of
sites $(x_0, x_1, \dots, x_m)$ such that $x_0\in \ms A$,
$x_m\in \ms B$, $x_i \not = x_j$, $i\not =j$, $R_n(x_i,x_{i+1})>0$,
$0\le i <m$. Denote by $\color{bblue} \Gamma_{\ms A, \ms B}$ the set
of self-avoiding paths from $\ms A$ to $\ms B$ and let
\begin{equation*}
{\color{blue} c_n(\ms A, \ms B)} \;:=\; \max_{\gamma\in \Gamma_{\ms A, \ms B}}
c_n (\gamma)\;, \quad
{\color{blue} c_n (\gamma)}  \;:=\; \min_{0\le i<m } c_n(x_i,x_{i+1})\;.
\end{equation*}
Note that there might be more than one optimal path and that
$c_n(\{x\},\{y\}) \ge c_n(x,y)$, with possibly a strict inequality.
Next result is \cite[Lemma 4.1]{bl4}.

\begin{lemma}
\label{l23}
There exists a positive and finite constant $C_1$ such that
\begin{equation*}
C_1^{-1} \;\le\; \frac{\Cap_n(\ms A, \ms B)}{c_n(\ms A, \ms B)} \;\le\; C_1
\end{equation*}
for all $n\ge $ and non-empty, disjoint subsets $\ms A$, $\ms B$ of
$V$.
\end{lemma}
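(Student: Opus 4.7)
The plan is to prove the two inequalities separately by appealing to the variational (Dirichlet) characterisation of capacity. Since by reversibility $c_n(x,y)=\pi_n(x)R_n(x,y)$ is symmetric on $E$, one has the classical identity
\[
\Cap_n(\ms A,\ms B)\;=\;\inf_f \tfrac12\sum_{(x,y)\in E} c_n(x,y)\,[f(y)-f(x)]^2,
\]
the infimum ranging over $f\colon V\to\bb R$ with $f\equiv 1$ on $\ms A$ and $f\equiv 0$ on $\ms B$. Nothing beyond the symmetry of $c_n$ and the finiteness of $V$ will be needed; in particular, assumption \eqref{mh} plays no role here.

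For the lower bound I would select a self-avoiding path $\gamma^\star=(x_0,\dots,x_m)$ realising the max-min, so that $c_n(\gamma^\star)=c_n(\ms A,\ms B)$, and replace it if necessary by a minimal sub-path to ensure $x_0\in\ms A$, $x_m\in\ms B$ and $x_i\notin\ms A\cup\ms B$ for $0<i<m$; this shortening can only raise the minimum edge weight, and since $c_n(\ms A,\ms B)$ is the max-min, it still equals $c_n(\ms A,\ms B)$. Killing every conductance outside the edges of $\gamma^\star$ decreases the Dirichlet form on every admissible $f$, hence by Rayleigh monotonicity decreases the capacity. The remaining network is a series of resistors whose capacity is $\bigl[\sum_{i=0}^{m-1}c_n(x_i,x_{i+1})^{-1}\bigr]^{-1}\ge c_n(\gamma^\star)/m\ge c_n(\ms A,\ms B)/|V|$, giving $\Cap_n(\ms A,\ms B)\ge c_n(\ms A,\ms B)/|V|$.

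For the upper bound I would exhibit an explicit test function. Set $\theta:=c_n(\ms A,\ms B)$ and consider the subgraph $G^\star$ of $(V,E)$ consisting of those edges $(x,y)\in E$ with $c_n(x,y)>\theta$. By the max-min definition of $\theta$, every self-avoiding path from $\ms A$ to $\ms B$ contains at least one edge of conductance $\le\theta$, so $\ms A$ and $\ms B$ lie in distinct $G^\star$-components; let $V_{\ms A}$ denote the component containing $\ms A$, and take $f:=\chi_{V_{\ms A}}$. Every edge $(x,y)\in E$ with $f(x)\neq f(y)$ straddles $\partial V_{\ms A}$ and is therefore missing from $G^\star$, hence satisfies $c_n(x,y)\le\theta$. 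Plugging $f$ into the Dirichlet principle yields
\[
\Cap_n(\ms A,\ms B)\;\le\;\tfrac12\sum_{(x,y)\in E} c_n(x,y)[f(y)-f(x)]^2\;\le\;|E|\,\theta\;=\;|E|\,c_n(\ms A,\ms B).
\]
Taking $C_1:=\max(|V|,|E|)$ concludes the proof. The only mildly delicate point is the sub-path shortening in the lower bound, needed so that the series-of-resistors formula applies cleanly; apart from that, both directions are standard electrical-network cut/path arguments and I expect no genuine obstacle.
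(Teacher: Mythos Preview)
Your argument is correct and entirely self-contained; in fact the paper itself does not prove this lemma at all but simply cites it as \cite[Lemma 4.1]{bl4}. What you have written is precisely the standard electrical-network proof: a path bound via Rayleigh monotonicity for the lower inequality, and a cut bound via an explicit indicator test function for the upper inequality. You are also right that hypothesis \eqref{mh} is irrelevant here; the constant $C_1=\max(|V|,|E|)$ you obtain depends only on the combinatorics of the graph, which is exactly what the lemma asserts.

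There is one small imprecision worth tightening. In the upper bound you write ``let $V_{\ms A}$ denote the component containing $\ms A$'', but $\ms A$ need not lie in a single $G^\star$-component. What you actually need (and what your argument implicitly uses) is that no $G^\star$-component meets both $\ms A$ and $\ms B$; then take $V_{\ms A}$ to be the \emph{union} of all $G^\star$-components intersecting $\ms A$. With this cosmetic fix the test function $f=\chi_{V_{\ms A}}$ satisfies the boundary conditions and every edge across $\partial V_{\ms A}$ has conductance $\le\theta$, exactly as you say. The sub-path shortening in the lower bound is handled correctly and is indeed the only place requiring a moment of care.
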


Fix two disjoint, non-empty subsets $\ms A$, $\ms B$ of $V$, and let
$h_{\ms A, \ms B}$ be the equilibrium potential between $\ms A$ and
$\ms B$:
\begin{equation*}
{\color{blue}  h_{\ms A, \ms B}(x) }
\;:=\;  \mb P^n_{\! x} [H_{\ms A} < H_{\ms B}]\;,
\quad x\in V\;.
\end{equation*}
Denote by $D_n(f)$ the Dirichlet form of a function $f:V\to \bb R$:
\begin{equation*}
{\color{blue} D_n(f)} \;:=\; \< \, f \,,\, (-\, \ms L_n) f \,\>_{\pi_n
}\;.
\end{equation*}
It is well known \cite[equation (B.7)]{lrev}, that
\begin{equation*}
\Cap_n(\ms A \,,\, \ms B) \;=\; D_n(h_{\ms A, \ms B})\;.
\end{equation*}

\begin{lemma}
\label{l27}
There exists a finite constant $C_0$, independent of $n$, such that
\begin{equation*}
h_{\ms A, \ms B}(x)^2 \;\le\; C_0\, \frac{\Cap_n (\ms A \,,\, \ms B)}
{\Cap_n (\{x\} \,,\, \ms B)} 
\end{equation*}
for all $x\not \in \ms A \cup \ms B$.
\end{lemma}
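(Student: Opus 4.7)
The plan is to derive this bound directly from the Dirichlet (minimum) principle for capacities, using the equilibrium potential $h_{\ms A,\ms B}$ itself (suitably rescaled and truncated) as a test function for $\Cap_n(\{x\},\ms B)$. Set $g := h_{\ms A,\ms B}$, which satisfies $g=1$ on $\ms A$, $g=0$ on $\ms B$, $0\le g\le 1$, and, by the identity $\Cap_n(\ms A,\ms B)=D_n(g)$ recalled just before the statement, $D_n(g) = \Cap_n(\ms A,\ms B)$. If $g(x)=0$ the left-hand side vanishes and the inequality is trivial, so I will assume $g(x)>0$; irreducibility of $X^{(n)}$ also ensures $\Cap_n(\{x\},\ms B)>0$.

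First I would introduce the candidate
\[
f(y) \;:=\; \min\!\big\{\, g(y)/g(x) \,,\, 1 \,\big\}\;, \qquad y\in V\;,
\]
and observe that $f(x)=1$, $f\equiv 0$ on $\ms B$, and $f\equiv 1$ on $\ms A$. In particular, $f$ is admissible in the Dirichlet principle characterisation
\[
\Cap_n(\{x\},\ms B) \;=\; \inf\big\{\, D_n(\varphi) \,:\, \varphi(x)=1\,,\; \varphi|_{\ms B}=0 \,\big\}\;,
\]
so $\Cap_n(\{x\},\ms B) \le D_n(f)$.

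Next I would compare $D_n(f)$ to $D_n(g)$ edge by edge. Using reversibility to write $D_n(\varphi) = (1/2)\sum_{y,z} c_n(y,z)(\varphi(y) - \varphi(z))^2$, and using that $t\mapsto \min\{t,1\}$ is $1$-Lipschitz composed with the linear scaling $t\mapsto t/g(x)$, one obtains the pointwise bound $|f(y) - f(z)| \le |g(y) - g(z)|/g(x)$ for every edge $(y,z)$. Squaring and summing against the symmetric weights $c_n(y,z)$ gives $D_n(f) \le D_n(g)/g(x)^2$, whence
\[
\Cap_n(\{x\},\ms B) \;\le\; D_n(g)/g(x)^2 \;=\; \Cap_n(\ms A,\ms B)/h_{\ms A,\ms B}(x)^2\;,
\]
which is the claim with $C_0 = 1$.

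There is no real obstacle; the only point worth writing out explicitly is the edge-by-edge Lipschitz bound on $|f(y)-f(z)|$, which is exactly where the truncation at $1$ is used: it secures $f|_{\ms A} = 1$ (required implicitly only through admissibility, but needed to control the values on $\ms A$ where $g/g(x) \ge 1$) without inflating the Dirichlet form beyond $D_n(g)/g(x)^2$.
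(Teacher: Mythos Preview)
Your proof is correct and takes a genuinely different route from the paper. The paper argues via Cauchy--Schwarz along a self-avoiding path from $x$ to $\ms B$: writing $h(x)=h(x_0)-h(x_m)$ and applying Cauchy--Schwarz with weights $c_n(x_i,x_{i+1})$, it bounds $h(x)^2$ by $|E|\,D_n(h)\cdot \max_i c_n(x_i,x_{i+1})^{-1}$, then optimizes over paths to get $|E|\,\Cap_n(\ms A,\ms B)/c_n(\{x\},\ms B)$ and finally invokes Lemma~\ref{l23} to replace $c_n(\{x\},\ms B)$ by $\Cap_n(\{x\},\ms B)$, obtaining $C_0=C_1\,|E|$. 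Your argument instead uses the Dirichlet variational principle directly: the truncated rescaling $f=\min\{h_{\ms A,\ms B}/h_{\ms A,\ms B}(x),1\}$ is admissible for $\Cap_n(\{x\},\ms B)$, and the $1$-Lipschitz property of $t\mapsto\min\{t,1\}$ gives $D_n(f)\le D_n(h_{\ms A,\ms B})/h_{\ms A,\ms B}(x)^2$ edge by edge. This is shorter, avoids the path machinery and Lemma~\ref{l23} altogether, and delivers the sharp constant $C_0=1$; the paper's approach, by contrast, fits naturally into its broader framework of comparing capacities to the path quantities $c_n(\cdot,\cdot)$, which it uses repeatedly elsewhere.
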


\begin{proof}
Let $h = _{\ms A, \ms B}$, and let $\gamma = (x=x_0, \dots, x_m)$ be a
self-avoiding path between $x$ and $\ms B$. Hence
$R_n(x_i,x_{i+1}) >0$, $x_i\not \in \ms B$, $0\le i<m$ and
$x_m\in \ms B$. As $x_m\in \ms B$, $h(x_m)=0$ so that
\begin{equation*}
h(x)^2 \;=\; (\,h(x_0) - h(x_m)\,)^2 \;\le\;
\sum_{i=0}^{m-1} c_n(x_i,x_{i+1}) \,[\, h(x_{i+1} - h(x_i)\,]^2
\sum_{i=0}^{m-1} \frac{1}{c_n(x_i,x_{i+1}) }\;\cdot
\end{equation*}
As the path is self-avoiding, this quantity is bounded by
\begin{equation*}
|E|\, D_n(h)\, \max_{0\le i<m} \frac{1}{c_n(x_i,x_{i+1})} \;=\;
|E|\, \Cap_n(\ms A \,,\, \ms B)\, \max_{0\le i<m}
\frac{1}{c_n(x_i,x_{i+1})} \; \cdot
\end{equation*}
Minimising over all possible paths $\gamma$ from $x$ to $\ms B$ yields
that
\begin{equation*}
h_{\ms A, \ms B}(x)^2 \;\le\;
|E|\, \Cap_n(\ms A \,,\, \ms B)\, 
\frac{1}{\max_\gamma \min_{0\le i<m} c_n(x_i,x_{i+1})}\;\cdot
\end{equation*}
The assertion of the lemma follows from Lemma \ref{l23}.
\end{proof}

\begin{lemma}
\label{l30}
Fix $1\le p \le \mf q$, and suppose that $r^{(p)}(j,k)>0$ for some
$j$, $k\in S_p$. Then,
\begin{equation*}
\liminf_{n\to\infty} \frac{\theta^{(p)}_n}{\pi_n(\ms V^{(p)}_j)}\, \Cap_n(\ms
V^{(p)}_j\,,\, \ms V^{(p)}_k) \;>\; 0 \;.
\end{equation*}
We do not exclude the possibility that this $\limsup$ is $+\infty$.
\end{lemma}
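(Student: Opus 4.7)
The approach is to pass to the trace chain $Y^{n,p}$ on $\ms V^{(p)}$, where $r^{(p)}_n(j,k)$ acquires a direct interpretation as a conductance flux between the wells $\ms V^{(p)}_j$ and $\ms V^{(p)}_k$. The first ingredient is the classical invariance of capacities under tracing (see e.g.\ \cite[Proposition 6.1]{bl2} or Section~2 of \cite{lrev}):
\[
\Cap_n\!\big(\ms V^{(p)}_j, \ms V^{(p)}_k\big) \;=\; \Cap^{(p)}_n\!\big(\ms V^{(p)}_j, \ms V^{(p)}_k\big)\;,
\]
where $\Cap^{(p)}_n$ denotes the capacity computed for the reversible trace chain $Y^{n,p}$, whose edge conductances are $c^{(p)}_n(x,y)=\pi_n(x)\,R^{(p)}_n(x,y)$.

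In this coarsened graph any pair $(x,y)$ with $x\in\ms V^{(p)}_j$, $y\in\ms V^{(p)}_k$ and $R^{(p)}_n(x,y)>0$ is a single edge joining the two sets; the unit flow $F$ equal to $1$ on this edge and vanishing elsewhere is then a valid unit flow from $\ms V^{(p)}_j$ to $\ms V^{(p)}_k$. Thomson's principle gives the one-edge lower bound
\[
\Cap^{(p)}_n\!\big(\ms V^{(p)}_j, \ms V^{(p)}_k\big)\;\ge\; c^{(p)}_n(x,y)\;=\;\pi_n(x)\,R^{(p)}_n(x,y)\;,
\]
and maximising over the finitely many pairs and using the defining identity \eqref{41} yields
\[
\Cap_n\!\big(\ms V^{(p)}_j, \ms V^{(p)}_k\big)\;\ge\; \max_{x\in \ms V^{(p)}_j,\,y\in \ms V^{(p)}_k}\pi_n(x)\,R^{(p)}_n(x,y)\;\ge\;\frac{\pi_n(\ms V^{(p)}_j)\,r^{(p)}_n(j,k)}{|\ms V^{(p)}_j|\,|\ms V^{(p)}_k|}\;.
\]

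Dividing by $\pi_n(\ms V^{(p)}_j)/\theta^{(p)}_n$ and invoking \eqref{34} together with the hypothesis $r^{(p)}(j,k)>0$ then produces
\[
\liminf_{n\to\infty} \frac{\theta^{(p)}_n\, \Cap_n(\ms V^{(p)}_j, \ms V^{(p)}_k)}{\pi_n(\ms V^{(p)}_j)}\;\ge\; \frac{r^{(p)}(j,k)}{|\ms V^{(p)}_j|\,|\ms V^{(p)}_k|}\;>\;0\;,
\]
which is the claim. No step appears to be a serious obstacle: the only conceptual ingredient is the reduction to the trace chain, after which the estimate reduces to a single-edge Thomson bound paired with the definition of $r^{(p)}_n$.
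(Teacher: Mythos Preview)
Your argument is correct and considerably more direct than the paper's. The paper proceeds by contradiction: assuming the $\liminf$ vanishes, it traces on $\ms V^{(p)}_j\cup\ms V^{(p)}_k$, invokes \cite[Theorem 2.6]{bl2} and \cite[Theorem 7.1]{bl4} to show that $H_{\ms V^{(p)}_k}(Y^{n,j,k})/\theta^{j,k}_n$ is asymptotically exponential, deduces that $\mb P^n_x[H_{\ms V^{(p)}_k}(Y^{n,p}) < a\theta^{(p)}_n]\to 0$, and then notes that this contradicts the weak convergence of $\Phi_p(Y^{n,p}_{t\theta^{(p)}_n})$ to $\bb X^{(p)}_t$ (which forces this probability to have a positive limit when $r^{(p)}(j,k)>0$). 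Your route bypasses all of this: the capacity-invariance under tracing (which is exactly Corollary~\ref{l01} with $V_0=\ms V^{(p)}_j\cup\ms V^{(p)}_k$ and $W=\ms V^{(p)}$, or \cite[Lemma 6.9]{bl2}) reduces the problem to the trace network on $\ms V^{(p)}$, where a single-edge Thomson bound and the definition \eqref{41} of $r^{(p)}_n(j,k)$ as an average immediately give the explicit lower bound $r^{(p)}(j,k)/(|\ms V^{(p)}_j|\,|\ms V^{(p)}_k|)$. This is shorter, avoids the soft convergence machinery, and even yields a quantitative constant; the paper's approach, by contrast, is more robust in situations where one already has the weak-convergence picture but not a clean conductance identity for the trace.
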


\begin{proof}
We argue by contradiction, proving that if the $\limsup$ vanishes than
$r^{(p)}(j,k)=0$, but we first derive a consequence of the positivity of
$r^{(p)}(j,k)$. 

Fix $x\in \ms V^{(p)}_j$.  The main result in \cite{bl4} states that
under the measure $\mb P^{n}_{\! x}$, the process
$\bb X^{n,p}_t = \Phi_{p}(Y^{n,p}_{t\theta^{(p)}_n})$ converges weakly
in the Skorohod topology to the $S_{p}$-valued process
$\bb X^{(p)}_t$. Hence, if $r^{(p)}(j,k)>0$, for every $a>0$,
\begin{equation}
\label{96}
\liminf_{n\to\infty} \mb P^{n}_{\! x} \big[ \, H_{\ms V^{(p)}_k} (Y^{n,p})
\,<\, a\, \theta^{(p)}_n \,\big] \;\ge\;
\bb Q^{(p)}_{j} \big[ \, H_{k} \,<\, a\,  \,\big] 
\;>\; 0\;.
\end{equation}

Denote by $Y^{n,j,k}_t$ the trace of $X^n_t$ on
$\ms V^{(p)}_j \cup \ms V^{(p)}_k$.  By \cite[Theorem 2.6]{bl2} (for
the process $Y^{n,j,k}_t$ and with $\ms B = \ms W = \ms V^{(p)}_j$,
$\ms B^c = \ms V^{(p)}_k$) and \cite[Theorem 7.1]{bl4} (Condition T4
ensures that the hypothesis (2.14) of \cite[Theorem 2.6]{bl2} is in
force), under $\mb P^n_{\! x}$, the random variable
$H_{\ms V^{(p)}_k} (Y^{n,j,k})/ \theta^{j,k}_n$ converges in
distribution to a mean-one exponencial random variable. In this
formula,
\begin{equation*}
\theta^{j,k}_n \;=\;
\frac{\pi^{j,k}_n(\ms V^{(p)}_j)}
{\Cap^{j,k}_n(\ms V^{(p)}_j\,,\, \ms V^{(p)}_k)} \;,
\quad \pi^{j,k}_n(\ms V^{(p)}_j) \;=\;
\frac{\pi_n(\ms V^{(p)}_j)}{\pi_n(\ms V^{(p)}_j \cup \ms
V^{(p)}_j)}\;, 
\end{equation*}
and $\Cap^{j,k}_n$ stands for the capacity with respect to the trace
process $Y^{n,j,k}_t$. By \cite[Lemma 6.9]{bl2},
$\Cap_n(\ms V^{(p)}_j\,,\, \ms V^{(p)}_k) = \pi_n(\ms V^{(p)}_j \cup
\ms V^{(p)}_j)$ $\Cap^{j,k}_n(\ms V^{(p)}_j\,,\, \ms V^{(p)}_k)$, so
that
\begin{equation*}
\theta^{j,k}_n \;=\;
\frac{\pi_n(\ms V^{(p)}_j)}
{\Cap_n(\ms V^{(p)}_j\,,\, \ms V^{(p)}_k)} \;\cdot
\end{equation*}

Suppose by contradiction that the limit appearing in the statement of
the lemma vanishes, so that $\theta^{(p)}_n / \theta^{j,k}_n \to 0$
and
$\mb P^n_{\! x} [H_{\ms V^{(p)}_k} (Y^{n,j,k}) < a \theta^{(p)}_n] \to 0$
for all $a>0$. Hence, as
$H_{\ms V^{(p)}_k} (Y^{n,j,k}) \,\le\, H_{\ms V^{(p)}_k} (Y^{n,p}) $,
\begin{equation*}
\lim_{n\to\infty} \mb P^{n}_{\! x} \big[ \, H_{\ms V^{(p)}_k} (Y^{n,p})
\,<\, a\, \theta^{(p)}_n \,\big] \;=\; 0\;.
\end{equation*}
This contradicts \eqref{96}, and therefore one must have that
$r^{(p)}(j,k)=0$, completing the proof of the lemma by contradiction.
\end{proof}

Fix $1\le p\le \mf q$, $j\in \mf T_p$. Let $\color{blue} A_j$ be the
recurrent points of the chain $\bb X^{(p)}_t$ which can be hit before
any other recurrent point when the chain starts from $j$. More
precisely, $\ell \in A_j$ if, and only if, $\ell\in \mf R^{(p)}$ and
there exists a path $j_0 = j, j_1, \dots, j_m=\ell$ such that
$r^{(p)}(j_a,j_{a+1}) >0$, $j_a\in \mf T_p$, $0\le a<m$. Let
$\color{blue} \ms A_j = \cup_{\ell \in A_j} \ms V^{(p)}_\ell$.

\begin{lemma}
\label{l31}
Fix $1\le p\le \mf q$, $x\in \ms V^{(p)}_j$, $j\in \mf T_p$. Then,
\begin{equation*}
\liminf_{n\to\infty} \frac{\theta^{(p)}_n}{\pi_n(x)}\,
\Cap_n(\{x\} \,,\, \ms A_j) \;>\; 0 \;.
\end{equation*}
\end{lemma}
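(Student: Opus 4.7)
The plan is to couple a uniform-in-$n$ lower bound on $\mb P^n_x[H_{\ms A_j} \le T\theta^{(p)}_n]$ with the standard capacity--hitting-time estimate (the same Lemma \ref{l06} used in the proofs of Lemmas \ref{l07} and \ref{l14}), namely
\[
\mb P^n_x\big[\, H_{\ms B} \le s\,\big]
\;\le\; C_0\, s\, \frac{\Cap_n(\{x\}, \ms B)}{\pi_n(x)}\;,
\]
and then read off the lower bound on $\Cap_n(\{x\}, \ms A_j)$ from the two inequalities with $\ms B = \ms A_j$ and $s = T\theta^{(p)}_n$.

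For the lower bound, I first show $\liminf_{T\to\infty}\bb Q^{(p)}_j[\bb X^{(p)}_T \in A_j] > 0$. The key observation is that $A_j$ meets every recurrent class reachable from $j$: if $\bb Q^{(p)}_j[\text{absorbed in }\mf R^{(p)}_m]>0$, then the first state in $\mf R^{(p)}$ visited along such a trajectory from $j$ belongs to $\mf R^{(p)}_m$ and lies at the end of a path of positive-rate transitions with intermediate vertices in $\mf T_p$, hence lies in $A_j \cap \mf R^{(p)}_m$. Since $M^{(p)}_m$ charges every state of $\mf R^{(p)}_m$,
\[
\lim_{T\to\infty}\bb Q^{(p)}_j\big[\,\bb X^{(p)}_T \in A_j\,\big]
\;=\; \sum_{m} \bb Q^{(p)}_j\big[\text{absorbed in }\mf R^{(p)}_m\big]\, M^{(p)}_m\big(A_j \cap \mf R^{(p)}_m\big) \;>\; 0\;.
\]
Fix $T$ and $2\delta>0$ with $\bb Q^{(p)}_j[\bb X^{(p)}_T \in A_j] \ge 2\delta$.

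To transfer this to the prelimit I invoke Theorem \ref{mt0}(b) at level $p$. Since $x\in\ms V^{(p)}_j$, Lemma \ref{l21} gives $\mf a^{(p-1)}(x,\cdot)=\delta_{j,\cdot}$, and since each $\pi^{(p)}_\ell$ is supported on $\ms V^{(p)}_\ell$ while $\ms A_j=\bigcup_{\ell\in A_j}\ms V^{(p)}_\ell$, summing the pointwise convergence over the finite set $\ms A_j$ gives
\[
\lim_{n\to\infty} \mb P^n_x\big[\, X^{(n)}_{T\theta^{(p)}_n} \in \ms A_j\,\big]
\;=\; \sum_{\ell \in A_j} p^{(p)}_T(j,\ell)
\;=\; \bb Q^{(p)}_j\big[\,\bb X^{(p)}_T \in A_j\,\big] \;\ge\; 2\delta\;.
\]
Hence for $n$ large $\mb P^n_x[H_{\ms A_j} \le T\theta^{(p)}_n] \ge \mb P^n_x[X^{(n)}_{T\theta^{(p)}_n} \in \ms A_j] \ge \delta$, and combining with the capacity inequality above yields
\[
\frac{\theta^{(p)}_n\, \Cap_n(\{x\}, \ms A_j)}{\pi_n(x)} \;\ge\; \frac{\delta}{C_0\, T}
\]
for all large $n$, as desired. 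The only delicate point is the first-paragraph claim that $A_j$ intersects each recurrent class reachable from $j$; everything else is a direct application of the capacity bound and of results from Sections \ref{sec4}--\ref{sec5}.
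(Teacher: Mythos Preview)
Your proof is correct and takes a genuinely different route from the paper's.

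The paper argues constructively at the level of the state space $V$. Using Lemma \ref{l30}, for each step $j_a\to j_{a+1}$ along a transient path from $j$ to $A_j$ it produces (via Lemma \ref{l23}) a self-avoiding path $\gamma_a$ in $V$ from $\ms V^{(p)}_{j_a}$ to $\ms V^{(p)}_{j_{a+1}}$ with $c_n(\gamma_a)\succeq \pi_n(\ms V^{(p)}_j)/\theta^{(p)}_n$; the reversibility identity \eqref{97} is used to guarantee $\pi_n(\ms V^{(p)}_{j_a})\succeq \pi_n(\ms V^{(p)}_j)$ along the path. It then stitches these inter-well paths together with intra-well paths coming from property (T4), obtaining an explicit self-avoiding path from $x$ to $\ms A_j$ with $c_n(\gamma)\succeq \pi_n(x)/\theta^{(p)}_n$, and concludes via Lemma \ref{l23}.

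Your approach bypasses the path construction entirely: you invoke the already-proven Theorem \ref{mt0}(b) (together with Lemma \ref{l21}) to get a uniform-in-$n$ lower bound on $\mb P^n_x[X^{(n)}_{T\theta^{(p)}_n}\in\ms A_j]$, and then invert the hitting-time inequality of Lemma \ref{l06}. This is shorter, and since neither Lemma \ref{l06} nor Theorem \ref{mt0} requires reversibility, your argument in fact does not use the detailed-balance assumption that the paper's path construction (through \eqref{97}) relies on. The paper's proof, in return, is self-contained within Section \ref{sec6} and yields explicit optimal paths, which can be useful when the convergence results of Theorem \ref{mt0} are not available.
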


\begin{proof}
As $x\in \ms V^{(p)}_j$, $j\in \mf T_p$, there exists a path
$j_0 = j, j_1, \dots, j_m$ such that $r^{(p)}(j_a,j_{a+1}) >0$,
$j_a\in \mf T_p$, $0\le a<m$, $j_m \in A_j$. Moreover, for
$0\le a<m$, by \eqref{97}, with $p$ instead of $p-1$,
$\pi_n(\ms V^{(p)}_{j_a}) \preceq \pi_n(\ms V^{(p)}_{j_{a+1}})$, and,
by Lemma \ref{l30},
\begin{equation*}
\liminf_{n\to\infty} \frac{\theta^{(p)}_n}{\pi_n(\ms V^{(p)}_{j_a})}\, \Cap_n(\ms
V^{(p)}_{j_a}\,,\, \ms V^{(p)}_{j_{a+1}}) \;>\; 0 \;.
\end{equation*}
This limit is finite because this capacity is bounded by the one
obtained by replacing $\ms V^{(p)}_{j_{a+1}}$ by
$\breve{\ms V}^{(p)}_{j_{a}}$, and the limit for this later one is
finite in view of \eqref{34}.

By the previous displayed equation and Lemma \ref{l23}, the exist a
positive constant $c_0$ and self-avoiding paths $\gamma_a$ from
$\ms V^{(p)}_{j_a}$ to $\ms V^{(p)}_{j_{a+1}}$ such that
$c_n(\gamma_a) \ge c_0\, \pi_n(\ms V^{(p)}_{j_a}) / \theta^{(p)}_n \ge
c_0\, \pi_n(\ms V^{(p)}_{j}) / \theta^{(p)}_n $, $0\le a<m$.

Denote by $y_{a}$, $0\le a<m$, the starting points of the paths
$\gamma_a$, and by $x_{a+1}$ its ending point. Let $x_0=x$. Hence
$x_a$, $y_a$ belongs to the same well $\ms V^{(p)}_{j_{a}}$. By
Property (T4) in \cite[Theorem 7.1]{bl4} and Lemma \ref{l23}, there
exist self-avoiding paths $\gamma'_a$ from $x_a$ to $y_a$ such that
$c_n(\gamma'_a) \ge c_0 \pi_n(\ms V^{(p)}_{j_a}) / \theta^{(p-1)}_n
\ge c_0 \pi_n(\ms V^{(p)}_{j}) / \theta^{(p)}_n$, where the value of
the constant $c_0$ may change from line to line.

By concatenating the paths $\gamma_a$, $\gamma'_a$, we obtain a path
$\gamma$ from $x$ to $\ms A_j$ such that
$c_n(\gamma) \ge c_0 \pi_n(\ms V^{(p)}_{j}) / \theta^{(p)}_n$.
If it is not self-avoiding, we may shorten it improving the lower on
$c_n(\gamma)$ and keeping it as a path from $x$ to $\ms A_j$. At
this point, the assertion of the lemma follows from Lemma \ref{l23}
and \eqref{58}.
\end{proof}

Fix $x\in \Delta$. Let $\color{blue} \ms A_x$ be the
recurrent points of the chain $\bb X_t$ which can be hit before
any other recurrent point when the chain starts from $x$. More
precisely, $y \in \ms A_x$ if, and only if, $y\in\ms V$ and
there exists a path $x_0 = x, x_1, \dots, x_m=y$ such that
$\bb R_0(x_a,x_{a+1}) >0$, $x_a\in \Delta$, $0\le a<m$. 

\begin{lemma}
\label{l38}
Fix $x\in \Delta$. Then,
\begin{equation*}
\liminf_{n\to\infty} \frac{1}{\pi_n(x)}\,
\Cap_n(\{x\} \,,\, \ms A_x) \;>\; 0 \;.
\end{equation*}
\end{lemma}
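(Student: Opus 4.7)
The proof would follow the same template as Lemma \ref{l31}: exhibit an explicit self-avoiding path from $x$ to some point of $\ms A_x$ whose conductance $c_n(\gamma)$ is bounded below by a constant multiple of $\pi_n(x)$, and then invoke Lemma \ref{l23} to convert this into a lower bound on the capacity. The main simplification compared to Lemma \ref{l31} is that at this ``base'' level the relevant edges have positive asymptotic microscopic rate $\bb R_0$, so the conductances can be estimated directly without chaining through intermediate scales.

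Concretely, I would start by picking a self-avoiding path $\gamma = (x_0 = x, x_1, \ldots, x_m)$ provided by the definition of $\ms A_x$: $x_m \in \ms A_x \subset \ms V$, $x_a \in \Delta$ for $0 \le a < m$, and $\bb R_0(x_a, x_{a+1}) > 0$ for every $0 \le a < m$. By \eqref{01}, each rate $R_n(x_a, x_{a+1})$ is bounded below by a positive constant for $n$ large, so
\begin{equation*}
c_n(x_a, x_{a+1}) \;=\; \pi_n(x_a)\, R_n(x_a, x_{a+1}) \;\succeq\; \pi_n(x_a).
\end{equation*}

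The key step is then to propagate this bound back to $\pi_n(x_0) = \pi_n(x)$. By reversibility,
\begin{equation*}
\frac{\pi_n(x_{a+1})}{\pi_n(x_a)} \;=\; \frac{R_n(x_a, x_{a+1})}{R_n(x_{a+1}, x_a)};
\end{equation*}
the numerator is bounded below by a positive constant and the denominator is bounded above, since $R_n(x_{a+1}, x_a) \to \bb R_0(x_{a+1}, x_a) \in [0, \infty)$. Hence $\pi_n(x_{a+1}) \succeq \pi_n(x_a)$ uniformly in $n$, and iterating along the path yields $\pi_n(x_a) \succeq \pi_n(x)$ for every $0 \le a \le m$. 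Combining with the previous paragraph gives $c_n(\gamma) = \min_a c_n(x_a, x_{a+1}) \succeq \pi_n(x)$, and then Lemma \ref{l23} produces
\begin{equation*}
\Cap_n(\{x\}, \ms A_x) \;\ge\; C_1^{-1}\, c_n(\{x\}, \ms A_x) \;\ge\; C_1^{-1}\, c_n(\gamma) \;\succeq\; \pi_n(x),
\end{equation*}
which is precisely the claimed lower bound.

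I do not foresee an essential obstacle. The only subtle point worth verifying is that the reversibility identity requires $R_n(x_{a+1}, x_a) > 0$; this holds because positivity of $\pi_n$ and $R_n$ on $E$, together with reversibility, force $E$ to be symmetric. It is perfectly possible that $\bb R_0(x_{a+1}, x_a) = 0$, and that is precisely why one obtains only an upper bound on the denominator rather than two-sided control, but a one-sided bound is all that the argument needs.
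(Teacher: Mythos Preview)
Your proposal is correct and follows essentially the same approach as the paper's proof: exhibit a self-avoiding path from $x$ to $\ms A_x$ along edges with $\bb R_0>0$, use reversibility to propagate $\pi_n(x_a)\succeq\pi_n(x)$ along the path (exactly as in the argument leading to \eqref{94}), deduce $c_n(\gamma)\succeq\pi_n(x)$, and conclude via Lemma~\ref{l23}. Your write-up is in fact more explicit than the paper's on the reversibility step and on why $(x_{a+1},x_a)\in E$.
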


\begin{proof}
By definition of the path from $x$ to $\ms A_x$,
$\bb R_0(x_a,x_{a+1}) >0$ for all $0\le a<m$. Hence
$\pi_n(x_a) \preceq \pi_n(x_{a+1})$ and
$c_n(x_a,x_{a+1}) = \pi_n(x_a) \, R_n (x_a,x_{a+1}) \succeq \pi_n(x_a)
\succeq \pi_n(x)$. This proves that $c_n(\gamma) \succeq  \pi_n(x)$
and completes the proof of the lemma in view of Lemma \ref{l23}.
\end{proof}

\begin{lemma}
\label{l20}
Fix $1\le p\le \mf q$. Then, for all for $x\not\in \ms V^{(p)}$, $j\in
S_p$, 
\begin{equation*}
\lim_{n\to\infty}
\frac{\pi_n(x)}{\pi_n(\ms V^{(p)}_j)}  \; \mb P^n_{\! x}
\big[\, H_{\ms V^{(p)}_j} =  H_{\ms V^{(p)}} \,\big] ^2  \, 
\;=\; 0 \;.
\end{equation*}
\end{lemma}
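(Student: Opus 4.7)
Since $\ms V^{(p)}_j$ and $\breve{\ms V}^{(p)}_j$ partition $\ms V^{(p)}$ and $x \notin \ms V^{(p)}$, the event $\{H_{\ms V^{(p)}_j} = H_{\ms V^{(p)}}\}$ coincides with $\{H_{\ms V^{(p)}_j} < H_{\breve{\ms V}^{(p)}_j}\}$, so the probability in question equals the equilibrium potential $h_n(x) := h_{\ms V^{(p)}_j,\,\breve{\ms V}^{(p)}_j}(x)$. I would split on the value of $\mf a^{(p-1)}(x,j)$. If $\mf a^{(p-1)}(x,j) > 0$, Corollary \ref{l26} yields $\pi_n(x) \prec \pi_n(\ms V^{(p)}_j)$, and the uniform bound $h_n(x)^2 \le 1$ settles the claim.

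Suppose therefore that $\mf a^{(p-1)}(x,j) = 0$. Applying Lemma \ref{l27} with $\ms A = \ms V^{(p)}_j$, $\ms B = \breve{\ms V}^{(p)}_j$, and combining with the bound $\Cap_n(\ms V^{(p)}_j,\,\breve{\ms V}^{(p)}_j) \le \pi_n(\ms V^{(p)}_j)/\theta^{(p)}_n$ that follows from the definition \eqref{26b} of $\theta^{(p)}_n$, one obtains
\[
\frac{\pi_n(x)}{\pi_n(\ms V^{(p)}_j)}\,h_n(x)^2 \;\le\; \frac{C_0\,\pi_n(x)}{\theta^{(p)}_n\,\Cap_n(\{x\},\,\breve{\ms V}^{(p)}_j)}\,.
\]
It therefore suffices to prove the capacity lower bound $\theta^{(p)}_n\,\Cap_n(\{x\},\,\breve{\ms V}^{(p)}_j)/\pi_n(x) \to \infty$. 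By Lemma \ref{l23}, this will follow from the construction of a self-avoiding path $\gamma$ from $x$ to $\breve{\ms V}^{(p)}_j$ whose weight satisfies $c_n(\gamma) \succ \pi_n(x)/\theta^{(p)}_n$.

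I would build $\gamma$ by cascading through the tree levels. Let $q \in \{1, \dots, p\}$ be the unique index with $x \in \Delta_q \setminus \Delta_{q-1}$, using the convention $\Delta_0 = \varnothing$. When $q = 1$, so $x \in \Delta$, Lemma \ref{l38} supplies a first segment from $x$ into $\ms A_x \subset \ms V^{(1)}$ with $c_n$-weight $\succeq \pi_n(x)$; when $q \ge 2$, so $x \in \ms V^{(q-1)}_m$ with $m \in \mf T_{q-1}$, Lemma \ref{l31} at level $q-1$ supplies one of weight $\succeq \pi_n(x)/\theta^{(q-1)}_n$. The hypothesis $\mf a^{(p-1)}(x,j) = 0$, combined with Lemma \ref{l37} and the recursion \eqref{33b}, forces the endpoint of this first segment to sit in a level-$q$ well disjoint from $\ms V^{(p)}_j$: otherwise the explicit formula for $\mf a^{(p-1)}(x,j)$ would contain a strictly positive summand. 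If that endpoint already belongs to $\breve{\ms V}^{(p)}_j$ (always the case when $q = p$), the construction is done; otherwise it lies in a well that is transient at some higher level $q' < p$, and one concatenates an additional segment from Lemma \ref{l31} at level $q'$, of weight $\succeq \pi_n(\mathrm{endpoint})/\theta^{(q')}_n$. Iterating and using \eqref{58} to compare nested-well measures with $\pi_n(x)$, together with the strict order $\theta^{(p-1)}_n \prec \theta^{(p)}_n$ from \eqref{51}, delivers the desired $c_n(\gamma) \succ \pi_n(x)/\theta^{(p)}_n$.

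The main obstacle is the bookkeeping in that last construction: checking at each level that the absorbing target supplied by Lemma \ref{l31} or \ref{l38} indeed stays clear of $\ms V^{(p)}_j$ (so the cascaded path never accidentally lands in $\ms V^{(p)}_j$), and that the concatenation of segments only loses multiplicative factors of $\theta^{(q')}_n$ with $q' < p$. Both ingredients rely on the tree structure of Section \ref{sec1}, on Lemma \ref{l37}'s description of $\mf a^{(p-1)}$ as an absorption probability for $\bb X^{(p-1)}$, and on the uniform comparability of nested-well measures from \eqref{58}.
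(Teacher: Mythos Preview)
Your approach is correct and genuinely different from the paper's in its organization, though it draws on the same toolbox (Lemmas \ref{l23}, \ref{l27}, \ref{l31}, \ref{l38}). The paper splits into three cases according to the order relation between $\pi_n(x)$ and $\pi_n(\ms V^{(p)}_j)$; in the hard case $\pi_n(\ms V^{(p)}_j)\prec\pi_n(x)$ it applies Lemma \ref{l27} with the \emph{smaller} target $\ms B=\ms A_{r,k}$, then controls the remaining probability via the decomposition
\[
\mb P^n_{\! x}\big[H_{\ms V^{(p)}_j}=H_{\ms V^{(p)}}\big]\le
\mb P^n_{\! x}\big[H_{\ms V^{(p)}_j}<H_{\ms A_{r,k}}\big]
+\mb P^n_{\! x}\big[H_{\ms A_{r,k}}<H_{\ms V^{(p)}_j}<H_{\breve{\ms V}^{(p)}_j}\big]
\]
and an induction on the level $r$. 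The disjointness $\ms A_{r,k}\cap\ms V^{(p)}_j=\varnothing$ is forced there by the measure hypothesis $\pi_n(\ms V^{(p)}_j)\prec\pi_n(x)$ together with Lemma \ref{l25}. Your route instead splits on $\mf a^{(p-1)}(x,j)$, applies Lemma \ref{l27} once with the full $\ms B=\breve{\ms V}^{(p)}_j$, and reduces everything to a single capacity lower bound $\Cap_n(\{x\},\breve{\ms V}^{(p)}_j)\succ\pi_n(x)/\theta^{(p)}_n$, which you obtain by concatenating the level-by-level segments from Lemmas \ref{l31} and \ref{l38}. The disjointness of each landing set from $\ms V^{(p)}_j$ is driven by the hypothesis $\mf a^{(p-1)}(x,j)=0$ propagating along the cascade via \eqref{33b} and Lemma \ref{l21}. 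Your argument is a bit more direct (one path instead of a probability splitting plus induction), at the cost of the bookkeeping you flag: verifying that $\mf a^{(p-1)}(z_i,j)=0$ persists at each stage, that $\pi_n(z_i)\succeq\pi_n(x)$, that the levels strictly increase so the cascade terminates in at most $p$ steps, and that the concatenated path can be shortened to a self-avoiding one without spoiling the bound. One small point to handle carefully: the optimal segments supplied by Lemma \ref{l23} may vary with $n$, so the cascade endpoints are in principle $n$-dependent; since $V$ is finite this is harmless (pass to subsequences, or take a uniform minimum over the finitely many possible cascades), but it should be said explicitly.
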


\begin{proof}
If $\pi_n(x) / \pi_n(\ms V^{(p)}_j) \to 0$, the conclusion is
straightforward. If $\pi_n(\ms V^{(p)}_j) \, \sim\, \pi_n(x)$, by
Corollary \ref{l26}, $\mf a^{(p-1)}(x,j)=0$, so that, by Lemma
\ref{l22}, 
\begin{equation*}
\lim_{n\to\infty}
\mb P^n_{\! x} \big[\, H_{\ms V^{(p)}_j} =  H_{\ms V^{(p)}} \,\big]
\;=\; 0\;,
\end{equation*}
and the assertion of the lemma follows.

Assume that $\pi_n(\ms V^{(p)}_j) \prec \pi_n(x)$, and suppose that
$x\in\ms V$.  Let $1\le r<p$ such that $x\in \ms V^{(r)}_k$ for some
$k\in \mf T_r$. Such $r$ exists and is smaller than $p$ because
$x\not\in \ms V^{(p)}$. 

Recall the definition of the sets $A_k$, $\ms A_k$ introduced just
before Lemma \ref{l31}. Add the index $r$ to recall that $k\in \mf
T_r$ and write $A_{r,k}$, $\ms A_{r,k}$ instead of $A_k$, $\ms A_k$,
respectively. By definition, $\ms A_{r,k} \subset \ms V^{(r+1)}$.

By the tree construction, since $r<p$, there exists
$B\subset S_{r+1}$, such that
$\ms V^{(p)}_j = \cup_{i\in B} \ms V^{(r+1)}_i$.  By Lemma \ref{l25},
$\pi_n(x) \prec \pi_n(\ms V^{(r)}_\ell)$, $\ell\in A_{r,k}$. Thus, as
$\pi_n(\ms V^{(p)}_j) \prec \pi_n(x)$,
$\pi_n(\ms V^{(r+1)}_i) \prec \pi_n(\ms V^{(r)}_\ell)$ for all
$i\in B$, $\ell\in A_{r,k}$. Hence, since by \eqref{58}, all elements
of the same valley have measures of the same order,
$\ms A_{r,k} \cap \ms V^{(p)}_j = \varnothing$.

The proof is by induction on $r$. We first prove it for $r=p-1$. In
the sequel, we show that if it holds for all
$r\in \{r_0+1, \dots, p-1\}$, then it holds for $r_0$ also. First,
assume that $r=p-1$ (and keep the index $r$ of $\ms A_{r,k}$ as $r$,
though $r=p-1$). In this case, since $\ms A_{r,k} \subset \ms V^{(p)}$
and $\ms A_{r,k} \cap \ms V^{(p)}_j = \varnothing$, we have that
$\ms A_{r,k} \subset \breve{\ms V}^{(p)}_j$. Therefore,
\begin{equation*}
\mb P^n_{\! x}
\big[\, H_{\ms V^{(p)}_j} <  H_{\breve{\ms V}^{(p)}_j} \,\big] \;\le\;
\mb P^n_{\! x}
\big[\, H_{\ms V^{(p)}_j} <  H_{\ms A_{r,k}} \,\big]  \;. 
\end{equation*}

By Lemma \ref{l27},
\begin{equation}
\label{98}
\frac{\pi_n(x)}{\pi_n(\ms V^{(p)}_j)}  \; \mb P^n_{\! x}
\big[\, H_{\ms V^{(p)}_j} <  H_{\ms A_{r,k}} \,\big] ^2 \;\le\;
C_0\, \frac{\pi_n(x)}{\pi_n(\ms V^{(p)}_j)} \,
\frac{\Cap_n (\ms V^{(p)}_j \,,\, \ms A_{r,k})}
{\Cap_n (\{x\} \,,\, \ms A_{r,k})}
\end{equation}
for some finite constant $C_0$.  By equation (B.2) in
\cite{lrev},
\begin{equation*}
\Cap_n (\ms V^{(p)}_j \,,\, \ms A_{r,k}) \;\le\;
\Cap_n (\ms V^{(p)}_j\,,\, \breve{\ms V}^{(p)}_j)\;.
\end{equation*}
By \eqref{26b}, this expression is bounded by
$C_0\, \pi_n(\ms V^{(p)}_j)/\theta^{(p)}_n$ for some finite constant
$C_0$ whose value may change from line to line.

On the other hand, by Lemma \ref{l31}, as $r=p-1$,
\begin{equation}
\label{f1}
\Cap_n (\{x\} \,,\, \ms A_{r,k}) \;\ge\;
c_0 \; \pi_n(x)/\theta^{(p-1)}_n 
\end{equation}
for some positive constant $c_0$. Putting together the two previous
estimates, we obtain that the expression in \eqref{98} vanishes as
$n\to\infty$. This completes the proof of the lemma in the case
$r=p-1$. 

We turn to the induction argument. Fix $r<p$ and assume that the
result holds for $r+1, \dots, p-1$.  Recall the notation introduced at
the beginning of the proof and write
\begin{equation}
\label{99}
\mb P^n_{\! x}
\big[\, H_{\ms V^{(p)}_j} =  H_{\ms V^{(p)}} \,\big] \;\le\;
\mb P^n_{\! x}
\big[\, H_{\ms V^{(p)}_j} <  H_{\ms A_{r,k}} \,\big] \;+\;
\mb P^n_{\! x}
\big[\, H_{\ms A_{r,k}} < H_{\ms V^{(p)}_j} <  H_{\breve{\ms V}^{(p)}_j}
\,\big] \;. 
\end{equation}
We estimate separately the square of each term on the right-hand
side.

The argument for the first term is similar to the one presented for
$r=p-1$. By Lemma \ref{l27}, \eqref{98} holds for some finite constant
$C_0$.  By equations (B.1) and (B.2) in \cite{lrev},
\begin{equation*}
\Cap_n (\ms V^{(p)}_j \,,\, \ms A_{r,k}) \;\le\; 
\sum_{i\in B} \Cap_n (\ms V^{(r+1)}_i \,,\, \ms A_{r,k}) \;\le\;
\sum_{i\in B} \Cap_n (\ms V^{(r+1)}_i\,,\, \breve{\ms V}^{(r+1)}_i)\;.
\end{equation*}
By \eqref{26b}, this expression is bounded by
$\sum_{i\in B} \pi_n(\ms V^{(r+1)}_i)/\theta^{(r+1)}_n = \pi_n(\ms
V^{(p)}_j)/\theta^{(r+1)}_n$.  On the other hand, by Lemma \ref{l31},
\eqref{f1} is in force with $\theta^{(r)}_n $ in place of
$\theta^{(p-1)}_n$ and some positive constant $c_0$. Putting together
the two previous estimates, we obtain that the expression in
\eqref{98} vanishes as $n\to\infty$.

We turn to the second term in \eqref{99}. By the strong Markov
property, it is bounded by
\begin{equation*}
\max_{z\in \ms A_{r,k}} \mb P^n_{\! z}
\big[\, H_{\ms V^{(p)}_j} <  H_{\breve{\ms V}^{(p)}_j}
\,\big] \;.
\end{equation*}
To complete the proof, it remains to show that for all
$z\in \ms A_{r,k}$,
\begin{equation*}
\lim_{n\to\infty}
\frac{\pi_n(x)}{\pi_n(\ms V^{(p)}_j)}  \; \mb P^n_{\! z}
\big[\, H_{\ms V^{(p)}_j} <  H_{\breve{\ms V}^{(p)}_j}\,\big] ^2  \, 
\;=\; 0 \;.
\end{equation*}
Since $\pi_n(x) \prec \pi_n(z)$, it is enough
to show that
\begin{equation}
\label{f2}
\lim_{n\to\infty}
\frac{\pi_n(z)}{\pi_n(\ms V^{(p)}_j)}  \; \mb P^n_{\! z}
\big[\, H_{\ms V^{(p)}_j} <  H_{\breve{\ms V}^{(p)}_j} \,\big] ^2  \, 
\;=\; 0 \;.
\end{equation}
This follows from the induction hypothesis. Indeed, as
$z\in \ms V^{(r+1)}$, either $z$ belongs to $\ms V^{(p)}$ or $z$
belongs to some $\ms V^{(s)}_\ell$, $r<s<p$, for some
$\ell \in \mf T_s$. In the first case, the probability vanishes
because $z\in \breve{\ms V}^{(p)}_j$ (as $z\in \ms V^{(p)} \cap \ms A_{r,k}$
and $\ms A_{r,k} \cap \ms V^{(p)}_j = \varnothing$,
$z\in \breve{\ms V}^{(p)}_j$). In the second case, \eqref{f2} holds by
the induction hypothesis.

It remains to consider the case where
$\pi_n(\ms V^{(p)}_j) \prec \pi_n(x)$ and $x\in \Delta$.  We repeat
the induction argument. Write \eqref{99} with $\ms A_x$ instead of
$\ms A_{r,k}$. We estimate the first term on the right-hand side of
\eqref{99} as before, applying Lemma \ref{l38} instead of Lemma
\ref{l31}. The second term is also bounded as before. At the end of
the argument one needs to estimate \eqref{f2} for $z\in \ms V$, 
$\pi_n(\ms V^{(p)}_j) \prec \pi_n(z)$. This has been done in the first
part of the proof.
\end{proof}

\section{Proof of Theorem \ref{mt3}}
\label{sec7}

We assume in this section that the dynamics is reversible:
$\pi_n(x) \, R_n(x,y) = \pi_n(y)\, R_n(y,x)$ for all $(x,y)\in E$.

\subsection*{Elementary properties of $\pi_n$}

The proof of Theorem \ref{mt3} requires some preparation. We first
introduce the transient equivalent classes of the chain $X^{(n)}_t$.
We say that $y$ is equivalent to $x$, $\color{blue} y\sim x$ if $y=x$
or if there exists a sequence $x=x_0, \dots, x_\ell = y$,
$y_0=y, \dots, y_m=x$ such that $\bb R_0(x_i,x_{i+1})>0$,
$\bb R_0(y_j,y_{j+1})>0$ for all $0\le i<\ell$, $0\le j<m$.

This relation divides the set $V$ into equivalent classes.  Clearly
the sets $\ms V_j$ are equivalent classes, but there might be
others. Denote by $\color{blue} \ms C_1, \dots, \ms C_{\mf m}$ the
equivalent classes which \emph{have more than one element} and are not
one of the sets $\ms V_j$, $1\le j\le \mf n$. Note that the sets
$\ms C_1, \dots, \ms C_{\mf m}$, $\ms V_1, \dots, \ms V_{\mf n}$ may
not exhaust $V$: the set $V$ may contain elements which do not belong
to one of the $\ms C_k$'s nor to one of the $\ms V_j$'s.

The first assertion extends \eqref{58} to the sets $\ms C_k$. We claim
that if $x$, $y$ belong to the same class $\ms C_k$, then
\begin{equation}
\label{f8}
\lim_{n\to\infty} \frac{\pi_n(x)}{\pi_n(y)} \; =\; 
a \in (0,\infty) \;.
\end{equation}
Indeed, by definition, there exists a sequence $x=x_0, \dots, x_\ell = y$,
such that $\bb R_0(x_i,x_{i+1})>0$, for all $0\le i<\ell$. By
reversibility,
\begin{equation*}
\frac{\pi_n(x)}{\pi_n(y)} \;=\;
\frac{R_n(x_{\ell},x_{\ell-1}) \dots R_n(x_1 , x_{0})}
{R_n(x_0 , x_{1}) \dots R_n(x_{\ell-1},x_{\ell})} \;\cdot
\end{equation*}
By hypothesis, the denominator converges to a positive real number. On
the other hand, by \eqref{01}, the numerator converges to a
non-negative real number. This proves that $\pi_n(x)/\pi_n(y)  \to 
a \in [0,\infty)$. Inverting the roles of $x$ and $y$ we conclude that 
$a \in (0,\infty)$, as claimed in \eqref{f8}.

Fix an oriented edge $(x,y)\in E$ whose endpoints belong to the same
equivalent class $\ms V^{(1)}_j$ or $\ms C_k$, $j\in S_1$,
$1\le k\le \mf m$. We claim that
\begin{equation}
\label{77b}
\text{$\bb R_0(x,y) >0$ if and only if $\bb R_0(y,x) >0$}\;. 
\end{equation}
Indeed, assume that $\bb R_0(x,y) >0$. Since
$\pi_n(x) \, R_n(x,y) = \pi_n(y)\, R_n(y,x)$, by \eqref{58} and \eqref{f8},
$\lim_{n\to \infty} R_n(y,x) = \bb R_0(x,y) \lim_{n\to \infty}
\pi_n(x) / \pi_n(y) >0$.

Denote by $\color{blue} \bb L^{(0)}_j$,
$\color{blue} \bb L^{(0)}_{T,k}$, $j\in S_1$, $1\le k\le \mf m$, the
generators associated to the rates $\bb R_0$ restricted to the
equivalent classes $\ms V^{(1)}_j$, $\ms C_k$, respectively. This
means that we set to $0$ all jumps from $\ms C_k$ to its complement.
Denote by $\color{blue} \nu_k$ the stationary state of the Markov
chain associated to the generator $\bb L^{(0)}_{T,k}$.

We claim that for all $1\le k\le \mf m$,
\begin{equation}
\label{f9}
\lim_{n\to\infty} \frac{\pi_n(x)}{\pi_n(\ms C_k)} \;=\; \nu_k(x)
\quad\text{for all $x\in \ms C_k$ and that $\nu_k$ is reversible}\;.
\end{equation}
This result extends Lemma \ref{l32} to the transient sets $\ms C_k$.
To establish \eqref{f9}, let $m\in\ms P(\ms C_k)$ be the limit of the
sequence of measures $\pi_n(\,\cdot\,)/\pi_n(\ms C_k)$. This limit
exists by \eqref{f8}.  By reversibility, for all $x$, $y\in\ms C_k$,
\begin{equation*}
\frac{\pi_n(x)}{\pi_n(\ms C_k)} \, R_n(x,y) \;=\;
\frac{\pi_n(y)}{\pi_n(\ms C_k)} \, R_n(y,x) \;.
\end{equation*}
Passing to the limit yields that $m$ satisfies the detailed balance
conditions with respect to $\bb R_0$. Hence $m$ is stationary
(actually, reversible), and, by uniqueness, $m=\nu_k$. This proves
that the sequence of measures $\pi_n(\,\cdot\,)/\pi_n(\ms C_k)$
converges to $\nu_k$ and that $\nu_k$ is reversible.

The same statement yields that  $\pi^{(1)}_j$ is a reversible measure
for the chain $\bb X_t$ restricted to $\ms V_j$, $j\in S_1$.

\subsection*{The functionals $\ms I^{(p)}$}

The first result of this section provides an alternative formula for
the functional $\ms I^{(0)}$ introduced in \eqref{f11}.  Its proof
relies on the construction of a directed graph without directed
loops. The equivalence classes of the chain $\bb X_t$ form the set of
vertices of this directed graph. Denote them by
$\ms Q_1, \dots, \ms Q_\ell$. The sets $\ms V_j$ and $\ms C_k$ belongs
to this set and are vertices of the graph.  In other words, for each
$j\in S_1$, there exists $1\le a\le \ell$ such that
$\ms V_j = \ms Q_a$. A similar statement holds for the sets $\ms C_k$.

Draw a directed arrow from $\ms Q_a$ to $\ms Q_b$ if there exists
$x\in \ms Q_a$ and $y\in \ms Q_b$ such that $\bb R_0(x,y)>0$. Denote
the set of directed edges by $\bb A$ and the graph by
$\bb G = (\mb Q, \bb A)$, where $\mb Q$ is the set
$\{\ms Q_1, \dots, \ms Q_\ell\}$ of vertices.

A path in the graph $\bb G = (\mb Q, \bb A)$ is a sequence vertices
$(\ms Q_{a_j} : 0\le j\le m)$, such that there is a directed arrow
from $\ms Q_{a_j}$ to $\ms Q_{a_{j+1}}$ for $0\le j<m$.

This directed graph has no directed loops because the existence of a
directed loop would contradict the definition of the sets $\ms Q_a$ as
equivalent classes. (Mind that undirected loops might exist). On the
other hand, since the sets $\ms V_j$ are closed irreducible classes,
these sets are not the tail of a directed edge in the graph. Finally,
fix an equivalent class $\ms Q_a$ which is not a set $\ms V_j$. Hence,
the elements of $\ms Q_a$ are transient for the chain $\bb X_t$. In
particular, there is a path
$(\ms Q_a= \ms Q_{a_0}, \dots, \ms Q_{a_m})$ such that $\ms Q_{a_j}$
is not a closed irreducible class for $0\le j<m$, and $\ms Q_{a_m}$ is
one.

Fix an equivalent class $\ms Q_a$ which is not a set $\ms V_j$.
Denote by $\mb D(\ms Q_a)$ the length of the longest path from
$\ms Q_a$ to a closed irreducible class. The function $\mb D$ is well
defined because (a) the set of vertices is finite, (b) there is at
least a path, (c) there are no directed loops in the graph.

Fix $a$, $b$ such that there is a directed arrow from $\ms Q_a$ to
$\ms Q_b$. Then, 
\begin{equation}
\label{f10}
\mb D(\ms Q_a) \;\ge\; \mb D(\ms Q_b) \;+\; 1\;.
\end{equation}
Indeed, it is enough to consider the longest path from $\ms Q_b$ to
the irreducible classes. $\ms Q_a$ does not belong to the path because
there are no directed loops. By adding $\ms Q_a$ at the beginning of
the path from $\ms Q_b$ to the irreducible classes, we obtain a path
from $\ms Q_a$ to the irreducible classes of length
$\mb D(\ms Q_b) + 1$, proving \eqref{f10}.

We may lift the function $\mb D$ to $V$ by setting
$\mb D(x) = \mb D(\ms Q_a)$ for all $x\in \ms Q_a$.

Let $\ms J^{(0)} \colon \ms P(V) \to [0,+\infty]$ be the functional
defined by
\begin{equation}
\label{09b}
\begin{aligned}
\ms J^{(0)} (\mu) \, & =\,
\sum_{j\in S_1} 
\big\< \sqrt{f_j} \,,\, (-\, \bb L^{(0)}_j)
\sqrt{f_j} \,\big\>_{\pi^{(1)}_j} \;+\;
\sum_{k=1}^{\mf m}
\big\< \sqrt{g_k} \,,\, (-\, \bb L^{(0)}_{T,k})
\sqrt{g_k} \,\big\>_{\nu_k} \\ 
\;& +\; \sum_{k=1}^{\mf m} \sum_{x \in \ms C_k} \sum_{y\not\in \ms
C_k} \mu(x)\, \bb R_0(x,y)
+\; \sum_{x \not \in \ms C \cup \ms V} \sum_{y\in V} \mu(x)\, \bb R_0(x,y) \;.
\end{aligned}
\end{equation}
In this formula, $\color{blue} \ms C = \cup_k \ms C_k$,
$f_j(x) = \mu(x) / \pi^{(1)}_j (x)$, $g_k(z) = \mu(z) / \nu_k (z)$,
$x\in \ms V^{(1)}_j$, $z\in \ms C_k$.

\begin{lemma}
\label{l35}
For every $\mu \in \ms P(V)$, $\ms I^{(0)} (\mu) = \ms J^{(0)} (\mu) $.
\end{lemma}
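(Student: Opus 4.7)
My plan is to identify $\ms I^{(0)}(\mu)$ with $\ms J^{(0)}(\mu)$ by decomposing the Donsker--Varadhan sum along the equivalence classes of $\bb X_t$. Set $\Delta^\star := V \setminus (\ms V \cup \ms C)$. By \eqref{77b} and since each $\ms V_j$ is closed for $\bb X_t$, every edge $(x,y)\in\bb E_0$ is of exactly one of four types: (i) internal to some $\ms V_j$, (ii) internal to some $\ms C_k$, (iii) leaving $\ms C_k$, or (iv) leaving $\Delta^\star$; edges of types (iii) and (iv) are necessarily one-way. Hence for every $u>0$,
\begin{equation*}
-\sum_{x\in V}\mu(x)\,\frac{(\bb L^{(0)}u)(x)}{u(x)} \,=\, \sum_{j\in S_1}A_j(u)+\sum_{k=1}^{\mf m}\bigl[B_k(u)+C_k(u)\bigr]+D(u)\,,
\end{equation*}
where $A_j,B_k$ collect the two internal types and $C_k,D$ the two outflow types.

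For $\ms I^{(0)}(\mu)\le \ms J^{(0)}(\mu)$: the chain $\bb L^{(0)}_j$ on $\ms V_j$ and $\bb L^{(0)}_{T,k}$ on $\ms C_k$ are irreducible, and the reversibility of $\pi^{(1)}_j$ and $\nu_k$ was established in \eqref{f9} and the paragraph following it. Applying \cite[Theorem 5]{dv75} to each piece gives $A_j(u)\le \bigl\langle\sqrt{f_j},(-\bb L^{(0)}_j)\sqrt{f_j}\bigr\rangle_{\pi^{(1)}_j}$ and $B_k(u)\le \bigl\langle\sqrt{g_k},(-\bb L^{(0)}_{T,k})\sqrt{g_k}\bigr\rangle_{\nu_k}$. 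The outflow terms are bounded trivially, using $u>0$ to obtain $1-u(y)/u(x)\le 1$: $C_k(u)\le \sum_{x\in\ms C_k}\sum_{y\notin\ms C_k}\mu(x)\bb R_0(x,y)$, and similarly for $D(u)$. Summing the four inequalities yields the claim.

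The reverse inequality is the delicate step, and requires a partial order on equivalence classes. Build the directed graph $\bb G$ whose vertices are the equivalence classes of $\bb X_t$ and whose arrows point from $\ms Q_a$ to $\ms Q_b$ whenever some edge of $\bb E_0$ exits $\ms Q_a$ towards $\ms Q_b$; a directed loop would force distinct classes to merge, so $\bb G$ is a DAG. Let $\mb D(\ms Q)$ denote the length of the longest directed path issuing from $\ms Q$, so that $\mb D=0$ on the closed classes $\ms V_j$ and every arrow $\ms Q_a\to\ms Q_b$ satisfies $\mb D(\ms Q_a)\ge \mb D(\ms Q_b)+1$. For each $\epsilon>0$ pick $\epsilon$-optimisers $u_j^\epsilon$, $u_k^\epsilon$ of the DV suprema on $\ms V_j$, $\ms C_k$, extend them by $1$ on singleton transient classes, and set
\begin{equation*}
u_{N,\epsilon}(x) \,:=\, N^{\mb D(\ms Q_a)}\,u_a^\epsilon(x)\,,\qquad x\in\ms Q_a\,.
\end{equation*}
Internal ratios $u_{N,\epsilon}(y)/u_{N,\epsilon}(x)$ are unchanged by the scaling, so the internal contributions remain within $\epsilon$ of their suprema; for every outflow edge $u_{N,\epsilon}(y)/u_{N,\epsilon}(x)=O(N^{-1})$, so the outflow contributions converge to the trivial upper bounds above. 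Sending $N\to\infty$ and then $\epsilon\to 0$ yields $\ms I^{(0)}(\mu)\ge \ms J^{(0)}(\mu)$. The principal obstacle is precisely this simultaneous saturation: a naive one-shot choice of $\epsilon$-optimisers on each component fails because the outflow terms couple distinct components through the ratios $u(y)/u(x)$, and the DAG height $\mb D$ is the device that decouples them through a compatible multiplicative scaling.
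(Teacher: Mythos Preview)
Your proof is correct and follows essentially the same strategy as the paper: decompose the Donsker--Varadhan sum along the equivalence classes, use \cite[Theorem 5]{dv75} on each irreducible block for the upper bound, and for the lower bound construct a near-optimal test function by scaling $\epsilon$-optimisers on each class by $N^{\mb D}$ along the DAG of classes, sending $N\to\infty$ to kill the outflow ratios. The paper's argument is structurally identical; the only cosmetic difference is that the paper writes the block optimisers explicitly as $u_\ell(x)=\ell^{\mb D(x)}\sqrt{(\mu(x)+\epsilon)/\pi^{(1)}_j(x)}$ (and analogously on $\ms C_k$) rather than invoking abstract $\epsilon$-optimisers, but this is the same device.
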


\begin{proof}
Fix $\mu \in \ms P(V)$.  We first prove that
$\ms J^{(0)}(\mu) \le \ms I^{(0)}(\mu)$. By definition of the
generator $\bb L^{(0)}$, 
\begin{equation}
\label{f12}
\ms I^{(0)}(\mu) \;=\; \sup_{u>0}
\, -\, \sum_{(x,y) \in \bb E_0} \frac{\mu(x)}{u (x)}\,
\bb R_0(x,y) \, [\, u (y) - u (x)\,] \;,
\end{equation}
where the sum is performed over all directed edges of $\bb E_0$.

Fix $\ell\ge 1$, and define $u_\ell : V \to (0,\infty)$ by
\begin{equation*}
u_\ell(x) \;=\; \ell^{\mb D(x)}\; \sqrt{ \frac{\mu(x) + \epsilon}{\pi^{(1)}_j(x)}}\;,
\quad
u_\ell(y) \;=\; \ell^{\mb D(y)}\; \sqrt{ \frac{\mu(y) + \epsilon}{\nu_k(y)}}\;,
\quad
u_\ell(z) \,=\, \ell^{\mb D(z)} \;,
\end{equation*}
for $x\in \ms V_j$, $j\in S_1$, $y\in \ms C_k$, $1\le k\le \mf m$, and
$z\not\in \ms V \cup \ms C$. Here, $\epsilon = 1/\ell$ and guarantees
that $u$ is positive. By definition of $\ms I^{(0)}$,
\begin{equation}
\label{84}
\ms I^{(0)} (\mu) \; \ge \;
\limsup_{\ell\to\infty}  \, -\, \sum_{(x,y) \in \bb E_0} \frac{\mu(x)}{u_\ell(x)}\,
\bb R_0(x,y) \, [\, u_\ell(y) - u_\ell(x)\,] \;. 
\end{equation}

We examine the asymptotic behavior of the right-hand of
\eqref{84}. Fix $(x,y)\in \bb E_0$, and suppose, first, that $x$,
$y\in \ms V_j$ for some $j\in S_1$. In this case, the factors
$\ell^{\mb D}$ cancel, and, as $\ell\to\infty$, the corresponding term
in \eqref{84} converges to
\begin{equation*}
\pi^{(1)}_j(x) \, \sqrt{f_j(x)} \,
\bb R_0(x,y) \, \big[\, \sqrt{f_j(y)} \,-\, \sqrt{f_j(x)} \,\big] 
\end{equation*}
where $f_j(x) = \mu(x)/\pi^{(1)}_j(x)$. Therefore, the contributions to
the right-hand side of \eqref{84}, of the sum over the edges
$(x,y)\in \bb E_0$ such that $x$, $y\in\ms V_j$ is
\begin{equation*}
\big\< \sqrt{f_j} \,,\, (-\, \bb L^{(0)}_j)
\sqrt{f_j} \,\big\>_{\pi^{(1)}_j} \;.
\end{equation*}

The same argument yields that the contributions to
the right-hand side of \eqref{84}, of the sum over the edges
$(x,y)\in \bb E_0$ such that $x$, $y\in\ms C_k$ for some $1\le k\le
\mf m$, is
\begin{equation*}
\big\< \sqrt{g_k} \,,\, (-\, \bb L^{(0)}_{T,k})
\sqrt{g_k} \,\big\>_{\nu_k} \;,
\end{equation*}
where $g_k(x) = \mu(x)/\nu_k(x)$.

Up to this point we considered all edges $(x,y)\in \bb E_0$ whose head
and tail belong to the same equivalent class $\ms V_j$ or $\ms
C_k$. Assume now that this is not the case, and consider the term
\begin{equation*}
-\, \frac{\mu(x)}{u_\ell(x)}\, \bb R_0(x,y) \, [\, u_\ell(y) - u_\ell(x)\,]
\;=\; \mu(x) \, \bb R_0(x,y)\;-\;
\frac{\mu(x)}{u_\ell(x)}\, \bb R_0(x,y) \, u_\ell(y)\;. 
\end{equation*}
By definition, and since the measures $\pi^{(1)}_j$, $\nu_k$ are
strictly positive, $u_\ell(y) \le C_0 \, \ell^{\mb D(y)}$,
$\mu(x)/u_\ell(x) \le C_0 \, \ell^{-\mb D(x)}$ for some finite
constant $C_0$ independent of $x$, $y$ and $\ell$. The absolute value
of the second term is thus bounded above by
$C_0 \,\ell^{\mb D(y)-\mb D(x)}$.  Since there is an edge from $x$ to
$y$, by \eqref{f10}, $\mb D(x) \ge \mb D(y) +1$, which proves that the
second term of the previous displayed equation vanishes as
$\ell\to\infty$. 

Fix an edge $(x,y)\in \bb E_0$ whose head and tail do not belong to
the same equivalent class $\ms V_j$ or $\ms C_k$. Since $\ms V_j$ is a
closed irreducible class, $x\not\in\ms V$. Suppose that
$x\in \ms C_k$. Hence, $y\not\in \ms C_k$ because they do not belong
to the same class. These are the terms which respond for the third sum
in \eqref{09b}. the terms in which $x\not \in \ms C$ respond for the
fourth sum in \eqref{09b}, completing the proof that $\ms I^{(0)}(\mu)
\ge \ms J^{(0)}(\mu)$.

We turn to the reverse inequality,
$\ms J^{(0)}(\mu) \ge \ms I^{(0)}(\mu)$.  By \eqref{f12},
\begin{equation*}
\ms I^{(0)}(\mu) \;\le\; \sum_{j\in S_1} \ms I^{(0)}_{\ms V_j}(\mu)
\;+\; \sum_{k=1}^{\mf m} \ms I^{(0)}_{\ms C_k}(\mu)
\;+\; \ms I^{(0)}_{\ms R}(\mu) \;,
\end{equation*}
where $\ms I^{(0)}_{\ms V_j}(\mu)$ is given by formula \eqref{f12}
when the sum is performed over the directed edges $(x,y)$ whose head
and tail belong to $\ms V_j$. $ \ms I^{(0)}_{\ms C_k}(\mu)$ is defined
similarly, while $\ms I^{(0)}_{\ms R}(\mu)$ contains the remaining
edges.

By \cite[Theorem 5]{var}, 
\begin{equation*}
\ms I^{(0)}_{\ms V_j}(\mu) \;=\;
\big\< \sqrt{f_j} \,,\, (-\, \bb L^{(0)}_{j}) \sqrt{f}
\,\big\>_{\pi^{(1)}_j}\;, 
\end{equation*}
where $f_j (x) = \mu(x)/\pi^{(1)}_j(x)$, $x\in \ms V_j$.
An analogous result holds for $\ms I^{(0)}_{\ms C_k}(\mu)$.
These two terms correspond to the first two terms in \eqref{09b}.

We turn to $\ms I^{(0)}_{\ms R}(\mu)$, which can be written as
\begin{equation*}
\ms I_{\ms R}^{(0)}(\mu) 
\;=\; \sum_{(x,y)} \mu(x) \, \bb R_0(x,y) 
\;+\; \sup_{u>0} \, -\, \sum_{(x,y)} \frac{\mu(x)}{u (x)}\,
\bb R_0(x,y) \,  u (y) \;.
\end{equation*}
where the sums are performed over directed edges whose head and tail
belong to different equivalent classes. Since the second term is
negative,
\begin{equation*}
\ms I_{\ms R}^{(0)}(\mu) 
\;\le \; \sum_{(x,y)} \mu(x) \, \bb R_0(x,y) \;.
\end{equation*}
We have seen in the first part of the proof that this sum can be
written as the third and fourth terms in $\ms J^{(0)}(\mu)$,
completing the proof of the lemma.
\end{proof}

Note that for each $1\le k\le \mf m$, there exists at least on
$x\in\mc C_k$ such that $\bb R_0(x,y)>0$ for some $y\not\in \ms
C_k$. On the other hand, as the Markov chain associated to
$\bb L^{(0)}_{T,k}$ is ergodic,
$\big\< \sqrt{g} \,,\, (-\, \bb L^{(0)}_{T,k}) \sqrt{g}
\,\big\>_{\nu_k} = 0$ entails that $g$ is constant.  Therefore,
$\ms I^{(0)} (\mu) \,=\, 0$ if and only if there exists a probability
measure $\omega$ on $S_1$ such that
\begin{equation}
\label{79}
\mu \;=\; \sum_{j\in S_1} \omega_j\, \pi^{(1)}_j\;.
\end{equation}

Fix $1\le p\le \mf q$, and let
$\ms J^{(p)} \colon \ms P(V) \to [0,+\infty]$ be the functional
defined as follows. If
$\mu = \sum_{j\in S_p} \omega_j \, \pi^{(p)}_j$ for some probability
measure $\omega$ in $S_p$, $\omega\in \ms P(S_p)$,
\begin{equation}
\label{09c}
{\color{blue} \ms J^{(p)} (\mu)} \, :=\,
\sum_{m\in S_{p+1}} 
\big\< \sqrt{f_m} \,,\, (-\, \bb L^{(p)}_m)
\sqrt{f_m} \,\big\>_{M^{(p)}_m}
\;+\; \sum_{j \in \mf T_p} \sum_{k\in S_p}
\omega_j \, r^{(p)} (j,k)\;.
\end{equation}
In this formula, $\color{blue} \bb L^{(p)}_m$ stands for the generator
associated to Markov chain $\bb X^{(p)}_t$ restricted to
the closed irreducible set $\mf R^{(p)}_m$ and
$f_m(j) = \omega_j / M^{(p)}_m (j)$, $j\in \mf R^{(p)}_m$.
To complete the definition of $\ms J^{(p)}$, set
\begin{equation}
\label{83}
\ms J^{(p)} (\mu) \, :=\, +\infty \quad\text{if $\mu$ is not a convex
combination of $\pi^{(p)}_j$, $j\in S_p$}\;.
\end{equation}

Recall from \eqref{83b} the definition of the functional
$\ms I^{(p)} \colon \ms P(V) \to [0,+\infty]$. The proof of Lemma
\ref{l35} yields that

\begin{lemma}
\label{l36}
For all $1\le p\le \mf q$, $\mu\in \ms P(V)$, $\ms I^{(p)} (\mu) = \ms
J^{(p)} (\mu)$. 
\end{lemma}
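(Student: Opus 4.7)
The plan is to mimic the argument of Lemma \ref{l35}, transporting the variational analysis from the $V$-valued chain $\bb X_t$ with rates $\bb R_0$ to the $S_p$-valued chain $\bb X^{(p)}_t$ with rates $r^{(p)}$. When $\mu$ is not a convex combination $\sum_{j\in S_p} \omega_j \pi^{(p)}_j$, both $\ms I^{(p)}(\mu)$ and $\ms J^{(p)}(\mu)$ are $+\infty$ by \eqref{83b} and \eqref{83}, so the identity is trivial. Henceforth fix $\mu = \sum_{j\in S_p} \omega_j \pi^{(p)}_j$ with $\omega \in \ms P(S_p)$.

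First I would establish the preliminary reversibility fact that $M^{(p)}_m$ is reversible for $\bb L^{(p)}_m$ on $\mf R^{(p)}_m$. Since $X^{(n)}_t$ is reversible, the trace $Y^{n,p}_t$ on $\ms V^{(p)}$ is reversible with stationary measure proportional to $\pi_n$; summing the detailed-balance identity over the wells yields $\pi_n(\ms V^{(p)}_j)\,r^{(p)}_n(j,k) = \pi_n(\ms V^{(p)}_k)\,r^{(p)}_n(k,j)$. Multiplying by $\theta^{(p)}_n/\pi_n(\ms V^{(p+1)}_m)$ and invoking Lemma \ref{l33} together with \eqref{34} gives $M^{(p)}_m(j)\,r^{(p)}(j,k) = M^{(p)}_m(k)\,r^{(p)}(k,j)$ for $j,k \in \mf R^{(p)}_m$. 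Next, I build the directed graph on $S_p$ with an arrow $j \to k$ whenever $r^{(p)}(j,k) > 0$; its closed communicating classes are precisely $\mf R^{(p)}_1, \dots, \mf R^{(p)}_{\mf n_{p+1}}$, and the remaining (non-closed) ones partition $\mf T_p$. Define $\mb D : S_p \to \bb Z_+$ by $\mb D(j) = 0$ on $\mf R^{(p)}$ and otherwise the length of the longest directed path in the graph from the class of $j$ to a closed class; as in \eqref{f10}, whenever $r^{(p)}(j,k) > 0$ with $j,k$ in distinct classes, $\mb D(j) \ge \mb D(k) + 1$.

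For the upper bound $\ms I^{(p)}(\mu) \ge \ms J^{(p)}(\mu)$, I insert into the variational formula \eqref{83b} the test function $\mb h_\ell(j) := \ell^{\mb D(j)}\sqrt{(\omega_j + 1/\ell)/M^{(p)}_m(j)}$ for $j \in \mf R^{(p)}_m$ and $\mb h_\ell(j) := \ell^{\mb D(j)}$ for $j \in \mf T_p$, and then pass to the limit $\ell \to \infty$. Edges within a closed class $\mf R^{(p)}_m$ produce, via the reversibility just established and the expansion $\omega_j\,r^{(p)}(j,k)[1 - \mb h_\ell(k)/\mb h_\ell(j)]$, the Dirichlet form $\langle \sqrt{f_m}, (-\bb L^{(p)}_m)\sqrt{f_m}\rangle_{M^{(p)}_m}$ in the limit. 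For an edge $(j,k)$ with $j \in \mf T_p$ crossing a class boundary, $\mb D(j) \ge \mb D(k)+1$ forces $\mb h_\ell(k)/\mb h_\ell(j) \to 0$, so the corresponding term contributes $\omega_j\,r^{(p)}(j,k)$ to the limit. Summation reproduces $\ms J^{(p)}(\mu)$ exactly.

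For the reverse inequality, I decompose the variational expression in \eqref{83b} by splitting the sum over edges $(j,k)$ according to whether they lie inside a single closed class $\mf R^{(p)}_m$ or have $j \in \mf T_p$. Applying \cite[Theorem 5]{var} (Donsker-Varadhan) to the reversible chain $\bb X^{(p)}_t$ restricted to $\mf R^{(p)}_m$ identifies the supremum over the $\mb h$-variables on $\mf R^{(p)}_m$ with $\langle \sqrt{f_m}, (-\bb L^{(p)}_m)\sqrt{f_m}\rangle_{M^{(p)}_m}$, matching the first sum in \eqref{09c}. The elementary bound $1 - \mb h(k)/\mb h(j) \le 1$ controls the contribution of edges emanating from $\mf T_p$ by $\sum_{j \in \mf T_p}\sum_k \omega_j\,r^{(p)}(j,k)$, matching the second sum. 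The main obstacle I expect is verifying that these two estimates can be saturated simultaneously — the closed-class Dirichlet forms reached on each $\mf R^{(p)}_m$ while every transient-to-lower-class ratio $\mb h(k)/\mb h(j)$ is simultaneously driven to $0$. This is handled exactly as in Lemma \ref{l35} by the same $\ell^{\mb D(\cdot)}$ scaling used above, which decouples the levels in the limit; the bookkeeping for any non-singleton communicating class inside $\mf T_p$ then follows the treatment of the sets $\ms C_k$ in \eqref{09b}.
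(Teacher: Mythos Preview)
Your proposal is correct and follows essentially the same approach as the paper, which simply states that the proof of Lemma \ref{l35} carries over verbatim to the $S_p$-valued chain $\bb X^{(p)}_t$. Your adaptation is exactly that transport: the graph construction with the depth function $\mb D$, the test functions $\mb h_\ell$ scaled by $\ell^{\mb D}$, and the Donsker--Varadhan identification on each closed class $\mf R^{(p)}_m$ are the direct analogues of the ingredients in Lemma \ref{l35}; the reversibility of $M^{(p)}_m$ that you derive from Lemma \ref{l33} and \eqref{34} plays the role that the reversibility of $\pi^{(1)}_j$ and $\nu_k$ played there.
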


Note that, by \eqref{09c} and \eqref{80}, $\ms I^{(p)} (\mu) =0$ if
and only if  there exists a probability measure $\widehat \omega$ in
$S_{p+1}$ such that
\begin{equation}
\label{81}
\mu \;=\; \sum_{m\in S_{p+1}} \widehat \omega_m
\sum_{j\in \mf R^{(p)}_m} M^{(p)}_m (j) \, \pi^{(p)}_j
\;=\;  \sum_{m\in S_{p+1}} \widehat \omega_m \, \pi^{(p+1)}_m
\;.
\end{equation}
On the other hand, if $\mu$ is not of this form, by \eqref{83},
$\ms I^{(p+1)} (\mu)$ is set to be equal to $+\infty$. Hence, the
functional $\ms I^{(p+1)}$ is finite only at the $0$-level set of $\ms
I^{(p)}$. Furthermore, since the right-hand side of \eqref{09c} is
always finite, 
\begin{equation}
\label{82}
\ms I^{(p+1)} (\mu) \;<\; \infty \quad \text{if and only if}\quad
\ms I^{(p)} (\mu) \;=\; 0\;.
\end{equation}
By \eqref{79}, this assertion holds also for $p=0$. 

\subsection*{The $\Gamma$-convergence}

We turn to the proof of Theorem \ref{mt3}. We proceed by induction.
We first show that $\ms I_n$ $\Gamma$-converges to the functional
$\ms I^{(0)}$.  Then, we observe that, according to \eqref{79}, the
$0$-level set of $\ms I^{(0)}$ corresponds to the convex combinations
of the measures $\pi^{(1)}_j$, $j\in S_1$. In the sequel, we prove
that $\theta^{(1)}_n\, \ms I_n$ $\Gamma$-converges to $\ms
I^{(1)}$. Clearly, by definition, $\ms I^{(1)}(\mu) = +\infty$ if
$\mu$ is not a convex combinations of the measures $\pi^{(1)}_j$,
$j\in S_1$, while $\ms I^{(1)}(\mu) < +\infty$ if it is. By
\eqref{81}, the $0$-level set of $\ms I^{(1)}$ consists of the convex
combinations of the measures $\pi^{(2)}_j$, $j\in S_2$.

At this point, we iterate the procedure by examining the behavior of
$\theta^{(2)}_n\, \ms I_n$, and so on until proving that
$\theta^{(\mf q)}_n\, \ms I_n$ $\Gamma$-converges to
$\ms I^{(\mf q)}$. The $0$-level set of this functional is the
singleton formed by the measure $\pi^{(\mf q+1)}$. As the level set is
a singleton, the iterative procedures ends.  Note that this approach
produced the state $\pi^{(\mf q+1)}$ which is is the limit of the
stationary measures $\pi_n$:
$\pi^{(\mf q+1)} (x) = \lim_{n\to\infty} \pi_n(x)$, $x\in V$.

We turn to the proof that $\ms I_n$ $\Gamma$-converges to $\ms I^{(0)}$.  

\begin{proposition}
\label{p01b}
The functional $\ms I_n$ $\Gamma$-converges to $\ms I^{(0)}$.
\end{proposition}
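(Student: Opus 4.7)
The plan is to verify the two defining conditions of $\Gamma$-convergence separately, exploiting the two distinct representations of $\ms I_n$: the Donsker--Varadhan variational formula \eqref{f4} for the $\Gamma$-$\liminf$ bound, and the reversible Dirichlet form representation \eqref{f6} for the $\Gamma$-$\limsup$ bound. Lemma \ref{l35} already identifies $\ms I^{(0)}$ with the explicit functional $\ms J^{(0)}$ of \eqref{09b}, which makes the $\Gamma$-$\limsup$ target transparent.

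For the $\Gamma$-$\liminf$, fix $\mu \in \ms P(V)$ and a sequence $\mu_n \to \mu$. For each fixed test function $u: V \to (0,\infty)$, the variational formula \eqref{f4} gives the lower bound
\begin{equation*}
\ms I_n(\mu_n) \;\ge\; -\sum_{x\in V} \mu_n(x)\,\frac{(\ms L_n u)(x)}{u(x)}\,.
\end{equation*}
Since $V$ is finite, $\mu_n(x) \to \mu(x)$ pointwise, and $(\ms L_n u)(x) \to (\bb L^{(0)} u)(x)$ by \eqref{01} and the definition of $\bb L^{(0)}$, the right-hand side converges to $-\sum_x \mu(x)(\bb L^{(0)} u)(x)/u(x)$. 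Taking the supremum over $u>0$ yields $\liminf_n \ms I_n(\mu_n) \ge \ms I^{(0)}(\mu)$. This argument uses nothing beyond the termwise convergence of the jump rates.

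For the $\Gamma$-$\limsup$ I propose the constant recovery sequence $\mu_n \equiv \mu$ and plan to prove the stronger pointwise statement $\lim_n \ms I_n(\mu) = \ms J^{(0)}(\mu)$. Setting $f_n := \mu/\pi_n$, reversibility and \eqref{f6} give
\begin{equation*}
\ms I_n(\mu) \;=\; \tfrac12 \sum_{(x,y)\in E} c_n(x,y)\,\bigl[\sqrt{f_n(y)} - \sqrt{f_n(x)}\bigr]^2.
\end{equation*}
I would split this sum according to the equivalence-class decomposition introduced before Lemma \ref{l35}: (a) pairs inside a closed irreducible class $\ms V_j$; (b) pairs inside a non-trivial transient class $\ms C_k$; (c) remaining pairs, straddling two distinct equivalence classes (including pairs incident to singleton transient vertices). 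For pairs of type (a), Lemma \ref{l32} gives $\pi_n(x) = \pi^{(1)}_j(x)\,\pi_n(\ms V_j)(1+o(1))$; the factor $\pi_n(\ms V_j)$ cancels between $c_n(x,y)$ and $[\sqrt{f_n(y)} - \sqrt{f_n(x)}]^2$, and the limit is precisely the intra-$\ms V_j$ Dirichlet form $\langle \sqrt{f_j}, (-\bb L^{(0)}_j)\sqrt{f_j}\rangle_{\pi^{(1)}_j}$ appearing in \eqref{09b}. Contributions of type (b) are handled identically, using \eqref{f9} in place of Lemma \ref{l32}.

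The delicate step, which I expect to require the most care, is the analysis of contributions of type (c), because $\pi_n(x)$ may tend to zero so that $f_n(x) \to \infty$. The key observation is the algebraic identity
\begin{equation*}
c_n(x,y)\,f_n(x) \;=\; R_n(x,y)\,\mu(x) \;\longrightarrow\; \bb R_0(x,y)\,\mu(x),
\end{equation*}
so the divergence of $f_n(x)$ is exactly compensated by the decay of $c_n(x,y)$. The cross term
\begin{equation*}
c_n(x,y)\sqrt{f_n(x)f_n(y)} \;=\; \sqrt{R_n(x,y)\,R_n(y,x)\,\mu(x)\mu(y)}
\end{equation*}
vanishes, because for $x,y$ in distinct equivalence classes at most one of $\bb R_0(x,y)$, $\bb R_0(y,x)$ is positive by \eqref{77b}. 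Summing these termwise limits over the finite edge set reproduces the third and fourth sums in \eqref{09b}, and combining this with the contributions from (a) and (b) yields $\lim_n \ms I_n(\mu) = \ms J^{(0)}(\mu) = \ms I^{(0)}(\mu)$, which is the required $\Gamma$-$\limsup$ bound.
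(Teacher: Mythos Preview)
Your proof is correct and follows essentially the same strategy as the paper: the variational formula \eqref{f4} for the $\Gamma$-$\liminf$, and the constant recovery sequence together with the reversible Dirichlet form \eqref{f6} for the $\Gamma$-$\limsup$, analysed edge by edge. The only minor slip is your citation of \eqref{77b} for the cross-term vanishing: \eqref{77b} concerns edges with both endpoints in the \emph{same} class, whereas what you need---that $\bb R_0(x,y)>0$ and $\bb R_0(y,x)>0$ force $x\sim y$---is immediate from the definition of the equivalence relation.
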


\begin{proof}
We start with the $\Gamma-\limsup$. Fix $\mu\in \ms P(V)$ and consider
the sequence $\mu_n$ constant equal to $\mu$. By \eqref{f6},
\begin{equation*}
\ms I_n (\mu) \;=\; \frac{1}{2} \, \sum_{(x,y)\in E} \pi_n(x) \,
R_n(x,y)\, \Big\{ \sqrt{\frac{\mu(y)}{\pi_n(y)}} \,-\,
\sqrt{\frac{\mu(x)}{\pi_n(x)}} \, \Big\}^2\;,
\end{equation*}

Fix an edge $(x,y)\in E$. We examine the asymptotic behavior of 
\begin{equation}
\label{75}
\pi_n(x) \,
R_n(x,y)\, \Big\{ \sqrt{\frac{\mu(y)}{\pi_n(y)}} \,-\,
\sqrt{\frac{\mu(x)}{\pi_n(x)}} \, \Big\}^2\;.
\end{equation}
By reversibility, this term is symmetric in $x$, $y$. 

There are three types of edges. Assume first that $R_n(x,y) \to 0$ and
$R_n(y,x)\to 0$. By \cite[Lemma 3.1]{lx16}, either $\pi_n(x)/\pi_n(y)$
converges to a nonnegative real number or so does
$\pi_n(y)/\pi_n(x)$. Assume, without loss of generality because
\eqref{75} is symmetric, that
$\pi_n(y)/\pi_n(x) \to a \in [0,\infty)$. In this case, \eqref{75} is
equal to
\begin{equation*}
R_n(y,x)\, \Big\{ \sqrt{\frac{\mu(x)\, \pi_n(y)}{\pi_n(x)}} \,-\,
\sqrt{\mu(y)} \, \Big\}^2\;,
\end{equation*}
which vanishes as $n\to\infty$.

Assume that $R_n(x,y) \not\to 0$ and $R_n(y,x)\to 0$. Hence,
$\bb R_0(x,y) >0$, $\bb R_0(y,x) = 0$. In particular, as the set
$\ms V_j$ are closed irreducible classes, $x\not \in \ms V$ (if
$x\in \ms V_j$ and $\bb R_0(x,y) >0$, then $y\in \ms V_j$ because it
is a closed irreducible class. Hence, by \eqref{77b},
$\bb R_0(y,x) >0$, which is a contradiction). Two possibilities
remain, either $x\in \ms C_k$ for some $k$ or $x\not \in \ms C$.

By reversibility, $\pi_n(x)/\pi_n(y) \to 0$. Hence, \eqref{75} is
equal to
\begin{equation*}
R_n(x,y)\, \Big\{ \sqrt{\frac{\mu(y)\, \pi_n(x)}{\pi_n(y)}} \,-\,
\sqrt{\mu(x)} \, \Big\}^2\;,
\end{equation*}
which converges to $\mu(x) \, \bb R_0(x,y)$.  If $x\in \ms C_k$, by
\eqref{77b}, $y\not\in \ms C_k$. These are the pairs which appear in
the third term on the right-hand side of \eqref{09b}. If
$x\not \in \ms C$ these pairs are responsible for the fourth term on
the right-hand side of \eqref{09b}.

Finally, assume that $R_n(x,y) \not\to 0$ and $R_n(y,x)\not\to 0$.
This means that $x$ and $y$ belong to the same equivalence class, say
$\ms V_j$ or $\ms C_k$. Assume that $x$ and $y\in \ms V_j$. The
argument is identical if we replace $\ms V_j$ by $\ms C_k$.  Replace
$\pi_n(x)$, $\pi_n(y)$ by $\pi_n(x)/\pi_n(\ms V_j)$,
$\pi_n(y)/\pi_n(\ms V_j)$, respectively. By Lemma \ref{l32},
$\pi_n(x)/\pi_n(\ms V_j)$ converges to
$\pi^\sharp_j(x) = \pi^{(1)}_j(x)>0$. Hence, \eqref{75} converges to
\begin{equation*}
\pi^{(1)}_j (x) \,
\bb R_0(x,y)\, \Big\{ \sqrt{\frac{\mu(y)}{\pi^{(1)}_j(y)}} \,-\,
\sqrt{\frac{\mu(x)}{\pi^{(1)}_j(x)}} \, \Big\}^2\;.
\end{equation*}

Putting together the previous estimates yields that $\ms I_n(\mu) \to
\ms J^{(0)}(\mu)$. To complete the proof of the $\Gamma-\limsup$, it
remains to recall the statement of Lemma \ref{l35}.

We turn to the $\Gamma-\liminf$. Fix  $\mu\in \ms P(V)$ and a sequence
of probability measures $\mu_n$ in $\ms P(\ms V)$ converging to $\mu$.
By definition of $\ms I_n$,
\begin{equation*}
\ms I_n(\mu_n) \; \ge \; -\, \int_V \frac{\ms L_n u}{u}\, d\mu_n
\;=\; -\, \sum_{x\in V} \frac{\mu_n(x)}{u(x)}\,
\sum_{y\in V} R_n(x,y) \, [\, u(y) - u(x)\,]
\end{equation*}
for all $u: V \to (0,\infty)$. As $\mu_n\to \mu$ and $R_n \to \bb
R_0$, this expression converges to
\begin{equation*}
-\, \sum_{(x,y) \in \bb E_0} \frac{\mu(x)}{u(x)}\,
\bb R_0(x,y) \, [\, u(y) - u(x)\,] \;.
\end{equation*}
Therefore,
\begin{equation*}
\liminf_{n\to\infty} \ms I_n(\mu_n) \; \ge \;
\sup_{u>0}  \, -\, \sum_{(x,y) \in \bb E_0} \frac{\mu(x)}{u(x)}\,
\bb R_0(x,y) \, [\, u(y) - u(x)\,] \;=\; \ms I^{(0)}(\mu) \;,
\end{equation*}
which completes the proof of the lemma.
\end{proof}

Recall from \eqref{09c} the definition of the functionals $\ms I^{(p)}$,
$1\le p\le \mf q$.

\begin{proposition}
\label{p02b}
Fix $1\le p\le \mf q$.  The functional $\theta^{(p)}_n\, \ms I_n$
$\Gamma$-converges to $\ms I^{(p)}$.
\end{proposition}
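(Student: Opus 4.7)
The plan is to proceed by induction on $p \ge 1$, with Proposition \ref{p01b} serving as the base case $p=0$. Fix $p \ge 1$ and assume the $\Gamma$-convergence has been established for every $0 \le q < p$. The first task is to show that any $\mu_n \to \mu$ with $\liminf_n \theta^{(p)}_n \ms I_n(\mu_n) < \infty$ forces $\mu$ to be a convex combination of the metastable states $\pi^{(p)}_j$. Since $\theta^{(p-1)}_n \prec \theta^{(p)}_n$ by \eqref{51} (with convention $\theta^{(0)}_n \equiv 1$), one has $\theta^{(p-1)}_n \ms I_n(\mu_n) \to 0$, so the induction hypothesis yields $\ms I^{(p-1)}(\mu) \le \liminf_n \theta^{(p-1)}_n \ms I_n(\mu_n) = 0$. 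The equivalence \eqref{79} (for $p=1$) or \eqref{82} (for $p \ge 2$) then produces $\omega \in \ms P(S_p)$ with $\mu = \sum_{j \in S_p} \omega_j \pi^{(p)}_j$.

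For the $\Gamma$-liminf at such a $\mu$ I will use the variational formula \eqref{f4}. Given $\mb h \colon S_p \to (0,\infty)$, lift it to $V$ by setting $u_n(x) := \mb h(j)$ for $x \in \ms V^{(p)}_j$ and letting $u_n$ be the $\ms L_n$-harmonic extension on $\Delta_p$, so $\ms L_n u_n \equiv 0$ there. Applying the trace-process identification to $\ms L_n u_n$ restricted to $\ms V^{(p)}_j$ and averaging via \eqref{41}, combined with $\theta^{(p)}_n r^{(p)}_n(j,k) \to r^{(p)}(j,k)$ from \eqref{34} and $\mu_n(\ms V^{(p)}_j) \to \omega_j$ from Theorem \ref{mt0}, produces
\begin{equation*}
\liminf_n \theta^{(p)}_n \ms I_n(\mu_n) \;\ge\; -\sum_{j \in S_p} \omega_j\, \frac{\bb L^{(p)} \mb h(j)}{\mb h(j)}\;.
\end{equation*}
The contributions from $\Delta_p$ are controlled by Lemma \ref{l20} together with $\pi_n(\Delta_p) \to 0$ (inherited from \eqref{70} and \eqref{59}). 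Taking the supremum over $\mb h$ gives exactly $\ms I^{(p)}(\mu)$.

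For the $\Gamma$-limsup, define $\mb h_n(j) := \sqrt{\omega_j / \pi_n(\ms V^{(p)}_j)}$, set $u_n(x) := \mb h_n(j)$ on $\ms V^{(p)}_j$, extend $u_n$ to $\Delta_p$ as $\ms L_n$-harmonic, and let $\mu_n := u_n^2\, \pi_n / Z_n$ with $Z_n$ the normalizer. Corollary \ref{l34} and $\pi_n(\Delta_p) \to 0$ give $Z_n \to 1$ and $\mu_n \to \mu$. Since $u_n$ is harmonic on $\Delta_p$, the Dirichlet principle yields
\begin{equation*}
\theta^{(p)}_n\, D_n(u_n) \;=\; \tfrac{1}{2} \sum_{j \ne k} (\theta^{(p)}_n\, r^{(p)}_n(j,k))\, \pi_n(\ms V^{(p)}_j)\, (\mb h_n(k) - \mb h_n(j))^2\;.
\end{equation*}
Expanding the square and passing to the limit term by term, pairs $j, k$ in the same recurrent class $\mf R^{(p)}_m$ contribute $M^{(p)}_m(j)\, r^{(p)}(j,k)\, (\sqrt{f_m(k)} - \sqrt{f_m(j)})^2/2$ via Lemma \ref{l33} and \eqref{34}, matching the reversible Dirichlet form $\<\sqrt{f_m},\,(-\bb L^{(p)}_m)\sqrt{f_m}\>_{M^{(p)}_m}$; pairs with $j \in \mf T_p$ transient and $k$ recurrent contribute $\omega_j\, r^{(p)}(j,k)$, as Lemma \ref{l25} and Corollary \ref{l26} give $\pi_n(\ms V^{(p)}_j)/\pi_n(\ms V^{(p)}_k) \to 0$ which exactly cancels the blow-up of $\mb h_n(j)^2$. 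Summing and invoking Lemma \ref{l36} identifies the limit with $\ms J^{(p)}(\mu) = \ms I^{(p)}(\mu)$.

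The main obstacle lies in the recovery sequence. The weights $\mb h_n(j)$ may diverge whenever $\pi_n(\ms V^{(p)}_j) \to 0$, so one must verify carefully that the harmonic extension of $u_n$ does not concentrate mass on $\Delta_p$ (so that $\mu_n \to \mu$), and that the Dirichlet-form limit separates cleanly into the symmetric contribution from each closed class $\mf R^{(p)}_m$ and the asymmetric escape contribution from transient wells. The capacity and potential-theoretic bounds of Section \ref{sec6}, particularly Lemma \ref{l20} and the Dirichlet-principle estimates feeding into it, are the crucial inputs making both the liminf bound and the identification of the limsup sharp.
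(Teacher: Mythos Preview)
Your approach is essentially identical to the paper's: the same induction scheme, the same harmonic-extension test functions $u_n$ for the $\Gamma$-liminf (plugged into the variational formula \eqref{f4} and reduced to the trace generator via Lemma \ref{l19}), and the same recovery sequence $\mu_n = u_n^2\,\pi_n/Z_n$ with $u_n$ the harmonic extension of $\sqrt{\omega_j/\pi_n(\ms V^{(p)}_j)}$ for the $\Gamma$-limsup, followed by the reduction to the trace Dirichlet form (Corollary \ref{l01}) and a case analysis of \eqref{76}. A few details need tightening: in the $\Gamma$-liminf the $\Delta_p$ contribution is exactly zero by harmonicity (no appeal to Lemma \ref{l20} is needed there); in the $\Gamma$-limsup, $Z_n\to 1$ genuinely requires Lemma \ref{l20} and not merely $\pi_n(\Delta_p)\to 0$, the ratio $\pi_n(\ms V^{(p)}_j)/\pi_n(\ms V^{(p)}_k)\to 0$ comes from the reversibility identity \eqref{97} rather than Lemma \ref{l25}/Corollary \ref{l26}, and you omit the case where both limiting rates $r^{(p)}(j,k)$ and $r^{(p)}(k,j)$ vanish; but these are minor and the substance matches the paper.
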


\begin{proof}
We start with the $\Gamma-\limsup$. Fix $\mu\in \ms P(V)$. If $\mu$ is
not a convex combinations of the measures $\pi^{(p)}_j$, $j\in S_p$,
there is nothing to prove. Assume, therefore, that
$\mu = \sum_{j\in S_p} \omega_j \, \pi^{(p)}_j$ for some weights
$\omega_j$.

Let $f_n : \ms V^{(p)} \to \bb R_+$ be the function given by
$f_n = \sum_{j\in S_p} \omega_j(n) \, \chi_{_{\ms V^{(p)}_j}}$, where
$\omega_j(n) = \omega_j / \pi_n(\ms V^{(p)}_j)$. To extend
this function to $V$, solve the Poisson equation \eqref{21} with
$\ms L = \ms L_n$, $V_0 = \ms V^{(p)}$, $g = \sqrt{f_n}$. Denote by $h_n$
the solution of the equation. Let $\mu_n = \alpha_n \, h^2_n \,
\pi_n$, where $\alpha_n$ is a normalizing constant which turns $\mu_n$
into a probability measure.

We claim that $\alpha_n\to 1$ and $\mu_n \to \mu$. By definition,
\begin{equation*}
\alpha_n^{-1} \;=\; \sum_{x\not\in \ms V^{(p)}} h_n(x)^2 \, \pi_n(x)
\;+\;
\sum_{j\in S_p} \sum_{x\in \ms V^{(p)}_j} f_n(x) \, \pi_n(x)\;.
\end{equation*}
By definition of $h_n$, for $x\not\in \ms
V^{(p)}$,
\begin{equation*}
\begin{aligned}
h_n(x)^2 \, \pi_n(x) \; & =\; \Big\{\,
\sum_{j\in S_p} \sqrt{\omega_j(n)} \, \mb P^n_{\! x}
\big[\, H_{\ms V^{(p)}_j} =  H_{\ms V^{(p)}} \,\big]  \, \Big\}^2
\, \pi_n(x) \\
& \le \; C_0\, 
\sum_{j\in S_p} \frac{\omega_j}{\pi_n(\ms V^{(p)}_j)}  \, \mb P^n_{\! x}
\big[\, H_{\ms V^{(p)}_j} =  H_{\ms V^{(p)}} \,\big]^2  \, \pi_n(x)\;,
\end{aligned}
\end{equation*}
where the constant $C_0$ bounds the cardinality of $V$.
By Lemma \ref{l20}, this expression vanishes as $n\to\infty$. By definition
of $f_n$, the second term of the penultimate displayed equation is equal to
\begin{equation*}
\sum_{j\in S_p} \sum_{x\in \ms V^{(p)}_j}
\frac{\omega_j}{\pi_n(\ms V^{(p)}_j)}
\, \pi_n(x) \;=\; 1 \;,
\end{equation*}
which proves that $\alpha_n\to 1$.

The previous argument shows that
$\mu_n(x) \,=\, \alpha_n\, h_n(x)^2 \, \pi_n(x) \to 0 = \mu(x)$ if
$x\not\in \ms V^{(p)}$. If $x\in \ms V^{(p)}_j$,
$\mu_n(x) \,=\, \alpha_n\, f_n(x) \, \pi_n(x) \,=\, \alpha_n\,
\omega_j \, \pi_n(x)/\pi_n(\ms V^{(p)}_j)$. Since $\alpha_n \to 1$, by
Corollary \ref{l34}, the previous expression converges to
$\omega_j \, \pi^{(p)}_j(x) \,=\, \mu(x)$.

To complete the proof of the $\Gamma-\limsup$, it remains to show that
$\limsup_n \theta^{(p)}_n\, \ms I_n(\mu_n) \le \ms I^{(p)} (\mu)$. By
\eqref{f6} and the definition of $\mu_n$,
\begin{equation*}
\ms I_n(\mu_n) \;=\; \alpha_n\,
\< \, h_n \, , \, (-\ms L_n)\, h_n \,\>_{\pi_n} \;.
\end{equation*}
By Corollary \ref{l01} and the definition of $h_n$, the right-hand
side is equal to
\begin{equation*}
\begin{aligned}
& \alpha_n\, \pi_n(\ms V^{(p)})\,
\< \sqrt{f_n} \, , (-\ms L^{(p)}_n) \sqrt{f_n}\>_{\pi^{(p)}_n} \\
&\quad =\;
-\, \alpha_n\,
\sum_{x, y\in \ms V^{(p)}} \pi_n(x) \, R^{(p)}_n (x,y)\,
\sqrt{f_n (x) } \,\big\{\,
\sqrt{f_n (y) } \,-\, \sqrt{f_n (x) }\, \big\} \;,
\end{aligned}
\end{equation*}
where $\color{blue} \ms L^{(p)}_n$ stands for the generator of the
trace process $Y^{n,p}_t$ introduced in \eqref{40}, and
$\color{blue} \pi^{(p)}_{n}$ for the measure $\pi_n$ conditioned to
$\ms V^{(p)}$. Since $f_n$ is constant and equal to $\omega_j(n)$ on
each set $\ms V^{(p)}_j$, the previous expression is equal to
\begin{equation*}
\begin{aligned}
& -\, \alpha_n\, \sum_{j\in S_p} \sqrt{\omega_j (n) }
\sum_{k\in S_p \setminus \{j\}} \,\big\{\,
\sqrt{\omega_k (n) } \,-\, \sqrt{\omega_j (n) }\, \big\} 
\sum_{x\in \ms V^{(p)}_j} \pi_n(x) \,
\sum_{y\in \ms V^{(p)}_k} R^{(p)}_n (x,y)  \\
&\quad =\; -\, \alpha_n\, \sum_{j\in S_p} \sqrt{\omega_j (n) }
\sum_{k\in S_p \setminus \{j\}} \,\big\{\,
\sqrt{\omega_k (n) } \,-\, \sqrt{\omega_j (n) }\, \big\}
\pi_n(\ms V^{(p)}_j)\, r^{(p)}_n (j,k) \;,
\end{aligned}
\end{equation*}
where $r^{(p)}_n (j,k)$ is defined in \eqref{41}. Up to this point, we
proved that
\begin{equation*}
\theta^{(p)}_n\, \ms I_n(\mu_n)  \;=\; 
\frac{\alpha_n}{2} \, \theta^{(p)}_n\, \sum_{j, k\in S_p}
\pi_n(\ms V^{(p)}_j)\, r^{(p)}_n (j,k) 
\,\Big\{\,\sqrt{\omega_k (n) } \,-\, \sqrt{\omega_j (n) }\, \Big\}^2 
\;,
\end{equation*}
where we used that
$\pi_n(\ms V^{(p)}_j)\, r^{(p)}_n (j,k) \,=\, \pi_n(\ms V^{(p)}_k)\,
r^{(p)}_n (k,j)$, an identity which follows from the reversibility
assumption. 

Recall that $\alpha_n\to 1$. In view of the definition of
$\omega_i (n)$, it remains to examine the asymptotic behavior of
\begin{equation}
\label{76}
\pi_n(\ms V^{(p)}_j)\, \theta^{(p)}_n\, r^{(p)}_n (j,k) 
\,\Big\{\,\sqrt{\frac{\omega_k}{\pi_n(\ms V^{(p)}_k)} }
\,-\, \sqrt{\frac{\omega_j}{\pi_n(\ms V^{(p)}_j)}}\, \Big\}^2 \;.
\end{equation}

As in the proof of Proposition \ref{p01b}, we divide the pairs $(j,k)$
in three types. Assume first that
$\theta^{(p)}_n \, r^{(p)}_n (j,k) \to 0$ and
$\theta^{(p)}_n \, r^{(p)}_n (k,j) \to 0$. By \cite[Lemma 3.1]{lx16},
and \eqref{58}, either $\pi_n(\ms V^{(p)}_j)/\pi_n(\ms V^{(p)}_k)$
converges to a nonnegative real number or so does
$\pi_n(\ms V^{(p)}_k)/\pi_n(\ms V^{(p)}_j)$. Assume that
$\pi_n(\ms V^{(p)}_k)/\pi_n(\ms V^{(p)}_j) \to a \in [0,\infty)$. In
this case, by reversibility, \eqref{76} is equal to
\begin{equation}
\label{77}
\theta^{(p)}_n \, r^{(p)}_n (k,j) 
\,\Big\{\,\sqrt{\omega_k}
\,-\, \sqrt{\omega_j\, \frac{\pi_n(\ms V^{(p)}_k)}
{\pi_n(\ms V^{(p)}_j)}}\, \Big\}^2
\;,
\end{equation}
which vanishes as $n\to\infty$.

Next, suppose that $\theta^{(p)}_n \, r^{(p)}_n (j,k) \to 0$ and
$\theta^{(p)}_n \, r^{(p)}_n (k,j) \to r^{(p)} (k,j) > 0$, where
$r^{(p)} (k,j)$ has been introduced in \eqref{34}. In particular, $k$
is a transient state of the chain $\bb X^{(p)}_t$.  By reversibility,
$\pi_n(\ms V^{(p)}_k)/\pi_n(\ms V^{(p)}_j) \,=\, r^{(p)}_n
(j,k)/r^{(p)}_n (k,j) \to 0$. Hence, \eqref{76}, which is equal to
\eqref{77}, converges to
\begin{equation*}
r^{(p)} (k,j) \,\omega_k\;.
\end{equation*}

Finally, suppose that
$\theta^{(p)}_n \, r^{(p)}_n (j,k) \to r^{(p)} (j,k) > 0$ and
$\theta^{(p)}_n \, r^{(p)}_n (k,j) \to r^{(p)} (k,j) > 0$. This means
that $j$ and $k$ belong to some closed irreducible class
$\mf R^{(p)}_m$ of the chain $\bb X^{(p)}_t$.  By Lemma \ref{l33}, the
expression \eqref{76} converges to
\begin{equation*}
M^{(p)}_m(j)\, r^{(p)} (k,j) \,\Big\{\,
\sqrt{\frac{\omega_k} {M^{(p)}_m(k)}}
\,-\, \sqrt{\frac{\omega_j} {M^{(p)}_m(j)}}\, \Big\}^2\;.
\end{equation*}
Combining the previous estimates yields that
$\theta^{(p)}_n \ms I_n (\mu_n)$ converges to $\ms J^{(p)} (\mu)$,
which completes the proof of the $\Gamma-\limsup$ in view of Lemma
\ref{l36}.

We turn to the $\Gamma-\liminf$ where we use an induction
argument. Fix $1\le p\le \mf q$ and assume that the
$\Gamma$-convergence of $\theta^{(p-1)}_n\, \ms I_n$ to
$\ms I^{(p-1)}$ has been proved. Fix a probability measure $\mu$ on
$V$ and a sequence $\mu_n$ converging to $\mu$.

Suppose that $\ms I^{(p-1)} (\mu)>0$. In this case, since
$\theta^{(p-1)}_n\, \ms I_n$ $\Gamma$-converges to $\ms I^{(p-1)}$ and
$\theta^{(p)}_n/\theta^{(p-1)}_n \to\infty$,
\begin{equation*}
\liminf_{n\to\infty} \theta^{(p)}_n\, \ms I_n(\mu_n) \;=\;
\liminf_{n\to\infty} \frac{\theta^{(p)}_n}{\theta^{(p-1)}_n}
\, \theta^{(p-1)}_n\, \ms I_n(\mu_n) \;\ge\;
\ms I^{(p-1)} (\mu)\,
\lim_{n\to\infty} \frac{\theta^{(p)}_n}{\theta^{(p-1)}_n}
\;=\; \infty\;.
\end{equation*}
On the other hand, by \eqref{82}, $\ms I^{(p-1)} (\mu) = \infty$. This
proves the $\Gamma-\liminf$ convergence for measures $\mu$ such that
$\ms I^{(p-1)} (\mu)>0$.

Assume that $\ms I^{(p-1)} (\mu)=0$. By \eqref{83}, there exists a
probability measure $\omega$ on $S_p$ such that $\mu = \sum_{j\in S_p}
\omega_j\, \pi^{(p)}_j$. By \eqref{f4},
\begin{equation*}
\ms I_n(\mu_n) \; \ge \; -\, \int_V \frac{\ms L_n u}{u}\, d\mu_n
\end{equation*}
for all $u: V \to (0,\infty)$.

Fix a function $h: \ms V^{(p)} \to (0,\infty)$ which is constant on
each $\ms V^{(p)}_j$, $j\in S_p$:
$h = \sum_{j\in S_p} \mb h(j) \, \chi_{\ms V^{(p)}_j}$. Let
$u_n\colon V\to \bb R$ be the solution of the Poisson equation
\eqref{21} with $\ms L = \ms L_n$, $V_0 = \ms V^{(p)}$ and $g = h$. By
the representation \eqref{71}, it is clear that
$u_n(x) \in (0,\infty)$ for all $x\in V$.

Since $u_n$ is harmonic on $V \setminus \ms V^{(p)}$ and $u_n = h$ on
$\ms V^{(p)}$, by Lemma \ref{l19}, the right-hand side of the previous
displayed equation with $u=u_n$ is equal to
\begin{equation*}
-\, \int_{\ms V^{(p)}} \frac{\ms L_n u_n}{u_n}\, d\mu_n
\;=\; -\, \int_{\ms V^{(p)}} \frac{\ms L_n u_n}{h}\, d\mu_n
\;=\; -\, \int_{\ms V^{(p)}} \frac{\ms L^{(p)}_n h}{h}\, d\mu_n \;.
\end{equation*}
Here, as in the first part of the proof, $\ms L^{(p)}_n$ stands for
the generator of the trace process $Y^{n,p}_t$ introduced in
\eqref{40}.

Since $h$ is constant on each set $\ms V^{(p)}_j$ (and equal to $\mb
h(j)$), the last integral is equal to
\begin{equation*}
-\, \sum_{j,k\in S_p} \frac{[\, \mb h(k) - \mb h(j)\,]}{\mb h(j)}\,
\sum_{x\in \ms V^{(p)}_j} \pi_n(x)\, \frac{\mu_n (x)}{\pi_n(x)}\,
R^{(p)}_n(x, \ms V^{(p)}_k)\;,
\end{equation*}
where
$R^{(p)}_n(x, \ms V^{(p)}_k) = \sum_{y\in \ms V^{(p)}_k} R^{(p)}_n(x,
y)$. By Proposition \ref{p3},
$\pi_n(x)/\pi_n(\ms V^{(p)}_j) \to \pi^{(p)}_j(x)$ for all
$x\in \ms V^{(p)}_j$. Thus, since
$\mu_n \to \mu = \sum_{j\in S_p} \omega_j\, \pi^{(p)}_j$,
\begin{equation*}
\lim_{n\to \infty} \pi_n(\ms V^{(p)}_j)\, \frac{\mu_n (x)}{\pi_n(x)}
\;=\; \omega_j \quad\text{for all $x\in \ms V^{(p)}_j$} \;.
\end{equation*}
Therefore, by \eqref{34}, as $n\to\infty$, the penultimate expression
multiplied by $\theta^{(p)}_n $ converges to 
\begin{equation*}
-\, \sum_{j\in S_p} \omega_j \, \frac{1}{\mb h(j)}\,
\sum_{k\in S_p} r^{(p)}(j,k) \, [\, \mb h(k) - \mb h(j)\,]
\;=\; -\, \sum_{j\in S_p} \omega_j \, \frac{\bb L^{(p)} \mb h}{\mb h} \;.
\end{equation*}

Summarising, we proved that
\begin{equation*}
\liminf_{n\to\infty} \theta^{(p)}_n\, \ms I_n(\mu_n) \; \ge \;
\sup_{\mb h} \,  -\, \sum_{j\in S_p} \omega_j \, \frac{\bb L^{(p)} \mb
h}{\mb h} \;,
\end{equation*}
where the supremum is carried over all functions
$\mb h: S_p \to (0,\infty)$. By \eqref{83b}, the right-hand side is
precisely $\ms I^{(p)}(\mu)$, which completes the proof of the
$\Gamma-\liminf$.
\end{proof}

\appendix

\section{Potential theory}
\label{sec8}

We present in this section some results on potential theory used in
the article. We do not assume reversibility. We keep the same notation
of the article, removing the index $n$. In particular, $X_t$ is a
$V$-valued, continuous-time irreducible Markov process whose jump
rates are represented by $R(x,y)$. Denote by
$\color{blue} (\mc F_t : t\ge 0)$ the canonical filtration induced by
the chain $X_t$. Hence, $\mc F_t$ is the $\sigma$-algebra generated by
the variables $X_s$, $0\le s\le t$.

We first recall for the reader's convenience the definition
of the trace of a process on a subset.

\subsection*{Trace process}

Fix a non-empty subset $W$ of $V$.  Denote by $T^{W}(t)$ the
total time the process $X_t$ spends in $W$ in the
time-interval $[0,t]$:
\begin{equation*}
T^{W}(t)\;=\;\int_{0}^{t}\,\chi_{W}(X_s)\,ds\;,
\end{equation*}
where, recall, $\chi_{W}$ represents the indicator function of the set
$W$. Denote by $S^{W}(t)$ the generalized inverse of $T^{W}(t)$:
\begin{equation*}
S^{W}(t)\;=\;\sup\{\,s\ge0\,:\,T^{W}(s)\le t\,\}\;.
\end{equation*}

The trace of $X_t$ on $W$, denoted by
$\color{blue}(X^{W}_t : t \ge 0)$, is defined by
\begin{equation}
\label{100}
X^{W}_t\;=\; X_{S^{W}(t)} \;;\;\;\;t\ge0\;.
\end{equation}
By Propositions 6.1 and 6.3 in \cite{bl2}, the trace process is an
irreducible, $W$-valued continuous-time Markov chain, obtained by
turning off the clock when the process $X_t$ visits the set $W^{c}$,
that is, by deleting all excursions to $W^{c}$. For this reason, it is
called the trace process of $X_t$ on $W$.

Denote by $\color{blue} \ms L_W$, $\color{blue} R_W$,
$\color{blue} \lambda_W$, $\color{blue} p_W$ and $\color{blue} \pi_W$
its generator, jump rates, holding times, transition matrix and
stationary state, respectively.  The measure $\pi_W$ is obtained by
conditioning $\pi$ to $W$: $\pi_W(x) = \pi(x)/\pi(W)$.

Let $\color{blue} \mb P^W_{\! x}$, $x\in W$, be the probability
measure on the path space $D(\bb R_+, W)$ induced by the Markov chain
$X^W_t$ starting from $x$. Expectation with respect to $\mb P^W_x$
is represented by $\color{blue} \mb E^W_x$.

\subsection*{Poisson equation}

Fix a non-empty proper subset $V_0$ of $V$ and a function
$g:V_0 \to \bb R$. Let $f$ be the solution of the Poisson equation
\begin{equation}
\label{21}
\left\{
\begin{aligned}
& \ms L f \,=\, 0\;, \quad V\setminus V_0\;, \\
& f \,=\, g\;, \quad V_0\;.
\end{aligned}
\right.
\end{equation}

Recall from \eqref{201} the definition of the hitting and return times
to a subset $\ms A$ of $V$. By the strong Markov property, the
solution of the Poisson equation can be represented as
\begin{equation}
\label{71}
f(x) \,=\, \; \mb E_x[\, g(X_{H_{V_0}})\,]\;, \quad x\in V \;.
\end{equation}

Fix $V_0 \subset W \subset V$ and denote by $f_W$ the solution of
the Poisson equation
\begin{equation}
\label{21b}
\left\{
\begin{aligned}
& \ms L_W f \,=\, 0\;, \quad W\setminus V_0\;, \\
& f \,=\, g\;, \quad V_0\;.
\end{aligned}
\right.
\end{equation}
Mind that $W$ may be equal to $V_0$.

Starting from $x\not\in V_0$, the processes $X_t$ and $X^W_t$ hit the
set $V_0$ at the same point: $X^W_{H_{V_0}(X^W)} = X_{H_{V_0}}$,
$\mb P_{\! x}$ a.s. In this formula and below,
$\color{blue} H_{V_0}(X^W)$, $\color{blue} H^+_{V_0}(X^W)$ stand for
hitting and return time to $V_0$ for the process $X^W$. By the
representation \eqref{71} and the previous observation, for $x\in W$
\begin{equation}
\label{73}
f_W(x) \;=\; \mb E^W_x[\, g(X_{H_{V_0}})\,]\; =\;
\mb E_x[\, g(X^W_{H_{V_0}(X^W)})\,] \; =\;
\mb E_x[\, g(X_{H_{V_0}})\,] \; =\; f(x)\;.
\end{equation}

\begin{lemma}
\label{l19}
Fix $V_0 \subset W \subset V$.  Denote by $f$, $f_W$ the solutions of
the Poisson equations \eqref{21}, \eqref{21b}, respectively.  Then,
\begin{equation*}
(\ms L f)(x) \;=\; (\ms L_W f_W)(x) \;, \quad x\in V_0\;.
\end{equation*}
\end{lemma}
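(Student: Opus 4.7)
The plan is to reduce both sides to the common expression $\lambda(x)\,\bigl[\,\mb E_x[f(X_{H^+_W})]-f(x)\,\bigr]$ by using Dynkin's formula on the original chain for the left-hand side and the explicit trace-process jump rates for the right-hand side. Throughout, $x\in V_0$ is fixed, and the key input is the identity $f_W=f$ on $W$ furnished by \eqref{73}.

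First I would treat the left-hand side. The process $M_t:=f(X_t)-\int_0^t(\ms L f)(X_s)\,ds$ is a martingale, and applying optional stopping at $H^+_W$ yields
\begin{equation*}
f(x)\;=\;\mb E_x\bigl[f(X_{H^+_W})\bigr]\;-\;\mb E_x\!\left[\int_0^{H^+_W}(\ms L f)(X_s)\,ds\right].
\end{equation*}
The crux is to evaluate the integral. On $[0,\tau_1)$ the process is held at $x$, contributing $(\ms L f)(x)\,\tau_1$. On $[\tau_1,H^+_W)$, if nonempty, the process lies in $V\setminus W$; since $V_0\subset W$ by hypothesis, we have $V\setminus W\subset V\setminus V_0$, where $\ms L f\equiv 0$ by \eqref{21}. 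Hence the integrand vanishes on this interval, and taking expectations gives $(\ms L f)(x)/\lambda(x)$. Rearranging,
\begin{equation*}
(\ms L f)(x)\;=\;\lambda(x)\,\bigl[\,\mb E_x[f(X_{H^+_W})]-f(x)\,\bigr].
\end{equation*}

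For the right-hand side I would use the trace-process formula $R_W(x,y)=\lambda(x)\,\mb P_x[H_y=H^+_W]$ for $y\in W$, $y\neq x$, which is the analogue of \eqref{40}. Combined with $f_W=f$ on $W$ and $\sum_{y\in W}\mb P_x[X_{H^+_W}=y]=1$, a direct rearrangement gives
\begin{equation*}
(\ms L_W f_W)(x)\;=\;\sum_{y\in W,\,y\neq x}\lambda(x)\,\mb P_x[X_{H^+_W}=y]\,[f(y)-f(x)]\;=\;\lambda(x)\,\bigl[\,\mb E_x[f(X_{H^+_W})]-f(x)\,\bigr],
\end{equation*}
where the self-term $y=x$ contributes zero to the bracket and hence can be freely added back. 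Matching the two displays closes the proof.

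The main (modest) obstacle is the integral computation in the Dynkin step: one must clearly use the inclusion $V_0\subset W$ to conclude that $(\ms L f)$ vanishes along every excursion of $X$ from $x$ back to $W$, so that only the contribution on the initial holding interval $[0,\tau_1)$ survives. Everything else is direct manipulation, and no reversibility or special structure of the chain is needed.
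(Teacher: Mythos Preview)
Your proof is correct and takes a cleaner route than the paper's. Both arguments ultimately rely on the representation \eqref{73} and on the trace jump-rate formula $R_W(x,y)=\lambda(x)\,\mb P_x[H_y=H^+_W]$, but the intermediate steps differ substantially.

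The paper expresses $(\ms L f)(x)$ as $\lambda(x)\{\mb E_x[g(X_{H^+_{V_0}})]-f(x)\}$ via the strong Markov property, and then performs a case analysis according to whether $H^+_W<H^+_x$ or $H^+_W=H^+_x$. This leads to the intermediate identity
\[
\mb E_x\bigl[g(X_{H^+_{V_0}})\bigr]-g(x)\;=\;\mb P_x[H^+_W<H^+_x]\,\Bigl\{\mb E_x\bigl[g(X^W_{H^+_{V_0}(X^W)})\bigr]-g(x)\Bigr\},
\]
after which one invokes the holding-rate relation $\lambda_W(x)=\lambda(x)\,\mb P_x[H^+_W<H^+_x]$ and unwinds via the strong Markov property for the trace chain. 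Your argument bypasses all of this by stopping Dynkin's martingale at $H^+_W$ rather than $H^+_{V_0}$: the harmonicity of $f$ on $V\setminus V_0\supset V\setminus W$ kills the integral on the excursion interval $[\tau_1,H^+_W)$, so both sides collapse directly to the common expression $\lambda(x)\{\mb E_x[f(X_{H^+_W})]-f(x)\}$. The paper's approach makes the role of $\lambda_W$ explicit, which is perhaps conceptually informative, but your route is shorter and avoids the case split entirely.
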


\begin{proof}
Fix $x\in V_0$.  The left-hand side of the identity appearing in the
statement of the lemma can be written as
\begin{equation*}
\lambda(x) \,
\sum_{y\in V} p(x,y) \, [\, f(y) \,-\, f(x)\,]\;.
\end{equation*}
Without loss of generality, assume that $p(z,z)=0$ for all $z\in V$
(if this is not the case, one redefines the holding time $\lambda (z)$
for the identity to hold).  By \eqref{71},
$f(y) = \mb E_y[\, g(X_{H_{V_0}})\,]$ for all $y\in V$, and by the
strong Markov property
$\mb E_x[\, g(X_{H^+_{V_0}})\,] = \sum_{y\in V} p(x,y) \,
f(y)$. Hence, the previous sum can be written as
\begin{equation}
\label{72}
\lambda(x) \,
\, \big \{\,  \mb E_x[\, g(X_{H^+_{V_0}})\,] \,-\, f(x)\,\big\}\;.
\end{equation}
 
Recall that we denote by $X^W_t$ the trace of the process $X_t$ on
$W$. We consider two cases. If $H^+_{W} < H^+_x$ then the process
$X_t$ and $X^W_t$ return to $V_0$ at the same point $X_{H^+_{V_0}}$
(to prove this assertion, consider separately the two situations
$\{H^+_{V_0} < H^+_x\}$ and $\{H^+_{V_0} = H^+_x\}$). Thus, if
$H^+_{W} < H^+_x$ we may replace in \eqref{72} $g(X_{H^+_{V_0}})$ by
$g(X^W_{H^+_{V_0}(X^W)})$.

If $H^+_W = H^+_x$, the process $X_t$ returns to $V_0$ (and also to
$W$) at $x$. In contrast, in the time interval $[0, H^+_W]$ the trace
process on $W$ remains at $x$, and $X^W_t$ may return to $V_0$ at a
point $y \neq x$.  In particular, $X_t$ and $X^W_t$ may return to $V$
at different points. In this case, since $H^+_{V_0} = H^+_x$, we have
\begin{equation*}
\mb E_x\big[\, g(X_{H^+_{V_0}}) \, \chi(H^+_x = H^+_W)
\,\big]
\;=\; g(x)\, 
\mb P_x\big[\, H^+_x = H^+_W \,\big]\;.
\end{equation*}

Up to this point, we proved that for $x\in V_0$,
\begin{equation*}
\mb E_x\big[\, g(X_{H^+_{V_0}}) \,\big] \;=\; 
\mb E_x\big[\, g(X^W_{H^+_{V_0}(X^W)})  \, \chi_{_{A^c}} \,\big]
\;+\; g(x)\, 
\mb P_x[\, A \,]\;,
\end{equation*}
where $A$ is the event $\{H^+_x = H^+_W\}$. Write
$\chi_{_{A^c}}$ as $1 - \chi_{_{A}}$. On the event $A$,
$H^+_{V_0}(X^W) = H^+_{V_0} + H^+_{V_0}(X^W) \circ
\vartheta_{H^+_{V_0}}$. Hence, conditioning on $\mc F_{H^+_{V_0}}$,
since $A$ is $\mc F_{H^+_{V_0}}$-measurable and $X(H^+_{V_0}) = x$ on
the event $A$, by the strong Markov property,
\begin{equation*}
\mb E_x\big[\, g(X^W_{H^+_{V_0}(X^W)})  \, \chi_{_{A}} \,\big] \;=\;
\mb P_x[\, A \,] \, \mb E_x\big[\, g(X^W_{H^+_{V_0}(X^W)})   \,\big] \;.
\end{equation*}
Therefore, for $x\in V_0$,
\begin{equation}
\label{74}
\mb E_x\big[\, g(X_{H^+_{V_0}}) \,\big]  \; -\; g(x) \;=\;
\mb P_x[\, H^+_W \,<\, H^+_x \,] \; \Big\{\,
\mb E_x\big[\, g(X^W_{H^+_{V_0}(X^W)})   \,\big] \;-\; g(x)
\,\Big\}\;. 
\end{equation}
By equation (6.9) in \cite{bl2}, $\lambda (x) \, \mb P_x[\, H^+_W
\,<\, H^+_x \,]  = \lambda_W(x)$. Therefore, \eqref{72} is
equal to 
\begin{equation*}
\lambda_W(x) \,
\, \big \{\,  \mb E^W_x[\, g(X_{H^+_{V_0}})\,] \,-\, f(x)\,\big\}\;.
\end{equation*}
By the strong Markov property and \eqref{73}, this expression is equal
to
\begin{equation*}
\begin{aligned}
& \lambda_W(x) \, \sum_{y\in W} p_W(x,y)
\, \big \{\,  \mb E^W_y[\, g(X_{H_{V_0}})\,] \,-\, f(x)\,\big\} \\
&\quad \;=\;
\lambda_W(x) \, \sum_{y\in W} p_W(x,y)
\, \big \{\,  f_W(y) \,-\, f_W(x)\,\big\}
\;=\; (\ms L_W f_W)(x) \;,
\end{aligned}
\end{equation*}
as claimed.
\end{proof}

Denote by $D(f)$ the Dirichlet form of a function $f:V\to \bb R$:
\begin{equation*}
D(f) \;:=\; \< \, f \,,\, (-\, \ms L) f \,\>_{\pi}\;.
\end{equation*}

\begin{corollary}
\label{l01}
Fix $V_0 \subset W \subset V$.  Denote by $f$, $f_W$ the solutions of
the Poisson equations \eqref{21}, \eqref{21b}, respectively.  Then,
\begin{equation}
\label{18}
D(f)
\;=\; \pi(W)\, \< \, f_W \,,\, (-\, \ms L_W) f_W \,\>_{\pi_W} \;.
\end{equation}
\end{corollary}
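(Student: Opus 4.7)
The proof is essentially a direct bookkeeping exercise once Lemma \ref{l19} is in hand. The plan is to expand the Dirichlet form, use the harmonicity conditions in the two Poisson equations to cut the sum down to $V_0$ and then rebuild it on $W$, and finally rewrite $\pi$ as $\pi(W)\,\pi_W$.

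More concretely, I would start by writing
\begin{equation*}
D(f) \;=\; -\, \sum_{x\in V} \pi(x)\, f(x)\, (\ms L f)(x)\;.
\end{equation*}
Since $f$ solves \eqref{21}, $(\ms L f)(x) = 0$ for every $x\in V\setminus V_0$, so only the terms with $x\in V_0$ survive:
\begin{equation*}
D(f) \;=\; -\, \sum_{x\in V_0} \pi(x)\, f(x)\, (\ms L f)(x)\;.
\end{equation*}
On $V_0$ both $f$ and $f_W$ coincide with $g$, and by Lemma \ref{l19} we have $(\ms L f)(x)=(\ms L_W f_W)(x)$ for all $x\in V_0$. Substituting, the right-hand side becomes
\begin{equation*}
-\, \sum_{x\in V_0} \pi(x)\, f_W(x)\, (\ms L_W f_W)(x)\;.
\end{equation*}

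Next, I would use the second Poisson equation \eqref{21b} to reintroduce the terms on $W\setminus V_0$: since $\ms L_W f_W \equiv 0$ there, adding those terms costs nothing, and we obtain
\begin{equation*}
D(f) \;=\; -\, \sum_{x\in W} \pi(x)\, f_W(x)\, (\ms L_W f_W)(x)\;.
\end{equation*}
Finally, dividing and multiplying by $\pi(W)$ and recalling that $\pi_W(x)=\pi(x)/\pi(W)$ for $x\in W$, the sum is identified as $\pi(W)\,\langle f_W, -\ms L_W f_W\rangle_{\pi_W}$, which is the claimed identity \eqref{18}.

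I do not foresee a serious obstacle: the only point requiring a moment of care is the justification that one may freely rewrite the sum first over $V_0$ and then over $W$, which uses both harmonicity statements (for $f$ on $V\setminus V_0$ and for $f_W$ on $W\setminus V_0$) together with the fact that $f\big|_{V_0} = g = f_W\big|_{V_0}$. All the substantive work sits in Lemma \ref{l19}.
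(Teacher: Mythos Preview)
Your proposal is correct and follows essentially the same argument as the paper: expand $D(f)$, use harmonicity of $f$ on $V\setminus V_0$ to restrict to $V_0$, invoke Lemma \ref{l19} together with $f|_{V_0}=g=f_W|_{V_0}$ to pass to $f_W$ and $\ms L_W$, then use harmonicity of $f_W$ on $W\setminus V_0$ to extend to $W$ and rewrite in terms of $\pi_W$. The only cosmetic difference is that the paper cites \eqref{73} (which gives $f=f_W$ on all of $W$) rather than just the boundary condition on $V_0$, but on $V_0$ these amount to the same thing.
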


\begin{proof}
By definition of the Dirichlet form,
\begin{equation*}
D(f)
\;=\;  \< \, f \,,\, (-\, \ms L) f \,\>_{\pi}
\;=\; -\, \sum_{x\in V} \pi(x)\, f(x) \, (\ms L f)(x)\;.
\end{equation*}
Since $f$ is harmonic on $V_0^c$, the sum can be restricted to
$V_0$. Hence, the previous expression is equal to
\begin{equation*}
-\, \sum_{x\in V_0} \pi(x)\, f(x) \, (\ms L f)(x)\;.
\end{equation*}
By \eqref{73} and Lemma \ref{l19}, this sum is equal to
\begin{equation*}
-\, \sum_{x\in V_0} \pi(x)\, f_W(x) \, (\ms L_W f_W)(x)\;.
\end{equation*}
Since $f_W$ is $\ms L_W$-harmonic on $W\setminus V_0$, we may extend
the sum to $W$. To complete the proof, it remains to recall that 
$\pi_W(\,\cdot\,) = \pi(\,\cdot\,)/\pi(W)$.
\end{proof}

The same proof yields the following result.

\begin{corollary}
\label{l13}
Fix $V_0 \subset V$, $g:V_0 \to \bb R$, and let $u$ be the solution of
\eqref{21}. Then, 
\begin{equation*}
\int_V \frac{\ms L\, u}{u}\, d\mu \;=\;
\int_{V_0} \frac{\ms L_{V_0}\, g}{g}\, d\mu
\end{equation*}
for all probability measures $\mu$ on $V$.
\end{corollary}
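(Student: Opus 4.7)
The plan is to reduce both integrals to sums over $V_0$ and then invoke Lemma \ref{l19} in the degenerate case $W = V_0$. First, since $u$ solves the Poisson equation \eqref{21}, one has $(\ms L u)(x) = 0$ for every $x \in V \setminus V_0$, so the integrand on the left-hand side vanishes identically off $V_0$, and
\[
\int_V \frac{\ms L u}{u}\, d\mu \;=\; \sum_{x\in V_0} \frac{(\ms L u)(x)}{u(x)}\, \mu(x).
\]

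Next, I would apply Lemma \ref{l19} with the choice $W = V_0$ (which is explicitly allowed, as the remark ``Mind that $W$ may be equal to $V_0$'' preceding \eqref{73} makes clear). In this degenerate situation the Poisson equation \eqref{21b} has solution $f_W = g$, since the interior $W \setminus V_0$ is empty. The conclusion of Lemma \ref{l19} then yields $(\ms L u)(x) = (\ms L_{V_0} g)(x)$ for every $x \in V_0$. Combining this with the boundary condition $u(x) = g(x)$ on $V_0$ from \eqref{21}, the previous sum becomes
\[
\sum_{x\in V_0} \frac{(\ms L_{V_0} g)(x)}{g(x)}\, \mu(x) \;=\; \int_{V_0} \frac{\ms L_{V_0} g}{g}\, d\mu,
\]
which is the claimed identity.

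No serious obstacle is anticipated: the corollary is essentially a pointwise identity repackaging Lemma \ref{l19} together with the harmonicity of $u$ on $V \setminus V_0$. The only tacit requirement is that the ratios be well defined, which via the representation \eqref{71} reduces to $g$ being non-vanishing on $V_0$, the situation under which this corollary is applied in Proposition \ref{p02b}.
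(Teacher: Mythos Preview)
Your proof is correct and follows essentially the same route as the paper: restrict the integral to $V_0$ by harmonicity, apply Lemma~\ref{l19} with $W=V_0$ so that $f_W=g$, and use $u=g$ on $V_0$ for the denominator. Your added remark about the non-vanishing of $g$ is a reasonable caveat that the paper leaves implicit.
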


\begin{proof}
Since $u$ is harmonic on $V\setminus V_0$, we may restrict the
integral to $V_0$. By Lemma \ref{l19}, on $V_0$ we may
replace $\ms L u$ by $\ms L_{V_0} u_{V_0}$, where $u_{V_0}$ is the
solution of \eqref{21b} with $W=V_0$. However, as $W=V_0$, the
solution of \eqref{21b} is $u_{V_0}=g$. Hence, $\ms L_{V_0} u_{V_0} =
\ms L_{V_0} g$. As $u=g$ on $V_0$, the proof is complete.
\end{proof}

We turn to an estimate of hitting times. Denote by $\pi_A$, $A\subset
V$, the stationary measure $\pi$ conditioned to $A$
\begin{equation*}
\pi_A(x) \;=\; \frac{\pi(x)}{\pi(A)}\;, \quad x\in A\;.
\end{equation*}
Next result is \cite[Proposition 8.4]{lms1}. It holds for
non-reversible dynamics.  The assertion in the case where $A$ is a
singleton follows from the proofs of \cite[Corollary 4.2]{BL15} and
\cite[Proposition 8.4]{lms1}.

\begin{lemma}
\label{l06}
Let $A$, $B$ be two nonempty disjoint subsets of $E$. Then, for every
probability measure $\nu$ concentrated on the set $A$ and $\varrho >0$
\begin{equation*}
\mathbb{\mathbf{P}}_{\nu}
\big[\, H_{B} \;\le\; \varrho \, \big]^{2} \;\le\;
e^{2} \, {E}_{\pi_A}\Big[\,
\Big(\frac{\nu}{\pi_A}\Big)^{2}\, \Big]\,
\frac{\Cap (A ,\,B)}{\pi (A)} \, \varrho \;. 
\end{equation*}
If $A$ is a singleton, $A=\{x\}$, then for every $\varrho >0$
\begin{equation*}
\mathbb{\mathbf{P}}_{\! x}
\big[\, H_{B} \;\le\; \varrho \, \big] \;\le\;
e \, \frac{\Cap (\{x\} ,\,B)}{\pi (x)} \, \varrho \;. 
\end{equation*}
\end{lemma}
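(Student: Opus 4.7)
Both bounds are obtained by combining the exponential Chebyshev inequality with a sharp estimate on the Laplace transform
\[
h(x) \;:=\; \mb E_x\bigl[\, e^{-\lambda H_B}\,\bigr]\,,\qquad \lambda \;:=\; 1/\varrho,
\]
which satisfies $\ms L h = \lambda h$ on $V \setminus B$ and $h \equiv 1$ on $B$. From the pointwise inequality $\mb 1_{\{H_B \le \varrho\}} \le e \cdot e^{-\lambda H_B}$, integrating against $\nu$ gives
\[
\mb P_\nu\big[\, H_B \le \varrho \,\big] \;\le\; e \sum_{x \in A} \nu(x)\, h(x).
\]
Both estimates reduce to bounding $h(x)$ on $A$, and the two bounds correspond to two sharpness levels of this estimate.

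For the singleton case $A = \{x\}$ I would carry out a one-step renewal decomposition to avoid the quadratic loss coming from Cauchy--Schwarz. Conditioning on the first jump out of $x$, which occurs at an independent exponential time with rate $\lambda(x)$, and then on whether the process hits $B$ before returning to $x$, one derives
\[
h(x) \;=\; \frac{\lambda(x)}{\lambda + \lambda(x)}\,\bigl[\,c_1 \,+\, c_2\, h(x)\,\bigr],
\]
where $c_1 \le \mb P_x[H_B < H_x^+] =: p$ and $c_2 \le 1 - p$, because inserting $e^{-\lambda H_{\{\cdot\}}} \le 1$ only reduces the unweighted first-exit probabilities. Solving and invoking the singleton capacity identity $p\, \lambda(x) = \Cap(\{x\},B)/\pi(x)$,
\[
h(x) \;\le\; \frac{\lambda(x)\, p}{\lambda + \lambda(x)\, p} \;\le\; \frac{\Cap(\{x\},B)}{\pi(x)\, \lambda} \;=\; \varrho\, \frac{\Cap(\{x\},B)}{\pi(x)},
\]
which, plugged into the Chebyshev bound, delivers the second inequality.

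For the general case I would apply Cauchy--Schwarz with weight $\pi$ on the sum over $A$:
\[
\Bigl(\sum_{x \in A} \nu(x)\, h(x)\Bigr)^2 \;\le\; \frac{E_{\pi_A}\bigl[(\nu/\pi_A)^2\bigr]}{\pi(A)}\, \sum_{x \in A} \pi(x)\, h(x)^2,
\]
reducing the proof to the Dirichlet-form bound $\lambda \sum_{x \in A} \pi(x)\, h(x)^2 \le \Cap(A, B)$. Multiplying the eigenfunction relation $\ms L h = \lambda h$ by $\pi h$, summing over $V \setminus B$, and using $h \equiv 1$ on $B$ yields the identity
\[
\lambda \sum_{x \notin B} \pi(x)\, h(x)^2 \,+\, \mc E_s(h,h) \;=\; \sum_{x \in B}\sum_{y \notin B} \pi(x)\, R(x,y)\,\bigl(1 - h(y)\bigr),
\]
where $\mc E_s(h,h) = \langle h, -\ms L h\rangle_{\pi_n}$ is the symmetrized Dirichlet energy. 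Dropping the non-negative $\mc E_s(h,h)$ on the left and restricting the sum to $A$ reduces matters to bounding the boundary-flux expression on the right by $\Cap(A,B)$. In the reversible case this is immediate from the Dirichlet principle $\Cap(A,B) = \inf_{f\vert_A=1,\, f\vert_B=0} \mc E_s(f,f)$ applied to $1-h$ (after truncation to keep its range in $[0,1]$ and scaling by the values of $h$ on $A$). For the general non-reversible case one must recognize the right-hand side as the pairing $\langle 1-h,\,\ms L^{*} \mb 1_B\rangle_{\pi_n}$ and appeal to the Gaudilli\`ere--Landim min-max variational characterization of capacities.

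\textbf{Main obstacle.} The decisive step is identifying the boundary-flux sum $\sum_{x \in B, y \notin B} \pi(x) R(x,y)(1 - h(y))$ with an admissible energy controlled by $\Cap(A,B)$ in the absence of reversibility, since the asymmetry between $\ms L$ and $\ms L^{*}$ obstructs a direct use of the Dirichlet principle; this is precisely where one has to import the extended variational formulas from \cite{BL15,lrev}. The singleton estimate bypasses this entirely by exploiting the explicit renewal structure at $x$, which is why it is quantitatively sharper and dispenses with the squared form of the inequality.
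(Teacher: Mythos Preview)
The paper does not actually prove this lemma; it cites \cite[Proposition~8.4]{lms1} for the general statement and \cite[Corollary~4.2]{BL15} together with \cite[Proposition~8.4]{lms1} for the singleton case. So there is no in-paper argument to compare against, and I comment only on the internal soundness of your proposal.

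Your singleton argument is correct and self-contained. The renewal decomposition at $x$ gives $h(x)\le \lambda(x)p/(\lambda+\lambda(x)p)$ with $p=\mb P_x[H_B<H_x^+]$; combined with $\lambda(x)p=\Cap(\{x\},B)/\pi(x)$ and the exponential Chebyshev bound, this yields the second inequality exactly.

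The general case, however, has a genuine gap. After your identity and after dropping $\mc E_s(h,h)$, you need the boundary-flux term
\[
\Phi \;:=\; \sum_{x\in B}\sum_{y\notin B}\pi(x)\,R(x,y)\,\bigl(1-h(y)\bigr)
\]
to be bounded above by $\Cap(A,B)$. But $\Phi$ depends only on $B$ and on $\lambda$, not on $A$ at all, so the inequality $\Phi\le\Cap(A,B)$ cannot hold uniformly in $A$. A concrete reversible counterexample: nearest-neighbour unit-rate chain on $V=\{1,2,3\}$, $B=\{3\}$, $A=\{1\}$. One computes $h(2)=(1+\lambda)/(\lambda^{2}+3\lambda+1)$, hence
\[
\Phi \;=\; \tfrac{1}{3}\bigl(1-h(2)\bigr) \;=\; \tfrac{1}{3}\cdot\frac{\lambda^{2}+2\lambda}{\lambda^{2}+3\lambda+1}\,,
\]
which equals $1/5$ already at $\lambda=1$ and tends to $1/3$ as $\lambda\to\infty$, whereas $\Cap(\{1\},\{3\})=1/6$. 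So $\Phi>\Cap(A,B)$ even in this reversible three-point example.

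Your suggested fix---``apply the Dirichlet principle to $1-h$ after truncation and scaling by the values of $h$ on $A$''---cannot repair this. The function $1-h$ vanishes on $B$, but on $A$ it equals $1-h|_A$, which is strictly less than $1$ and in general non-constant on $A$; no single rescaling produces an admissible competitor for $\Cap(A,B)$. The underlying problem is that by testing the eigenvalue equation against $\pi h$ you have discarded all information about the set $A$, and no a posteriori manipulation of the resulting flux can recover it. A correct argument must bring $A$ into the test function from the outset---for example by testing against $\pi\,h\,h_{A,B}$ (respectively $\pi\,h\,h^{*}_{A,B}$ in the non-reversible case), or, equivalently, by first passing to the trace process on $A\cup B$ so that the renewal structure on the whole set $A$ becomes available. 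This is essentially the mechanism behind the proofs in \cite{lms1,BL15}.
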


This result helps in showing that the left-hand side vanishes
asymptotically if
$[\, \Cap_n (\{x\} ,\,B) /\pi_n (x)\,] \, \varrho_n \to 0$.

\begin{remark}
For two sets $A$, $B$ satisfying the hypotheses of Lemma \ref{l06},
let $\nu_{A,B}$ be the equilibrium measure on $A$:
\begin{equation*}
\nu_{A,B}(x) \;=\; 
\frac{1}{\Cap (A\, ,\,B)}\, \pi(x)\, \lambda(x)\, 
\mathbb{\mathbf{P}}_{\! x} [\, H_B < H^+_A\,]
\;, \quad x\in A  \;.
\end{equation*}
By Chebychev inequality and \cite[Proposition A.2]{bl7},
\begin{equation*}
\mathbb{\mathbf{P}}_{\! \nu_{A,B}}
\big[\, H_{B} \;\ge \; \varrho \, \big] \;\le\;
\frac{1}{\varrho}\,
E_{\nu_{A,B}} \big[\, H_{B} \, \big]
\;=\; \frac{E_{\pi} [h^*_{A,B}]}{\varrho \, \Cap (A ,\,B)}
\;,
\end{equation*}
where $h^*_{A,B}$ stands for the equilibrium potential of the
time-reversed process (sometimes called the adjoint process):
$h^*_{A,B} (y) = \mb P^*_{\! y}[H_A < H_B]$, and $\mb P^*$ stands for
the distribution of the continuous-time Markov chain with jump rates
$R^*(x,y)$ given by $R^*(x,y) = \pi(y) \, R(y,x)/\pi(x)$. In many
cases, $E_{\nu_{A,B}} [\, H_{B} \, ] \,=\, [1+o(1)] \, \pi(A)$ so that
\begin{equation*}
\mathbb{\mathbf{P}}_{\! \nu_{A,B}}
\big[\, H_{B} \;\ge \; \varrho \, \big] \;\le\;
[1+o(1)] \, \frac{\pi(A) }{\varrho \, \Cap (A ,\,B)}
\;.
\end{equation*}
This inequality demonstrates that the bound in Lemma \ref{l06} is
sharp whenever $E_{\nu_{A,B}} [\, H_{B} \, ] \,=\, [1+o(1)] \, \pi(A)$.
\end{remark}

\subsection*{Acknowledgments}

C. L. has been partially supported by FAPERJ CNE E-26/201.207/2014, by
CNPq Bolsa de Produtividade em Pesquisa PQ 303538/2014-7.


\begin{thebibliography}{99}

\bibitem{bl2} J. Beltr\'an, C. Landim: Tunneling and metastability of
continuous time Markov chains. J. Stat. Phys.  {\bf 140} 1065--1114
(2010).

\bibitem{bl3} J. Beltr\'an, C. Landim: Metastability of reversible
condensed zero range processes on a finite set, Probab. Theory
Relat. Fields {\bf 152}, {781--807} (2012).
  
\bibitem{bl4} J. Beltr\'an, C. Landim; Metastability of reversible
finite state Markov processes. Stoch. Proc. Appl. {\bf 121} 1633--1677
(2011).

\bibitem{bl7} J. Beltr\'an, C. Landim; Tunneling and metastability of
continuous time Markov chains II. J. Stat. Phys.  {\bf 149}, 598--618
(2012).

\bibitem{BL15} J. Beltr\'{a}n, C. Landim: Martingale approach to
metastability. Probab. Theory Relat. Fields {\bf 161}, 267--307 (2015)

\bibitem{bl15b} J. Beltr\'an, C. Landim; Tunneling of the Kawasaki
dynamics at low temperatures in two dimensions.
Ann. Inst. H. Poincar\'e, Probab. Statist. {\bf 51}, 59--88 (2015).

\bibitem{bc96} G. Ben Arous, R. Cerf, Metastability of the
three-dimensional Ising model on a torus at very low temperature,
Electron. J. Probab. 1 (1996) Research Paper 10. 

\bibitem{bfg} L. Bertini, A. Faggionato, D. Gabrielli: Large
deviations of the empirical flow for continuous time Markov chains.
Annales de l'I.H.P. Probabilités et statistiques, {\bf 51}, 867--900
(2015).

\bibitem{bdg17} A. Bianchi, S. Dommers, C. Giardin{\`a}: Metastability
in the reversible inclusion process.  Electron. J. Probab. {\bf 22},
paper no. 70, (2017).

\bibitem{bbi09} A. Bianchi, A. Bovier, D. Ioffe: Sharp asymptotics for
metastability in the random field Curie-Weiss model
Electron. J. Probab. {\bf 14}: Paper no. 53, pages 1541--1603, (2009).

\bibitem{begk01} A. Bovier, M. Eckhoff, V. Gayrard, M. Klein:
Metastability in stochastic dynamics of disordered mean field
models. Probab. Theory Relat. Fields {\bf 119}, 99-161 (2001).

\bibitem{begk02} A. Bovier, M. Eckhoff, V. Gayrard, M. Klein:
Metastability in reversible diffusion process I. Sharp asymptotics for
capacities and exit times. J. Eur. Math. Soc. 6, 399–424 (2004)

\bibitem{bh15} A. Bovier, F. den Hollander: {\sl Metastability: a
potential-theoretic approach}.  Grundlehren der mathematischen
Wissenschaften {\bf 351}, Springer, Berlin, 2015. 

\bibitem{bhn06} A. Bovier, F. den Hollander, F.R. Nardi: Sharp
asymptotics for Kawasaki dynamics on a finite box with open boundary.
Probab. Theory Relat. Fields {\bf 135}, 265--310 (2006).

\bibitem{bm02} A. Bovier, F. Manzo: Metastability in Glauber dynamics
in the low-temperature limit: beyond exponential asymptotics,
J. Stat. Phys. {\bf 107}, 757--779 (2002).

\bibitem{Br} A. Braides; \emph{$\Gamma$-Convergence for beginners.}
Oxford University Press, Oxford, 2002.

\bibitem{CCG-14} J. Cao, P. Chleboun, S. Grosskinsky: Dynamics of
condensation in the totally asymmetric inclusion
process. J. Stat. Phys.  {\bf 155}, 523--543 (2014).

\bibitem{co96} E. Cirillo, E. Olivieri: Metastability and nucleation
for the Blume–Capel model. Different mechanisms of
transition. J. Stat. Phys.{\bf 83}, 473-554 (1996)

\bibitem{dv75} M. D. Donsker, S. R. S. Varadhan: Asymptotic evaluation
of certain Markov proces expectations for large time, I, Comm. Pure
Appl. Math., {\bf 28}, 1-47 (1975).

\bibitem{fk17} M. Freidlin, L. Koralov: Metastable Distributions of
Markov Chains with Rare Transitions.  J. Stat. Phys. {\bf 167},
1355--1375 (2017)

\bibitem{FW98} M. I. Freidlin, A. D. Wentzell: Random perturbations of
dynamical systems. Second edition. Grundlehren der Mathematischen
Wissenschaften [Fundamental Principles of Mathematical Sciences], {\bf
260}. Springer-Verlag, New York, 1998.

\bibitem{GHNO09} A. Gaudilli\`ere, F. Den Hollander, F.R. Nardi,
E. Olivieri, E. Scoppola: Ideal gas approximation for a
two-dimensional rarefied gas under Kawasaki dynamics, Stochastic
Process. Appl. {\bf 119}, 737--774  (2009).

\bibitem{GM17} G. Di Gesù, M. Mariani: Full metastable asymptotic of the
Fisher information. SIAM J. MATH. ANAL. {\bf 49}, 3048--3072 (2017)

\bibitem{GL15} B. Gois, C. Landim; Zero-temperature limit of the
Kawasaki dynamics for the Ising lattice gas in a large two-dimensional
torus. Ann. Probab. {\bf 43}, 2151 -- 2203 (2015).

\bibitem{GRV-13} S. Grosskinsky, F. Redig, K. Vafayi: Dynamics of
condensation in the symmetric inclusion process {\bf 18}, article
no. 66 (2013).

\bibitem{HNT12} F. den Hollander, F. Nardi, A. Troiani: Metastability
for Kawasaki dynamics at low temperature with two types of particles
Electron. J. Probab. {\bf 17}: 1--26 (2012).

\bibitem{kim21} S. Kim. Second time scale of the metastability of
reversible inclusion processes. Probab. Theory Relat. Fields {\bf
180}, 1135--1187 (2021).

\bibitem{ks21} S. Kim, I. Seo: Condensation and Metastable Behavior of
Non-reversible Inclusion Processes. Commun. Math. Phys. {\bf 382},
1343--1401 (2021).

\bibitem{ks21b} S. Kim, I. Seo: Metastability of Stochastic Ising and
Potts Models on Lattices without External Fields. (2021)
arXiv:2102.05565

\bibitem{ks21c} S. Kim, I. Seo: Metastability of Ising and Potts
models without external fields in large volumes at low
temperatures. arXiv:2109.13583 (2021)

\bibitem{l2014} C. Landim: Metastability for a non-reversible
dynamics: the evolution of the condensate in totally asymmetric zero
range processes. Commun. Math. Phys. {\bf 330}, 1--32 (2014). 

\bibitem{lrev} C. Landim: Metastable Markov chains. Probability
Surveys {\bf 16}, 143--227 (2019).  DOI: 10.1214/18-PS310

\bibitem{ll16} C. Landim, P. Lemire; Metastability of the
two-dimensional Blume-Capel model with zero chemical potential and
small magnetic field. J. Stat. Phys. {\bf 164}, 346--376 (2016)

\bibitem{llm19} C. Landim, P. Lemire, M. Mourragui: Metastability of
the two-dimensional Blume-Capel model with zero chemical potential and
small magnetic field on a large torus.  J. Stat. Phys. {\bf 175},
456--494 (2019)
  
\bibitem{llm18} C. Landim, M. Loulakis, M. Mourragui: Metastable
Markov chains: from the convergence of the trace to the convergence of
the finite-dimensional distributions. Electron. J. Probab.  {\bf 23},
paper no. 95 (2018).

\bibitem{lms1} C. Landim, D. Marcondes, I. Seo: Metastable behavior of
weakly mixing Markov chains: the case of reversible, critical
zero-range processes. arXiv:2006.04214 (2020).

\bibitem{lms2} C. Landim, D. Marcondes, I. Seo: A resolvent approach
to metastability. arXiv: 2102.00998 (2021).

\bibitem{lms17} C. Landim, M. Mariani, I. Seo:. A Dirichlet and a
Thomson principle for non-selfadjoint elliptic operators,
Metastability in non-reversible diffusion processes. Arch. Rational
Mech. Anal.  {\bf 231}, 887--938 (2019) (2017)

\bibitem{lmt2015} C. Landim, R. Misturini, K. Tsunoda: Metastability
of reversible random walks in potential field. J.  Stat. Phys.  {\bf
160}, 1449--1482 (2015)

\bibitem{ls2016} C. Landim, I. Seo: Metastability of non-reversible
mean-field Potts model with three spins. J. Stat.  Phys. {\bf 165},
693--726 (2016)

\bibitem{ls2018} C. Landim, I. Seo: Metastability of non-reversible
random walks in a potential field, the Eyring-Kramers transition rate
formula. Comm. Pure. Appl. Math.  {\bf 71}, 203--266 (2018)

\bibitem{ls19} C.  Landim, I.  Seo: Metastability of one-dimensional,
non-reversi\-ble diffusions with periodic boundary conditions.  Ann.
Inst.  H.  Poincar\'e, Probab.  Statist. {\bf 55}, 1850--1889 (2019).

\bibitem{lx16} C. Landim, T. Xu; Metastability of finite state Markov
chains: a recursive procedure to identify slow variables for model
reduction.  ALEA Lat. Am. J. Probab. Math. Stat. {\bf 13}, 725-751
(2016)

\bibitem{lee22} J. Lee: Energy Landscape and Metastability of
Curie–Weiss–Potts Model J. Stat.  Phys. {\bf 187}, (2022)

\bibitem{ls22} J. Lee, I. Seo: Non-reversible metastable diffusions
with Gibbs invariant measure I: Eyring–Kramers formula. Probab. Theory
Relat. Fields {\bf 182}, 849–903 (2022).

\bibitem{ls22b} J. Lee, I. Seo: Non-reversible metastable diffusions
with Gibbs invariant measure II: Markov chain convergence.
arXiv:2008.08295 (2022).

\bibitem{nz19} F. R. Nardi, A. Zocca: Tunneling behavior of Ising and
Potts models in the low-temperature regime. Stochastic Process. Appl.  {\bf
129}, 4556--4575 (2019).

\bibitem{ns91} E. J. Neves, R. H. Schonmann: Critical droplets and
metastability for a Glauber dynamics at very low temperatures. Comm.
Math. Phys. {\bf 137}, 209--230 (1991).

\bibitem{ns92} E. J. Neves, R. H. Schonmann: Behavior of droplets for
a class of Glauber dynamics at very low temperature.  Probab. Theory
Related Fields {\bf 91}, 331--354 (1992).

\bibitem{or19} C. Oh, F. Rezakhanlou: Metastability of zero range
processes via Poisson equations. preprint available at
math.berkeley.edu (2019).

\bibitem{ov05} E. Olivieri and M. E. Vares. {\em Large deviations and
metastability}. Encyclopedia of Mathematics and its Applications,
vol. 100. Cambridge University Press, Cambridge, 2005.

\bibitem{rs18} F. Rezakhanlou, I. Seo: Scaling limit of small random
perturbation of dynamical systems arXiv:1812.02069 (2018).

\bibitem{s2018} I. Seo: Condensation of non-reversible zero-range
processes, Commun. Math. Phys. {\bf 366}, 781--839 (2019)

\bibitem{s95} M. Sugiura: Metastable behaviors of diffusion processes
with small parameter. J. Math. Soc. Japan {\bf 47}, 755--788 (1995)

\bibitem{var} S.R.S. Varadhan: Large Deviations and
Applications. CBMS-NSF Regional Conference Series in Applied
Mathematics, 46. Society for Industrial and Applied Mathematics
(SIAM), Philadelphia, PA, 1984. ISBN: 0-89871-189-4

\end{thebibliography}
\end{document}